\newcommand{\tri}{\operatorname{tri}}
\newcommand{\hex}{\operatorname{hex}}
\newcommand{\fcc}{\operatorname{fcc}}
\newcommand{\Dfour}{\operatorname{D4}}
\newcommand{\full}{\operatorname{full}}
\newcommand{\supp}{\operatorname{supp}}
\newcommand{\diam}{\operatorname{diam}}
\newcommand{\mix}{\operatorname{mix}}
\newcommand{\TV}{\operatorname{TV}}
\newcommand{\nbd}{\operatorname{nbd}}
\newcommand{\sav}{\operatorname{sav}}
\newcommand{\RE}{\operatorname{Re}}
\newcommand{\IM}{\operatorname{Im}}
\newcommand{\spn}{\operatorname{span}}
\newcommand{\bN}{\mathbb{N}}
\newcommand{\bR}{\mathbb{R}}
\newcommand{\bT}{\mathbb{T}}
\newcommand{\bU}{\mathbb{U}}
\newcommand{\zed}{\mathbb{Z}}
\newcommand{\id}{\mathrm{id}}
\newcommand{\gap}{\mathrm{gap}}
\newcommand{\one}{\mathbf{1}}
\newcommand{\E}{\mathbf{E}}
\newcommand{\Var}{\mathbf{Var}}
\newcommand{\Cov}{\mathbf{Cov}}
\newcommand{\Prob}{\mathbf{Prob}}
\newcommand{\sB}{\mathscr{B}}
\newcommand{\sF}{\mathscr{F}}
\newcommand{\sG}{\mathscr{G}}
\newcommand{\sH}{\mathscr{H}}
\newcommand{\sL}{\mathscr{L}}
\newcommand{\sN}{\mathscr{N}}
\newcommand{\sR}{\mathscr{R}}
\newcommand{\sS}{\mathscr{S}}
\newcommand{\sT}{\mathscr{T}}
\newcommand{\sI}{\mathscr{I}}
\newcommand{\sX}{\mathscr{X}}
\newcommand{\ua}{\underline{a}}
\newcommand{\sP}{{\mathscr{P}}}
\newcommand{\sC}{{\mathscr{C}}}
\newcommand{\sQ}{{\mathscr{Q}}}
\newcommand{\fS}{{\mathfrak{S}}}
\newtheorem{theorem}{Theorem}
\newtheorem{cor}[theorem]{Corollary}
\newtheorem{lemma}[theorem]{Lemma}
\newtheorem*{lemma*}{Lemma}
\newtheorem{proposition}[theorem]{Proposition}
\theoremstyle{remark}
\newtheorem*{theorem*}{Theorem}
\title[Sandpiles]{Cut-off for sandpiles on tiling graphs}
\author{Robert Hough}
\address[Robert Hough]{Department of Mathematics, Stony Brook University, Stony Brook,
NY, 11794}
\email{robert.hough@stonybrook.edu}
\author{Hyojeong Son}
\address[Hyojeong Son]{Department of Mathematics, Stony Brook University, Stony Brook,
NY, 11794}
\email{hyojeong.son@stonybrook.edu}
\subjclass[2010]{Primary 82C20, 60B15, 60J10}
\keywords{Abelian sandpile model, random walk on a group, spectral gap,  cut-off
phenomenon}
\thanks{This material is based upon work supported by the National Science
Foundation under agreements No.\ DMS-1712682 and DMS-1802336. Any opinions, findings and
conclusions or recommendations expressed in this material are those of the
authors and do not necessarily reflect the views of the National Science
Foundation.}
\thanks{Hyojeong Son was supported by a fellowship from the Stony Brook Summer Research Fund.}
\begin{document}

\begin{abstract}
Sandpile dynamics are considered on graphs constructed from periodic plane and space tilings by assigning a growing piece of the tiling either torus or open boundary conditions.  A general method of obtaining the Green's function of the tiling is given, and a total variation cut-off phenomenon  is demonstrated under general conditions.  It is shown that the boundary condition does not affect the mixing time for planar tilings, nor does it change the asymptotic mixing time for the cubic lattice $\zed^d$ for all sufficiently large $d$.  In a companion paper, computational methods are used to demonstrate that the mixing time is altered for the $\Dfour$ lattice in dimension 4.  
\end{abstract}

\maketitle

\section{Introduction}
The abelian sandpile model is an important model of self organized criticality, which has been studied extensively in the statistical physics literature since it was introduced by Bak, Tang and Wiesenfeld \cite{BTW88}, see, e.g. \cite{D89}, \cite{DM92}, \cite{BIP93}, \cite{P94}, \cite{I94}, \cite{DFF03}, \cite{LH02},  \cite{PR05}, \cite{JPR06}, \cite{SV09}, \cite{LP10}, \cite{DS10}, \cite{FLW10}, \cite{PS13}, \cite{G16}, \cite{KW16}, \cite{NOT17}.  %A Google scholar search for `abelian sandpile model' returns more than 1800 hits. 
Sandpile dynamics on a finite connected graph $G= (V, E)$ may be described as follows. In the model, a node $s \in V$ is designated sink.  Each non-sink vertex $v$ is assigned a non-negative number $\sigma(v)$ of chips.  If at some point $\sigma(v) \geq \deg(v)$ the vertex can topple, passing one chip to each neighbor; if a chip falls on the sink it is lost from the model. A configuration $\sigma$ is called \emph{stable} if $\sigma(v) < \deg(v)$ for all $v \in V\setminus\{s\}$.  The dynamics in the model occur in discrete time steps, in which a chip is added to the model at a uniform random vertex, then all legal topplings occur until the model reaches a stable state. 
%In \cite{HJL17} the first author and his collaborators determined the asymptotic mixing time and proved a cut-off phenomenon for driven abelian sandpile dynamics on the torus $(\zed/m\zed)^2$ as $m \to \infty$.  This article develops the techniques of \cite{HJL17} further, proving a cut-off phenomenon for the dynamics on an arbitrary plane or space tiling in dimensions at least 2, provides further example calculations, and studies the effect on the mixing behavior of an open boundary condition as opposed to periodic boundary condition, thus resolving an open problem of \cite{HJL17}. It is demonstrated that for a wide class of plane tilings in two dimensions, the mixing time with periodic and open boundary condition is asymptotically equal and governed by the behavior in the bulk of the tiling, while in 4 dimensions, for a specific set of bounding hyperplanes, the mixing behavior of sandpiles on the $\Dfour$ lattices is slower on the 3 dimensional boundary than in the bulk.
 
 In \cite{HJL17} sandpile dynamics are studied on the torus $(\zed/m\zed)^2$ and the asymptotic total variation mixing time is determined with a cut-off phenomenon as $m \to \infty$.  This article extends the techniques of \cite{HJL17} to treat sandpiles on a growing  piece of an arbitrary periodic plane or space tiling of arbitrary dimension, again determining the asymptotic total variation mixing time and proving a cut-off phenomenon. 
 A second purpose of the article is to study the effect of the boundary condition on the mixing time, and a class of tilings are considered with an open boundary in which the chips fall off the boundary and are lost from the model.  In this case, also, a cut-off phenomenon is demonstrated in the total variation mixing time and in two dimensions it is shown that the asymptotic mixing time is the same for the periodic and open boundary conditions, resolving a problem raised in \cite{HJL17}. 
 
 In a companion paper \cite{HS19}  computations are performed of the spectral gap and  `boundary spectral parameters' associated to eigenfunctions which are harmonic modulo 1 and concentrated near boundaries of a specified dimension in several specific examples including the triangular
and honeycomb %an dice 
tilings in dimension 2 and the face centered cubic sphere packing in dimension 3.  By determining these parameters for a specific set of bounding hyperplanes of the $\Dfour$ lattice in dimension 4 it is demonstrated that the total variation mixing with an open boundary is controlled by a statistic concentrated near the 3 dimensional boundary, and is thus different from the periodic boundary mixing time asymptotically. It is also  proved that for all $d$ sufficiently large, the asymptotic mixing time on the cubic lattice $\zed^d$ is the same for periodic and open boundary conditions determined by hyperplanes parallel to the coordinate axes, but that the optimization problem controlling the spectral gap does not determine the asymptotic mixing time.

\subsection{Precise statement of results}\subsubsection{Convergence of probability measures}
The results presented consider convergence of  probability measures in the \emph{total variation metric}. This is already a strong notion of convergence, and in fact all of the results hold also in $L^2$.
Recall that the total variation distance between two probability measures $\mu$ and $\nu$ on a measure space $(\sX, \sB)$ is 
\begin{equation}
 \left\|\mu - \nu\right\|_{\TV(\sX)} = \sup_{A \in \sB} |\mu(A)-\nu(A)|.
\end{equation}
Given a finite graph $G$, the set of recurrent sandpiles on the graph form an abelian group \cite{D88}. A random walk driven by a probability measure $\mu$ on a group has distribution at step $n$ given by $\mu^{*n}$ where $\mu^{*1} = \mu$ and $\mu^{*n} = \mu * \mu^{*(n-1)}$ is the group convolution.  
 Given a measure $\mu$ driving sandpile dynamics on the group of recurrent sandpile states $\sG(G)$ with uniform measure $\bU_{\sG}$, the \emph{total variation mixing time} is
\begin{equation}
 t_{\mix} = \min \left\{ k: \left\|\mu^{*k} - \bU_{\sG(G)}\right\|_{\TV(\sG(G))} < \frac{1}{e}\right\}.
\end{equation}
Given a sequence of graphs $G_n$ the sandpile dynamics is said to satisfy the \emph{cut-off phenomenon in total variation} if, for each $\epsilon > 0$,
\begin{align*}
 \left\| \mu^{*\lceil (1-\epsilon) t_{\mix} \rceil}  - \bU_{\sG(G_n)}\right\|_{\TV(\sG(G_n))} &\to 1, \\\left\| \mu^{*\lfloor (1+\epsilon) t_{\mix} \rfloor}  - \bU_{\sG(G_n)}\right\|_{\TV(\sG(G_n))} &\to 0
\end{align*}
as $n \to \infty$. 
 
 \subsubsection{Periodic tiling graphs}
To describe the tilings and graphs more precisely, let $M$ be a non-singular $d \times d$ matrix, and let $\Lambda = M \cdot \zed^d < \bR^d$ be a $d$-dimensional lattice.  A (periodic) space tiling $\sT$ is a connected graph embedded in $\bR^d$ which is connected, is $\Lambda$-periodic, has finitely many vertices in a fundamental domain for $\bR^d/\Lambda$, and has bounded degree.  Suppose without loss of generality that 0 is a vertex in $\sT$. Given an integer $m \geq 1$, two types of graphs are considered.
\begin{enumerate}
 \item[(1)] (Torus boundary condition) The graph $\bT_m = \sT/m\Lambda$ consists of $m^d$ fundamental domains with opposite faces identified.  By convention, 0 is designated sink.
\end{enumerate}
\begin{figure}
 \centering
\begin{tikzpicture}[node distance = 1.3cm, auto, place/.style = {circle,  thick, draw=blue!75, fill=blue!20}]
 \draw[blue, ultra thick] (-3,-3)--(3,-3);
 \draw[blue, ultra thick, ->] (-3,-3)--(0,-3);
 \draw (-3, -2)--(3,-2);
 \draw (-3,-1)--(3,-1);
 \draw (-3,0)--(3,0);
 \draw (-3,1)--(3,1);
 \draw (-3,2)--(3,2);
 \draw[blue, ultra thick] (-3,3)--(3,3);
 \draw[blue, ultra thick, ->] (-3,3)--(0,3);
 \draw[red, ultra thick](-3,-3)--(-3,3);
 \draw[red, ultra thick, ->](-3,-3)--(-3,0);
 \draw(-2,-3)--(-2,3);
 \draw(-1,-3)--(-1,3);
 \draw(0,-3)--(0,3);
 \draw(1,-3)--(1,3);
 \draw(2,-3)--(2,3);
 \draw[red, ultra thick](3,-3)--(3,3);
 \draw[red, ultra thick,->] (3,-3)--(3,0);
 \draw [black, ultra thick] (0,0) circle [radius = .1];
 \node [below] at (0,0) {sink};
\end{tikzpicture}
\caption{The square lattice configuration with periodic boundary condition and a single sink.}\label{fig:periodic_boundary}
\end{figure}
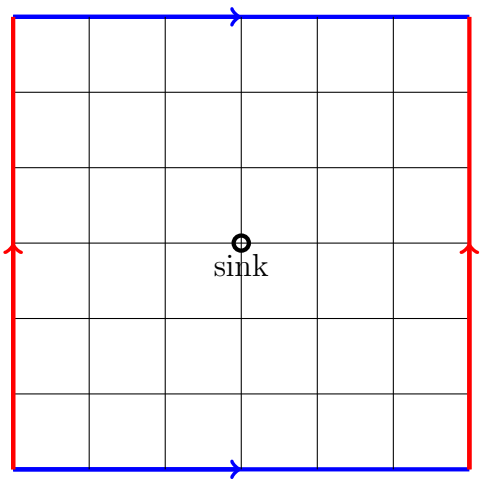

In treating graphs with open boundary condition, further symmetry on the tiling $\sT$ is assumed.  In two dimensions, assume that there are vectors $v_1, ..., v_k$ in which $\sT$ has translational symmetry, and lines $\ell_1, ..., \ell_k$, $\ell_i = \{x  \in \bR^2: \langle x, v_i\rangle = 0\}$ such that $\sT$ has reflection symmetry in the family of lines \begin{equation}\sF = \{n v_i + \ell_i: 1\leq i \leq k, n \in \zed\}.\end{equation}  In this case, let $\sR$ be an open, connected, convex region cut out by some of the lines, and assume further that $\bR^2$ is tiled by the reflections of $\sR$ in the family of lines and that any sequence of reflections which maps $\sR$ to itself is the identity map.  Examples of such families of lines are the lines in the square, triangular,  and tetrakis square tilings. 

\begin{figure}\centering
 \begin{tikzpicture}[scale = .5, node distance = 1.3cm, auto, place/.style = {circle, %every size=7mm, 
 thick, draw=blue!75, fill=blue!20}]
  \draw (0, 0)--(5,0);
  \draw (0,1)--(5,1);
  \draw (0,2)--(5,2);
  \draw (0,3)--(5,3);
  \draw (0,4)--(5,4);
  \draw (0,5)--(5,5);
  \draw (0,0)--(0,5);
  \draw (1,0)--(1,5);
  \draw (2,0)--(2,5);
  \draw (3,0)--(3,5);
  \draw (4,0)--(4,5);
  \draw (5,0)--(5,5);
\end{tikzpicture}
\begin{tikzpicture}[node distance = 1.3cm, auto, place/.style = {circle, %every size=7mm, 
thick, draw=blue!75, fill=blue!20}]
  \draw (-1, .86602540378)--(2, .86602540378);
  \draw (-1,0)--(2,0);
  \draw (-1, -.86602540378)--(2,-.86602540378);
  \draw (-1, 1.73205080757)--(2, 1.73205080757);
  \draw (-0.5, -.86602540378)--(1, 1.73205080757);
  \draw (-1, 0)--(0, 1.73205080757);
  \draw (.5, -.86602540378)--(2, 1.73205080757);
  \draw (2, 0)--(1, 1.73205080757);
  \draw (1.5, -.86602540378)--(0, 1.73205080757);
  \draw (.5, -.86602540378)--(-1, 1.73205080757);
  \draw (-.5, -.86602540378)--(-1, 0);
   \draw (1.5, -.86602540378)--(2, 0);
 \end{tikzpicture}
  \begin{tikzpicture}[scale = .5, node distance = 1.3cm, auto, place/.style = {circle, %every size=7mm, 
  thick, draw=blue!75, fill=blue!20}]
  \draw (0, 0)--(5,0);
  \draw (0,1)--(5,1);
  \draw (0,2)--(5,2);
  \draw (0,3)--(5,3);
  \draw (0,4)--(5,4);
  \draw (0,5)--(5,5);
  \draw (0,0)--(0,5);
  \draw (1,0)--(1,5);
  \draw (2,0)--(2,5);
  \draw (3,0)--(3,5);
  \draw (4,0)--(4,5);
  \draw (5,0)--(5,5);
  \draw (0,0)--(5,5);
  \draw (1,0)--(5,4);
  \draw (2,0)--(5,3);
  \draw (3,0)--(5,2);
  \draw (4,0)--(5,1);
  \draw (0,1)--(4,5);
  \draw (0,2)--(3,5);
  \draw (0,3)--(2,5);
  \draw (0,4)--(1,5);
  \draw (5,0)--(0,5);
  \draw (5,1)--(1,5);
  \draw (5,2)--(2,5);
  \draw (5,3)--(3,5);
  \draw (5,4)--(4,5);
  \draw (4,0)--(0,4);
  \draw (3,0)--(0,3);
  \draw (2,0)--(0,2);
  \draw (1,0)--(0,1);
\end{tikzpicture}
\caption{The square, triangular and tetrakis square lattices are examples of tilings with reflecting families of lines such that the quotient by the reflection group is a bounded convex region of the plane.}\label{fig:reflecting_planes}
\end{figure}
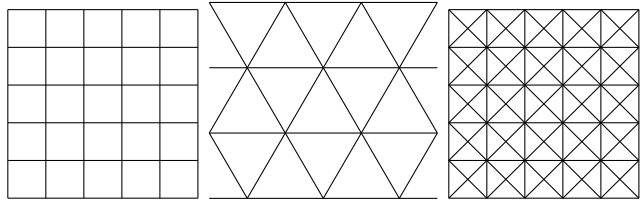

In $d \geq 3$ dimensions, impose the further constraint that, after an orthogonal transformation and dilation $\sT$ is $\zed^d$ periodic and has reflection symmetry in the family $\sF$ of coordinate hyperplanes $H_{i,j}$
\begin{equation}
 H_{i,j} = \{x \in \bR^d: x_i = j\}, \qquad 1 \leq i \leq d, j \in \zed.
\end{equation}
After the transformation, $\sR = (0,1)^d$. 

The open boundary graphs are constructed as follows.
\begin{enumerate}
 \item[(2)] (Open boundary condition) If  the following condition holds
 \begin{enumerate}
 \item[A.] No edge of $\sT$ crosses a face of $\sR$,
 \end{enumerate}
 then a graph $\sT_m$ is obtained by identifying all vertices of $\sT \cap (m \cdot \sR)^c$ and designating this `boundary' vertex the sink. 
\end{enumerate}

Note that, although many planar tilings lack lines of reflection symmetry, all of those planar tilings considered by \cite{KW16} are of the type considered, and all but the Fisher tiling satisfy A, see the examples in Figure \ref{fig:open_boundary} in which the reflecting lines are in red, and the vertices on the boundary are sinks.

The $\Dfour$ lattice in dimension four is another example  which satisfies condition A with the appropriate choice of reflecting hyperplanes.  The $\Dfour$ lattice has vertices $\zed^4 \cup \zed^4 + (\frac{1}{2}, \frac{1}{2}, \frac{1}{2}, \frac{1}{2})$ and 24 nearest neighbors of 0
\begin{equation}
U_4 = \{ \pm e_1, \pm e_2, \pm e_3 , \pm e_4\} \cup \left\{\frac{1}{2} (\epsilon_1, \epsilon_2, \epsilon_3, \epsilon_4), \epsilon_i \in \{\pm 1\}\right\},
\end{equation}
which have unit Euclidean length. The elements of the $\Dfour$ lattice are frequently identified with the `Hurwitz quaternion algebra' in which $U_4$ is the group of units. Let 
\begin{align*}v_1=(1,1,0,0),\; v_2=(1,-1,0,0),\; v_3 = (0,0,1,1),\; v_4 = (0,0,1,-1),
\end{align*}
 and define hyperplanes
\begin{equation*}
 \sP_j = \{x \in \bR^4: \langle x, v_j\rangle =0\}
\end{equation*}
and family of hyperplanes
\begin{equation}
 \sF_{\Dfour} = \{ n v_j + \sP_j: j \in \{1, 2, 3, 4\}, n \in \zed\}.
\end{equation}
\begin{lemma}
 The $\Dfour$ lattice has reflection symmetry in the family of hyperplanes $\sF_{\Dfour}$.  After a rotation and scaling, $\Dfour$ together with this family satisfy property A.
\end{lemma}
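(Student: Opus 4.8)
The statement has two essentially independent halves, and I would establish them in turn.

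\emph{Reflection symmetry.} Fix $j\in\{1,2,3,4\}$ and $n\in\zed$ and write the reflection $r_{j,n}$ in the hyperplane $nv_j+\sP_j$ explicitly. Since $\langle v_j,v_j\rangle=2$ one has $nv_j+\sP_j=\{x:\langle x,v_j\rangle=2n\}$ and $r_{j,n}(x)=x-(\langle x,v_j\rangle-2n)\,v_j$; reading this off gives $r_{1,n}(x)=(2n-x_2,\,2n-x_1,\,x_3,\,x_4)$, $\;r_{2,n}(x)=(x_2+2n,\,x_1-2n,\,x_3,\,x_4)$, and $r_{3,n},r_{4,n}$ act the same way on the block $(x_3,x_4)$. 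In every case $r_{j,n}$ permutes one coordinate pair (possibly with a sign change) and then translates by an even integer, so it visibly maps $\zed^4$ into $\zed^4$ and maps $\zed^4+(\tfrac12,\tfrac12,\tfrac12,\tfrac12)$ into itself; hence it preserves the vertex set of $\Dfour$. Being a Euclidean isometry, $r_{j,n}$ also preserves the set of pairs of vertices at distance $1$, which is precisely the edge set (with displacement set $U_4$) of the tiling graph $\sT$. Thus $\sT$ has reflection symmetry in $\sF_{\Dfour}$.

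\emph{The normalizing map.} I would take $\Phi$ to be the linear map acting block-diagonally by $(x_1,x_2)\mapsto(\tfrac{x_1+x_2}{2},\tfrac{x_1-x_2}{2})$ and $(x_3,x_4)\mapsto(\tfrac{x_3+x_4}{2},\tfrac{x_3-x_4}{2})$. Each $2\times2$ block equals $\tfrac1{\sqrt2}$ times an orthogonal matrix of determinant $-1$, so $\Phi=\tfrac1{\sqrt2}R$ with $R\in\SO(4)$: a rotation followed by a dilation, as the setup requires. Writing $y=\Phi(x)$ one has $\langle x,v_1\rangle=2y_1$, $\langle x,v_2\rangle=2y_2$, $\langle x,v_3\rangle=2y_3$, $\langle x,v_4\rangle=2y_4$, so each hyperplane $\{x:\langle x,v_j\rangle=2n\}$ of $\sF_{\Dfour}$ is carried to $\{y_i=n\}$; hence $\Phi$ carries $\sF_{\Dfour}$ bijectively onto the full family of integer coordinate hyperplanes and carries the fundamental region of the reflection group onto $\sR=(0,1)^4$. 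A short computation shows $\Phi(\Dfour)$ contains $\zed^4$ with finite index (indeed with $8$ lattice points per unit cube), so the image graph $\Phi(\sT)$ is $\zed^4$-periodic, and reflection symmetry in the coordinate hyperplanes follows from the first half because a similarity conjugates a reflection in a hyperplane to the reflection in its image.

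\emph{Property A.} It remains to check that no edge of $\Phi(\sT)$ crosses a face of $\sR$, and this is short once two facts are recorded, each a case check over the two kinds of elements of $U_4$: (i) $\Phi(\Dfour)\subseteq\tfrac12\zed^4$, since for $x\in\zed^4$ the entries $\tfrac{x_i\pm x_j}{2}$ lie in $\tfrac12\zed$ and for a half-integer point the pairwise sums and differences of coordinates are integers; and (ii) for every $u\in U_4$ the displacement $\Phi(u)$ has all four entries in $\{-\tfrac12,0,\tfrac12\}$ — immediate for $u=\pm e_i$, and for $u=\tfrac12(\epsilon_1,\epsilon_2,\epsilon_3,\epsilon_4)$ each entry of $\Phi(u)$ equals $\tfrac12\cdot\tfrac{\pm1\pm1}{2}\in\{-\tfrac12,0,\tfrac12\}$. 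Given (i) and (ii), the two endpoints of any edge of $\Phi(\sT)$ lie in $\tfrac12\zed^4$ and differ by at most $\tfrac12$ in each coordinate, so coordinatewise they are either equal or adjacent points of $\tfrac12\zed$ and no integer lies strictly between them; hence the open edge meets no hyperplane $\{y_i=j\}$ transversally, which is exactly property A.

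The argument is entirely elementary; I expect the only points needing care to be bookkeeping — confirming that $\Phi$ is genuinely a rotation composed with a dilation (rather than an arbitrary change of basis) and correctly pinning down $\zed^4$ as the periodicity lattice of $\Phi(\sT)$ — after which both halves of the lemma reduce to the one-line verifications above.
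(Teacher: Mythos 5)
Your proof is correct. Both halves agree with the paper's argument in essence, though you work things out more explicitly. For reflection symmetry, the paper simply writes $x' = x - \langle x, v_1\rangle v_1$ and notes $\langle x, v_1\rangle \in \zed$, so $x' - x \in \zed v_1 \subset \Dfour$; you spell out the same fact coordinatewise and also handle the translated hyperplanes $nv_j + \sP_j$ directly rather than implicitly. For the normalization, the paper only remarks that $v_1, \dots, v_4$ are orthogonal and of equal length; you actually exhibit the map $\Phi$ and check it is $\tfrac1{\sqrt 2}$ times an element of $\SO(4)$, which closes a small gap the paper leaves to the reader. For property A, the paper argues in the original coordinates: if an edge crossed $\sP_1$ strictly, the integer quantity $x_1 + x_2$ would have to jump by at least $2$, forcing $\|x - y\|_2 \geq \sqrt 2 > 1$. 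Your version pushes everything through $\Phi$ and observes that in the new coordinates every vertex lies in $\tfrac12\zed^4$ and every edge-displacement has entries in $\{-\tfrac12, 0, \tfrac12\}$, so no coordinate can strictly pass an integer. The two observations are really the same fact ($x_1+x_2 = 2y_1$ changes by at most $1$ per edge) viewed before and after the change of coordinates, and both yield a clean contradiction; yours has the mild advantage that it checks all hyperplanes $\{y_i = j\}$ in one pass without invoking the symmetry reduction.
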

\begin{proof}
 Since $\Dfour$ is a lattice, which is invariant under permuting the coordinates, it suffices to prove the reflection symmetry property for $\sP_1$.  Given $x \in \Dfour$, its reflection in $\sP_1$ is
 $x' = x - \langle x, v_1\rangle v_1$.  Since $\langle x, v_1\rangle \in \zed$, the claim holds.
 
 Since the vectors $v_1, v_2, v_3, v_4$ are orthogonal and of equal length, after a rotation and scaling the planes in $\sF_{\Dfour}$ coincide with the coordinate hyperplanes.
 
 To prove that property A is satisfied, it suffices by symmetry to prove that there are not edges crossing $\sP_1$.  Suppose for contradiction that $x$ and $y$ are connected, so that $\|x-y\|_2 = 1$, and that the line segment connecting $x=(x_1,x_2,x_3,x_4)$ and $y=(y_1,y_2,y_3,y_4)$ crosses $\sP_1$, say at $z= (z_1,z_2,z_3,z_4)$.  It follows that $z_1 + z_2 = 0$.  Assume without loss of generality that $x_1 + x_2 >0$ and $y_1 + y_2 < 0$.  Since the sum of these coordinates is integer valued, $x_1 + x_2 \geq 1$ and $y_1 + y_2 \leq -1$.  Thus $(x_1 + x_2)-(y_1 + y_2) \geq 2$ so $\|x-y\|_2 \geq \sqrt{2}$, a contradiction.
\end{proof}

\noindent
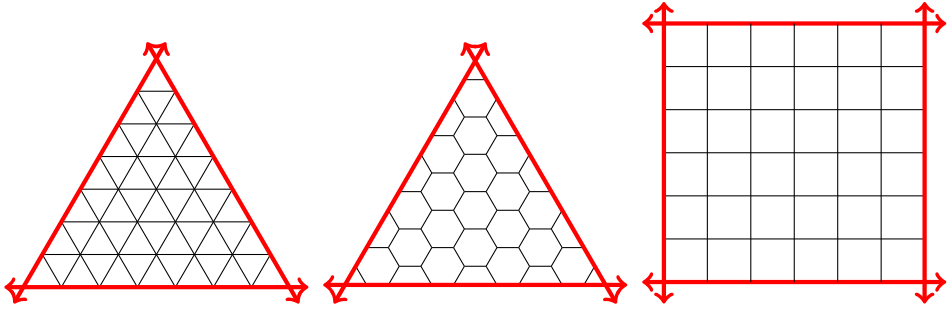
\begin{figure}
\centering
\begin{tikzpicture}[scale = .5, node distance = 1.3cm, auto, place/.style = {circle, %every size=7mm, 
thick, draw=blue!75, fill=blue!20}]
  \draw (0,0)--(7,0);
  \draw (.5, .86602540378)--(6.5, .86602540378);
  \draw (1, 2*.86602540378) --(6, 2*.86602540378);
  \draw (1.5, 3*.86602540378)--(5.5, 3*.86602540378);
  \draw (2, 4*.86602540378)--(5, 4*.86602540378);
  \draw (2.5, 5*.86602540378)--(4.5, 5*.86602540378);
  \draw (3, 6* .86602540378)--(4, 6*.86602540378);
  \draw (0,0)--(3.5, 7*.86602540378);
  \draw (1,0)--(4, 6*.86602540378);
  \draw (2,0)--(4.5, 5*.86602540378);
  \draw (3,0)--(5, 4*.86602540378);
  \draw (4,0)--(5.5, 3*.86602540378);
  \draw (5,0)--(6, 2*.86602540378);
  \draw (6,0)--(6.5, .86602540378);
  \draw (3.5, 7*.86602540378)--(7,0);
  \draw (3, 6*.86602540378)--(6,0);
  \draw (2.5, 5*.86602540378)--(5,0);
  \draw (2, 4*.86602540378)--(4,0);
  \draw (1.5, 3*.86602540378)--(3,0);
  \draw (1, 2*.86602540378)--(2,0);
  \draw (.5, .86602540378)--(1,0);
 \draw [red, ultra thick,<->] (3.25, 7.5*.86602540378)-- (7.25, -.5*.86602540378);
 \draw [red, ultra thick, <->] (3.75, 7.5*.86602540378)--(-.25, -.5*.86602540378);
 \draw [red, ultra thick, <->] (-.5, 0)--(7.5,0);
 \end{tikzpicture}
 \begin{tikzpicture}[scale =.5 *(8./14.), node distance = 1.3cm, auto, place/.style = {circle, %every size=7mm, 
 thick, draw=blue!75, fill=blue!20}]
  \draw (1,0)--(2,0);
  \draw (1,0)--(.5, .86602540378);
  \draw (2,0)--(2.5, .86602540378);
  \draw (.5, .86602540378)-- (1, 2*.86602540378);
 \draw (1, 2*.86602540378)--(2, 2*.86602540378);
  \draw (2.5, .86602540378)--(2,2*.86602540378);
  \draw(2.5, .86602540378)--(3.5, .86602540378);
  \draw (3.5,.86602540378)--(4, 2*.86602540378);
  \draw (3.5, .86602540378)--(4,0);
  \draw (4,0)--(5,0);
  \draw (5,0)--(5.5, .86602540378);
  \draw (5.5, .86602540378)-- (5, 2*.86602540378);
  \draw (4, 2*.86602540378)--(5, 2*.86602540378);
  \draw (2, 2*.86602540378)--(2.5, 3*.86602540378);
  \draw (2.5, 3*.86602540378)--(3.5, 3*.86602540378);
  \draw (3.5, 3*.86602540378)--(4, 2*.86602540378);
  \draw (5.5, .86602540378)--(6.5, .86602540378);
  \draw (6.5, .86602540378)--(7, 0);
  \draw (7,0)--(8,0);
  \draw (8,0)--(8.5, .86602540378);
  \draw (8.5, .86602540378)-- (8, 2*.86602540378);
  \draw (7, 2*.86602540378) -- (8, 2*.86602540378);
  \draw (6.5, .86602540378)--(7, 2*.86602540378);
  \draw (8.5, .86602540378)--(9.5,.86602540378);
  \draw (9.5, .86602540378)--(10,0);
  \draw (10,0)--(11,0);
  \draw (11,0)--(11.5, .86602540378);
  \draw (11.5, .86602540378)--(11, 2*.86602540378);
  \draw (10, 2*.86602540378)--(11, 2*.86602540378);
  \draw (9.5, .86602540378)--(10, 2*.86602540378);
  \draw (5, 2*.86602540378)--(5.5, 3*.86602540378);
  \draw (5.5, 3*.86602540378)--(5, 4*.86602540378);
  \draw (4, 4*.86602540378)--(5, 4*.86602540378);
  \draw (3.5, 3*.86602540378)--(4, 4*.86602540378);
  \draw (5.5, 3*.86602540378)--(6.5, 3*.86602540378);
  \draw (6.5, 3*.86602540378)--(7, 2*.86602540378);
  \draw (6.5, 3*.86602540378)--(7, 4*.86602540378);
  \draw (7, 4*.86602540378)--(8,4*.86602540378);
  \draw (8, 4*.86602540378)--(8.5, 3*.86602540378);
  \draw (8, 2*.86602540378)--(8.5, 3*.86602540378);
  \draw (8.5, 3*.86602540378)--(9.5, 3*.86602540378);
  \draw (9.5, 3*.86602540378)--(10, 2*.86602540378);
  \draw ( 2.5, 3*.86602540378)--( 2, 4*.86602540378);
  \draw (2, 4*.86602540378)--(2.5, 5*.86602540378);
  \draw (2.5, 5*.86602540378)--(3.5, 5*.86602540378);
  \draw (3.5, 5*.86602540378)--(4, 4*.86602540378);
  \draw (3.5, 5*.86602540378)--(4, 6*.86602540378);
  \draw (4, 6*.86602540378)--(5, 6*.86602540378);
  \draw (5, 6*.86602540378)--(5.5, 5*.86602540378);
  \draw (5.5, 5*.86602540378)--(5, 4*.86602540378);
  \draw (5.5, 5*.86602540378)--(6.5, 5*.86602540378);
  \draw (6.5, 5*.86602540378)--(7, 4*.86602540378);
  \draw (6.5, 5*.86602540378)--(7, 6*.86602540378);
  \draw (7, 6*.86602540378)--(8, 6*.86602540378);
  \draw (8, 6*.86602540378)--(8.5, 5*.86602540378);
  \draw (8.5, 5*.86602540378)--(8, 4*.86602540378);
  \draw (8.5, 5*.86602540378)--(9.5, 5*.86602540378);
  \draw (9.5, 5*.86602540378)--(10, 4*.86602540378);
  \draw (10, 4*.86602540378)--(9.5, 3*.86602540378);
  \draw (4, 6*.86602540378)--(3.5, 7*.86602540378);
  \draw (3.5, 7*.86602540378)--(4, 8*.86602540378);
  \draw (4, 8*.86602540378)--(5, 8*.86602540378);
  \draw (5, 8*.86602540378)--(5.5, 7*.86602540378);
  \draw (5.5, 7*.86602540378)--(5,6*.866025403780);
  \draw (5.5, 7*.86602540378)--(6.5, 7*.86602540378);
  \draw (6.5, 7*.86602540378)--(7, 6*.86602540378);
  \draw (6.5, 7*.86602540378)--(7, 8*.86602540378);
  \draw (7, 8*.86602540378)--(8, 8*.86602540378);
  \draw (8, 8*.86602540378)--(8.5, 7*.86602540378);
  \draw (8.5, 7*.86602540378)--(8, 6*.86602540378);
  \draw (5, 8*.86602540378)--(5.5, 9*.86602540378);
  \draw (5.5, 9*.86602540378)--(6.5, 9*.86602540378);
  \draw (6.5, 9*.86602540378)--(7, 8*.86602540378);
  \draw (5.5, 9*.86602540378)--(5, 10*.86602540378);
  \draw (5, 10*.86602540378)--(5.5, 11*.86602540378);
  \draw (5.5, 11*.86602540378)--(6.5, 11*.86602540378);
  \draw (6.5, 11*.86602540378)--(7, 10*.86602540378);
  \draw (7, 10*.86602540378)--(6.5, 9*.86602540378);
  \draw[red, ultra thick, <->] (-1, 0) -- (13,0);
  \draw[red, ultra thick, <->] (-.5, -.86602540378)--(6.5 , 13*.86602540378);
  \draw[red, ultra thick, <->] (12.5, -.86602540378)--(5.5, 13*.86602540378);
 \end{tikzpicture}
\begin{tikzpicture}[scale = .5 *(8./7.), node distance = 1.3cm, auto, place/.style = {circle, %every size=7mm, 
thick, draw=blue!75, fill=blue!20}]
 \draw[red, ultra thick, <->] (-3.5,-3)--(3.5,-3);
 \draw (-3, -2)--(3,-2);
 \draw (-3,-1)--(3,-1);
 \draw (-3,0)--(3,0);
 \draw (-3,1)--(3,1);
 \draw (-3,2)--(3,2);
 \draw[red, ultra thick, <->] (-3.5,3)--(3.5,3);
 \draw[red, ultra thick, <->](-3,-3.5)--(-3,3.5);
 \draw(-2,-3)--(-2,3);
 \draw(-1,-3)--(-1,3);
 \draw(0,-3)--(0,3);
 \draw(1,-3)--(1,3);
 \draw(2,-3)--(2,3);
 \draw[red, ultra thick, <->](3,-3.5)--(3,3.5);
\end{tikzpicture}
\caption{The triangular, hex and square lattice configurations with open boundary condition.}\label{fig:open_boundary}
\end{figure}

\subsubsection{Spectral factors}

The results concerning sandpile dynamics are proved by studying the spectrum of the sandpile transition kernel. 
Denote $\Delta$ the graph Laplacian $\Delta f(v) = \deg(v) f(v) - \sum_{(v,w) \in E} f(w)$. Given a function $f$ on $\sT$, say that $f$ is \emph{harmonic modulo 1} if $\Delta f \equiv 0 \bmod 1$ and denote 
\begin{equation}
\sH(\sT) = \left\{f: \sT \to \bR, \Delta f \equiv 0 \bmod 1\right\}
\end{equation}
and $\sH^2(\sT) = \sH(\sT) \cap \ell^2(\sT)$.
Define, also, the function classes
\begin{align*}
 C^0(\sT) &= \{f: \sT \to \zed: f \in \ell^1(\sT)\},\\
 C^1(\sT) &= \left\{f \in C^0(\sT): \sum_{t \in \sT} f(t) = 0\right\}.
\end{align*}
In the case of a torus boundary condition, define the \emph{spectral parameter}
\begin{equation}
 \gamma = \inf \left\{ \sum_{x \in \sT} 1 - \cos(2\pi \xi_x): \xi \in \sH^2(\sT), \Delta \xi \in C^1(\sT), \xi \not \equiv 0 \bmod 1\right\}.
\end{equation}

In two dimensions, let $\sL$ denote the set of lines which make up a segment of the boundary of $\sR$ and let $\sC$ be the set of pairs of lines from $\sL$ which intersect at a corner of the boundary of $\sR$.  Write an affine line $a \in \sL$ as $a = nv + \ell$ where $v \in \bR^2$ and $\ell$ is the perpendicular line.  A pair of affine lines $(a_1, a_2) \in \sC$ have $\ell_1$ and $\ell_2$ that split $\sT$ into four quadrants.   Let $Q_{(a_1, a_2)}$ be the quadrant whose translate contains $\sR$.   Given $a \in \sL$, let $\sH_a^2(\sT)$ be those functions $\xi \in \sH^2(\sT)$ which are anti-symmetric in $\ell$, similarly given $(a_1, a_2) \in \sC$, let $\sH_{(a_1, a_2)}^2(\sT)$ be those functions in $\sH(\sT)$ which are anti-symmetric in $\ell_1$ and $\ell_2$.  Define spectral parameters
\begin{align*}
 \gamma_0 &= \inf_{\substack{\xi \in \sH^2(\sT)\\ \xi \not \equiv 0 \bmod 1}} \sum_{x \in \sT} 1 - \cos(2\pi \xi_x),\\
 \gamma_1 &= \frac{1}{2} \inf_{a \in \sL} \inf_{\substack{\xi \in \sH_a^2(\sT)\\ \xi \not \equiv 0 \bmod 1}} \sum_{x \in \sT} 1 - \cos(2\pi \xi_x),\\
 \gamma_2 &= \inf_{(a_1, a_2) \in \sC} \inf_{\substack{\xi \in \sH_{(a_1, a_2)}^2(\sT)\\ \xi \not \equiv 0 \bmod 1}} \sum_{x \in Q_{(a_1,a_2)}} 1 - \cos(2\pi \xi_x).
\end{align*}

In the case of $d \geq 3$, assume that a rotation and dilation have been performed so that reflecting hyperplanes are given by $H_{i,j}$ as above.  
Given a set $S \subset \{1, 2, ..., d\}$  let $\fS_S$ be the group generated by reflections in $\{H_{i,0}: i \in S\}$, and let 
$\sH_S^2(\sT)$ denote those $\sH^2(\sT)$ functions which are anti-symmetric in $H_{i,0}$ for all $i \in S$, identified with functions on $\sT/\fS_S$.  Again, for $0 \leq i \leq d-1$ define the \emph{spectral parameters}
\begin{equation}
 \gamma_i =  \inf_{\substack{S \subset \{1, 2, ..., d\}\\ |S| = i}} \inf_{\substack{\xi \in \sH_S^2(\sT)\\ \xi \not \equiv 0 \bmod 1}} \sum_{x \in \sT/\fS_S} 1- \cos(2\pi \xi_x).
\end{equation}
Note that the definition of $\gamma_0$ differs from that of $\gamma$ in that the inf requires only that $\Delta \xi \in C^0(\sT)$, not $C^1(\sT)$.
In dimension $d \geq 2$ define the $j$th \emph{spectral factor} 
\begin{equation}
 \Gamma_j = \frac{d-j}{\gamma_j}
\end{equation}
and $\Gamma = \max_j \Gamma_j$.

\subsubsection{Statement of results}

The following theorem determines the spectral gap of sandpile dynamics for plane and space tiling graphs asymptotically.
\begin{theorem}\label{spectral_gap_theorem}
 Given a tiling $\sT$, as $m \to \infty$, the spectral gap of the transition kernel of sandpile dynamics on $\bT_m$ satisfies
 \begin{equation}
  \gap_{\bT_m} = (1 + o(1)) \frac{\gamma}{|\bT_m|}.
 \end{equation}
If $\sT$ has a family of reflection symmetries $\sF$ and satisfies condition A, then the spectral gap of the transition kernel of sandpile dynamics on $\sT_m$ satisfies
 \begin{equation}
  \gap_{\sT_m} = (1 + o(1)) \frac{\min(\gamma,  \gamma_j: j \geq 1)}{|\sT_m|}.
 \end{equation}

\end{theorem}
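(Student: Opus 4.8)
The plan is to diagonalise the sandpile transition kernel by characters of the sandpile group, reducing $\gap_G$ (for $G=\bT_m$ or $G=\sT_m$) to a variational problem over functions harmonic modulo $1$, and then to establish matching upper and lower bounds for this problem as $m\to\infty$.

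First I would record the algebraic reduction. The sandpile chain is a random walk on the finite abelian group $\sG(G)$ driven by the (almost) uniform measure on the chip-addition classes $[e_v]$, $v$ a non-sink vertex, so its eigenvalues are the Fourier coefficients of this measure over $\widehat{\sG(G)}$. Identifying the dual group with functions harmonic modulo $1$, a character is $\chi_\xi([e_v])=e^{2\pi i\xi_v}$ for $\xi$ ranging over $\{\xi:\Delta\xi\equiv 0\bmod 1\}/\zed^{V'}$ with $\xi$ set to $0$ at the sink, and
\begin{equation}
 \hat\mu(\chi_\xi)=\frac1n\sum_{v}e^{2\pi i\xi_v},\qquad 1-\RE\hat\mu(\chi_\xi)=\frac1n\sum_{v}\bigl(1-\cos(2\pi\xi_v)\bigr),
\end{equation}
where $n=(1+o(1))|G|$ is the normalisation. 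Since the extremal $\xi$ satisfy $\sum_v(1-\cos(2\pi\xi_v))=O(1)$, one also has $|\hat\mu(\chi_\xi)|=1-\frac1n\sum_v(1-\cos(2\pi\xi_v))+O(n^{-2})$, so whether the spectral gap is read off from the subdominant eigenvalue or from the subdominant singular value, $\gap_G=(1+o(1))\,|G|^{-1}\inf_{\xi\not\equiv 0}\sum_v(1-\cos(2\pi\xi_v))$. It remains to show that this infimum converges to $\gamma$ for $\bT_m$, and to $\min(\gamma,\gamma_j:j\ge 1)$ for $\sT_m$.

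Next I would prove the upper bound by exhibiting test characters. Take an almost-minimiser $\xi^\ast$ for $\gamma$ on $\sT$; then $\Delta\xi^\ast\in C^1(\sT)$, i.e.\ it is finitely supported with zero sum, so this charge can be placed in the bulk of $G$ (away from the sink, and in the open case deep inside $m\sR$) and the equation $\Delta\xi^{(m)}=\Delta\xi^\ast$ solved on $G$ to produce a character; by the Green's function asymptotics developed earlier in the paper — the torus, respectively Dirichlet, Green's function converges to the free Green's function locally with uniform tail control — the energy $\sum_v(1-\cos(2\pi\xi^{(m)}_v))$ tends to $\sum_{x\in\sT}(1-\cos(2\pi\xi^\ast_x))=\gamma+o(1)$. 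For $\sT_m$ one additionally, for each $j\ge 1$, takes an almost-minimiser $\xi^\ast_j$ for $\gamma_j$ (antisymmetric in $j$ of the coordinate hyperplanes), places it near the centre of a codimension-$j$ face of $m\sR$, unfolds it by the $j$ reflections fixing that face, and truncates; its decay makes it negligible on the remaining faces, so after a correction of size $o(1)$ it is a valid character with energy $\gamma_j+o(1)$. Taking the best of these test characters gives the required upper bound.

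Finally the lower bound, which is the main difficulty. Let $\xi_m$ be characters on $G$ with $E_m:=\sum_v(1-\cos(2\pi(\xi_m)_v))$ bounded; one must show $\liminf E_m\ge\gamma$ for the torus and $\liminf E_m\ge\min(\gamma,\gamma_j:j\ge 1)$ in the open case. The structural input is that bounded energy forces $\Delta\xi_m$ to carry bounded total integer charge and — via the growth of the Green's function, logarithmic in dimension two and a negative power in higher dimension — to be concentrated, up to negligible mass, within a bounded distance of a single cluster; charge that separates to spatial infinity either splits off a full copy of the problem (whose energy is again at least $\gamma$ or some $\gamma_j$, or is trivial) or, in the planar case, is excluded outright because unbalanced or widely separated charge has divergent energy. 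Translating so that a vertex of macroscopic deviation sits at the origin and passing to a subsequence, $\xi_m$ converges locally to a nontrivial $\xi_\infty\in\sH^2(\sT)$; in the open case one simultaneously records the distances from the cluster to the faces of $m\sR$, and the faces it stays close to — an intersection of $j$ of the reflecting hyperplanes for some $j\ge 0$ — force $\xi_\infty$ to be antisymmetric in exactly those hyperplanes, while, when $j=0$, the cluster being far from every face forces the net flux of $\xi_m$ into the sink (an integer tending to zero) to vanish, so that $\Delta\xi_\infty$ has zero sum and $\xi_\infty$ lies in the domain of $\gamma$ rather than merely $\gamma_0$; for $j\ge 1$ it lies in the domain of $\gamma_j$ (the higher-codimension corner cases, when not already of this form, reducing to a smaller $j$ with no greater energy). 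Hence $\liminf E_m\ge\min(\gamma,\gamma_1,\dots)$, with the torus being the boundary-free special case in which only $\gamma$ occurs. The hard part is precisely this compactness and classification: ruling out loss of mass to spatial infinity and to the boundary using quantitative Green's function estimates, and correctly matching each concentration scenario near a codimension-$j$ face of $\sR$ with the variational problem defining $\gamma_j$, including the charge-balance bookkeeping that makes the bulk contribute $\gamma$ rather than $\gamma_0$.
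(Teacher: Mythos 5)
Your overall architecture — diagonalise by characters of $\sG$, reduce $\gap$ to a variational problem over $\sH^2$, upper bound by placing test characters built from a near-minimiser and invoking Green's function convergence, lower bound by compactness/clustering — is the same as the paper's, and the upper bound half is essentially correct. But there is a real gap in the way you close the lower bound, specifically in the $j=0$ (bulk) case.

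You assert that ``the cluster being far from every face forces the net flux of $\xi_m$ into the sink (an integer tending to zero) to vanish, so that $\Delta\xi_\infty$ has zero sum and $\xi_\infty$ lies in the domain of $\gamma$ rather than merely $\gamma_0$,'' and similarly describe the torus as the ``boundary-free special case in which only $\gamma$ occurs.'' Nothing forces the charge of a bulk cluster to tend to zero: the sink can absorb arbitrary integer flux, and a cluster $\nu|_{C}\in C^0\setminus C^1$ is perfectly legal. The correct accounting, which the paper uses, is global: $\Delta\xi$ is integer-valued with total sum $0$ on $\bT_m$ (respectively on $\sT_m$), so if one cluster carries nonzero charge $C$ there must be a second cluster carrying $-C$. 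Each such $C^\rho$ cluster contributes savings at least $\gamma_0-o(1)$, so a two-cluster configuration has total savings $\geq 2\gamma_0-o(1)$. This is then converted into a bound by the separate inequality $\gamma\leq 2\gamma_0$ (proved by sending two oppositely-signed translates of a $\gamma_0$-minimiser to infinity, which produces a $C^1$ configuration with energy $2\gamma_0$). In the one-cluster case, the zero-sum constraint forces the cluster to be $C^1$ and the savings $\geq\gamma$. Your proposal also says a split-off cluster has ``energy at least $\gamma$ or some $\gamma_j$''; this overclaims — an unbalanced $C^0\setminus C^1$ cluster only contributes $\geq\gamma_0$, which can be strictly less than $\gamma$ once $d\geq 5$ (the paper shows $\gamma_{\zed^d}>\gamma_{\zed^d,0}$ for large $d$). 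Without the $\gamma\leq 2\gamma_0$ inequality and the explicit bookkeeping of where the compensating charge lives, the $j=0$ branch of your argument does not close.

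Two smaller points worth tightening: the reduction from ``at most finitely many clusters of bounded support up to exponentially small mass'' actually requires the quantitative savings lemmas (the paper's clustering obtains at most two non-trivial clusters after removing $\sI$-pieces, not one); and the identification of the gap with the variational infimum uses both that the extremal $\xi$ have $\|\Delta\xi\|_1 = O(1)$ (Lemma \ref{size_bound_lemma}) and that the imaginary part of $\hat{\mu}(\xi)$ is negligible, which you invoke implicitly.
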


%\begin{rem} 
% Theorem \ref{spectral_gap_theorem} need not hold in dimension 2 if the tiling is not periodic (add proof).
%\end{rem}

The following theorem demonstrates a cut-off phenomenon in sandpile dynamics on general tiling graphs with either a torus or open boundary condition. Whereas the mixing of sandpiles with torus boundary condition is controlled by the spectral gap, when there is an open boundary condition, the mixing time is controlled by the spectral factor.
\begin{theorem}\label{mixing_theorem}
For a fixed tiling $\sT$ in $\bR^d$, sandpiles started from a recurrent state on $\bT_m$ have asymptotic total variation mixing time
\begin{equation}
 t_{\mix}(\bT_m) \sim \frac{\Gamma_0}{2} |\bT_m| \log m
\end{equation}
with a cut-off phenomenon as $m \to \infty$.

If the tiling $\sT$ satisfies the reflection condition and condition A  then sandpile dynamics started from a recurrent configuration on $\sT_m$ have total variation mixing time
\begin{equation}
 t_{\mix}(\sT_m) \sim \frac{\Gamma}{2} |\sT_m| \log m
\end{equation}
with a cut-off phenomenon as $m \to \infty$. 
\end{theorem}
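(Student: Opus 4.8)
The plan is to follow the Fourier-analytic approach of \cite{HJL17}: realize the sandpile group, compute the Fourier transform of the driving measure in terms of functions harmonic modulo~$1$, and reduce the mixing statement to the asymptotics of a weighted count of low-energy characters, which in turn is controlled by the Green's function of the tiling. Recall that $\sG(\bT_m)$ (resp.\ $\sG(\sT_m)$) is a finite abelian group whose dual is identified with the harmonic-modulo-$1$ functions $\xi$ on the non-sink vertices \cite{D88}, and that the uniform chip-addition measure $\mu$ acts on the character $\chi_\xi$ by $\widehat\mu(\xi)=\frac{1}{|V|}\sum_v e^{2\pi i \xi_v}$ (setting $\xi$ to $0$ at the sink), so that $|\widehat\mu(\xi)| = 1 - \bar E(\xi)/|V|$ where $\bar E(\xi):=\inf_{c\in\bR}\sum_v(1-\cos 2\pi(\xi_v-c))$. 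Since $\bU$ is translation invariant and the group acts transitively on recurrent configurations, the mixing time does not depend on the (recurrent) starting state. The Plancherel bound $4\|\mu^{*k}-\bU\|_{\TV}^2\le\sum_{\xi\neq 0}|\widehat\mu(\xi)|^{2k}\le\sum_{\xi\neq 0}e^{-2k\bar E(\xi)/|V|}$ reduces the cut-off upper bound to showing that this sum tends to $0$ at $k=\tfrac{1+\epsilon}{2}\Gamma_0|\bT_m|\log m$ (resp.\ $\tfrac{1+\epsilon}{2}\Gamma|\sT_m|\log m$), while the cut-off lower bound will come from a second moment argument.

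The crux is a structure theorem for $\{\xi:\bar E(\xi)\le R\}$, obtained from the tiling Green's function $g$ (the periodization of the fundamental solution on $\bR^d/\Lambda$, whose construction and decay are the content of the ``general method'' alluded to in the abstract). After subtracting the optimal constant, such a $\xi$ has $\Delta\xi$ finitely supported, equals an integer combination of translates of $g$ away from the sink, and its part that is non-negligible modulo~$1$ splits into well-separated clusters, each of which in isolation realizes a near-minimizer of one of the variational problems defining the $\gamma_j$; in particular each cluster costs at least $\gamma_0-o(1)$, or at least $\gamma_j-o(1)$ if it sits within $O(1)$ of a codimension-$j$ face. On the torus a cluster of nonzero net charge must be balanced (by another cluster, or by the sink, at additional cost $\ge\gamma_0$), so the characters of energy $\approx n\gamma_0$ are to leading order the $\asymp|\bT_m|^{n}$ configurations of $n$ far-separated unit clusters (with $n$ even, or with a sink image), whence $\sum_{\xi\neq 0}e^{-2k\bar E(\xi)/|V|}\asymp\sum_{n\ge 1}\bigl(|\bT_m|\,e^{-2k\gamma_0/|\bT_m|}\bigr)^{n}$, a geometric-type series that tends to $0$ exactly when $|\bT_m|\,e^{-2k\gamma_0/|\bT_m|}\to 0$, i.e.\ $k>\tfrac12\Gamma_0|\bT_m|\log m$. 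This is the torus upper bound, and the charge-neutral clusters (costing $\ge\gamma\ge\gamma_0$) are subsumed.

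For the open boundary I would first use condition~A to extend a character of $\sG(\sT_m)$ by odd reflection across the faces of $m\sR$ to a function on all of $\sT$ that is antisymmetric in every hyperplane of $\sF$ --- the fact that no edge crosses a face is exactly what keeps the extension harmonic modulo~$1$. A cluster then sits within $O(1)$ of a codimension-$j$ face for some $0\le j\le d$, where, folding onto $\sT/\fS_S$, it realizes a near-minimizer of the $\gamma_j$ problem and has only $\asymp m^{d-j}$ admissible positions; consequently $\sum_{\xi\neq 0}e^{-2k\bar E(\xi)/|V|}$ factors, up to harmless constants, as $\prod_{j=0}^{d}\exp\!\bigl(c_j\,m^{d-j}e^{-2k\gamma_j/|\sT_m|}\bigr)-1$, which tends to $0$ iff $m^{d-j}e^{-2k\gamma_j/|\sT_m|}\to 0$ for every $j$, iff $k>\tfrac12\Gamma_j|\sT_m|\log m$ for every $j$, iff $k>\tfrac12\Gamma|\sT_m|\log m$. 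For the matching lower bounds I would run the second moment method with $W=\sum_{\xi\in S}\chi_\xi$, where $S$ is the family of $\asymp|\bT_m|^{2}$ two-cluster characters (torus) or of $\asymp m^{d-j^*}$ single-cluster characters placed along a codimension-$j^*$ face realizing $\Gamma_{j^*}=\Gamma$ (open boundary), choosing minimal representatives so that $\widehat\mu(\xi)>0$ on $S$: then $\E_\bU W=0$, $\E_\bU|W|^2=|S|$, $\E_{\mu^{*k}}W\gg|S|^{1/2}$ for $k$ below the claimed mixing time, and $\Var_{\mu^{*k}}W=o\bigl((\E_{\mu^{*k}}W)^2\bigr)$ because the product of two members of $S$ is generically a character of roughly twice the energy, the finitely many overlapping coincidences being of lower order; Chebyshev then produces an event of $\mu^{*k}$-mass $\to1$ and $\bU$-mass $\to0$. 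Sandwiching $t_{\mix}$ between the two bounds gives the asymptotics and the cut-off.

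The main obstacle is the structure theorem together with its quantitative enumeration: showing that every character of energy $\le R$ really does break into $O(R)$ clusters, each pinned within $O(1)$ of a near-minimizer of some $\gamma_j$ problem --- a rigidity statement needing sharp control of $g$ and of discrete functions that are harmonic and uniformly small modulo~$1$ --- and then counting such characters (including the non-minimal clusters, whose excess energy must be summed against the exponential weight, and the large connected clusters, whose admissible shapes must be bounded like lattice animals) precisely enough to pin down the exponential rate in $m$. A secondary but delicate point is the bookkeeping that assigns the parameter $\gamma_j$ to face dimension $d-j$ through the reflection extension and verifies that condition~A legitimizes that extension; the evaluation of the $\gamma_j$ for concrete tilings, and the question of when $\Gamma_j$ with $j\ge1$ exceeds $\Gamma_0$, is left to \cite{HS19}.
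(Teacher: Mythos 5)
Your proposal matches the paper's proof architecture: the Plancherel/Fourier upper bound is controlled by a clustering structure theorem for low-energy prevectors $\nu=\Delta\xi$ (each cluster saving at least $\gamma_0-o(1)$ on the torus, or $\gamma_j-o(1)$ near a codimension-$j$ face in the reflected case, together with a lattice-animal count $\sN(V,K)$ of clustered prevectors), and the lower bound uses a Diaconis--Shahshahani second-moment criterion applied to a family of far-separated near-minimizing frequencies. The one small conceptual slip is the remark that a charged cluster on $\bT_m$ could be ``balanced by the sink'': the savings functional $\sav(\xi)=|V|-|\sum_v e(\xi_v)|$ is invariant under adding a constant to $\xi$, so the sink vertex plays no distinguished energetic role, and the paper instead balances charge with a second far-away cluster (giving $\asymp|\bT_m|^2$ pairs each costing $\approx 2\gamma_0$), which yields the same threshold $\frac{\Gamma_0}{2}|\bT_m|\log m$.
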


Motivated by Theorem \ref{mixing_theorem}, if $\Gamma = \Gamma_0$  say that the \emph{bulk} or \emph{top dimensional behavior}  controls the total variation mixing time, and otherwise that the \emph{boundary behavior} controls the total variation mixing time.  The proof of Theorem \ref{mixing_theorem} will in fact generate a statistic which randomizes at the mixing time, and this statistic is either distributed throughout the graph, or concentrated near the boundary of the dimension controlling the spectral factor.

\begin{cor}\label{open_boundary_corollary}
 All plane tilings satisfying the reflection condition and condition A  have total variation mixing time controlled by the bulk behavior.
\end{cor}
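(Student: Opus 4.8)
The plan is to reduce the statement to the single inequality $\gamma_0 \le 2\gamma_1$ among the open-boundary spectral parameters in dimension $d=2$, prove that inequality by a containment of feasible sets, and then quote Theorem~\ref{mixing_theorem}.

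First I would fix a plane tiling $\sT$ satisfying the reflection condition and condition~A, so that $d=2$. By definition the spectral factors are $\Gamma_0 = 2/\gamma_0$, $\Gamma_1 = 1/\gamma_1$, and $\Gamma_2 = (d-2)/\gamma_2 = 0$, so $\Gamma = \max(\Gamma_0,\Gamma_1)$ and it suffices to show $\Gamma_1 \le \Gamma_0$, equivalently $\gamma_0 \le 2\gamma_1$. To see this, unwind the definition of $\gamma_1$: writing $r_\ell$ for the reflection fixing the line $\ell$, the class $\sH_a^2(\sT)$ consists of those $\xi \in \sH^2(\sT)$ with $\xi \circ r_\ell = -\xi$, and in particular $\sH_a^2(\sT) \subseteq \sH^2(\sT)$, so
\[
\bigl\{\xi \in \sH_a^2(\sT) : \xi \not\equiv 0 \bmod 1\bigr\} \subseteq \bigl\{\xi \in \sH^2(\sT) : \xi \not\equiv 0 \bmod 1\bigr\}.
\]
The right-hand set is exactly the feasible set defining $\gamma_0$: its implicit requirement $\Delta\xi \in C^0(\sT)$ is automatic here, since $\Delta\xi$ is integer valued (as $\xi$ is harmonic mod~$1$) and lies in $\ell^2(\sT)$ (as $\Delta$ is bounded on a bounded-degree graph), hence is finitely supported and so in $\ell^1(\sT)$. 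Since an infimum over a subset is no smaller than the infimum over the ambient set, for every $a \in \sL$ the inner infimum in the definition of $\gamma_1$ is $\ge \gamma_0$; taking the infimum over $a \in \sL$ preserves this, and the normalizing factor $\tfrac12$ then yields $\gamma_1 \ge \gamma_0/2$, i.e. $\gamma_0 \le 2\gamma_1$. (If $\sH_a^2(\sT)$ contains no function nonzero mod~$1$ the corresponding infimum is $+\infty$ and the bound is trivial; likewise if $\gamma_0 = \infty$.)

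Consequently $\Gamma_1 = 1/\gamma_1 \le 2/\gamma_0 = \Gamma_0$, so $\Gamma = \Gamma_0$, and Theorem~\ref{mixing_theorem} gives $t_{\mix}(\sT_m) \sim \tfrac{\Gamma_0}{2}\,|\sT_m|\log m$, i.e. the total variation mixing time is controlled by the bulk behavior. I do not expect any genuine obstacle: the only points requiring a line of care are the normalizing factor $\tfrac12$ in the definition of $\gamma_1$ — which encodes precisely the arithmetic identity $2(d-1)=d$ valid only for $d=2$, and is what makes the comparison close — the verification that the $\gamma_1$-feasible configurations are genuinely admissible for $\gamma_0$, and the bookkeeping of the degenerate cases in which one of the infima is infinite.
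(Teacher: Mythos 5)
Your proof is correct and follows essentially the same argument as the paper: in dimension $2$ the prefactor $\tfrac12$ in $\gamma_1$ exactly cancels the ratio $\tfrac{d}{d-1}=2$ of dimensions, and the containment $\sH^2_a(\sT)\subset\sH^2(\sT)$ gives $\gamma_1\ge\tfrac12\gamma_0$, hence $\Gamma_1\le\Gamma_0$ and $\Gamma=\Gamma_0$. Your extra remarks on $\Delta\xi\in C^0(\sT)$ and on degenerate infima are harmless safety checks, but not new content relative to the paper's one-line proof.
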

\begin{proof}
 It suffices that $\Gamma_1 \leq \Gamma_0$.  Indeed, the factor of $2^{-1}$ in $\gamma_1$ is canceled by the ratio $\frac{2}{2-1}$ of dimensions, and the anti-symmetry condition imposes an extra constraint on the harmonic modulo 1 function in the inf, so that $\frac{1}{2}\gamma_0 \leq\gamma_1$.
\end{proof}
In particular, Corollary \ref{open_boundary_corollary} implies that asymptotic mixing time of sandpile dynamics on the square grid with open and periodic boundary condition are the same to top order, answering a question raised in \cite{HJL17}.

In \cite{HS19} the above theorems are supplemented  by explicit verifications of spectral gaps for several tilings.  
The plane tilings considered  are the triangular ($\tri$) and  honeycomb ($\hex$) tilings the triangular face centered cubic tiling ($\fcc$) in 3 dimensions, which is the lattice tiling generated by vectors
\begin{equation}
 v_1 = (1,0,0), \qquad v_2 = \left(\frac{1}{2}, \frac{\sqrt{3}}{2},0\right), \qquad v_3 = \left(\frac{1}{2}, \frac{1}{2\sqrt{3}}, \sqrt{\frac{2}{3}}\right)
\end{equation}
with nearest neighbor edges.   All of the spectral parameters are obtained for the $\Dfour$ lattice in dimension 4 for a specific set of bounding hyperplanes. The results of \cite{HS19} are summarized in the following theorem.

\begin{theorem}\label{spectral_gap_calc_theorem}
 The triangular, honeycomb, %dice
 and face centered cubic tilings have periodic boundary spectral parameters\footnote{The digit in parenthesis indicates the last significant digit.}
 \begin{align*}
 \gamma_{\tri} &= 1.69416(6)\\
 \notag \gamma_{\hex} &= 5.977657(7)\\
 %\notag \gamma_{\dice} &= ...\\
 \notag \gamma_{\fcc} &= 0.3623(9).
 \end{align*}
The spectral parameters of the $\Dfour$ lattice with reflection planes $\sF_{\Dfour}$ and open boundary condition are ($\vartheta$ denotes a parameter bounded by 1 in size)
\begin{align*}
\gamma_{\Dfour, 0} &= 0.075554+ \vartheta 0.00024, \\
\gamma_{\Dfour, 1} &= 0.0440957 +\vartheta 0.00017, \\
\gamma_{\Dfour, 2} &= 0.0389569 +\vartheta 0.00013, \\
\gamma_{\Dfour, 3} &= 0.036873324 +\vartheta 0.00012, \\
\gamma_{\Dfour, 4} &= 0.0357604+ \vartheta 0.00011.
\end{align*}
The spectral factors are given by
\begin{align*}
 \Gamma_{\Dfour,0}  &= 52.9428 + \vartheta 0.17\\
 \Gamma_{\Dfour,1}  &= 68.03486+ \vartheta 0.27\\
 \Gamma_{\Dfour,2}  &=  51.3393 + \vartheta 0.17\\
 \Gamma_{\Dfour,3}  &= 27.1201 + \vartheta 0.084.
\end{align*}

\end{theorem}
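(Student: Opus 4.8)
The statement collects the main computational results of the companion paper \cite{HS19}, and the task is to describe how each parameter is obtained and rigorously certified; I would present the argument in three stages.

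\emph{Reduction to a lattice optimization.} Each spectral parameter is the infimum of the functional $\Phi(\xi) = \sum_{x} 1 - \cos(2\pi \xi_x)$ over a family of functions harmonic modulo $1$, so the first step is to parametrize that family. If $\xi \in \sH^2(\sT)$ then $\rho := \Delta \xi$ is an integer-valued, finitely supported charge configuration (lying in $C^0(\sT)$ in general, in $C^1(\sT)$ when the total charge vanishes as required for $\gamma$), and since the tilings in question carry no nonzero $\ell^2$ harmonic function, $\xi$ is recovered from $\rho$ by convolution with the lattice Green's function $G$ of $\sT$ when $d \geq 3$ (for $\fcc$ and $\Dfour$) and with the potential kernel $a(\cdot)$ when $d = 2$ (for $\tri$ and $\hex$). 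The $\ell^2$ requirement on $\xi$ forces cancellation of low moments of $\rho$, so the admissible charges form a countable family; $\gamma$, $\gamma_0$ and the $\gamma_j$ thereby become minima of $\rho \mapsto \Phi(G\rho)$ (respectively $\Phi(a*\rho)$) over that family. For the anti-symmetric parameters $\gamma_j$ with $j \geq 1$ one works on the quotient orbifold $\sT/\fS_S$, with $G$ replaced by the image-summed kernel $G_S(x,y) = \sum_{g \in \fS_S} \sgn(g)\, G(x, gy)$, the Green's function of the walk reflected in the hyperplanes indexed by $S$; for $\Dfour$ these are exactly the four families of planes $\sF_{\Dfour}$.

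\emph{Localization of the minimizer.} The crux is to replace the countable optimization by a finite search, i.e.\ to prove an a priori bound confining the support of any near-minimizing charge $\rho$ to a small ball and its entries to a bounded range. The ingredients are the elementary inequality $1 - \cos(2\pi t) \gg \|t\|_{\bR/\zed}^2$, which gives $\Phi(\xi) \gg \sum_x \|\xi_x\|_{\bR/\zed}^2$; the discrete Green identity $\sum_x \xi_x (\Delta \xi)_x = \sum_{(x,y) \in E} (\xi_x - \xi_y)^2$, which bounds the $\ell^2$ mass of $\xi$ below in terms of $\rho$; and quantitative decay and sign control of $G$ (and of the image-summed $G_S$), which forces charges with widely separated support, or large mass, to cost strictly more than the simplest candidates --- a nearest-neighbour dipole $\delta_0 - \delta_v$ for $\gamma$ and for the $\gamma_j$, and the minimal admissible configuration for $\gamma_0$. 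Carrying this through leaves, in each case, a finite and explicit list of candidate configurations.

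\emph{Certified numerics.} For each candidate $\rho$ one must evaluate $\Phi(G\rho)$ (or $\Phi(a*\rho)$, or the orbifold analogue), which requires the values of $G$, $a$ or $G_S$ at finitely many lattice points to controlled precision. These are computed from the Fourier representation $G(x) = \int_{[0,1)^d} e^{2\pi i \langle x, \theta \rangle}/\widehat{\Delta}(\theta)\, d\theta$, with the singularity of $\widehat{\Delta}$ at the origin handled by subtraction --- using, for $d = 2$, the asymptotic expansion $a(x) = \tfrac{c_\sT}{\pi}\log|x| + \kappa_\sT + O(|x|^{-2})$ --- and with rigorous error bounds on the truncation of the resulting integrals and lattice sums; the residual uncertainty is what is recorded by the parenthesized last digit and by the parameters $\vartheta$. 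Taking the minimum over the candidate list yields $\gamma_{\tri}$, $\gamma_{\hex}$, $\gamma_{\fcc}$ and each $\gamma_{\Dfour, j}$, and the spectral factors then follow from $\Gamma_j = (d-j)/\gamma_j$ and $\Gamma = \max_j \Gamma_j$, with the stated propagated error bars.

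\emph{Main obstacle.} The delicate point is the localization step: the a priori bound must be strong enough that the finite candidate list provably contains the true minimizer, which needs quantitative control of $G$ --- its positivity, monotonicity, and decay --- rather than merely its leading asymptotics, and, for $\gamma_{\Dfour,2}, \gamma_{\Dfour,3}, \gamma_{\Dfour,4}$, the same quantitative control for the image-summed kernels on the lower-dimensional orbifolds $\Dfour/\fS_S$, where the images can partially cancel. A secondary difficulty is the certified evaluation of the singular oscillatory integrals defining $G$ and $a$ to the precision needed to separate the competing candidates and to produce the error estimates quoted in the statement.
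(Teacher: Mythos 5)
The paper does not prove this theorem. It is an explicit summary of the computational results of the companion paper \cite{HS19}; the text surrounding the statement says exactly this, so there is no proof here against which to compare your proposal.

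That said, your three-stage sketch (parametrize $\sH^2$ by prevectors, localize the minimizer to a finite list, evaluate the Green's function with certified error) does track the machinery that \emph{is} developed in this paper in support of the companion's computations: Theorem~\ref{ell_2_theorem} gives the correspondence $\xi = g*(\Delta\xi)$ with $\Delta\xi \in C^j(\sT)$, Proposition~\ref{spectral_constant_prop} and Proposition~\ref{spectral_gap_prop} perform exactly the clustering/localization you outline, and Theorem~\ref{Greens_fn_theorem} supplies the Fourier representation for evaluation. Two points where your description is imprecise relative to the paper's actual apparatus. First, you write the Green's function Fourier integral with kernel $1/\widehat{\Delta}(\theta)$; for a non-lattice tiling such as $\hex$ the paper does not Fourier-transform $\Delta$ on $\sT$ directly but rather reduces to the period lattice $\Lambda$ by the stopped-walk (harmonic measure) device: $\varrho$ is the distribution of the first return to $\Lambda$, and $(\deg 0)\hat g_0(x) = (1 - \hat\varrho(x))^{-1}$ with $\hat\varrho$ given by the rational matrix formula in Theorem~\ref{Greens_fn_theorem}. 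Your formula only matches the lattice cases ($\tri$, $\fcc$, $\Dfour$). Second, for the anti-symmetric parameters $\gamma_j$ the paper does not use an image-summed kernel $G_S$ with sign $\sgn(g)$; the anti-symmetry is imposed at the level of the admissible prevectors/functions (Lemma on the reflected dual group, Proposition~\ref{spectral_gap_prop}), with the functional $f_S$ summed over the quotient $\sT/\fS_S$. These are cosmetic differences for the finite lattices, but your account would need to be adjusted to cover the honeycomb correctly. The substantive content of the theorem --- the specific digits and error bars --- is entirely in \cite{HS19} and cannot be verified from this paper alone.
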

Since $\Gamma_{\Dfour, 1}> \Gamma_{\Dfour, 0}$, a particular consequence of Theorem \ref{spectral_gap_calc_theorem} is that the total variation mixing time of the dynamics on the $\Dfour$ lattice is dominated by the three dimensional boundary behavior.

In \cite{HS19} the cubic lattices $\zed^d$ with coordinate hyperplanes are also treated asymptotically. 

\begin{theorem}\label{asymptotic_theorem}
 As $d \to \infty$, the spectral parameter of the $\zed^d$ lattice with periodic boundary condition is
  \begin{equation}
  \gamma_{\zed^d} = \frac{\pi^2}{d^2}\left(1 + \frac{1}{2d} + O\left(d^{-2}\right)\right)
 \end{equation}
 and the parameters with open boundary condition are
\begin{equation}
 \gamma_{\zed^d,j} = \frac{\pi^2}{2d^2} \left(1 + \frac{3}{2d} + O_j\left(d^{-2}\right)\right)
\end{equation}
and, uniformly in $j$,
\begin{equation}
 \gamma_{\zed^d, j} \geq \frac{\pi^2}{2d^2 + d}.
\end{equation}
For each fixed $j$,
\begin{equation}
 \Gamma_j = \frac{2d^3 -(2j+3)d^2 + O_j(d)}{\pi^2}.
\end{equation}
In particular, for all $d$ sufficiently large, the total variation mixing time on $\zed^d$ is dominated by the bulk behavior and $\Gamma = \frac{2d^3}{\pi^2}\left(1 - \frac{3}{2d} + O\left(d^{-2}\right)\right)$.
\end{theorem}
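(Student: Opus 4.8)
The scheme is to realize each spectral parameter as an $\ell^2$-minimization over integer ``charges'' and then evaluate or bound it through the random-walk Green's function of $\zed^d$, whose $1/d$-expansion is elementary. \emph{Reduction.} Each parameter in the statement is an infimum of $\sum_x(1-\cos 2\pi\xi_x)$ over $\xi\in\ell^2(\zed^d)$ with $\rho:=\Delta\xi$ integer-valued and in $\ell^1$ ($\rho\in C^1$ for $\gamma_{\zed^d}$, $\rho\in C^0$ for $\gamma_{\zed^d,0}$, and $\rho$ moreover $\fS_S$-anti-symmetric for $\gamma_{\zed^d,j}$, $j\ge1$), with $\xi\not\equiv 0\bmod 1$. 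Writing $\lambda(\theta)=\sum_{i=1}^d 2(1-\cos\theta_i)$, one has $\widehat\xi=\widehat\rho/\lambda$ and $\|\xi\|_{\ell^2(\zed^d)}^2=\int_{[-\pi,\pi]^d}|\widehat\rho(\theta)|^2\lambda(\theta)^{-2}\,(2\pi)^{-d}\,d\theta$; passing to $\zed^d/\fS_S$ divides both this and the anti-symmetric $\ell^2$-norm by $2^j$.

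\emph{Upper bounds.} Take $\xi$ to be the potential of a minimal admissible charge: the dipole $\rho=\delta_0-\delta_{e_1}$ for $\gamma_{\zed^d}$, the monopole $\rho=\delta_0$ for $\gamma_{\zed^d,0}$ (legitimate since the Green's function $\sim c|x|^{2-d}$ lies in $\ell^2$ for $d$ large), and the $\fS_S$-orbit of the corner $\sum_{i\in S}e_i$ for $\gamma_{\zed^d,j}$, for which $\widehat\rho(\theta)=\prod_{i\in S}(e^{i\theta_i}-e^{-i\theta_i})$. In each case $\|\xi\|_\infty=O(1/d)$, so $\sum_x(1-\cos 2\pi\xi_x)=2\pi^2\|\xi\|_2^2+O(d^{-4})$, and $\|\xi\|_2^2$ is read off from the identities $\int\lambda^{-1}=g(0)/(2d)$ and $\int\lambda^{-2}=(2d)^{-2}\sum_{n\ge 0}(n+1)p_n(0,0)$, where $p_n(0,0)$ is the $n$-step return probability of simple random walk, $p_0=1$, $p_2=\tfrac1{2d}$, $p_4=O(d^{-2})$, and $g(0)=\sum_n p_n(0,0)=1+\tfrac1{2d}+O(d^{-2})$. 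This gives $\gamma_{\zed^d}\le\frac{\pi^2}{d^2}(1+\tfrac1{2d}+O(d^{-2}))$ and $\gamma_{\zed^d,j}\le\frac{\pi^2}{2d^2}(1+\tfrac3{2d}+O_j(d^{-2}))$ (the quotient factor $2^{-j}$ cancels the $2^j$ produced by the $\prod_{i\in S}4\sin^2\theta_i$ in $\|\xi\|_{\ell^2(\zed^d)}^2$), uniformly for bounded $j$.

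\emph{Lower bounds.} First replace $\xi_x$ by its representative in $[-\tfrac12,\tfrac12]$; this alters $\rho$ only by a finitely supported integer Laplacian, preserving $C^0$, $C^1$, anti-symmetry and the $\ell^2$ class. If some $|\xi_x|\ge\epsilon$ then $\sum_x(1-\cos 2\pi\xi_x)\ge 1-\cos 2\pi\epsilon\gg d^{-2}$, so a near-minimizer has all $|\xi_x|<\epsilon$, whence $1-\cos 2\pi\xi_x\ge 2\pi^2(1-O(\epsilon^2))\xi_x^2$ and the problem becomes a lower bound on $\int|\widehat\rho|^2\lambda^{-2}$. A charge with $\|\rho\|_2^2$ large already exceeds the target since $\int|\widehat\rho|^2\lambda^{-2}\ge(4d)^{-2}\|\rho\|_2^2$, so one may take $\|\rho\|_2^2$, hence $\|\widehat\rho\|_\infty\le\|\rho\|_1\le\|\rho\|_2^2$, bounded; then, as $\lambda$ concentrates at $2d$, the set $\{\lambda\le 2d(1+\delta)\}$ has measure $1-o(1)$ and carries all but an $o(1)$-fraction of the mass of $|\widehat\rho|^2$, giving $\int|\widehat\rho|^2\lambda^{-2}\ge(1-o(1))(2d)^{-2}(1+\delta)^{-2}\|\rho\|_2^2$ with $\|\rho\|_2^2\ge 2,1,2^j$ respectively; with the quotient factor this yields $\gamma_{\zed^d}\ge(1-o(1))\frac{\pi^2}{d^2}$ and $\gamma_{\zed^d,j}\ge(1-o(1))\frac{\pi^2}{2d^2}$, and running the same estimate with the crude bound $\lambda\le 4d$ and explicit constants gives the uniform bound $\gamma_{\zed^d,j}\ge\frac{\pi^2}{2d^2+d}$. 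To upgrade $(1-o(1))$ to the stated second-order terms one must replace the soft ``mass sits where $\lambda\approx 2d$'' step by the exact identities for $\int\lambda^{-1},\int\lambda^{-2}$: one shows the minimizer is an honest monopole or dipole (any larger charge loses more than the disputed $O(d^{-1})$) and pushes the $1-\cos$ Taylor expansion one order further. This is the crux of the argument, precisely because $\lambda$ has relative fluctuation only $d^{-1/2}$, so no soft concentration estimate is accurate to order $d^{-1}$.

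\emph{Spectral factors.} From $\gamma_{\zed^d,j}=\frac{\pi^2}{2d^2}(1+\tfrac3{2d}+O_j(d^{-2}))$,
\[
 \Gamma_j=\frac{d-j}{\gamma_{\zed^d,j}}=\frac{2d^2(d-j)}{\pi^2}\Bigl(1-\tfrac3{2d}+O_j(d^{-2})\Bigr)=\frac{2d^3-(2j+3)d^2+O_j(d)}{\pi^2}.
\]
For bounded $j$ this is strictly decreasing in $j$ at the two leading orders, so $\Gamma_0-\Gamma_j>0$ for large $d$; for the remaining large $j$ the uniform bound gives $\Gamma_j\le\frac{(d-j)(2d^2+d)}{\pi^2}<\Gamma_0$ once $j\ge 3$ and $d$ is large. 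Hence $\Gamma=\Gamma_0=\frac{2d^3}{\pi^2}(1-\tfrac3{2d}+O(d^{-2}))$, i.e.\ the bulk controls the total variation mixing time of Theorem~\ref{mixing_theorem}. Finally the spectral gap of Theorem~\ref{spectral_gap_theorem} is governed by $\min(\gamma_{\zed^d},\gamma_{\zed^d,j}:j\ge 1)\sim\frac{\pi^2}{2d^2}$, an optimization that never involves $\gamma_{\zed^d,0}$, so it does not determine the mixing time, which is set by $\Gamma_0=d/\gamma_{\zed^d,0}$.
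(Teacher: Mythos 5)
This theorem is stated in the present paper but proved in the companion paper \cite{HS19}, so there is no in-paper proof to compare against. Taking the proposal on its own terms: the Fourier reduction, the choice of test charges (dipole, monopole, anti-symmetrized corner), and the evaluation of the upper bounds via $\int\lambda^{-1}=g(0)/(2d)$ and $\int\lambda^{-2}=(2d)^{-2}\sum_n(n+1)p_n(0,0)$ are all correct and give the stated upper bounds. The gaps are all on the lower-bound side, and they are real.

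\emph{Gap one: the uniform bound.} You claim that ``running the same estimate with the crude bound $\lambda\le 4d$'' gives $\gamma_{\zed^d,j}\ge\frac{\pi^2}{2d^2+d}$. It does not. From $\lambda\le 4d$ one gets $\lambda^{-2}\ge(4d)^{-2}$, hence $\|\xi\|_2^2\ge(4d)^{-2}\|\nu\|_2^2$, and with the quotient factor $2^{-j}$, the constraint $\|\nu\|_2^2\ge 2^j$, and the Taylor bound with constant $2\pi^2(1-O(\epsilon^2))$, the best you obtain this way is $\gamma_{\zed^d,j}\ge\frac{\pi^2}{8d^2}(1-o(1))$, which is a factor of $4$ below $\frac{\pi^2}{2d^2+d}$. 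The number $\frac{\pi^2}{2d^2+d}$ requires the effective value of $\lambda^{-2}$ against $|\widehat\nu|^2$ to be essentially $(2d)^{-2}$, and the soft concentration argument that delivers this requires control of $\|\widehat\nu\|_\infty\le\|\nu\|_1$ relative to $\|\nu\|_2^2$, which is not available uniformly in $j$ (for $j$ of order $d$ one has $\|\nu\|_1\ge 2^j$ and the tail estimate $\int_{\lambda>2d(1+\delta)}|\widehat\nu|^2\le\|\nu\|_1^2\,\mathrm{meas}\{\lambda>2d(1+\delta)\}$ no longer beats $\|\nu\|_2^2$). So the uniform statement is not proved.

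\emph{Gap two: the second-order lower bound.} You correctly identify that upgrading $(1-o(1))\frac{\pi^2}{2d^2}$ to $\frac{\pi^2}{2d^2}(1+\frac{3}{2d}-O(d^{-2}))$ is ``the crux of the argument'' and that the soft concentration estimate cannot reach order $d^{-1}$, but the step you sketch --- ``one shows the minimizer is an honest monopole or dipole'' --- is the hard content, and it is left undone. This is the step that is actually proved in \cite{HS19}.

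\emph{Why the gaps matter for the conclusion.} The final claim $\Gamma=\Gamma_0$ cannot be salvaged without the sharp lower bounds for small $j$. For $j=1$, the comparison $\Gamma_0>\Gamma_1$, i.e.\ $d\gamma_1>(d-1)\gamma_0$, would require a lower bound on $\gamma_1$ sharp to second order: from the upper bound $\gamma_0\le\frac{\pi^2}{2d^2}(1+\frac{3}{2d}+O(d^{-2}))$ one gets $(d-1)\gamma_0\le\frac{\pi^2}{2d}(1+\frac{1}{2d}+O(d^{-2}))$, while the uniform bound gives only $d\gamma_1\ge\frac{d\pi^2}{2d^2+d}=\frac{\pi^2}{2d}(1-\frac{1}{2d}+O(d^{-2}))$, and these two expansions do not force the needed inequality. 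Thus the passage from the (correct) upper bounds to the displayed spectral factors, and to $\Gamma=\Gamma_0$, is not justified by the proposal as written.
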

For all sufficiently large $d$, $\gamma_{\zed^d} > \gamma_{\zed^d, 0}$, with $\gamma_{\zed^d, 0}$ achieved by a configuration $\xi$ with $\Delta \xi \in C^0(\sT) \setminus C^1(\sT)$.  Hence the lattice $\zed^d$ with periodic boundary condition gives an example of a tiling for which $\gamma \neq \gamma_0$, that is, the spectral gap and mixing times are controlled by different limiting eigenfunctions.

An important object in this work is the Green's function of a tiling $\sT$ started from a node $v \in \sT$, indicated $g_v(x)$, which satisfies $\Delta g_v(x) = \delta_v(x)$.  Given a function $\eta$ on $\sT$ of bounded support, define the convolution $g*\eta = g_\eta = \sum_{v \in \sT} \eta(v)g_v$.  Theorem \ref{Greens_fn_theorem} of Section \ref{Greens_function_section} gives a general explicit method for obtaining a frequency space representation of the Green's function, which is useful in applications, see \cite{HS19}.  The following theorem is  proved in Section \ref{Greens_function_section}.
\begin{theorem}\label{ell_2_theorem}
 Let $\sT$ be a periodic plane or space tiling in $\bR^d$, $d \geq 2$, and let $\eta$ be a function on $\sT$ of bounded support.  Then $g_\eta \in \ell^2(\sT)$ if and only if $\eta \in C^j(\sT)$ where $j = 2$ if $d = 2$, $j=1$ if $d = 3, 4$ and $j =0$ if $d \geq 5$.
 In particular, \begin{equation}\sH^2(\sT)  = \left\{g_\eta: \eta \in C^j(\sT)\right\}\end{equation} and if $\xi \in \sH^2(\sT)$ then $\xi = g*(\Delta \xi)$.
\end{theorem}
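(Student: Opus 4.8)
The plan is to diagonalize $\Delta$ by the Floquet (Bloch) transform of the $\Lambda$-periodic tiling and reduce the $\ell^2$ question to the local behaviour of $\hat\Delta(\theta)^{-1}$ at the single frequency $\theta=0$ where it is singular. Fixing vertices $v_1,\dots,v_N$ forming a fundamental domain, I would identify $\ell^2(\sT)$ isometrically with $L^2\big(\bR^d/\Lambda^*;\bC^N\big)$; under this transform $\Delta$ becomes multiplication by a Hermitian, positive semi-definite $N\times N$ matrix $\hat\Delta(\theta)$ depending trigonometrically on $\theta$. Connectedness of $\sT$ forces $\hat\Delta(\theta)$ to be positive definite for $\theta\notin\Lambda^*$, with a simple kernel at $\theta=0$ spanned by the constant vector $\mathbf 1$ (there $\hat\Delta(0)$ is, up to conventions, the Laplacian of the finite quotient graph $\sT/\Lambda$). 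Theorem~\ref{Greens_fn_theorem} supplies the frequency representation $\widehat{g_\eta}(\theta)=\hat\Delta(\theta)^{-1}\hat\eta(\theta)$, which is smooth away from $\theta=0$ because $\hat\eta$ is a trigonometric polynomial; by Plancherel, $g_\eta\in\ell^2(\sT)$ iff $\theta\mapsto\hat\Delta(\theta)^{-1}\hat\eta(\theta)$ is square-integrable, and only a neighbourhood of $\theta=0$ is in question.

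For the local analysis, since $0$ is a simple eigenvalue of $\hat\Delta(0)$, analytic perturbation theory yields a real-analytic smallest eigenvalue $\lambda_1(\theta)$ with $\lambda_1(0)=0$ and a real-analytic unit eigenvector $w(\theta)$, $w(0)=N^{-1/2}\mathbf 1$. As $\hat\Delta(\theta)\succeq 0$ everywhere, $0$ is a minimum of $\lambda_1$, so $\nabla\lambda_1(0)=0$ and $\lambda_1(\theta)=\langle Q\theta,\theta\rangle+O(|\theta|^3)$ with $Q$ the effective (homogenized) quadratic form of the periodic structure; $Q$ is positive definite because $\sT$ is connected and spans $\bR^d$, so $\lambda_1(\theta)\asymp|\theta|^2$ near $0$. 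Writing $\hat\Delta(\theta)^{-1}=\lambda_1(\theta)^{-1}w(\theta)w(\theta)^{*}+R(\theta)$ with $R$ bounded near $0$, the $\ell^2$ condition becomes $\int_{|\theta|<\delta}|\theta|^{-4}\,|\langle w(\theta),\hat\eta(\theta)\rangle|^2\,d\theta<\infty$. A Taylor expansion gives $\langle w(\theta),\hat\eta(\theta)\rangle=N^{-1/2}\sum_{t}\eta(t)-2\pi i\,N^{-1/2}\langle\theta,\mathfrak m(\eta)\rangle+O(|\theta|^2)$, where $\mathfrak m(\eta)$ is a linear first-moment functional of $\eta$ (equal to $\sum_t\eta(t)t$ up to a correction coming from the first-order variation of $w$ and from the edge-vector sums at each vertex type, a correction which vanishes on $C^1(\sT)$ exactly when $\eta\in C^2(\sT)$). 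Passing to polar coordinates: the constant term gives a finite integral iff $\int_0 r^{d-5}\,dr<\infty$, i.e.\ $d\ge 5$; if $\eta\in C^1(\sT)$ the integrand loses a factor $|\theta|^2$ and the linear term gives a finite integral iff $\int_0 r^{d-3}\,dr<\infty$, i.e.\ $d\ge 3$; and if moreover $\mathfrak m(\eta)=0$ a further factor $|\theta|^2$ is gained and $\int_0 r^{d-1}\,dr<\infty$ for every $d\ge 2$. This yields exactly the thresholds $j=0$ ($d\ge5$), $j=1$ ($d\in\{3,4\}$), $j=2$ ($d=2$); conversely, if the relevant leading Taylor coefficient is nonzero then $|\langle w(\theta),\hat\eta(\theta)\rangle|$ is bounded below by a multiple of $|\theta|^{k}$ (with $k=0$ or $1$) on a set of positive measure near $0$ and the integral diverges, giving the ``only if''.

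The displayed consequences then follow formally. If $\eta\in C^j(\sT)$ then $\Delta g_\eta=\sum_v\eta(v)\delta_v=\eta$ is integer-valued, so $g_\eta\in\sH(\sT)$, and $g_\eta\in\ell^2(\sT)$ by the equivalence, hence $g_\eta\in\sH^2(\sT)$. Conversely, given $\xi\in\sH^2(\sT)$, put $\eta:=\Delta\xi$; since $\Delta$ is bounded on $\ell^2(\sT)$ (bounded degree), $\eta\in\ell^2(\sT)$, and being integer-valued it is finitely supported. Comparing Floquet transforms, $\hat\Delta(\theta)\hat\xi(\theta)=\hat\eta(\theta)$, so $\hat\xi(\theta)=\hat\Delta(\theta)^{-1}\hat\eta(\theta)=\widehat{g_\eta}(\theta)$ for a.e.\ $\theta\ne0$; hence $\xi=g*(\Delta\xi)$ and, since this lies in $\ell^2(\sT)$, the equivalence already proved gives $\Delta\xi\in C^j(\sT)$. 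Thus $\sH^2(\sT)=\{g_\eta:\eta\in C^j(\sT)\}$. The main obstacle is the local picture at $\theta=0$: showing that the homogenized form $Q$ is nondegenerate so that $\lambda_1(\theta)\asymp|\theta|^2$ (and not $o(|\theta|^2)$ along some direction), and matching the low-order Taylor coefficients of $\langle w(\theta),\hat\eta(\theta)\rangle$ with the linear conditions cutting out $C^1(\sT)$ and $C^2(\sT)$, in particular identifying the first-moment functional $\mathfrak m$ precisely. The remaining ingredients are routine; an alternative to the whole Fourier argument is to run the same multipole count in physical space directly from the asymptotic expansion of $g_v(x)$ furnished by Theorem~\ref{Greens_fn_theorem}.
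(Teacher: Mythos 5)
Your proposal takes a genuinely different route from the paper. Rather than Floquet/Bloch theory, the paper observes that on $\Lambda$ one has $g_\eta = g_0 * \varrho_\eta$, so the $\ell^2$ question reduces to the scalar Fourier integral
\begin{equation*}
\|g_\eta\|_{\ell^2(\Lambda)}^2 = \frac{1}{(\deg 0)^2}\int_{\bR^d/\zed^d}\frac{|\hat\varrho_\eta(\xi)|^2}{|1-\hat\varrho(\xi)|^2}\,d\xi,
\end{equation*}
with translation-invariance of the classes $C^j(\sT)$ (Lemma~\ref{C_2_translation_invariance_lemma}) taking care of the remaining cosets $t+\Lambda$. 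The denominator vanishes quadratically at $0$ because $\varrho$ is symmetric, and the low-order Taylor coefficients of the numerator at $0$ are \emph{literally} the moments of $\varrho_\eta$ whose vanishing defines $C^1(\sT)$ and $C^2(\sT)$; the thresholds $d\ge 5$, $d\in\{3,4\}$, $d=2$ then drop out by polar coordinates with no further identification needed. Your $N\times N$ matrix version, via the rank-one singular part $\lambda_1(\theta)^{-1}w(\theta)w(\theta)^*$ of $\hat\Delta(\theta)^{-1}$, is conceptually equivalent (it is what the paper's $\hat\varrho$ becomes after a Schur complement reduction to the sink coordinate), but it pushes the real labor into showing that the linear Taylor coefficient of $\langle w(\theta),\hat\eta(\theta)\rangle$ agrees with the stopping-time functional $\sum_x\eta(x)\E[Y_{x,T_x}]$ used to cut out $C^2(\sT)$. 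That linear coefficient receives contributions both from $\hat\eta'(0)$ (the naive lattice first moment) and from $w'(0)$ paired against the per-class sums of $\eta$, and reconciling their sum with the corrector $\E[Y_{x,T_x}]$ is the homogenization computation that you have correctly flagged as the main obstacle; as written, your parenthetical remark restates this gap rather than closing it. For the $\sH^2$ characterization you stay entirely in frequency space (from $\hat\Delta\hat\xi=\hat\eta$ and a.e.\ invertibility deduce $\hat\xi=\hat\Delta^{-1}\hat\eta=\widehat{g_\eta}$), which is a bit cleaner than the paper's argument via the maximum modulus principle plus a translation trick to dispatch the exceptional case $d=2$, $\nu\notin C^1(\sT)$, at the cost of needing the first half proved first. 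Once the corrector identification is supplied, your argument would be complete.
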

The functions $g_\eta$ are extremal functions for the spectral parameter optimization problems.
The fact that the Green's function itself  just fails to be in $\ell^2(\sT)$ in dimension 4 motivated the calculation of the $\Dfour$ example in which the 3 dimensional boundary dominates the mixing time. 

\subsection{Discussion of method}
  The results  build on the recent work of the the first author, Jerison and Levine \cite{HJL17}, which determined the asymptotic mixing time and obtained a cut-off phenomenon for sandpile dynamics on the torus $(\zed/m\zed)^2$ as $m \to \infty$. Since the sandpile group of a graph with sink $s$ is isomorphic to $\sG = \zed^{V \setminus \{s\}}/\Delta' \zed^{V \setminus \{s\}}$ where $\Delta'$ is the reduced graph Laplacian obtained by omitting the row and column corresponding to the sink, the dual group is isomorphic to $\hat{\sG} = (\Delta')^{-1} \zed^{V \setminus \{s\}}/ \zed^{V \setminus \{s\}}$.  Thus $\Delta'$ provides a natural mapping from $\hat{\sG} \to \sG$.  A map in the reverse direction may be constructed via convolution with the graph Green's function. The necessary theory and analytic properties needed to study the Green's function on a periodic or open piece of a plane or space tiling is developed here, using a stopped random walk on the graph and is obtained by combining a local limit theorem for the random walk in time domain with a frequency domain representation.  Since a tiling lacks the abelian group structure of a lattice, compared to the previous work, the determination of the Green's function in the tiling as opposed to lattice case is more involved.  It is reduced to the lattice case by stopping a random walk on the tiling when it hits the period lattice, and using the resulting stopped measure to determine the Green's function restricted to the lattice.  An explicit formula for the Fourier transform of the Green's function restricted to the lattice is given in Theorem \ref{Greens_fn_theorem}.   As in \cite{HJL17}, van der Corput's method from the theory of exponential sums is used to reduce the determination of the maximum spectral factor to a finite check, and to prove an approximate spectral disjointness for frequencies $\xi \in \hat{\sG}$ for which $\nu = \Delta' \xi$ is separated into a small number of separated clusters.

\subsection{Historical review}
 
Sandpile dynamics on a finite piece of the square lattice were first considered by Bak, Tang and Wiesenfeld \cite{BTW88} in a study of self-organized criticality, see also Dhar \cite{D89}, where an arbitrary graph is considered.  In \cite{LP10} driven dynamics on the square grid with open boundary are considered and a picture is given of the identity element in the sandpile group.  In \cite{S15} numerical studies are made of sandpile statistics on a square grid with open boundary, but the statistics are measured at a point prior to the mixing time in Theorem \ref{mixing_theorem}.  

Sandpiles have been studied on a large number of different graph geometries.  The hex tiling is considered in \cite{ADMR10}, the graph of the dihedral group $D_n$ is considered in \cite{DFF03}, the Husimi lattice is studied in \cite{PS95}. Mixing time estimates for sandpiles on many graphs, some of which are asymptotics with cut-off, are given in \cite{JLP15}.  All of the cases obtained here are new, however, except for the square tiling with periodic boundary treated in \cite{HJL17}.  Several sandpile statistics are calculated for two dimensional tilings in \cite{KW16}, which was the original motivation for this project.

The effect of the boundary condition on sandpile behavior has been studied extensively, although this is the first treatment of the spectral gap and mixing time.  See \cite{BIP93}, \cite{I94}, \cite{JPR06}, \cite{PR05} and \cite{ADMR10} for height probabilities and correlation functions. In \cite{HJL17} the asymptotic mixing time and a cut-off phenomenon were proved for sandpile dynamics on the rectangular grid with periodic boundary condition.  Theorem \ref{mixing_theorem} generalizes this result to sandpile dynamics on an arbitrary plane or space tiling.  In \cite{HJL17} it was conjectured that a cut-off phenomenon also exists on the square grid with open boundary condition, which is proved here, and in Corollary \ref{open_boundary_corollary} it is demonstrated that the asymptotic mixing time is the same as for the periodic boundary case.  Theorem \ref{spectral_gap_calc_theorem} gives an example in 4 dimensions in which the two mixing times are asymptotically unequal.

In \cite{H17} a random walk is studied with generators given by the powers of 2 in $\zed/p\zed$ and a similar treatment of the boundary occurs where multiples of the largest power of 2 wrap around $p$.  In that case, the boundary does not influence the leading order asymptotic mixing time.
\section{Notation and conventions}
The additive character on $\bR/\zed$ is written $e(x) = e^{2\pi i x}$.  Write, also, $c(x) = \cos 2\pi x$ and $s(x) = \sin 2\pi x$.  For real $x$, $\|x\|_{\bR/\zed}$ denotes the distance to the nearest integer, while for $x \in \bR^d$, $\|x\|_{\bR^d/\zed^d}$ denotes the distance to the nearest lattice point in $\zed^d$.

We use the notations $A \ll B$ and $A = O(B)$ to mean that there is a constant 
$0<C<\infty$ such that $|A| < CB$, and $A \asymp B$ to mean $A \ll B \ll A$. A subscript such as $A \ll_R B$, $A = O_R(B)$ means that the constant $C$ depends on $R$. The notation $A = o(B)$ means that $A/B$ tends to zero as the relevant parameter tends to infinity.

Given a graph $G = (V,E)$ and vertices $v, w \in V$, the degree of $v$ is $\deg(v)$ and the number of edges from $v$ to $w$ is $\deg(v,w)$. The notation $d(v,w)$ indicates the graph distance from $v$ to $w$, which is the length of the shortest path from $v$ to $w$. The graph Laplacian $\Delta$ operates on functions on $G$ by
\begin{equation}
 \Delta f(v) = \deg(v) f(v) - \sum_{(v,w) \in E} f(w).
\end{equation}
The notation $\delta_v(w)$ indicates a point mass at $v$, which takes value 1 if $v = w$ and 0 otherwise.  The Green's function started at $v$ on an infinite  graph $G$ is a function $g_v(w)$ such that $\Delta g_v(w) = \delta_v(w)$. If the graph is finite, $\Delta (g_{v_1}-g_{v_2})(w) = \delta_{v_1}(w) - \delta_{v_2}(w)$. Convolution of the Green's function $g$ with a function $\eta$ of finite support in $V$ with sum of values 0 is defined by
\begin{equation}
 g*\eta = \sum_{v \in V} \eta(v) g_v.
\end{equation}
Thus, on a finite graph, $\Delta g*\eta = \eta$ if the sum of the values of $\eta$ is 0. The notation $g_\eta$ for $g*\eta$ is also used.  When $G$ is a finite graph and a node $s$ has been designated sink, the reduced Laplacian $\Delta'$ is obtained from $\Delta$ by removing the row and column corresponding to the sink. 

A random walk on $G$ proceeds in discrete time steps.  At a given time step, each edge from a given node $v$ is chosen with equal probability as a transition. The transition kernel of this random walk is
\begin{equation}
 P(v,w) = \frac{\deg(v,w)}{\deg (v)}.
\end{equation}
Let $Y_{v, n}$ indicate the walk started at random or deterministic node $v$ and at random or deterministic step $n$.  A \emph{stopping time} adapted to the random walk $Y_{v, n}$ is a random variable $N$ taking values in $\{1, 2, ...\} \cup \{\infty\}$, such that for each deterministic $n$, the event $\{N=n\}$ is measurable in the $\sigma$-algebra generated by the first $n$ steps of the walk.  An important stopping time in this work is the time $T_v$ which is the first positive step when random walk started from $v \in \sT$ reaches the lattice $\Lambda$.

A \emph{sandpile} on a graph $G$ is a map $\sigma: G \to \zed_{\geq 0}$.  The map $\sigma_{\full} = \deg - 1$ is the full sandpile.  The set of stable sandpiles is indicated 
\begin{equation}
\sS(G) = \{ \sigma: G \to \zed_{\geq 0}: \sigma \leq \sigma_{\full}\}.
\end{equation}
The set of recurrent states form the \emph{sandpile group} and are indicated $\sG(G)$.  Its dual group is $\hat{\sG}$.

A function $f$ on $G$ is harmonic if $\Delta f =0$, and harmonic modulo 1 if $\Delta f \equiv 0 \bmod 1$. Let 
\begin{equation}
 \sH(G) = \{ f: G \to \bR, \Delta f \equiv 0 \bmod 1\}.
\end{equation}

Throughout, $\sT$ denotes a plane or space tiling, which is periodic in a lattice $\Lambda$.  The periodic graph $\sT/m\Lambda$ is indicated $\bT_m$, while $\sT_m$ indicates the open boundary graph obtained from a family of reflecting hyperplanes $m\sF$. The notation $g_{\bT_m}$ and $g_{\sT_m}$ indicate the Green's functions on $\bT_m$ or $\sT_m$.  $\sR$ indicates an open convex region (fundamental domain) cut out by the family $\sF$, and whose reflections in $\sF$ tile the plane or space.  $\sT_m$ may be identified with the intersection of $m \sR$ with $\sT$, together with an added point identified with the boundary. Functions on $\sT_m$ are identified with functions on $\sT$, which are reflection anti-symmetric in each hyperplane of $m \sF$. 

The ball $B_R(x) \subset \sT$ is defined to be
\begin{equation}
 B_R(x) = \{y\in \sT: d(x,y) \leq R\}
\end{equation}
where $d(x,y)$ is the graph distance. Since $|\sT/\Lambda| < \infty$, for $x \in \Lambda$, $d(0, x) \asymp \|x\|$, and $\#\{B_R(0)\} \asymp R^d$ as $R \to \infty$.  In $\bT_m$, $B_{R, \bT_m}(x)$ is defined via the quotient distance,  treating points which are equivalent modulo $m\Lambda$ as identified.  On $\sT_m$, $B_{R, \sT_m}(x)$ is defined via the quotient distance in which points which are equivalent under $m \sF$ reflections are identified.

\subsection{Function spaces}
In handling the analysis on a periodic tiling, a key tool is the `harmonic measure' on the period lattice $\Lambda$ obtained by stopping simple random walk started on the tiling when it reaches the lattice.  Random walk started from $v$ in $\sT$ is defined by
\begin{equation}Y_{v,0} = v, \qquad \forall n \geq 0,\; Y_{v,n+1} = P \cdot Y_{v,n},\end{equation} where $P$ is the transition kernel of simple random walk. Let 
\begin{equation}
T_v = \{\min n \geq 1: Y_{v, n} \in \Lambda\} 
\end{equation}
 be the stopping time for simple random walk started at $v$ in $\sT$ and stopped at the first positive time that it returns to $\Lambda$.  For $v \not \in \Lambda$, let 
 \begin{equation}
  \varrho_v \sim Y_{v, T_v}
 \end{equation}
 be the probability distribution of $Y_{v,T_v}$ on $\Lambda$, while for $v \in \Lambda$, let $\rho_v = \delta_v$ be the distribution of a point mass at $v$. Let $\varrho$ have the distribution of $Y_{0, T_0}$ which is the distribution of the first return to $\Lambda$ started at 0.
 
 The following lemma is used to justify convergence when working with the corresponding stopping times and harmonic measures.
\begin{lemma}
 There is a constant $c > 0$ such that, as $n \to \infty$, for all $v \in \sT$, $\Prob(T_v > n) \ll e^{-cn}$.  The measure $\varrho_v$ satisfies $\varrho_v(\{x: d(x,v) > N\}) \ll e^{-cN}$ as $N \to \infty$. Similarly, $\varrho(\{x: d(x,0)>N\}) \ll e^{-cN}$ as $N \to \infty$.
\end{lemma}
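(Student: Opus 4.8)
The three estimates all follow from the first, so the plan is to concentrate on the uniform exponential tail of $T_v$ and then read off the statements about $\varrho_v$ and $\varrho$ essentially for free. First I would record the geometric input that underlies everything: since $\sT$ is connected, has bounded degree $D := \sup_{v \in \sT}\deg(v) < \infty$, and has only finitely many vertex orbits under $\Lambda$, there is a finite $R$ with $d(v,\Lambda)\le R$ for \emph{every} $v \in \sT$. Indeed, choose orbit representatives $v_1,\dots,v_r$, join each $v_i$ to $0 \in \Lambda$ by a finite path, let $R$ be the largest of these path lengths, and translate by $\Lambda$. Then for an arbitrary vertex $v$, pick a neighbour $v'$ together with a path of length at most $R$ from $v'$ into $\Lambda$; concatenating the edge $v\to v'$ with this path yields a walk of length between $1$ and $R+1$ starting at $v$ whose terminal vertex lies in $\Lambda$, which simple random walk follows with probability at least $p := D^{-(R+1)} > 0$. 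Hence $\Prob(T_v \le R+1) \ge p$ uniformly in $v \in \sT$; allowing $R+1$ rather than $R$ steps is exactly what takes care of the bookkeeping forcing $T_v \ge 1$.

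Next I would iterate this with the Markov property. Conditioning on the $\sigma$-algebra $\mathcal{F}_{j(R+1)}$ generated by the first $j(R+1)$ steps, the event $\{T_v > (j+1)(R+1)\}$ forces the walk restarted from $Y_{v,j(R+1)}$ to avoid $\Lambda$ during its next $R+1$ steps, which by the Markov property and the previous paragraph has conditional probability $\le 1-p$ on $\{T_v > j(R+1)\}$. Thus $\Prob(T_v > (j+1)(R+1)) \le (1-p)\,\Prob(T_v > j(R+1))$, so inductively $\Prob(T_v > j(R+1)) \le (1-p)^j$, and for general $n$, taking $j = \lfloor n/(R+1)\rfloor$,
\[
\Prob(T_v > n) \le (1-p)^{\lfloor n/(R+1)\rfloor} \ll e^{-cn}, \qquad c := \frac{-\log(1-p)}{R+1} > 0,
\]
uniformly in $v$. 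In particular $T_v < \infty$ almost surely, so $\varrho_v$ is a genuine probability measure.

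Finally I would transfer this to the harmonic measures. Since one random walk step changes the graph distance by at most $1$, we always have $d(Y_{v,T_v},v) \le T_v$, so $\{x : d(x,v) > N\} \subseteq \{T_v > N\}$ up to the identification under $Y_{v,T_v}$, giving $\varrho_v(\{x : d(x,v) > N\}) \le \Prob(T_v > N) \ll e^{-cN}$; specializing to $v = 0$ and to $\varrho$, the law of $Y_{0,T_0}$, yields $\varrho(\{x: d(x,0) > N\}) \le \Prob(T_0 > N) \ll e^{-cN}$. The only genuinely nontrivial point in all of this is the uniform geometric fact $\sup_{v\in\sT} d(v,\Lambda)<\infty$, which I expect to be the main (mild) obstacle; once it is in hand the remainder is the standard geometric-tail argument for hitting times, and the passage to $\varrho_v$ and $\varrho$ is immediate from the Lipschitz property of graph distance along a walk.
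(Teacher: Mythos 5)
Your proof is correct and follows the same strategy as the paper's: both observe that $T_v$ coincides (under the quotient map) with the first positive hitting time of the distinguished vertex in the finite, irreducible chain on $\sT/\Lambda$, deduce a uniform lower bound on the probability of hitting within a bounded block of steps, iterate via the Markov property to obtain the geometric tail, and then transfer to $\varrho_v$ and $\varrho$ using $d(Y_{v,T_v},v)\le T_v$. The paper states this in three sentences and leaves the uniform bound on $d(v,\Lambda)$ and the block-iteration argument implicit; you make them explicit (the path-length bound $R$ from orbit representatives, the probability bound $p=D^{-(R+1)}$, and the induction $\Prob(T_v>j(R+1))\le(1-p)^j$), but the underlying idea is identical.
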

\begin{proof}
 The second statement follows from the first, since $T_v$ bounds $d(Y_{v,T_v}, v)$. To prove the first, note that $T_v$ is the same stopping time as the first positive time reaching  0 on the finite state Markov chain given by random walk on $\sT/\Lambda$.  The conclusion follows, since, given any state on $\sT/\Lambda$, there is a bounded number $k$ such that the walk has a positive probability of returning to 0 from the state after $k$ steps.  
\end{proof}

Given a finite, possibly signed measure $\eta$ on $\sT$,  define
\begin{equation}
 \varrho_\eta = \sum_{v \in \sT} \eta(v) \varrho_v.
\end{equation}

Define function classes on $\sT$ by
\begin{align*}
 C^0(\sT) &= \left\{f: \sT \to \zed, \|f\|_1 < \infty\right\}, \\
 C^1(\sT) &= \left\{f \in C^0(\sT), \sum_{x \in \sT} f(x) = 0\right\},\\
 C^2(\sT) &= \left\{f \in C^1(\sT), \sum_{x \in \sT}f(x) \E[Y_{x, T_x}]=0\right\}.
\end{align*}
Hence $C^0(\sT)$ is the set of integer functions of finite support, $C^1(\sT)$ is those functions of sum 0, and $C^2(\sT)$ are those $C^1(\sT)$ functions with zero moment.  

Given a set $S \subset \sT$, say that $f \in C^j(S)$ if, viewed as a function on $\sT$ with support in $S$, $f \in C^j(\sT)$.

Although the definition of $C^2(\sT)$ depends on the lattice $\Lambda$, it is invariant under translating $\sT$  as the following lemma shows.
\begin{lemma}\label{C_2_translation_invariance_lemma}
 Suppose $f \in C^2(\sT)$.  For any $t \in \sT \setminus \Lambda$ let $T_v^t$ denote the stopping time of random walk started at $v$ and stopped the first time that it reaches $t + \Lambda$.  Then
 \begin{equation}
  \sum_{x \in \sT}f(x) \E[Y_{x, T_x^t}] = 0.
 \end{equation}

\end{lemma}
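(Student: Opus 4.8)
The plan is to prove the \emph{a priori} stronger identity
\[
\sum_{x\in\sT}f(x)\,\E[Y_{x,T_x^t}]\;=\;\sum_{x\in\sT}f(x)\,\E[Y_{x,T_x}]
\qquad\text{for every }f\in C^1(\sT),
\]
from which the lemma is immediate, since the right-hand side is $0$ whenever $f\in C^2(\sT)$. Because $f$ has finite support and $T_x,T_x^t$ have exponential tails by the preceding lemma, every sum and expectation below converges absolutely, so I will not belabor that point.

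The first step is a strong-Markov decomposition at $\sigma:=\min(T_x,T_x^t)$, the first (positive) time the walk from $x$ enters $\Lambda\cup(t+\Lambda)$. As $\Lambda$ and $t+\Lambda$ are disjoint, $Y_{x,\sigma}$ lies in exactly one of them; put $p(x)=\Prob(Y_{x,\sigma}\in\Lambda)$ and $q(x)=1-p(x)$, both $\Lambda$-periodic functions of $x$. Conditioning on $\mathcal{F}_\sigma$ and translating the post-$\sigma$ excursion by a lattice vector (using $\Lambda$-periodicity of $\sT$) would give
\[
\E[Y_{x,T_x}]=\E[Y_{x,\sigma}]+(a-t)\,q(x),\qquad
\E[Y_{x,T_x^t}]=\E[Y_{x,\sigma}]+b\,p(x),
\]
where $a:=\E[Y_{t,T_t}]$ and $b:=\E[Y_{0,T_0^t}]$ are fixed vectors of $\bR^d$ (the mean first-hitting sites of $\Lambda$ from $t$, and of $t+\Lambda$ from $0$). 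Subtracting, summing against $f$, and using $\sum_x f(x)=0$ to replace $\sum_x f(x)q(x)$ by $-\sum_x f(x)p(x)$, the difference of the two target sums collapses to $(a+b-t)\sum_{x}f(x)p(x)$. Everything is thereby reduced to the single vector identity $a+b=t$.

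To prove $a+b=t$, I would project to the finite connected quotient multigraph $\mathcal{G}=\sT/\Lambda$, giving each directed edge $\vec e$ its displacement $\xi(\vec e)\in\bR^d$ (antisymmetric under edge-reversal). Writing $W(\bar u,\bar w)=\E_{\bar u}\big[\sum_{k<\tau_{\bar w}}\xi(\vec e_k)\big]$ for the expected net displacement of the $\mathcal{G}$-walk from $\bar u$ until it first hits $\bar w$, one has $b=W(\bar 0,\bar t)$ and $a=t+W(\bar t,\bar 0)$, so $a+b-t=W(\bar 0,\bar t)+W(\bar t,\bar 0)$. Pairing each undirected edge with its reverse, $W(\bar u,\bar w)=\sum_e\theta_{\bar u,\bar w}(\vec e)\,\xi(\vec e)$, where $\theta_{\bar u,\bar w}(\vec e)$ is the expected net number of traversals of $\vec e$ before hitting $\bar w$. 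A direct count of the steps into and out of each vertex shows that $\theta_{\bar u,\bar w}$ is a flow with divergence $\delta_{\bar u}-\delta_{\bar w}$, and that it obeys Ohm's law with unit edge-conductances (it is the gradient of the potential $\bar z\mapsto G^{\bar w}(\bar u,\bar z)/\deg(\bar z)$, with $G^{\bar w}$ the walk's Green's function killed at $\bar w$); hence $\theta_{\bar u,\bar w}$ is the unique unit current flow from $\bar u$ to $\bar w$. Interchanging source and sink negates a current flow, so $\theta_{\bar w,\bar u}=-\theta_{\bar u,\bar w}$, giving $W(\bar u,\bar w)=-W(\bar w,\bar u)$; in particular $W(\bar 0,\bar t)+W(\bar t,\bar 0)=0$, i.e.\ $a+b=t$, which together with the previous step finishes the proof.

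The hard part is exactly the identity $a+b=t$, equivalently the antisymmetry $W(\bar u,\bar w)=-W(\bar w,\bar u)$: a priori $W(\bar 0,\bar t)+W(\bar t,\bar 0)$ is only the expected net displacement of a closed excursion through $\bar 0$ and $\bar t$, which does lie in $\Lambda$ but has no evident reason to vanish, and making it vanish uses the \emph{uniqueness} of the electric current flow for fixed conductances (equivalently, the reversibility of the walk). Everything else — the strong-Markov decomposition, the cancellation via $\sum_x f(x)=0$, and the identification of $\theta_{\bar u,\bar w}$ with a current flow — is routine, and all integrability is supplied by the exponential-tail lemma already proved.
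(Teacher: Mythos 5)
Your proof is correct, and it reaches the lemma by a route genuinely different from the paper's. The paper works with the auxiliary stopping time $\tilde T_x^t$ (first hit of $t+\Lambda$ after the first hit of $\Lambda$): conditioning on the first visit to $\Lambda$ gives $\sum_x f(x)\E[Y_{x,\tilde T_x^t}]=\sum_x f(x)\bigl(\E[Y_{x,T_x}]+\E[Y_{0,T_0^t}]\bigr)=0$ for $f\in C^2$, and then the replacement $\E[Y_{x,\tilde T_x^t}]=\E[Y_{x,T_x^t}]$ is justified by the fact that the return measure to $t+\Lambda$ started on $t+\Lambda$ is centered --- proved by the same path-reversal-plus-$\Lambda$-translation used for $\varrho$ --- with the passage from that centering to the equality of expectations being an implicit optional-stopping argument for the martingale of successive $t+\Lambda$-visits. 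You instead split at $\sigma=\min(T_x,T_x^t)$, reduce the difference of the two functionals on $C^1$ to the scalar multiple $(a+b-t)\sum_x f(x)p(x)$, and prove the vector identity $a+b=t$ by recognizing the expected net edge-displacement $W(\bar u,\bar w)$ on the finite quotient $\sT/\Lambda$ as a unit current flow, whose source--sink antisymmetry is Thomson uniqueness for the reversible network. The two key inputs are in fact logically equivalent --- your $\sigma$-decomposition applied at $x=t$, together with $\E[Y_{t,T_t}]=a$, turns the centering $\E[Y_{t,T_t^t}]=t$ into exactly $a+b=t$ and conversely --- and both rest on reversibility of the walk, but you invoke it via uniqueness of current flows whereas the paper invokes it via a one-excursion path reversal. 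Your route buys the slightly stronger statement that the functional $f\mapsto\sum_x f(x)\E[Y_{x,T_x^t}]$ on all of $C^1(\sT)$ is independent of the coset $t$, and it isolates the reversibility input cleanly at the price of importing the electrical formalism (unique current flow, Thomson's principle); the paper's argument is more self-contained and runs in parallel with its earlier proof that $\varrho$ is symmetric. The optional-stopping and absolute-convergence points you deferred to the exponential-tail lemma are indeed covered by it.
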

\begin{proof}
Let $\tilde{T}_v^{t}$ be the stopping time of random walk started from $v$ and stopped at the first time greater than $T_v$ at which the walk reaches $t + \Lambda$. Note that, by conditioning on the first visit to $\Lambda$,
\begin{align*}
 &\sum_{x \in \sT} f(x) \E[Y_{x, \tilde{T}^t_x}] \\
 &=\sum_{w \in \Lambda} \left(\sum_{x \in \sT} f(x) \Prob(Y_{x, T_x} = w) \right) \sum_{v \in t+\Lambda} v \cdot \Prob(Y_{w, T^t_w} = v)\\
 &= \left(\sum_{w \in \Lambda} \sum_{x \in \sT} f(x) \Prob(Y_{x, T_x} = w) \right) \\ & \times \left( w + \sum_{v \in t + \Lambda} (v-w) \Prob(Y_{0, T^t_0} = (v-w))\right) = 0.
\end{align*}
The last equality holds, since
\begin{equation}
 \sum_{v \in t + \Lambda} (v-w) \Prob(Y_{0, T_0^t} = (v-w)) 
\end{equation}
is a constant independent of $w$ and 

\begin{align*}
\sum_{w \in \Lambda} \sum_{x \in \sT} f(x)\Prob(Y_{x, T_x} = w) = \sum_{x \in \sT} f(x) &= 0\\
 \sum_{w \in \Lambda} \sum_{x \in \sT} f(x) \Prob(Y_{x, T_x} = w) w = \sum_{x \in \sT} f(x)\E\left[Y_{x, T_x}\right] &= 0.
\end{align*}
The equality  $\E\left[Y_{x, \tilde{T}^t_x}\right] = \E\left[Y_{x, T^t_x}\right]$ holds since random walk started from a node $t$ and stopped at the first time it reaches a node in $\Lambda + t$ has mean $t$.  To check this,  let $t = v_0, v_1, v_2, ..., v_n = x+t$ be a path from $t$ to $x+t \in \Lambda + t$ such that $v_i \not \in \Lambda+t$ for $1 \leq i \leq n-1$.  Let $e_1, e_2, ..., e_n$ be edges with $e_i$ connecting $v_{i-1}$ and $v_i$.  The probability of following $e_1, e_2, ..., e_n$ in succession is $\prod_{j=0}^{n-1} \frac{1}{\deg v_j}$. The probability of following that path in reverse is $\prod_{j=1}^n \frac{1}{\deg v_j}$.  Since $\deg v_0 = \deg v_n$ by $\Lambda$-periodicity, running the path in reverse has the same probability.  This is also true of the path translated by $-x$, which proves the claim regarding expectation.
\end{proof}

Given $\lambda$ in the lattice $\Lambda$, the translation operator $\tau_\lambda$ acts on functions $f$ on $\sT$ or on $\Lambda$ by
\begin{equation}
 \tau_\lambda f(x) = f(x-\lambda).
\end{equation}
Given $f \in C^0(\sT)$, the function
\begin{equation}
f_{\bT_m} = \sum_{\lambda \in \Lambda} \tau_{m\lambda} f
\end{equation}
is $m\Lambda$ periodic.
The classes $C^j$ are extended to $\bT_m$ and $\sT_m$ as follows.
Say $f \in C^j(\bT_m)$ if there is a function $f_0 \in C^j(\sT)$ such that $f = f_{0, \bT_m}$.
Given a family of hyperplanes $\sF$ there is a lattice $\Lambda$ such that any function $f$ having reflection anti-symmetry in $\sF$ is $\Lambda$ periodic.  Say that $f \in C^j(\sT_m)$ if $f$ has reflection anti-symmetry in $m\cdot \sF$ and if there is a function $f_0 \in C^j(\sT)$ such that $f = f_{0, \bT_m}$.

Given $f \in \ell^1(\Lambda)$ and $h \in \ell^\infty(\sT)$,
\begin{equation}
 f*h(x) = \sum_{y \in \Lambda} f(y) h(x-y).
\end{equation}
Similarly, given $f \in \ell^1(\Lambda/m\Lambda)$ and $h \in \ell^\infty(\sT/m\Lambda)$,
\begin{equation}
 f*h(x) = \sum_{y \in \Lambda/m\Lambda} f(y)h(x-y).
\end{equation}

In dimension $d$, identify $\Lambda$ with $\zed^d$ by choice of basis, which is fixed throughout the argument, and let $e_i$ be the $i$th standard basis vector.  Discrete differentiation in the $e_i$ direction is defined by
\begin{equation}
 D_{e_i} f(x) = D_i f(x) = f(x + e_i) - f(x).
\end{equation}
Given a vector $\ua \in \bN^d$, define the differential operator
\begin{equation}
 D^{\ua} f(x) = D_1^{a_1} \cdots D_d^{a_d} f(x).
\end{equation}
The discrete derivatives can be expressed as convolution operators.  Let 
\begin{equation}
 \delta_i(x) = \left\{\begin{array}{ccl} -1 && x = 0\\ 1 && x = -e_i\\ 0 && \text{otherwise}\end{array}\right..
\end{equation}
Thus $D^{\ua}f = \delta_1^{*a_1}* \cdots * \delta_d^{*a_d}*f$.  

Given functions $f_1, ..., f_n$ on $\Lambda$, define
\begin{equation}
 \langle f_1, ..., f_n \rangle = \spn_\zed\{\tau_x f_1, ..., \tau_x f_n: x \in \Lambda\}.
\end{equation}
On the lattice $\Lambda$, 
\begin{align*}
 C^0(\Lambda) &= \langle \one(x = 0) \rangle = \{f: \Lambda \to \zed, \|f\|_1 < \infty\},\\
 C^1(\Lambda) &= \langle \delta_i: 1 \leq i \leq d \rangle,\\
 C^2(\Lambda) &= \langle \delta_i * \delta_j: 1 \leq i\leq j \leq d\rangle.
\end{align*}

Given $f \in \ell^1(\Lambda)$, its Fourier transform is
\begin{equation}
 \hat{f}(x) = \sum_{n \in \Lambda} f(n) e(-n \cdot x).
\end{equation}
On $\Lambda/m\Lambda$, the discrete Fourier transform is
\begin{equation}
 \hat{f}(x) = \sum_{n \in \Lambda/m\Lambda} f(n) e\left(-\frac{n \cdot x}{m} \right).
\end{equation}

\subsection{Results from classical analysis}
The sandpile chain is studied in frequency space, and the techniques combine methods which are probabilistic and from the theory of distribution modulo 1.  
Several techniques from the classical theory of exponential sums are used, including van der Corput's inequality \cite{S04}.
\begin{theorem}[van der Corput's Lemma] Let $H$ be a positive integer.  Then for any complex numbers $y_1, y_2, ..., y_N$,
\begin{equation}
 \left| \sum_{n=1}^N y_n \right|^2 \leq \frac{N+H}{H+1} \sum_{n=1}^N |y_n|^2 + \frac{2(N+H)}{H+1} \sum_{h=1}^{H} \left(1 - \frac{h}{H+1}\right)\left| \sum_{n=1}^{N-h} y_{n+h}\overline{y_n}\right|.
\end{equation}
\end{theorem}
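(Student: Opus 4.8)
The plan is to derive the inequality from the Cauchy--Schwarz inequality applied to a shifted average of the sum, followed by a reorganization of the resulting quadratic form according to the lag variable. First I would extend the sequence by setting $y_n = 0$ for $n \leq 0$ and for $n > N$, so that all sums below may be taken over $n \in \zed$ with only finitely many nonzero terms. The starting point is the elementary identity
\[
(H+1) \sum_{n=1}^N y_n \;=\; \sum_{n=1-H}^{N} \sum_{h=0}^{H} y_{n+h},
\]
which holds because each $y_m$ with $1 \leq m \leq N$ is counted exactly once for each $h \in \{0, 1, \dots, H\}$, while terms with $m \notin \{1, \dots, N\}$ vanish. The outer index $n$ runs over exactly $N+H$ integers, so Cauchy--Schwarz gives
\[
(H+1)^2 \Bigl| \sum_{n=1}^N y_n \Bigr|^2 \;\leq\; (N+H) \sum_{n \in \zed} \Bigl| \sum_{h=0}^{H} y_{n+h} \Bigr|^2,
\]
where the sum over $n$ has been harmlessly extended to all of $\zed$.

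Next I would expand the inner modulus squared as $\sum_{h, h' = 0}^{H} y_{n+h} \overline{y_{n+h'}}$, interchange the order of summation, and substitute $m = n + h'$, so that the inner sum over $m$ depends on $(h, h')$ only through the lag $k = h - h'$. Since the number of pairs $(h, h')$ with $0 \leq h, h' \leq H$ and $h - h' = k$ equals $H + 1 - |k|$ for $|k| \leq H$, this yields
\[
\sum_{n \in \zed} \Bigl| \sum_{h=0}^{H} y_{n+h} \Bigr|^2 \;=\; \sum_{k=-H}^{H} (H+1-|k|) \sum_{m \in \zed} y_{m+k} \overline{y_m}.
\]
The term $k=0$ contributes $(H+1) \sum_{n=1}^N |y_n|^2$. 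For the terms with $k \neq 0$ I would fold the contributions of $k$ and $-k$ together using the conjugate symmetry $\sum_m y_{m-k} \overline{y_m} = \overline{\sum_m y_{m+k} \overline{y_m}}$, so that for each $h$ with $1 \leq h \leq H$ the combined contribution is $2(H+1-h)\,\RE\bigl( \sum_m y_{m+h} \overline{y_m} \bigr)$, which is at most $2(H+1-h) \bigl| \sum_m y_{m+h} \overline{y_m} \bigr|$. Re-indexing each correlation sum back to the range where both factors are supported gives $\sum_m y_{m+h} \overline{y_m} = \sum_{n=1}^{N-h} y_{n+h} \overline{y_n}$.

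Assembling these pieces produces
\[
(H+1)^2 \Bigl| \sum_{n=1}^N y_n \Bigr|^2 \;\leq\; (N+H) \Bigl[ (H+1) \sum_{n=1}^N |y_n|^2 + 2 \sum_{h=1}^{H} (H+1-h) \Bigl| \sum_{n=1}^{N-h} y_{n+h} \overline{y_n} \Bigr| \Bigr],
\]
and dividing through by $(H+1)^2$ and writing $\tfrac{H+1-h}{(H+1)^2} = \tfrac{1}{H+1}\bigl(1 - \tfrac{h}{H+1}\bigr)$ gives exactly the claimed inequality. The only delicate point is the bookkeeping: one must check that the window for $n$ in the first identity contains exactly $N+H$ integers so that the Cauchy--Schwarz factor is the stated one, and one must correctly collapse the off-diagonal lags $\pm k$ using conjugate symmetry to reach the real, absolute-value form in which the bound is stated. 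Beyond this the argument is routine manipulation.
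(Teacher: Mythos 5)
Your proof is correct, and it is the standard shifted-averaging argument for van der Corput's inequality: write $(H+1)\sum_n y_n$ as an average over windows of length $H+1$, apply Cauchy--Schwarz across the $N+H$ window positions, expand the square, and collect terms by lag. All the bookkeeping you flag as delicate checks out (the window count is $N-(1-H)+1=N+H$; the multiplicity of lag $k$ is $H+1-|k|$; the folding via $\sum_m y_{m-k}\overline{y_m}=\overline{\sum_m y_{m+k}\overline{y_m}}$ gives $2\RE(\cdot)\leq 2|\cdot|$; and $\frac{H+1-h}{(H+1)^2}=\frac{1}{H+1}\bigl(1-\frac{h}{H+1}\bigr)$). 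The paper does not include its own proof of this lemma; it cites Steele's \emph{The Cauchy-Schwarz Master Class}, where essentially this same argument appears, so your proposal matches the intended route.
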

The following basic estimate for the sum of a linear phase is also used.
\begin{lemma}
 Let $\alpha \in \bR/\zed$ and let $N \geq 1$. Then
 \begin{equation}
  \left|\sum_{j=1}^N e(\alpha j)\right| \ll \min\left(N, \left\|\alpha\right\|_{\bR/\zed}^{-1} \right).
 \end{equation}

\end{lemma}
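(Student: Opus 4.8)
The plan is to treat the sum as a finite geometric series. First I would dispose of the easy half: by the triangle inequality $\left|\sum_{j=1}^N e(\alpha j)\right| \leq N$, so the substance of the lemma is the complementary bound $\left|\sum_{j=1}^N e(\alpha j)\right| \ll \|\alpha\|_{\bR/\zed}^{-1}$. This is vacuous when $\alpha \equiv 0 \bmod 1$, since then the right-hand side is infinite (and the sum equals $N$), so I may assume $\alpha \not\equiv 0 \bmod 1$, in which case $e(\alpha) \neq 1$.

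Next I would sum the geometric series explicitly, writing $\sum_{j=1}^N e(\alpha j) = e(\alpha)\bigl(e(\alpha N) - 1\bigr)/\bigl(e(\alpha) - 1\bigr)$, and bound the numerator trivially by $|e(\alpha N) - 1| \leq 2$. This reduces the claim to a lower bound for $|e(\alpha) - 1|$ in terms of $\|\alpha\|_{\bR/\zed}$.

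The one genuinely substantive (though still elementary) step is that lower bound. I would write $|e(\alpha) - 1| = 2|\sin(\pi\alpha)|$, observe that this quantity depends only on the residue of $\alpha$ modulo $1$ and hence equals $2\sin\bigl(\pi\|\alpha\|_{\bR/\zed}\bigr)$, and then invoke the concavity of $\sin$ on $[0,\pi/2]$ to obtain $\sin(\pi t) \geq 2t$ for $0 \leq t \leq \tfrac12$. This gives $|e(\alpha) - 1| \geq 4\|\alpha\|_{\bR/\zed}$, hence $\left|\sum_{j=1}^N e(\alpha j)\right| \leq (2\|\alpha\|_{\bR/\zed})^{-1}$, and combining with the trivial bound yields the asserted $\min$. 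I do not anticipate any real obstacle here; the only point requiring a moment's care is to phrase the sine lower bound in terms of the distance to the nearest integer rather than $\alpha$ itself, so that it is valid for all real $\alpha$.
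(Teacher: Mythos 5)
Your proof is correct and follows the same route as the paper, which simply says the bound follows from summing the geometric series; you have supplied the standard details, including the sine lower bound $\sin(\pi t)\geq 2t$ on $[0,\tfrac12]$ phrased in terms of $\|\alpha\|_{\bR/\zed}$.
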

\begin{proof}
 This follows on summing the geometric series.
\end{proof}

Chernoff's inequality is used to control the tail of sums of independent variables, see \cite{TV06}.

\begin{lemma}[Chernoff's inequality]
 Let $X_1, X_2, ..., X_n$ be i.i.d. random variables satisfying $|X_i - \E[X_i]| \leq 1$ for all $i$.  Set $X:= X_1 + \cdots + X_n$ and let $\sigma := \sqrt{\Var(X)}$. For any $\lambda >0$,
 \begin{equation}
  \Prob\left(X-\E[X] \geq \lambda \sigma \right) \leq \max\left(e^{-\frac{\lambda^2}{4}}, e^{\frac{-\lambda \sigma}{2}} \right).
 \end{equation}

\end{lemma}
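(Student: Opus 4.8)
The plan is to apply the standard exponential moment (Bernstein--Chernoff) method. Fix a parameter $0 < t \leq 1$ to be optimized at the end. Since $x \mapsto e^{tx}$ is increasing and positive, Markov's inequality gives
\[
\Prob\!\left(X - \E[X] \geq \lambda \sigma\right) \leq e^{-t\lambda\sigma}\, \E\!\left[e^{t(X - \E[X])}\right] = e^{-t\lambda\sigma} \prod_{i=1}^n \E\!\left[e^{t(X_i - \E[X_i])}\right],
\]
the last step by independence. So everything reduces to bounding a single factor.

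Write $Y_i = t(X_i - \E[X_i])$, so that $\E[Y_i] = 0$ and $|Y_i| \leq t \leq 1$. The elementary inequality $e^y \leq 1 + y + y^2$ holds for all $|y| \leq 1$: on $[-1,0]$ one even has $e^y \leq 1 + y + \tfrac{y^2}{2}$ (the function $1 + y + \tfrac{y^2}{2} - e^y$ has nonnegative derivative there and vanishes at $0$), while on $[0,1]$ one has $e^y - 1 - y = \sum_{k \geq 2} \tfrac{y^k}{k!} \leq y^2 \sum_{k \geq 2} \tfrac{1}{k!} = (e-2)\,y^2 < y^2$. Applying this with $y = Y_i$ and taking expectations, $\E[e^{Y_i}] \leq 1 + \E[Y_i^2] = 1 + t^2\Var(X_i) \leq e^{t^2\Var(X_i)}$. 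Multiplying over $i$ and using $\sigma^2 = \sum_i \Var(X_i)$ yields $\E[e^{t(X - \E[X])}] \leq e^{t^2\sigma^2}$, hence
\[
\Prob\!\left(X - \E[X] \geq \lambda\sigma\right) \leq e^{-t\lambda\sigma + t^2\sigma^2}, \qquad 0 < t \leq 1.
\]

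The last step is to optimize the exponent $\varphi(t) = -t\lambda\sigma + t^2\sigma^2$ over $0 < t \leq 1$. Its unconstrained minimizer is $t^\ast = \lambda/(2\sigma)$, with $\varphi(t^\ast) = -\lambda^2/4$. If $\lambda \leq 2\sigma$ then $t^\ast \leq 1$ is admissible, giving the bound $e^{-\lambda^2/4}$. If $\lambda > 2\sigma$, instead take $t = 1$; then $\varphi(1) = -\lambda\sigma + \sigma^2 < -\lambda\sigma + \tfrac{1}{2}\lambda\sigma = -\tfrac{1}{2}\lambda\sigma$, giving the bound $e^{-\lambda\sigma/2}$. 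In either regime the bound is at most $\max(e^{-\lambda^2/4}, e^{-\lambda\sigma/2})$, which is the claim. There is no real obstacle here; the only subtlety is the constraint $t \leq 1$ imposed by the quadratic bound on $e^y$, which is precisely what forces the two regimes and produces the maximum of the two exponential terms.
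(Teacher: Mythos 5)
The paper does not supply a proof of this lemma; it cites it as a standard result from Tao--Vu \cite{TV06}. Your exponential-moment (Cram\'er--Chernoff) argument is correct and is precisely the standard proof one finds in such references: the Markov bound on $e^{t(X-\E[X])}$, the pointwise inequality $e^{y}\leq 1+y+y^{2}$ on $[-1,1]$ to get $\E[e^{Y_i}]\leq e^{t^{2}\Var(X_i)}$, and the constrained optimization over $t\in(0,1]$ that produces the two regimes $\lambda\leq 2\sigma$ and $\lambda>2\sigma$ and hence the $\max$ in the bound. The one implicit assumption you should flag is $\sigma>0$ (so that $t^{\ast}=\lambda/(2\sigma)$ is defined), but the stated inequality is trivially true when $\sigma=0$ since the right-hand side is then $1$. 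No gaps.
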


The following variant of Chernoff's inequality applies to unbounded random variables with exponentially decaying tails.
\begin{lemma}\label{Chernoff_variant_lemma}
 Let $X_1, X_2, ..., X_n$ be i.i.d. non-negative random variables of variance $\sigma^2$, $\sigma >0$, satisfying the tail bound, for some $c>0$ and for all $Z>0$, $\Prob(X_1 > Z) \ll e^{-cZ}$. Let $X = X_1 + X_2 + \cdots + X_n$.  Then for any $\lambda > 1$, for $c_1 = \frac{\sqrt{c\sigma}}{2}$,
 \begin{equation}
  \Prob\left(|X - \E[X]| \geq \lambda \sigma \sqrt{n}\right) \ll e^{-\frac{\lambda^2}{16}} + n e^{-c_1 \lambda^{\frac{1}{2}} n^{\frac{1}{4}}}.
 \end{equation}

\end{lemma}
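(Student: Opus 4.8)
The plan is a truncation argument: cut each $X_i$ at a level $M$ (to be tuned at the end) into a bounded part, to which Chernoff's inequality in the version stated above applies, plus a tail part that is controlled by the exponential decay hypothesis on $\Prob(X_1 > Z)$.

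\emph{Step 1: truncate and concentrate the bounded sum.} Fix a truncation level $M > 0$, set $\tilde X_i = \min(X_i, M)$, and let $S = \sum_{i=1}^n \tilde X_i$, so that $0 \le \tilde X_i \le M$ and $X_i - \tilde X_i = (X_i - M)^+$. First record the mean shift and the failure probability of truncation:
\begin{equation*}
 \Delta := \E[X] - \E[S] = n\,\E[(X_1-M)^+] = n\int_M^\infty \Prob(X_1>z)\,dz \ll \frac{n}{c}\,e^{-cM}, \qquad \Prob\Bigl(\bigcup_{i=1}^n\{X_i>M\}\Bigr) \le n\,\Prob(X_1>M) \ll n\,e^{-cM}.
\end{equation*}
Next, put $Y_i = \tilde X_i/M \in [0,1]$, so $|Y_i - \E Y_i| \le 1$ and $\Var\bigl(\sum_i Y_i\bigr) = n\Var(\tilde X_1)/M^2 \le n\sigma^2/M^2$, the bound $\Var(\tilde X_1) \le \Var(X_1)$ following from $\Var(g(X_1)) = \tfrac12\E[(g(X_1)-g(X_1'))^2]$ with $X_1'$ an independent copy and $g(x)=\min(x,M)$ a contraction. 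Applying Chernoff's inequality to $Y_i$ and to $-Y_i$ and rescaling by $M$ gives, for every $t>0$,
\begin{equation*}
 \Prob\bigl(|S-\E S|\ge t\bigr) \ll \max\Bigl(e^{-\frac{t^2}{4n\sigma^2}},\ e^{-\frac{t}{2M}}\Bigr).
\end{equation*}

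\emph{Step 2: transfer to $X$ and optimize $M$.} Since $X=S$ off the event $\bigcup_i\{X_i>M\}$, since $S \le X$ always, and since $\Delta\ge 0$,
\begin{equation*}
 \Prob\bigl(|X-\E X|\ge \lambda\sigma\sqrt n\bigr) \le \Prob\Bigl(\bigcup_{i=1}^n\{X_i>M\}\Bigr) + \Prob\bigl(|S-\E S|\ge \lambda\sigma\sqrt n-\Delta\bigr).
\end{equation*}
Now choose $M = \tfrac12\sqrt{\sigma/c}\,\lambda^{1/2}n^{1/4}$, so that $cM = c_1\lambda^{1/2}n^{1/4}$ and $\tfrac{\lambda\sigma\sqrt n}{4M} = c_1\lambda^{1/2}n^{1/4}$. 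If $\lambda^{1/2}n^{1/4}\ge K$ for a constant $K=K(c,\sigma)$, then, because $n \le (\lambda^{1/2}n^{1/4})^4$ and $u^2 e^{-c_1 u}\to 0$, one gets $\Delta \le \tfrac12\lambda\sigma\sqrt n$; taking $t = \lambda\sigma\sqrt n - \Delta \ge \tfrac12\lambda\sigma\sqrt n$ in the concentration bound makes its two exponents $\ge \tfrac{\lambda^2}{16}$ and $\ge c_1\lambda^{1/2}n^{1/4}$, and adding the truncation term $n\,e^{-cM}\ll n\,e^{-c_1\lambda^{1/2}n^{1/4}}$ yields the asserted inequality. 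If instead $\lambda^{1/2}n^{1/4}<K$, then $\lambda$ and $n$ are both bounded, $e^{-\lambda^2/16}$ is bounded below by a positive constant, and the inequality holds trivially after enlarging the implied constant.

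The main obstacle is not any individual estimate but the simultaneous tuning of $M$: it must make the sub-Gaussian exponent of order $\lambda^2$, the Poisson-type exponent of order $\lambda^{1/2}n^{1/4}$, and the truncation-error term of order $n\,e^{-c_1\lambda^{1/2}n^{1/4}}$ all balance, and the exponents $\tfrac12$ and $\tfrac14$ appearing in the statement are precisely what is needed for the three contributions to be compatible. The only other delicate point is controlling $\Delta$ against $\lambda\sigma\sqrt n$ uniformly in $\lambda$ and $n$; this is handled by the observation that $\Delta$ can fail to be negligible only when $\lambda^{1/2}n^{1/4}$ is bounded, a regime in which the desired estimate is vacuous.
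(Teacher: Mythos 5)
Your proof is correct, and it follows essentially the same strategy as the paper's: truncate at a level $\asymp\lambda^{1/2}n^{1/4}$ (your $M$ equals the paper's $Z = \sqrt{\lambda\sigma\sqrt n/(4c)}$), apply the bounded Chernoff inequality to the truncated sum, and union-bound the exceptional tail event. The only cosmetic difference is that you clip via $\min(X_i,M)$ and correct for the mean shift $\Delta$ explicitly (with a clean treatment of the small-$\lambda^{1/2}n^{1/4}$ edge case), whereas the paper replaces the overflow by the conditional mean $\mu'$ so that the mean shift is automatic; both are sound.
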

\begin{proof}
 Let $Z$ be a parameter, $Z \gg n^{\frac{1}{4}}$.  Let $X_i'$ be $X_i$ conditioned on $X_i \leq Z$.  Let $\mu' = \E[X_i']$.  Let $X_i'' = X_i \cdot \one(X_i \leq Z) + \mu' \cdot \one(X_i >Z)$ and $X'' = X_1'' + X_2'' + \cdots + X_n''$. We have
 \begin{align*}
  \E[X_i \cdot \one(X_i \geq Z)] &= - \int_Z^\infty x d\Prob(X_i \geq x)\\
  &=Z \Prob(X_i \geq Z) + \int_Z^\infty \Prob(X_i \geq x) dx\\
  &\ll Z e^{-cZ} + \int_Z^\infty e^{-cx}dx \leq \left(Z + \frac{1}{c}\right) e^{-cZ}.
 \end{align*}
Thus, for some $c'>0$, $\E[X''] = \E[X] + O(ne^{-c'Z})$.  Also,
\begin{align*}
 \Var(X_i) &= \E[(X_i-\E[X_i])^2] \\&\geq \E[(X_i-\E[X_i])^2 \one(X_i \leq Z)] \\&\geq \E[(X_i-\mu')^2 \one(X_i \leq Z)]\\& = \Var(X_i'').
\end{align*}
Since $|X_i''| \leq Z$, for all $n$ sufficiently large, applying Chernoff's inequality,
\begin{align*}
 \Prob(|X-\E[X]| > \lambda \sigma \sqrt{n})&\leq \sum_{i=1}^n \Prob(X_i'' \neq X_i) \\&+ \Prob\left(|X'' -\E[X'']| > \frac{\lambda}{2} \sigma \sqrt{n}\right)\\
 &\ll n e^{-cZ} + 2\max\left(e^{-\frac{\lambda^2}{16}}, e^{-\frac{\lambda \sigma \sqrt{n}}{4 Z}}\right).
\end{align*}
To optimize the exponents, choose $Z^2 = \frac{\lambda \sigma \sqrt{n}}{4c}$ to obtain the claim.
\end{proof}

The local limit theorem for sums of lattice random variables is used in the argument.  As discrete derivatives are needed, a self-contained proof  is given. This is similar to the treatment in \cite{LL10}, but the claim here extends further into the tail of the distribution. The proof is given in Appendix \ref{Green_fn_appendix}.

\begin{theorem}[Local limit theorem]\label{local_limit_theorem}
 Let $\mu$ be a probability measure on $\zed^d$, satisfying the following conditions
 \begin{enumerate}
  \item (Lazy) $\mu(0)>0$
  \item (Symmetric) $\mu(x) = \mu(-x)$
  \item (Generic) $\supp(\mu)$ generates $\bR^d$.  There is a constant $k > 0$ such that $\mu^{*k}$ assigns positive measure to each standard basis vector.
  \item (Exponential tails) There is a constant $c > 0$ such that, for all $r \geq 1$,
  \begin{equation}
   \mu(|x| > r) \ll e^{-c r}.
  \end{equation}
 \end{enumerate}
 Let $\Cov(\mu) = \sigma^2$ where $\sigma$ is a positive definite symmetric matrix. For all $\ua \in \bN^d$ there is a polynomial  $Q_{\ua}(x_1, ..., x_d)$, depending on $\mu$, of degree at most $a_i$ in $x_i$ such that,   for all $N \geq 1$, and all $n \in \zed^d$,
 \begin{align*}
  \delta_1^{*a_1} * &\delta_2^{*a_2} * \cdots * \delta_d^{*a_d}* \mu^{*N}(n) = \frac{\exp\left(-\frac{\left|\sigma^{-1} \left(n + \frac{\ua}{2}\right) \right|^2}{2N} \right)}{N^{\frac{d + |\ua|}{2}}}\\
  &\times \Biggl(Q_{\ua}\left(\frac{n + \frac{\ua}{2}}{\sqrt{N}} \right)+ O\left(\frac{1}{N}\left(1 +\frac{\|n\|}{\sqrt{N}}\right)^{|\ua|+4} \right)  \Biggr)\\
  &+ O_\epsilon\left(\exp\left( -N^{\frac{3}{8} -\epsilon}\right) \right).
 \end{align*}
In the case of the gradient convolution operator $\nabla = \begin{pmatrix} \delta_1 \\ \vdots \\ \delta_d\end{pmatrix}$,
\begin{align*}
 \nabla \mu^{*N}(n) &= -\frac{\sigma^{-2} n}{N} \frac{\exp\left(-\frac{\|\sigma^{-1}n\|^2}{2N} \right)}{(2\pi)^{\frac{d}{2}} N^{\frac{d}{2}} \det \sigma}\\&+ O\left(\frac{\exp\left(-\frac{\|\sigma^{-1}n\|^2}{2N} \right)}{N^{\frac{d+2}{2}}}\left(1+\frac{\|n\|}{\sqrt{N}} \right)^5\right) + O_\epsilon\left(\exp\left( -N^{\frac{3}{8} -\epsilon}\right) \right).
\end{align*}

\end{theorem}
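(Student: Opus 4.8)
The plan is to carry out the standard Fourier‑analytic (characteristic‑function) proof of the local central limit theorem, as in \cite{LL10}, but pushing the error analysis to the exponentially small floor $\exp(-N^{3/8-\epsilon})$. Write $\phi(\theta) = \sum_{x \in \zed^d}\mu(x)e^{i x\cdot\theta}$ for $\theta \in [-\pi,\pi]^d$. By symmetry $\phi$ is real‑valued and even; since $0 \in \supp(\mu)$ one has $\phi(\theta) > -1$ for all $\theta$, and since $\supp(\mu)$ together with the basis vectors produced by the genericity hypothesis generates $\zed^d$ as a group, $\phi(\theta) < 1$ for $\theta \ne 0$, so $|\phi(\theta)| < 1$ off the origin and hence $|\phi(\theta)| \le 1 - c_0$ outside any fixed neighborhood of $0$ by compactness. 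The exponential tail hypothesis makes $\phi$ real‑analytic near the origin, with cumulant expansion $\log\phi(\theta) = -\tfrac12\theta^T\sigma^2\theta + c_4(\theta) + c_6(\theta) + \cdots$, each $c_{2k}$ homogeneous of degree $2k$ (odd terms vanish by symmetry), convergent for $|\theta|$ below a fixed radius. Fourier inversion gives
\begin{equation*}
 \delta_1^{*a_1}*\cdots*\delta_d^{*a_d}*\mu^{*N}(n) = \frac{1}{(2\pi)^d}\int_{[-\pi,\pi]^d}\prod_{j=1}^d\bigl(e^{-i\theta_j}-1\bigr)^{a_j}\,\phi(\theta)^N\, e^{-in\cdot\theta}\,d\theta ,
\end{equation*}
and the identity $e^{-i\theta_j}-1 = -2i\,e^{-i\theta_j/2}\sin(\theta_j/2)$ lets me factor out the phase $e^{-i(\ua/2)\cdot\theta}$, combining it with $e^{-in\cdot\theta}$ to produce exactly the shifted point $n+\tfrac{\ua}{2}$ and leaving the bounded oscillatory factor $(-i)^{|\ua|}\prod_j(2\sin(\theta_j/2))^{a_j}$. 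The gradient statement is the vector of the cases $\ua = e_i$.

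Next I would split the integral at a truncation radius $r_N \asymp N^{-5/16}$ (any scale with $r_N = o(N^{-1/4})$ and $Nr_N^2 \asymp N^{3/8}$ works). On $|\theta| > r_N$, combining the quadratic lower bound $\theta^T\sigma^2\theta \gtrsim |\theta|^2$ near the origin (positive definiteness of $\sigma^2$) with the bound $1-c_0$ away from it gives $|\phi(\theta)|^N \le \exp(-c\min(Nr_N^2,N)) \le \exp(-cN^{3/8})$ for large $N$, which is absorbed into the claimed floor. On $|\theta| \le r_N$ I substitute $\theta = u/\sqrt N$ (so $|u| \le r_N\sqrt N \asymp N^{3/16}$), at which scale the cumulant expansion gives $\phi(u/\sqrt N)^N = \exp\bigl(-\tfrac12 u^T\sigma^2 u\bigr)\bigl(1 + N^{-1}c_4(u) + \cdots\bigr)$ with all remaining terms of the form $N^{1-k}c_{2k}(u)$ uniformly small on $|u|\le N^{3/16}$, and likewise $\prod_j(2\sin(u_j/(2\sqrt N)))^{a_j} = N^{-|\ua|/2}u^{\ua}\bigl(1 + O(|u|^2/N)\bigr)$; extending the resulting Gaussian‑type integrand from $|u|\le N^{3/16}$ to all of $\bR^d$ costs another $\exp(-cN^{3/8})$.

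The remaining model integrals $\int_{\bR^d} u^{\ua}\,(\text{polynomial in }u)\,e^{-iv\cdot u}e^{-\frac12 u^T\sigma^2 u}\,du$ with $v = (n+\tfrac{\ua}{2})/\sqrt N$ are evaluated by differentiating the Gaussian $(2\pi)^{d/2}(\det\sigma)^{-1}e^{-\frac12 v^T\sigma^{-2}v}$ in $v$; the leading term produces $N^{-(d+|\ua|)/2}\exp(-|\sigma^{-1}(n+\tfrac{\ua}{2})|^2/2N)$ times a polynomial $Q_{\ua}(v)$ of degree at most $a_i$ in $v_i$ (a Hermite‑type polynomial, the prefactor $(-i)^{|\ua|}$ cancelling the $i^{|\ua|}$ from the $v$‑derivatives to leave a real quantity), and the first correction — coming from $c_4$ (degree $4$) and the $\sin$‑expansion (degree $2$) — integrates to $N^{-1}$ times a polynomial of degree $|\ua|+4$ in $v$, giving the stated error $O(N^{-1}(1+\|n\|/\sqrt N)^{|\ua|+4})$. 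For the gradient, the case $|\ua| = 1$ gives the first‑order term $-\sigma^{-2}n/N$ times the Gaussian, the shift $e_i/2$ and the lower‑degree pieces of $Q_{e_i}$ being absorbed into the $O(N^{-(d+2)/2}(1+\|n\|/\sqrt N)^5)$ remainder.

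The main obstacle is the uniform bookkeeping of the error terms across all $n$: verifying that every neglected contribution is genuinely of the stated shape $N^{-(d+|\ua|)/2}\exp(-|\sigma^{-1}(n+\tfrac{\ua}{2})|^2/2N)\,O(N^{-1}(1+\|n\|/\sqrt N)^{|\ua|+4})$ plus the floor, rather than merely $O(N^{-1}\|n\|^{c})$ for some unspecified $c$. This forces one to keep the Gaussian weight $e^{-\frac12 u^T\sigma^2 u}$ attached to the integrand throughout the expansion (so that polynomial factors in $u$ become polynomial factors in $v$ only after integration), to check that the truncation and Gaussian‑tail losses really do land below $\exp(-N^{3/8-\epsilon})$ rather than merely below the main term, and to observe that in the regime where $\|n\|$ is so large that the $Q_{\ua}$ polynomial would exceed $N$ the Gaussian factor itself is already below the floor, so the bound is automatic there.
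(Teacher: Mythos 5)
Your proposal follows the standard Fourier-analytic route and matches the paper's argument in its broad outline: the Fourier inversion formula, the factorization $e^{-i\theta_j}-1 = -2i\,e^{-i\theta_j/2}\sin(\theta_j/2)$ that shifts $n$ to $n+\tfrac{\ua}{2}$, the decay of $\phi^N$ away from the origin via laziness and genericity, the truncation, the identification of the $N^{-1}$ correction as coming from $c_4$ and the $\sin$ Taylor expansion, and the gradient case as $\ua = e_i$. These are all correct and align with the paper.

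There is, however, a genuine gap in how you treat the non-polynomial remainder of the cumulant expansion. Writing $\phi(u/\sqrt N)^N = e^{-\frac12 u^T\sigma^2 u}\bigl(1 + N^{-1}c_4(u) + E(u)\bigr)$, you evaluate the polynomial contributions exactly via Gaussian-derivative (Hermite) identities — which correctly produces the factor $e^{-\frac12 v^T\sigma^{-2}v}$ — but the $E(u)$ contribution is only bounded by taking absolute values inside the truncated integral, giving $\int_{|u|\le N^{3/16}}|E(u)|\,|u^{\ua}|\,e^{-\frac12 u^T\sigma^2 u}\,du = O(N^{-2})$, \emph{uniformly in $v$}. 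This is too coarse: the theorem's error is $e^{-\|\sigma^{-1}(n+\ua/2)\|^2/(2N)}\,N^{-(d+|\ua|)/2-1}(1+\|v\|)^{|\ua|+4} + \exp(-N^{3/8-\epsilon})$, which for $\|n\|$ in the intermediate range (roughly $\sqrt{N\log N}\lesssim\|n\|\lesssim N^{3/4}$) is much smaller than $N^{-2-(d+|\ua|)/2}$, because the Gaussian prefactor is then superpolynomially small while the absolute floor is exponentially small. The oscillation of $e^{-iv\cdot u}$ against $E(u)$ must be exploited; throwing it away loses precisely the $v$-dependent Gaussian decay. Keeping more cumulant terms does not help — the Hermite polynomial coefficients grow factorially and defeat the geometric gain. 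The paper circumvents this by completing the square and shifting the contour of integration into the complex plane (substituting $y = x - i\sigma^{-2}(n+\tfrac{\ua}{2})/(2\pi N)$): on the shifted contour the Gaussian prefactor is already extracted, so the $O(N\|x\|^4)$ and $\sin$-expansion remainders, estimated pointwise there, automatically inherit the Gaussian decay in $n$. You flag the issue ("keep the Gaussian weight attached," "uniform bookkeeping"), but the real-axis evaluation does not supply the needed mechanism for $E(u)$; the complex contour shift — or an equivalent saddle-point argument — appears unavoidable. Note also that once the contour shift is in place it only applies for $\|n\|\lesssim N^{3/4-\epsilon/4}$ (so the imaginary shift stays small enough for the side-contour bound), which is why the paper supplements with a Chernoff estimate for $\|n\|^2 \ge N^{3/2-\epsilon/2}$; your argument would inherit the same constraint.
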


\section{The Green's function of a tiling}\label{Greens_function_section}
This section constructs the Green's function of a periodic tiling and records some of its analytic properties, which are proved in Appendix \ref{Green_fn_appendix}.
For the potential theory of random walks, see \cite{S13}. Special cases are worked out in \cite{KW16}.  

Let $\sT \subset \bR^d$ be a tiling which is $\Lambda$-periodic for a lattice $\Lambda$, $|\sT/\Lambda| < \infty$.  Assume $0 \in \sT$.  Given $v, x \in \sT$, a Green's function $g_v(x)$,  which satisfies
\begin{equation}
 \Delta g_v(x) = \delta_v(x),
\end{equation}
may be obtained iteratively by imposing  the \emph{mean value property}
\begin{equation}\label{iterative_estimate}
 g_v(x) = C + \frac{1}{\deg v}\left(\delta_v (x) + \sum_{(v,w) \in E} g_w(x)\right),
\end{equation}
which is formally justified, since
\begin{align}\label{laplace_mean_value}
 \Delta g_v(x) &= \Delta C + \delta_v(x) - \frac{1}{\deg v}\sum_{(x,y) \in E} \delta_v(y)  + \frac{1}{\deg v} \sum_{(v,w) \in E} \Delta g_w(x)\\
 \notag &= 0 + \delta_v(x) - \frac{1}{\deg v} \sum_{(x,y) \in E} \delta_v(y) + \frac{1}{\deg v} \sum_{(v,w) \in E} \delta_w(x)\\
\notag &= \delta_v(x) - \frac{\deg(v,x)}{\deg v} + \frac{\deg (v,x)}{\deg v} \\\notag &= \delta_v(x).
\end{align}

Let $P$ be the transition kernel of random walk on $\sT$, and $P^n$ the transition kernel of $n$ steps of the random walk, thus $P^n(v,w)$ is the probability of transitioning from $v$ to $w$ in $n$ steps. Equation (\ref{iterative_estimate}) may be written
\begin{equation}
 g_v(x) = C + \frac{\delta_v(x)}{\deg v} + \sum_{w \in V} P^1(v,w) g_w(x).
\end{equation}
Iterating, for any $n \geq 1$, 
\begin{equation}
g_v(x) = C + \sum_{j=0}^n \frac{P^j(v,x)}{\deg x} + \sum_{w \in V} P^{n+1}(v,w) g_w(x).
\end{equation}
In dimension 2 it is common to regularize this by setting
\begin{equation}\label{2d_greens_function}
 g_v(x) = \sum_{n=0}^\infty \left(\frac{P^n(v,x)}{\deg x}  - \frac{P^n(v,v)}{\deg v}\right).
\end{equation}
In dimensions $d \geq 3$ it is customary to set $C = 0$ above, and 
\begin{equation}\label{gtr_2d_greens_function}
 g_v(x) = \sum_{n=0}^\infty \frac{P^n(v,x)}{\deg x}.
\end{equation}
Assuming the sums converge, which is justified shortly,
\begin{equation}\label{green_fn_P}
 \Delta g_v(x) = P^0(v,x) + \sum_{n=0}^\infty \left(P^{n+1}(v,x) - \sum_{(w,x) \in E} \frac{P^{n}(v,w)}{\deg w}\right)
\end{equation}
and each summand vanishes, while $P^0(v,x) = \delta_v(x)$.

For computations, an alternative description of the Green's function is more useful.  Recall that $\varrho$ is the measure on $\Lambda$ of random walk started from 0 and stopped at the first positive time $T_0$ at which it reaches $\Lambda$.  Let $\E[T_0] = \alpha > 0$.   

\begin{lemma}
 The measure $\varrho$ is symmetric, that is $\varrho(x) = \varrho(-x)$.
\end{lemma}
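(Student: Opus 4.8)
The plan is to reduce the symmetry of the stopped measure $\varrho$ to the $\Lambda$-periodicity of the tiling $\sT$ together with the reversibility of simple random walk. First I would observe that, by definition, $\varrho(x)$ is the sum over all finite paths $0 = v_0, v_1, \dots, v_n = x$ in $\sT$ with $v_1, \dots, v_{n-1} \notin \Lambda$ of the path probability $\prod_{j=0}^{n-1} \frac{\deg(v_j, v_{j+1})}{\deg v_j}$. The idea is to biject each such path with its reversal $x = v_n, v_{n-1}, \dots, v_0 = 0$, which is again a path from a lattice point to a lattice point whose interior vertices avoid $\Lambda$, and then to translate by $-x$. Since $x \in \Lambda$ and $\sT$ is $\Lambda$-periodic, the translated reversed path $v_n - x, v_{n-1} - x, \dots, v_0 - x$ runs from $0$ to $-x$, has interior vertices avoiding $\Lambda$, and its path probability is the same as that of the reversed path by periodicity of the degrees; so summing over all such paths gives $\varrho(x) = \varrho(-x)$.

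The concrete steps are: (1) write $\varrho(x) = \sum_{\text{paths}} \prod_{j=0}^{n-1} \frac{\deg(v_j,v_{j+1})}{\deg v_j}$ as above, with the sum absolutely convergent by the exponential tail bound on $T_0$ from the preceding lemma, so rearrangements are justified; (2) compare the forward path probability $\prod_{j=0}^{n-1} \frac{1}{\deg v_j}$ (writing $\deg(v_j,v_{j+1})$ as a weight on the edge, which is reversal-invariant) with the reverse path probability $\prod_{j=1}^{n} \frac{1}{\deg v_j}$, noting these differ only by the factor $\frac{\deg v_n}{\deg v_0} = \frac{\deg x}{\deg 0}$, which equals $1$ because $x, 0 \in \Lambda$ and $\sT$ is $\Lambda$-periodic; (3) translate the reversed path by $-x$ and invoke $\Lambda$-periodicity to see that the edge multiplicities and degrees are unchanged, and that the avoidance condition $v_j \notin \Lambda$ is preserved since $v_j - x \notin \Lambda \iff v_j \notin \Lambda$; (4) conclude that path-reversal-then-translation is a measure-preserving bijection from paths realizing $\varrho(x)$ to paths realizing $\varrho(-x)$, giving $\varrho(x) = \varrho(-x)$. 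This is exactly the argument already carried out in the proof of Lemma \ref{C_2_translation_invariance_lemma} to show a stopped walk has the right mean, so I would phrase it as a clean special case of that computation.

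I do not expect any serious obstacle here; the only mild subtlety is bookkeeping the multi-edge case (so that $\deg(v_j, v_{j+1})$ rather than $\one((v_j,v_{j+1})\in E)$ appears), and justifying the interchange of the reversal bijection with the infinite sum, both of which are handled by the exponential decay of $\Prob(T_0 > n)$.
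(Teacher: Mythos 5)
Your argument is correct and is essentially the same path-reversal-plus-translation proof given in the paper: the paper also compares the forward and reverse path probabilities $\prod_{j=0}^{n-1}\frac{1}{\deg v_j}$ and $\prod_{j=1}^{n}\frac{1}{\deg v_j}$, uses $\deg v_0=\deg v_n$ from $\Lambda$-periodicity, translates by $-x$, and sums over paths. The extra bookkeeping you note on multi-edges and absolute convergence is sound but not something the paper dwells on.
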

\begin{proof}
 Let $0 = v_0, v_1, v_2, ..., v_n = x$ be a path from 0 to $x$ such that $v_i \not \in \Lambda$ for $1 \leq i \leq n-1$.  Let $e_1, e_2, ..., e_n$ be edges with $e_i$ connecting $v_{i-1}$ and $v_i$.  The probability of following $e_1, e_2, ..., e_n$ in succession is $\prod_{j=0}^{n-1} \frac{1}{\deg v_j}$. The probability of following that path in reverse is $\prod_{j=1}^n \frac{1}{\deg v_j}$.  Since $\deg v_0 = \deg v_n$ by $\Lambda$-periodicity, running the path in reverse has the same probability.  This is also true of the path translated by $-x$.  Summing over all paths that lead to $x$ proves that $\varrho(x) \leq \varrho(-x)$. By symmetry, $\varrho(x) = \varrho(-x)$.
\end{proof}

\begin{lemma}
 In dimension 2, for $x \in \Lambda$,
 \begin{equation}
  g_0(x) = \sum_{n=0}^\infty \frac{P^n(0,x)}{\deg x} - \frac{P^n(0,0)}{\deg 0} = \sum_{n=0}^\infty \frac{\varrho^{*n}(x)}{\deg x} - \frac{\varrho^{*n}(0)}{\deg 0}
 \end{equation}
and both sums converge.  If the dimension is $\geq 3$, then
\begin{equation}
g_0(x) = \sum_{n=0}^\infty \frac{P^n(0,x)}{\deg x} = \sum_{n=0}^\infty \frac{\varrho^{*n}(x)}{\deg x}
\end{equation}
and both sums converge. Restricted to $\Lambda$, in dimension 2, $g_0(x) \ll 1 + \log (2 + \|x\|)$ and in dimension $n > 2$, $g_0(x) \ll \frac{1}{(1 +\|x\|)^{n-2}}$.
\end{lemma}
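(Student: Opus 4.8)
The plan is to run simple random walk $(Y_{0,n})$ on $\sT$ started at $0$, to watch it only at the successive times it visits $\Lambda$, and thereby reduce to the potential theory of a random walk on $\Lambda\cong\zed^d$, where the local limit theorem (Theorem~\ref{local_limit_theorem}) applies. Two preliminary reductions help: by $\Lambda$-periodicity $\deg(x)=\deg(0)$ for all $x\in\Lambda$, so on $\Lambda$ every $\deg x$ or $\deg 0$ becomes $\deg(0)$; and replacing $\varrho$ by the lazy measure $\varrho'=\tfrac12(\delta_0+\varrho)$ multiplies every Green's function by a harmless constant while making $\varrho'$ satisfy all four hypotheses of Theorem~\ref{local_limit_theorem} --- symmetry and exponential tails are inherited from $\varrho$ (the latter from the previous Lemma and $d(0,x)\asymp\|x\|$ on $\Lambda$), laziness is automatic, and genericity follows once $\supp\varrho$ is known to generate $\Lambda$. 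That last point I would get from connectedness of $\sT$: if $\langle\supp\varrho\rangle$ were a proper sublattice $\Lambda'$, then decomposing a path from $0$ to a point of $\Lambda\setminus\Lambda'$ into successive $\Lambda$-excursions would force that point into $\Lambda'$. In particular $\Cov(\varrho)$ is positive definite.

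Now let $0=T^{(0)}<T^{(1)}<T^{(2)}<\cdots$ enumerate the times $n$ with $Y_{0,n}\in\Lambda$; these are a.s.\ finite by the exponential tail of $T_v$, and by the strong Markov property and $\Lambda$-periodicity the chain $S_k:=Y_{0,T^{(k)}}$ has i.i.d.\ increments of law $\varrho$, so $S_k\sim\varrho^{*k}$. Since $\{n:Y_{0,n}\in\Lambda\}=\{T^{(k)}\}$ and $Y_{0,T^{(k)}}=S_k$, the number of visits of $(Y_{0,n})$ to $x\in\Lambda$ equals that of $(S_k)$, and taking expectations gives $\sum_n P^n(0,x)=\sum_k\varrho^{*k}(x)$ whenever either side is finite. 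For $d\ge3$ it is: $\widehat{\varrho'}(\theta)=\tfrac12(1+\widehat\varrho(\theta))\ge0$ forces $(\varrho')^{*k}(x)\le(\varrho')^{*k}(0)\ll k^{-d/2}$ by Theorem~\ref{local_limit_theorem}, which is summable; dividing by $\deg(0)$ gives both equalities in \eqref{gtr_2d_greens_function}, their convergence, and incidentally transience of the walk on $\sT$. For the decay, split $\sum_k\varrho^{*k}(x)$ at $k\asymp\|x\|^2$: for $k\gtrsim\|x\|^2$, $(\varrho')^{*k}(x)\ll k^{-d/2}$ contributes $\ll\|x\|^{2-d}$; for $k\lesssim\|x\|^2$, the Gaussian heat-kernel bound $(\varrho')^{*k}(x)\ll k^{-d/2}e^{-c\|x\|^2/k}$ --- from Theorem~\ref{local_limit_theorem} when $k$ is not too small, and from Chernoff's inequality (Lemma~\ref{Chernoff_variant_lemma}), which makes the contribution negligible, when $k$ is small --- also contributes $\ll\|x\|^{2-d}$. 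Hence $g_0(x)\ll(1+\|x\|)^{2-d}$ for $d>2$.

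In dimension $2$ the walk is recurrent, the bare series diverge, and the content of the first displayed formula is a statement about the regularized sums. The discrete-gradient case of Theorem~\ref{local_limit_theorem} gives $|(\varrho')^{*k}(x)-(\varrho')^{*k}(0)|\ll\|x\|\,k^{-3/2}$ for $k\gtrsim\|x\|^2$ (telescope along a lattice geodesic from $0$ to $x$), so $\sum_k(\varrho^{*k}(x)-\varrho^{*k}(0))$ converges, and together with $\varrho^{*k}(x)+\varrho^{*k}(0)\ll k^{-1}$ for $k\lesssim\|x\|^2$ this also gives $\deg(0)^{-1}\sum_k(\varrho^{*k}(x)-\varrho^{*k}(0))\ll1+\log(2+\|x\|)$. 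To identify this function --- call it $G$ on $\Lambda$, harmonically extended off $\Lambda$ by $G(v)=\E_v[G(Y_{v,T_v})]$ --- with \eqref{2d_greens_function}, one checks $\Delta G=\delta_0$ on $\sT$ (off $\Lambda$ by construction; on $\Lambda$ because the elementary telescoping identity $(I-\varrho)\bigl(\sum_k(\varrho^{*k}(\cdot)-\varrho^{*k}(0))\bigr)=\delta_0$, valid by recurrence, transfers --- via the equality of neighbour-averages in $\sT$ with $\varrho$-averages on $\Lambda$ --- to $\Delta G|_\Lambda=\delta_0$), $G(0)=0$, and $|G(v)|\ll\log(2+\|v\|)$ on $\sT$. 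The truncations $u_N(x):=\sum_{n=0}^N(P^n(0,x)/\deg x-P^n(0,0)/\deg 0)$ satisfy $\Delta u_N=\delta_0-P^{N+1}(0,\cdot)$ and $u_N(0)=0$; granting the uniform bound $|u_N(x)|\ll\log(2+\|x\|)$, every subsequential limit of $u_N$ is a Green's function of at most logarithmic growth vanishing at $0$, and two such differ by a harmonic function of at most logarithmic growth on $\sT$, necessarily constant, hence coincide; so $u_N\to G$, which is the first displayed identity and its $\log$ bound.

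I expect the main obstacle to be exactly this last identification in $d=2$: both series are only conditionally convergent, so one must establish carefully that the time-horizon and renewal-count regularizations produce the same function. The uniqueness-of-sublinear-harmonic-functions argument above is clean but rests on the uniform bound $|u_N(x)|\ll\log(2+\|x\|)$, which one would get either by transporting standard Gaussian and gradient heat-kernel estimates from $\sT$ through the stopped walk, or --- rewriting $u_N(x)\deg(0)=\sum_k\E[\one_{T^{(k)}\le N}(\one_{S_k=x}-\one_{S_k=0})]$ via the excursion decomposition --- from a local limit theorem for the joint (excursion-length, excursion-displacement) renewal walk on $\zed^{1+d}$, which supplies the cancellation needed to dominate the series uniformly in $N$; alternatively one pushes the interchange of limits through directly using reversibility of $P$ and the constancy of $\deg$ on $\Lambda$. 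The only other point, purely bookkeeping, is that because $\sT$ is not a group, Theorem~\ref{local_limit_theorem} is applied not on $\sT$ but on $\Lambda\cong\zed^d$ through the stopped walk, which is what forces the lazification and the nondegeneracy check for $\Cov(\varrho)$.
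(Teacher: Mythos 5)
Your reduction to $\Lambda$ via the stopped walk, the lazification to apply Theorem~\ref{local_limit_theorem}, and the connectedness argument for genericity of $\supp\varrho$ are all correct and match the paper's framing (the paper passes to $\varrho^{*2}$ rather than $\varrho_{1/2}$, but this is cosmetic). Your handling of $d\ge 3$ is clean, and in fact slightly simpler than the paper's: counting visits of $Y$ to $x$ via the subsequence $(S_k)$ gives $\sum_n P^n(0,x)=\sum_k\varrho^{*k}(x)$ directly by monotone convergence, without any renewal bookkeeping; the decay $\ll\|x\|^{2-d}$ by splitting at $k\asymp\|x\|^2$ is what the paper does. The convergence of the regularized $\varrho$-series in $d=2$ and the $\log$ bound are also fine.

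Where you diverge from the paper, and where the genuine gap is, is the identification of the two regularized sums in $d=2$. You propose to show the truncations $u_N$ have a unique subsequential limit by (a) establishing the uniform bound $|u_N(x)|\ll\log(2+\|x\|)$ and (b) invoking constancy of sublinear harmonic functions on $\sT$; you flag (a) as the main obstacle and list three possible routes without carrying any of them out, and (b) is not established on a general periodic tiling either (it is plausible via the stopped-walk reduction to $\zed^2$, but it is an extra lemma). As written, neither (a) nor (b) is closed, so the $d=2$ identification is not proved. The paper takes a more direct route that avoids both: it compares the partial sums at matched time horizons, writing $\sum_{j=0}^{n}P^j(0,x)=\E[\sum_{k<T(n)}\one(S_k=x)]$ and approximating $T(n)$ by $n/\alpha$ with $\alpha=\E[T_0]$; the discrepancy is controlled by the local limit theorem for $\varrho$ (giving $\varrho^{*j}(x)\ll j^{-d/2}$ to bound the window $|j-n/\alpha|\le n^{3/4}$) together with Lemma~\ref{Chernoff_variant_lemma} applied to $S_j=T_1+\cdots+T_j$ (giving $\Prob(|T(n)-n/\alpha|>n^{3/4})\ll ne^{-cn^{3/8}}$). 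Applying this to both $x$ and $0$ and subtracting shows the difference of the two regularizations tends to $0$ with the truncation parameter, which is exactly the identity you need. This direct renewal-clock comparison is the step your proposal is missing; with it, no uniqueness-of-Green's-functions argument or uniform bound on $u_N$ is needed.
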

\begin{proof}
There is a positive probability that $\varrho^{*2}(0) >0$, since the measure $\varrho$ is symmetric. Let $\sigma^2$ be the covariance matrix. It follows that the local limit theorem, Theorem \ref{local_limit_theorem}  applies to $\varrho^{*2}$, see also \cite{LL10}.  This implies the following bounds on the density of $\varrho^{*n}(x)$, for any $A>0$,
\begin{align*}
 \varrho^{*n}(x) \ll \left\{\begin{array}{lll}\frac{e^{- \frac{\|\sigma^{-1} x\|^2}{n}}}{n^{\frac{d}{2}}} && n \geq \frac{\|x\|^2}{(\log (2+\|x\|))^2}\\ O_A\left((1+\|x\|)^{-A}\right) && n < \frac{\|x\|^2}{(\log (2+\|x\|))^2} \end{array} \right..
\end{align*}

When $d \geq 3$ this justifies the convergence of the $\varrho$ sums for $d \geq 3$ and also the bound on $g_0(x)$ as $x \to \infty$, since the sum is concentrated around $n$ of order $\|x\|^2$.  

To treat the case $d = 2$, notice that in defining the stopping time related to the measure $\varrho$, there is a positive probability that $Y_{0,2} = 0$, so that if $\varrho$ has periodicity, the only possible periodicity is 2.  Again by the local limit theorem on $\bR^2$, either $\varrho^{*n}(x) - \varrho^{*n}(0) \ll n^{-\frac{3}{2}}$ or $\varrho^{*n}(x) - \varrho^{*(n+1)}(0) \ll n^{-\frac{3}{2}}$ as $n \to \infty$, which again justifies the convergence.  The bound on $g_0$ can be proved by noting that
 $P^n(0,0), P^n(0,x) \ll \frac{1}{n}$ so that 
 \begin{equation}
\sum_{n \ll \|x\|^2} \frac{P^n(0,x)}{\deg x} - \frac{P^n(0,0)}{\deg 0} \ll \log(2 + \|x\|^2)
 \end{equation}
 while, since $\deg 0 = \deg x$, for some $c > 0$,
 \begin{align*}
  &\left|\sum_{n \gg \|x\|^2} \frac{P^n(0,x)}{\deg x} - \frac{P^n(0,0)}{\deg 0}\right| \\&\ll \frac{1}{\deg 0}\sum_{n \gg \|x\|^2} \frac{\left|e^{-\frac{\|\sigma^{-1}x\|^2}{n}}-1\right|}{n} + O\left(n^{-\frac{3}{2}} \right)= O(1).
 \end{align*}
 The last line uses the leading order term of the local limit theorem, which is proportional to the Gaussian density at the point in this range.

To show the equality of the $P$ and $\varrho$ sums, given the random walk $Y_{0,n}$ let $S_0 = 0< S_1<S_2< ...$ be the return times to $\Lambda$.  The distribution of $Y_{0,S_n}$ is the same as that of $\varrho^{*n}$.  Let $T(n)$ be the least $j$ such that $S_j > n$.  Since for $x \in \Lambda$,
 \begin{align*}
  \sum_{j=0}^n P^j(0, x) &= \E\left[\sum_{j = 0}^n \one(Y_{0,S_j} = x \wedge S_j \leq n) \right]\\
  &= \sum_{j=0}^{\frac{n}{\alpha}} \varrho^{*j}(x) + O\left(\sum_{\left| j - \frac{n}{\alpha}\right| \leq n^{\frac{3}{4}}} \varrho^{*j}(x) \right) \\ &+ O\left(n \Prob\left(\left|T(n) - \frac{n}{\alpha}\right|> n^{\frac{3}{4}}\right)\right).
 \end{align*}
The first error term tends to 0 as $n \to \infty$ by the local limit theorem for $\varrho$, since $\varrho^{*j}(x) \ll j^{-\frac{d}{2}}$.  To bound the second error term, write $S_j = T_1 +  T_2 + \cdots + T_j$, where $T_1, ..., T_j$ are independent copies of the random variable $T_0$ which is the first return time to the lattice $\Lambda$.  These variables have exponentially decaying tails, and hence the variant of Chernoff's inequality in Lemma \ref{Chernoff_variant_lemma}  with $\lambda$ of order $n^{\frac{1}{4}}$ implies that, for some $c > 0$,  
\begin{equation}
 \Prob\left(\left|T(n) - \frac{n}{\alpha}\right|> n^{\frac{3}{4}}\right) \ll n e^{-c n^{\frac{3}{8}}}.
\end{equation}
This shows that the second error term tends to 0 as $n \to \infty$.  Since both error terms tend to 0 as $n \to \infty$, it is possible to replace the $P$ sums with the $\varrho$ sums.
\end{proof}

It is now possible to show that equations (\ref{2d_greens_function}) and (\ref{gtr_2d_greens_function}) converge and define Green's functions.
\begin{lemma}\label{green_fn_growth_lemma}
 In dimension 2,
 \begin{equation}
  g_0(x) = \sum_{n=0}^\infty \frac{P^n(0,x)}{\deg x} - \frac{P^n(0,0)}{\deg 0}
 \end{equation}
and, in dimension at least 3,
\begin{equation}
 g_0(x) = \sum_{n=0}^\infty \frac{P^n(0,x)}{\deg x}
\end{equation}
converge for all $x \in \sT$ and are Green's functions.  The functions satisfy the bounds, in dimension 2,
\begin{equation}
 g_0(x) \ll 1 + \log(2 + d(0,x))
\end{equation}
and in dimensions $d \geq 3$, 
\begin{equation}
 g_0(x) \ll  \frac{1}{(1 + d(0,x))^{d-2}}.
\end{equation}

\end{lemma}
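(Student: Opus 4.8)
The plan is to reduce everything to the case $x\in\Lambda$, which is exactly the content of the preceding lemma, by conditioning simple random walk on its first return to the period lattice $\Lambda$. First I would record the detailed balance identity $\deg(0)\,P^n(0,x)=\deg(x)\,P^n(x,0)$ for simple random walk on $\sT$ (a consequence of reversibility with stationary weight $\deg$, independent of the $\Lambda$-periodicity), which gives at once $g_0(x)=g_x(0)$ in either normalization. Fix $x\in\sT\setminus\Lambda$; since $0\in\Lambda$ one has $T_x\ge1$ and, applying the strong Markov property at $T_x$, for every $n\ge1$
\[
 P^n(x,0)=\sum_{k=1}^{n}\sum_{w\in\Lambda}\Prob\big(T_x=k,\,Y_{x,k}=w\big)\,P^{n-k}(w,0),\qquad P^0(x,0)=0.
\]
In dimension $d\ge3$ all terms are non-negative, so Tonelli's theorem licenses summing in $n$ and interchanging orders, giving $\sum_n P^n(x,0)=\sum_{w\in\Lambda}\varrho_x(w)\sum_m P^m(w,0)$; dividing by $\deg 0$ and using $\deg w=\deg 0$ together with $g_w(0)=g_0(w)$ yields the representation $g_0(x)=\sum_{w\in\Lambda}\varrho_x(w)\,g_0(w)$, the right side converging absolutely because $\varrho_x$ has exponentially decaying tails (the harmonic-measure lemma) while $g_0$ is bounded on $\Lambda$. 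This simultaneously proves convergence of the defining series and the representation when $d\ge3$.

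The dimension-$2$ case of this identity is where I expect the main difficulty, since the regularized series is only conditionally convergent. Here I would run the same first-return decomposition on the truncated sums $\sum_{n=1}^{N}P^n(x,0)-\sum_{n=0}^{N}P^n(0,0)$: after regrouping, the $(k,w)$ contribution equals $\sum_{m=0}^{N-k}\big(P^m(w,0)-P^m(0,0)\big)-\sum_{m=N-k+1}^{N}P^m(0,0)$, which tends to $\deg(0)\,g_0(w)$ as $N\to\infty$ for each fixed $k,w$. Interchanging this limit with the sum over $k,w$ would be justified by dominated convergence, using the exponential tail bound $\Prob(T_x=k)\ll e^{-ck}$, the deterministic bound $d(0,w)\le d(0,x)+k$ on the event $\{T_x=k,\,Y_{x,k}=w\}$, the uniform partial-sum estimate $\sup_L\big|\sum_{m\le L}(P^m(w,0)-P^m(0,0))\big|\ll 1+\log(2+d(0,w))$ extracted from the proof of the preceding lemma, and $\sum_{m=N-k+1}^{N}P^m(0,0)\ll k/(N-k)$. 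This gives the same representation $g_0(x)=\sum_{w\in\Lambda}\varrho_x(w)\,g_0(w)$, and in particular convergence, for all $x\in\sT$.

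With the representation in hand the growth bounds are routine. Split $\sum_{w}\varrho_x(w)\,g_0(w)$ according to whether $d(0,w)<\tfrac12 d(0,x)$ or not. The first range carries $\varrho_x$-mass $\ll e^{-c\,d(0,x)}$, on which $g_0(w)$ is $O(1)$ when $d\ge3$ and $\ll\log(2+d(0,x))$ when $d=2$; on the second range $d(0,w)\asymp d(0,x)$, so $g_0(w)\ll(1+d(0,x))^{-(d-2)}$, resp.\ $\ll 1+\log(2+d(0,x))$, by the preceding lemma. Summing yields $g_0(x)\ll(1+d(0,x))^{-(d-2)}$ for $d\ge3$ and $g_0(x)\ll 1+\log(2+d(0,x))$ for $d=2$, as claimed.

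It remains to check that $\Delta g_0=\delta_0$. The value $\Delta g_0(x)$ is a finite linear combination of the values of $g_0$ at $x$ and its finitely many neighbours, each of which is a convergent series, so $\Delta$ may be applied term by term. Writing $h_n(y)=P^n(0,y)/\deg y$, detailed balance gives $\Delta h_n(x)=P^n(0,x)-P^{n+1}(0,x)$ (in dimension $2$ the subtracted constant $P^n(0,0)/\deg 0$ is annihilated by $\Delta$), so the partial sums telescope to $\delta_0(x)-P^{N+1}(0,x)$, and $P^{N+1}(0,x)\to0$ by the on-diagonal decay furnished by the local limit theorem. Hence $\Delta g_0(x)=\delta_0(x)$, completing the verification that $g_0$ is a Green's function.
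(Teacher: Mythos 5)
Your proof is correct, but it reorganizes the argument in a way that differs genuinely from the paper's. The paper keeps the sum over $n$ intact: it writes $\frac{P^n(0,x)}{\deg x}=\E\bigl[\frac{P^{n-T_n}(0,Y_{x,T_n})}{\deg Y_{x,T_n}}\bigr]$ with the \emph{truncated} stopping time $T_n=\min(T_x,n)$, excludes two exceptional events of exponentially small probability, and then bounds each summand via the local limit theorem before summing in $n$; it does not explicitly establish the ``harmonic measure'' identity $g_0(x)=\E[g_0(Y_{x,T_x})]$ inside this lemma (that appears only afterwards, as Lemma~\ref{off_lattice_green_lemma}, once the Green's function is known to exist). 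You instead interchange the sum over $n$ with the first-passage decomposition up front --- by Tonelli in $d\ge3$, and by dominated convergence on truncated partial sums in $d=2$, using the uniform partial-sum bound on $\Lambda$ together with the exponential tail of $T_x$ --- to obtain $g_0(x)=\sum_w\varrho_x(w)\,g_0(w)$ directly, after which the growth estimates and the telescoping verification of $\Delta g_0=\delta_0$ are routine. This is more modular (all local-limit-theorem work is delegated to the preceding on-lattice lemma, and $\Delta g_0=\delta_0$ is verified explicitly rather than referred back to equation~\eqref{green_fn_P}), at the cost of the more delicate $d=2$ interchange and the extra input of reversibility $\deg(0)P^n(0,x)=\deg(x)P^n(x,0)$. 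One small imprecision: in the growth-bound step you assert $d(0,w)\asymp d(0,x)$ on the range $d(0,w)\ge\tfrac12 d(0,x)$, which gives only the lower comparison; for $d=2$ you also need an upper bound on $g_0(w)\ll1+\log(2+d(0,w))$ there, so you should further split on $d(x,w)$ and use that $\varrho_x$ has exponentially decaying tails to control the contribution from $w$ far from $x$. (For $d\ge3$ the monotonicity of $(1+r)^{2-d}$ makes the lower comparison sufficient, so that case is fine as written.)
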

\begin{proof}
Assume that $x \not \in \Lambda$. Let $Y_{x,0}=x,$ $Y_{x,i+1} = P \cdot Y_{x,i}$ be random walk on $\sT$ started from $x$. Since, for $n \geq 1$,
 \begin{equation}
  \frac{P^n(0,x)}{\deg x} = \frac{1}{\deg x} \sum_{(w,x) \in E} \frac{P^{n-1}(0,w)}{\deg w} = \E\left[\frac{P^{n-1}(0,Y_{x,1})}{\deg Y_{x,1}} \right]
 \end{equation}
it follows that for the finite stopping time $T_n$, which is the minimum of $n$ and the first time that $Y$ reaches $\Lambda$,
\begin{equation}
 \frac{P^n(0,x)}{\deg x} = \E\left[\frac{P^{n-T_n}(0, Y_{x,T_n})}{\deg Y_{x,T_n}} \right].
\end{equation}

Since $T_n$ has exponentially decaying tail, two exceptional cases may be excluded.  
\begin{itemize}
 \item (Case 1) If $n \leq 2+ d(0,x)^2$, by choosing the constants appropriately, the event $T_n \gg \log (2+d(0,x))$ has probability, for any $A>0$,  $\ll_A \frac{1}{ d(0,x)^A}$. 
 \item (Case 2) If $n >  2+d(0,x)^2$ the event $T_n \gg \log n$ has probability, for any $A>0$,  $\ll_A n^{-A}$. 
\end{itemize}
\noindent
Choosing an $A$ sufficiently large, the sum in $n$ of the probabilities of Case 1 or 2 is $O(1)$.
%% Under construction %%

Let $S_n$ be the event that neither Case 1 nor 2 holds.  First consider the case that the dimension $d$ is at least 3.  Conditioned on $S_n$, $Y_{x,n} \in \Lambda$, and $P^{n-T_n}(0,Y_{x,n})$ satisfies,  uniformly in $Y_{x,n}$, for some $c>0$, and all $A>0$,
\begin{equation}
 P^{n-T_n}(0,Y_{x,n}) \ll \left\{\begin{array}{lll} O_A \left(\frac{1}{d(0,x)^A} \right) && n \leq \frac{d(0,x)^2}{(\log (2 + d(0,x)))^2}\\
\frac{e^{-\frac{cd(0,x)^2}{n}}}{n^{\frac{d}{2}}} && n>\frac{d(0,x)^2}{(\log (2 + d(0,x)))^2}  
\end{array}\right..
\end{equation}
The first bound holds, since conditioned on $S_n$, $d(0, Y_{x,n}) = d(0,x) + O(\log(2 + d(0,x)))$ and from the local limit theorem for $\varrho^{*n}$ on the lattice $\Lambda$.  The second bound holds, since conditioned on $S_n$, $d(0, Y_{x,n}) = d(0,x) + O(\log n)$, so that the claim follows again from the local limit theorem for $\varrho^{*n}$.
Since 
\begin{equation}
\sum_{n> \frac{d(0,x)^2}{(\log (2 + d(0,x)))^2}} \frac{e^{-\frac{cd(0,x)^2}{n}}}{n^{\frac{d}{2}}} \ll \frac{1}{(1 + d(0,x))^{d-2}} 
\end{equation}
and the contribution of smaller $n$ is negligible by taking $A$ sufficiently large, the claimed bound holds.

In the case of dimension 2, conditioned on $S_n$, either $|P^{n-T_n}(0, Y_{x,T_n}) - P^n(0,0)|$ or $|P^{n-T_n}(0, Y_{x,T_n}) - P^{n+1}(0,0)|$ satisfies for some $c>0$,
\begin{equation}
 \ll \left\{\begin{array}{lll} \frac{1}{n}  && n \leq \frac{d(0,x)^2}{(\log (2 + d(0,x)))^2}\\
\frac{1-e^{-\frac{cd(0,x)^2}{n}}}{n}+ O\left(n^{-\frac{3}{2}+\epsilon} \right) && n > \frac{d(0,x)^2}{(\log (2 + d(0,x)))^2}  
\end{array}\right..
\end{equation}
The proof of these bounds is again by the local limit theorem of $\varrho^{*n}$.  The reason for considering either the difference of steps $n$ or $n+1$ is to handle the case of 2-periodicity.

Since restricted to $S_n$,  $\deg 0 = \deg Y_{x,T_n}$,
\begin{align*}
 \sum_{n=0}^\infty \frac{P^n(0,x)}{\deg x} - \frac{P^n(0,0)}{\deg 0} &= \sum_{n=0}^\infty  \E \left[\frac{P^{n-T_n}(0, Y_{x,T_n})}{\deg Y_{x,T_n}} - \frac{P^n(0,0)}{\deg 0} \right]\\
 &= \sum_{n=0}^\infty \frac{1}{\deg 0} \E\left[(P^{n-T_n}(0, Y_{x,T_n}) - P^n(0,0))\one_{S_n} \right] \\&+ O\left(\sum_n  \Prob(S_n^c) \right)\\
 &\ll \log (2 + d(0,x)).
\end{align*}
This proves the convergence and obtains the claimed bound.  

Since the sums converge, $g_0$ defines a Green's function by (\ref{green_fn_P}).
\end{proof}

When $v \not \in \Lambda$, it follows from the Laplace equation that
\begin{equation}\label{mean_value_property}
 g_0(v) = \frac{1}{\deg v} \sum_{(v,w) \in E} g_0(w).
\end{equation}
\begin{lemma}\label{off_lattice_green_lemma}
Given a Green's function $g_0$ started from zero  on $\sT$, satisfying,  for $x \in \sT$, $g_0(x) \ll \log (2 + d(0,x))$, the Green's function can be recovered from its values on $\Lambda$ by, for $v \in \sT$,  $g_0(v) = \E[g_0(Y_{v, T_v})]$.
\end{lemma}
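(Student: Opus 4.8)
The plan is to observe that $g_0$ is harmonic on $\sT \setminus \Lambda$ and to run an optional-stopping argument against the hitting time $T_v$ of the lattice. If $v \in \Lambda$ there is nothing to prove, since by convention $Y_{v, T_v}$ has law $\delta_v$ and the identity reads $g_0(v) = g_0(v)$. So fix $v \in \sT \setminus \Lambda$. Let $(\sF_n)_{n \geq 0}$ be the filtration generated by the random walk $Y_{v, \cdot}$, and set $M_n = g_0(Y_{v, n \wedge T_v})$.

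First I would check that $(M_n)$ is a martingale. Since $0 \in \Lambda$, every $x \in \sT \setminus \Lambda$ has $x \neq 0$, hence $\Delta g_0(x) = 0$, which is exactly the mean value property (\ref{mean_value_property}). On the event $\{n < T_v\} \in \sF_n$ we have $Y_{v,n} \notin \Lambda$, so
\begin{equation*}
 \E\left[ M_{n+1} \mid \sF_n \right] = \sum_{(Y_{v,n}, w) \in E} \frac{g_0(w)}{\deg Y_{v,n}} = g_0(Y_{v,n}) = M_n
\end{equation*}
by (\ref{mean_value_property}), while on $\{n \geq T_v\}$ one has $M_{n+1} = M_n$. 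Thus $(M_n)$ is a martingale with $M_0 = g_0(v)$, so $\E[M_n] = g_0(v)$ for every $n \geq 0$.

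Next I would pass to the limit $n \to \infty$. By the lemma giving $\Prob(T_v > n) \ll e^{-cn}$, the time $T_v$ is almost surely finite, so $M_n = g_0(Y_{v, n \wedge T_v}) \to g_0(Y_{v, T_v})$ almost surely. To interchange limit and expectation, note that a single step of the walk changes the graph distance to $0$ by at most $1$, whence $d(0, Y_{v, n \wedge T_v}) \leq d(0,v) + (n \wedge T_v) \leq d(0,v) + T_v$. Combined with the hypothesis $g_0(x) \ll \log(2 + d(0,x))$, this gives $|M_n| \leq C\bigl(1 + \log(2 + d(0,v) + T_v)\bigr)$ for a constant $C$ independent of $n$. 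Since $T_v$ has exponentially decaying tails, $\E\bigl[\log(2 + d(0,v) + T_v)\bigr] < \infty$, so the right-hand side is an integrable dominating function. Dominated convergence then yields $g_0(v) = \E[M_n] \to \E[g_0(Y_{v, T_v})]$, which is the claim.

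The main obstacle is the last step: one must control how far the walk can wander before it hits $\Lambda$ in order to justify pushing the limit through the expectation, and this is precisely where both hypotheses are consumed — the logarithmic growth bound on $g_0$ together with the exponential tail of $T_v$ from the earlier lemma produce the needed uniform integrable majorant. In dimensions $d \geq 3$ the polynomial decay of $g_0$ makes the domination even more comfortable, but the logarithmic bound already suffices uniformly across dimensions.
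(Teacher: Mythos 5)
Your proof is correct and follows essentially the same approach as the paper: iterate the mean value property up to the truncated stopping time $T_n = n \wedge T_v$ (which is the martingale identity $\E[M_n] = g_0(v)$) and then pass to the limit $n \to \infty$ using the logarithmic growth of $g_0$ together with the exponential tail of $T_v$. The paper is terser and does not explicitly exhibit a dominating function, but the content is the same.
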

\begin{proof}
 By iterating the mean value property (\ref{mean_value_property}), for the stopping time $T_n =  \min (T_v, n)$,
 \begin{equation}
  g_0(v) = \E[g_0(Y_{v, T_n})].
 \end{equation}
 Meanwhile, $\E[g_0(Y_{v, T_v}) \one(T_v \leq n)]$ converges as $n \to\infty$, since $g_0$ grows at most logarithmically on the lattice $\Lambda$ and $T_v$ has exponentially decaying tails.  Both limits are equal to $\E[g_0(Y_{v, T_v})]$ by the growth assumption on $g_0$.
\end{proof}

Finally, to obtain the Green's function in general, for $v \not \in \Lambda$  iterate the identity
\begin{equation}\label{laplace_again}
 g_v(x) = \frac{\delta_v(x)}{\deg v} + \frac{1}{\deg v} \sum_{(v,w) \in E} g_w(x).
\end{equation}
\begin{lemma}
 For $v \not \in \Lambda$, a Green's function $g_v(x)$ is given by
 \begin{equation}
  g_v(x) = \frac{1}{\deg x} \E\left[\sum_{j=0}^{T_v-1} \one(Y_{v,j}=x) \right] + \E\left[g_{Y_{v, T_v}}(x)\right].
 \end{equation}
In particular, for $x \in \Lambda$, $g_v(x) = g_0*\varrho_v (x)$.
\end{lemma}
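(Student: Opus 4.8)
The plan is to follow the recipe announced just before the statement: iterate the mean-value identity $(\ref{laplace_again})$ for $v \notin \Lambda$, expanding only at vertices that have not yet hit $\Lambda$, and then pass to the limit. Concretely, I would first prove by induction on $n \geq 1$ that, with $T_n = \min(T_v, n)$,
\begin{equation*}
 g_v(x) = \frac{1}{\deg x}\,\E\left[\sum_{j=0}^{T_n - 1} \one(Y_{v,j} = x)\right] + \E\left[g_{Y_{v, T_n}}(x)\right],
\end{equation*}
using that $\deg Y_{v,j} = \deg x$ on the event $\{Y_{v,j}=x\}$. The base case $n = 1$ is exactly $(\ref{laplace_again})$ after rewriting $\frac{1}{\deg v}\sum_{(v,w)\in E} g_w(x) = \E[g_{Y_{v,1}}(x)]$. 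For the inductive step I would split $\E[g_{Y_{v,T_n}}(x)]$ over $\{T_v \leq n\}$ (where $Y_{v,T_n} = Y_{v,T_v} \in \Lambda$ and nothing changes) and $\{T_v > n\}$ (where $Y_{v,n} \notin \Lambda$, so $(\ref{laplace_again})$ plus the Markov property at step $n$ gives $g_{Y_{v,n}}(x) = \frac{\delta_{Y_{v,n}}(x)}{\deg Y_{v,n}} + \E[g_{Y_{v,n+1}}(x)\mid \text{first }n\text{ steps}]$); the new $\delta$-term absorbs into the sum as its $j = n$ summand and the stopping time advances to $T_{n+1}$.

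Next I would let $n \to \infty$. Because $\Prob(T_v > n) \ll e^{-cn}$ (the exponential tail estimate for $T_v$), $T_n \uparrow T_v$ almost surely. The first term is monotone increasing in $n$, and its limiting integrand is $\frac{1}{\deg x}\#\{0 \le j < T_v : Y_{v,j} = x\} \le T_v/\deg x$, which is integrable since $T_v$ has exponential tails; monotone convergence then identifies the limit as $\frac{1}{\deg x}\E[\sum_{j=0}^{T_v-1}\one(Y_{v,j}=x)]$. For the second term I would write $\E[g_{Y_{v,T_n}}(x)] = \E[g_{Y_{v,T_v}}(x)\one_{T_v \le n}] + \E[g_{Y_{v,n}}(x)\one_{T_v > n}]$. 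On $\{T_v \le n\}$ one has $g_{Y_{v,T_v}}(x) = g_0(x - Y_{v,T_v})$ by $\Lambda$-periodicity, and $\E|g_0(x-Y_{v,T_v})| = \sum_{w\in\Lambda}\varrho_v(w)|g_0(x-w)| < \infty$ since $\varrho_v$ decays exponentially while $g_0$ on $\Lambda$ grows at most logarithmically (dimension $2$) or decays polynomially (dimension $\ge 3$) by Lemma~\ref{green_fn_growth_lemma}; dominated convergence gives this piece $\to \E[g_{Y_{v,T_v}}(x)]$. The remaining piece $\E[g_{Y_{v,n}}(x)\one_{T_v > n}]$ tends to $0$ because $|g_w(x)| \ll \log(2 + d(w,x))$ uniformly in the base point $w$ (the bound of Lemma~\ref{green_fn_growth_lemma} is translation-stable on $\Lambda$, and for off-lattice $w$ follows from one application of the mean-value property $(\ref{mean_value_property})$ together with the uniform exponential tails of the hitting measures, exactly as in Lemma~\ref{off_lattice_green_lemma}), so $|g_{Y_{v,n}}(x)| \ll \log(2 + d(v,x) + n)$ is killed by $\Prob(T_v > n) \ll e^{-cn}$. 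This establishes the displayed formula.

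It then remains to check that the formula genuinely defines a Green's function and to read off the ``in particular''. For $x \in \Lambda$ and $v \notin \Lambda$ the walk avoids $\Lambda$ at steps $0, 1, \dots, T_v - 1$, so every indicator $\one(Y_{v,j} = x)$, $0 \le j < T_v$, vanishes; hence the first term is $0$ and $g_v(x) = \E[g_{Y_{v,T_v}}(x)] = \sum_{w\in\Lambda}\varrho_v(w)g_0(x-w) = (g_0 * \varrho_v)(x)$. To see $\Delta g_v = \delta_v$, observe that the first term equals the Green's function $\tilde g_v$ of the walk killed upon hitting $\Lambda$, which satisfies $\Delta \tilde g_v = \delta_v - \varrho_v$ (it is $\delta_v$ off $\Lambda$ and $-\varrho_v$ on $\Lambda$, by the forward equation for the killed walk), while $\Delta(g_0 * \varrho_v) = (\Delta g_0) * \varrho_v = \varrho_v$ since $\Delta g_0 = \delta_0$ and $\Delta$ commutes with the locally finite, absolutely convergent sum over $\Lambda$; adding gives $\Delta g_v = \delta_v$.

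The main obstacle I anticipate is the vanishing of the tail term $\E[g_{Y_{v,n}}(x)\one_{T_v > n}]$: this is the only point that is not a mechanical application of monotone/dominated convergence, and it genuinely requires a growth bound on the Green's function that is uniform over all base points rather than just over $0 \in \Lambda$. Everything else is a routine repetition of the limiting arguments already used in Lemmas~\ref{green_fn_growth_lemma} and~\ref{off_lattice_green_lemma}.
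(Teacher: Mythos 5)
Your proposal is correct, and its essential content (the third paragraph) coincides with the paper's proof, but it is wrapped in extra scaffolding that the paper omits and that, as stated, has a small logical wrinkle worth noting.

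The paper's proof is precisely your third paragraph: it takes the displayed formula as a definition, remarks in one sentence that both expectations converge (exponential tail of $T_v$, growth bound on $g_0$), and then directly computes $\Delta$ of each term. For the occupation-measure term it writes out exactly the telescoping you summarize as ``the forward equation for the killed walk,'' obtaining $\delta_v(x) - \Prob(Y_{v,T_v}=x)$, and for the second term it observes $\Delta \E[g_{Y_{v,T_v}}(\cdot)] = \Prob(Y_{v,T_v}=\cdot) = \varrho_v$; adding gives $\delta_v$. Your decomposition ``killed-walk Green's function plus convolution with the harmonic measure'' is the same computation under a different name, and your treatment of the ``in particular'' for $x\in\Lambda$ is the same as the paper's. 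One small point: the identity $\E[g_{Y_{v,T_v}}(x)] = (g_0*\varrho_v)(x)$ holds for \emph{all} $x\in\sT$, not only $x\in\Lambda$, since $Y_{v,T_v}\in\Lambda$; you should state this, because taking $\Delta$ at a lattice point involves off-lattice neighbors.

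Your first two paragraphs, deriving the formula by iterating $(\ref{laplace_again})$ and passing $n\to\infty$, are not present in the paper and, strictly speaking, are circular as written: the induction hypothesis asserts an identity for $g_v$ and invokes $g_{Y_{v,n+1}}$ at (possibly) off-lattice points, yet these are the very objects the lemma is defining. The correct logical order is the one the paper adopts --- declare the right-hand side to be the definition of $g_v$, check it is finite, and verify $\Delta g_v = \delta_v$; the iteration is then motivational, not part of the argument. The convergence discussion you give in paragraph two (monotone convergence for the occupation term, dominated convergence for the stopped term using $\|\varrho_v\|$'s exponential tails against the at-most-logarithmic growth of $g_0$ on $\Lambda$) is a correct and useful expansion of the paper's one-line remark; you just don't need the tail estimate on $\E[g_{Y_{v,n}}(x)\one_{T_v>n}]$ at all, because that term only arises in the iteration, which can be dispensed with.
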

\begin{proof}Convergence of the two expectations is guaranteed by the exponential decay of the tail of $T_v$ and by the growth bound of the Green's function.     From the definition of the Green's function on the lattice $\Lambda$, $\Delta \E\left[g_{Y_{v, T_v}}(x) \right] = \Prob(Y_{v, T_v} = x)$.  Meanwhile,
\begin{align}
& \Delta\left(\frac{1}{\deg x} \E\left[\sum_{j=0}^{T_v-1}\one(Y_{v,j}=x) \right] \right) \\&\notag= \E\left[ \sum_{j=0}^{T_v-1} \one(Y_{v,j}=x) \right] -  \sum_{(x,y) \in E}\frac{1}{\deg y} \E\left[ \sum_{j=0}^{T_v-1} \one(Y_{v,j}=y)\right]\\
&=\notag \E\left[\sum_{j=0}^{T_v-1} \one(Y_{v,j}=x) \right]- \E\left[\sum_{j=1}^{T_v} \one(Y_{v,j}=x) \right]\\
&= \notag \delta_{v}(x) - \Prob(Y_{v, T_v}=x).
\end{align}
Adding these two contributions completes the proof of the first claim.

To prove the second, note that for $x \in \Lambda$, $ \E\left[\sum_{j=0}^{T_v-1} \one(Y_{v,j}=x) \right] = 0$, and thus,  the claim follows since $Y_{v, T_v}$ has the distribution of $\varrho_v$. 
\end{proof}

\begin{lemma}
 For any $\eta \in C^0(\sT)$, for all $x \in \Lambda$,
 \begin{equation}
  g_\eta(x) = g_0 * \varrho_\eta(x).
 \end{equation}

\end{lemma}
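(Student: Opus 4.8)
The plan is to reduce directly to the preceding lemma using linearity, exploiting that $\eta \in C^0(\sT)$ has finite support. The first step is to upgrade the identity $g_v(x) = g_0 * \varrho_v(x)$ (for $x \in \Lambda$) so that it holds for \emph{every} $v \in \sT$, not only for $v \notin \Lambda$. For $v \notin \Lambda$ this is exactly the content of the previous lemma. For $v \in \Lambda$, $\Lambda$-periodicity of the tiling gives $P^n(v,x) = P^n(0, x-v)$ for all $n$, so the defining series for $g_v$ --- equation $(\ref{2d_greens_function})$ in dimension $2$ and equation $(\ref{gtr_2d_greens_function})$ in dimension $\geq 3$ --- shows that $g_v = \tau_v g_0$; since $\varrho_v = \delta_v$ by definition when $v \in \Lambda$, the right-hand side is $g_0 * \varrho_v(x) = g_0(x-v)$, which matches.

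The second step is simply to sum this identity over $v \in \supp(\eta)$ with weights $\eta(v)$. Since the support is finite, one obtains
\[
 g_\eta(x) = \sum_{v \in \sT} \eta(v)\, g_v(x) = \sum_{v \in \sT} \eta(v)\, (g_0 * \varrho_v)(x) = g_0 * \Bigl( \sum_{v \in \sT} \eta(v)\, \varrho_v \Bigr)(x) = g_0 * \varrho_\eta(x),
\]
where interchanging the finite sum over $v$ with the convolution sum over $\Lambda$ is trivially valid. One should also note that $g_0 * \varrho_\eta(x) = \sum_{y \in \Lambda} g_0(x-y)\,\varrho_\eta(y)$ converges absolutely: $\varrho_\eta$ is a finite signed measure on $\Lambda$ inheriting the exponential tail decay of the $\varrho_v$ from the tail lemma on $\varrho_v$, while $g_0$ restricted to $\Lambda$ grows at most logarithmically in dimension $2$ (Lemma \ref{green_fn_growth_lemma}) and decays polynomially in dimension $\geq 3$, so the sum is dominated by an exponentially decaying series in $\|y\|$.

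There is essentially no obstacle here: all the analytic work has already been done in the preceding lemma and in Lemma \ref{green_fn_growth_lemma}. The only point requiring a little care is the separate treatment of $v \in \Lambda$, where $\varrho_v$ is a point mass rather than a genuine harmonic measure, and the observation that the series construction of the Green's function is genuinely $\Lambda$-translation covariant so that $g_v = \tau_v g_0$ in that case.
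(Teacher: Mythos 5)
Your proposal is correct and follows essentially the same route as the paper: reduce to the preceding lemma, note that for $v \in \Lambda$ the identity $g_v = g_0 * \varrho_v$ holds trivially because $\varrho_v = \delta_v$ and the series defining $g_v$ is $\Lambda$-translation covariant, and then conclude by finite linearity over $\supp(\eta)$. You supply somewhat more detail (the explicit check of $g_v = \tau_v g_0$ and the absolute-convergence remark) than the paper, which compresses all of this into a single sentence, but the content is the same.
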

\begin{proof}
Since $\varrho_v$ has been defined to be a point mass at $v$ when $v \in \Lambda$, the previous Lemma demonstrates that for all $v \in \sT$ and all $x \in \Lambda$, $g_v(x) = g_0 * \varrho_v(x)$.  It thus follows that if $\eta$ is a function of bounded support on $\sT$, then, for $x \in \sT$,
\begin{equation}
 g_\eta(x) = g_0 * \varrho_\eta(x).
\end{equation}
\end{proof}

The following theorem demonstrates that the above methods may be used to obtain an explicit formula for the Green's function of a periodic tiling,  which is useful in practical calculations.
Let $\sT \subset \bR^d$ be a tiling with period lattice $\Lambda$ identified with  $\zed^d$ after a linear map and suppose $0 \in \sT$.  Split $\bR^d$ into unit cubes by identifying $(y_1, ..., y_d)$ with $(\lfloor y_1 \rfloor, ..., \lfloor y_d \rfloor)$. Let $z_i = e(-x_i)$, $i = 1, 2, ..., d$, and assign each directed edge $e = (u,v)$ of $\sT$ a weight $w_e$ which is the product of all $z_i$ such that the floor of the $i$th coordinate of $v$ is greater than the floor of the $i$th coordinate of $u$, divided by the product of all $z_j$ such that the opposite is true.  Choose a system of representatives $0 = v_0, v_1, ..., v_m$ for $\sT/\Lambda$, and let $Q$ be the $m \times m$ matrix with 
 \begin{equation}
  Q(i,j) = \sum_{\substack{v \equiv v_j \bmod \Lambda\\ e = (v_i,v) \in E}}\frac{w_e}{\deg v_i} .
 \end{equation}
Thus, when $z \equiv 1$, $Q$ is the transition matrix of simple random walk on $\sT/\Lambda$. Let $c_0$ be the column of $Q$ corresponding to $v_0$, and $r_0$ the row corresponding to 0, and let $Q'$ be the $(m-1)\times (m-1)$ minor obtained by deleting $c_0$ and $r_0$, and similarly let $c_0', r_0'$ be obtained by deleting the $(0,0)$ entry.
\begin{theorem}\label{Greens_fn_theorem}

The characteristic function of $\varrho$ is

\begin{align*}
 \hat{\varrho}(x) &= \sum_{\lambda \in \zed^d} \varrho(\lambda) e(-x \cdot \lambda)\\ &=Q_{0,0}(z) + r_0(z) (I-Q'(z))^{-1}c_0(z)
\end{align*}
and the Fourier transform of $g_0$ restricted to $\Lambda$ is given by 
\begin{equation}
(\deg 0) \hat{g}_0(x) = \frac{1}{1 - (Q_{0,0}(z) + r_0'(z) (I-Q'(z))^{-1}c'_0(z))}.
\end{equation}

\end{theorem}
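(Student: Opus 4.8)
The plan is to expand $\hat\varrho(x)$ as a generating function over random walk excursions and to recognize the result as a renewal-type geometric series in the weighted transition matrix $Q(z)$. First I would record the telescoping identity for the edge weights: writing $\lfloor u\rfloor \in \zed^d$ for the index of the unit cube containing a vertex $u$ of $\sT$, the weight of a directed edge $(u,v)$ is, by construction, $w_{(u,v)} = \prod_i z_i^{\lfloor v\rfloor_i - \lfloor u\rfloor_i}$, so along any path $0 = u_0, u_1, \dots, u_n$ the product of the weights collapses to $\prod_i z_i^{\lfloor u_n\rfloor_i - \lfloor u_0\rfloor_i}$. In particular, if $u_n = \lambda \in \Lambda$, then since lattice points are cube corners this product equals $\prod_i z_i^{\lambda_i} = e(-x\cdot\lambda)$. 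Therefore
\begin{equation*}
 \hat\varrho(x) = \sum_{\lambda \in \Lambda} e(-x\cdot\lambda)\,\Prob(Y_{0,T_0} = \lambda) = \sum_\gamma \prod_{j} \frac{w_{e_j}}{\deg(u_{j-1})},
\end{equation*}
where $\gamma$ ranges over finite paths in $\sT$ starting at $0$ whose only visit to $\Lambda$ after time $0$ is the terminal step, $u_0 = 0, u_1, \dots, u_n$ are its vertices and $e_1, \dots, e_n$ its edges, and $\prod_j \deg(u_{j-1})^{-1}$ is the probability of $\gamma$.

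Since the vertices of $\Lambda$ are exactly the $\Lambda$-translates of $0$, projecting such a $\gamma$ to $\sT/\Lambda$ yields a closed path at $v_0$ that avoids $v_0$ at every intermediate step, and the contribution $\prod_j w_{e_j}/\deg(u_{j-1})$ is precisely the corresponding product of entries of $Q(z)$. I would then split these excursions according to their length. A length-$1$ excursion contributes $Q_{0,0}(z)$; a length-$\ell \ge 2$ excursion factors uniquely as a step $v_0 \to v_j$ with $j \ne 0$ (an entry of $r_0'(z)$), then $\ell-2$ steps among the other representatives (an entry of $Q'(z)^{\ell-2}$), then a step $v_k \to v_0$ with $k \ne 0$ (an entry of $c_0'(z)$). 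Summing over $\ell \ge 2$ gives $r_0'(z)\big(\sum_{p\ge 0}Q'(z)^p\big)c_0'(z)$. The geometric series is legitimate because $|Q'_{ij}(z)| \le Q'_{ij}(1)$ entrywise (each $|w_e| = 1$), so the spectral radius of $Q'(z)$ is bounded by that of $Q'(1)$, which is the sub-Markov matrix of the walk on the connected graph $\sT/\Lambda$ killed at $v_0$ and hence has spectral radius $< 1$; thus $\sum_{p\ge 0}Q'(z)^p = (I-Q'(z))^{-1}$. Adding the two contributions gives $\hat\varrho(x) = Q_{0,0}(z) + r_0'(z)(I-Q'(z))^{-1}c_0'(z)$.

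For the Green's function I would start from the identity established above: $g_0(x) = (\deg 0)^{-1}\sum_{n\ge 0}\varrho^{*n}(x)$ for $x \in \Lambda$ when $d \ge 3$, and the regularized form $g_0(x) = (\deg 0)^{-1}\sum_{n\ge 0}(\varrho^{*n}(x) - \varrho^{*n}(0))$ when $d = 2$. Taking the Fourier transform on $\Lambda \cong \zed^d$ term by term, using $\widehat{\varrho^{*n}} = \hat\varrho^{\,n}$, gives $(\deg 0)\hat g_0(x) = \sum_{n\ge 0}\hat\varrho(x)^n = (1-\hat\varrho(x))^{-1}$, into which the formula for $\hat\varrho$ is substituted.

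The step I expect to be the main obstacle is making this last passage rigorous, since $g_0|_\Lambda$ is not summable, so the Fourier transform must be read either distributionally or pointwise for $x \notin \zed^d$. Pointwise it goes through: $\varrho$ is symmetric with $\varrho^{*2}(0) > 0$ and support generating $\zed^d$, so $|\hat\varrho(x)| < 1$ for $x \notin \zed^d$ and the geometric series converges there. Near the origin $1 - \hat\varrho(x)$ is comparable to the positive definite quadratic form $2\pi^2\,x^\top\Cov(\varrho)\,x$, so $(1-\hat\varrho)^{-1}$ is locally integrable when $d \ge 3$ but only conditionally summable against characters when $d = 2$; this is why the dimension-$2$ regularization is needed, and because the subtracted term $\varrho^{*n}(0)$ does not depend on $x$ it affects only the behavior at the origin, leaving the stated formula valid away from $\zed^d$.
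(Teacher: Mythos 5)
Your proposal is correct and follows essentially the same route as the paper: decompose the stopped walk into excursions in $\sT/\Lambda$ away from $v_0$, recognize the resulting sum as $Q_{0,0}(z) + r_0'(z)(\sum_p Q'(z)^p)c_0'(z)$, justify the geometric series because $|Q'(z)| \le Q'(1)$ entrywise with spectral radius $<1$, and then take the Fourier transform of the renewal series for $g_0$. The only cosmetic differences are that you make the edge-weight telescoping and the convergence issues for $\hat g_0$ (in particular the $d=2$ regularization and the pointwise interpretation for $x\notin\zed^d$) more explicit than the paper's brief ``caveat,'' and you correctly write $r_0'(z), c_0'(z)$ where the theorem's displayed formula for $\hat\varrho$ has an apparent typo with the unprimed $r_0, c_0$; the paper's own proof uses the primed versions, as you do.
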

\begin{proof}
 The stopped random walk either transitions directly from 0 to another point in $\Lambda$ with partial characteristic function given by $Q_{0,0}(z)$, or transitions from 0 to another state, makes $n \geq 0$ moves between states not in $\Lambda$ and then returns to $\Lambda$. 
 Given a probability measure $\nu$ on $\sT$, define
 \begin{align*}
   &\hat{\nu}(x) =  [\hat{\nu}_0, ..., \hat{\nu}_m]\\
   &\hat{\nu}_j = \sum_{y \equiv v_j \bmod \Lambda} \nu(y) z_1^{\lfloor y_1\rfloor} \cdots z_d^{\lfloor y_d \rfloor}.
 \end{align*}
 By the periodicity, the change in $(\lfloor y_1\rfloor, ..., \lfloor y_d\rfloor)$ in each transition, and the corresponding chances of a transition depend only on the current state $v \bmod \Lambda$, and the changes are additive, hence, given a probability $\nu$ on $\sT$,  with transition in $\sT$ given by $P\cdot \nu$, the mixture after one transition  satisfies $\widehat{P \cdot \nu} = \hat{\nu} Q$. Conditioning on $n$, the number of steps before a transition back into $\Lambda$, 
 \begin{equation}\hat{\varrho}(x) = Q_{0,0}(z) + r_0'(z) (I + Q'(z) + Q'(z)^2 + \cdots ) c_0'(z).
  \end{equation}
 The justification of the geometric series formula $\sum_{n=0}^\infty (Q'(z))^n = (I-Q'(z))^{-1}$ is that pointwise, $Q'(z)^n$ is bounded by $Q'(1)^n$, which tends to 0 with $n$, since the random walk has a positive probability of returning to $\Lambda$ in boundedly many steps from any state. 
  
Since, restricted to $\Lambda$, $g_0(x) = \frac{1}{\deg 0} \left(\sum_{n=0}^\infty \varrho^{*n}(x) -\varrho^{*n}(0) \right)$ in dimension 2, or in dimension at least 3, $g_0(x) = \frac{1}{\deg 0} \sum_{n=0}^\infty \varrho^{*n}(x)$, the Fourier transform of $g_0$ is given by, for $x \neq 0$,
\begin{equation}
 (\deg 0)\hat{g}_0(x) = \sum_{n=0}^\infty \hat{\varrho}(x)^n, 
\end{equation}
with the caveat that in dimension 2, the Green's function can be considered as dual to functions of bounded support and sum 0. The formula for the Green's function's characteristic function follows from applying the geometric series formula to the characteristic function of $\varrho$.
\end{proof}

\begin{proof}[Proof of Theorem \ref{ell_2_theorem}]
Identify $\Lambda$ with $\zed^d$.  We show that the conditions are necessary and sufficient for $g_\eta$ to be in $\ell^2(\Lambda)$.  This suffices for the theorem, since the condition of being in $C^j(\sT)$ is invariant under translation so that the same conditions are necessary and sufficient for $g_\eta$ to be in $\ell^2(t + \Lambda)$ for any $t \in \sT/\Lambda$.

Since on $\Lambda$, $g_\eta =  g_0 * \varrho_\eta$, $g_\eta$ has Fourier transform
\begin{equation}
 \hat{g}_\eta(\xi) = \frac{1}{\deg 0} \frac{\hat{\varrho}_\eta(\xi)}{1-\hat{\varrho}(\xi)}. 
\end{equation}
Since $\supp \varrho$ generates $\Lambda$, $|\hat{\varrho}(\xi)| \neq 1$ if $\xi \neq 0$, and hence $\frac{1}{1-\hat{\varrho}(\xi)}$ is bounded outside neighborhoods of 0.  Thus, by Parseval, it suffices to consider the behavior on a neighborhood of 0.  By Taylor expansion, using that $\varrho$ has exponentially decaying tails,
\[
 1-\hat{\varrho}(\xi) = \sum_n \varrho(n) (1-e^{-2\pi i n \cdot \xi}) = 2\pi^2 \xi^t \sigma^2 \xi + O(\|\xi\|^3)
\]
where we have used that the first moment of $\varrho$ vanishes, since $\varrho$ is symmetric.

By Parseval, for $\delta > 0$, 
\begin{align*}
 \|g_\eta\|_2^2& = \frac{1}{(\deg 0)^2} \int_{\bR^d/\zed^d} \frac{\left|\hat{\varrho}_\eta(\xi)\right|^2}{|1 - \hat{\varrho}(\xi)|^2} d\xi\\ & = O(1) + \frac{1}{(\deg 0)^2}\int_{\|\xi\|< \delta} \frac{\left|\hat{\varrho}_\eta(\xi)\right|^2}{4 \pi^4 (\xi^t \sigma^2 \xi)^2 + O(\|\xi\|^5)}d\xi.
\end{align*}
Since $\varrho_\eta$ has exponentially decaying tails, it follows that $\hat{\varrho}_\eta(\xi)$ is equal to its Taylor expansion at 0, which is necessarily bounded.  The constant term is the total mass, the linear term is given by the first moment.  Switching to polar coordinates gains a factor of $r^{d-1}$ against the factor of $\asymp r^{-4}$ from the definite quadratic form in the denominator.  Thus, in dimension 2, it is necessary and sufficient for $g_\eta$ to be in $\ell^2$ that $\hat{\varrho}_\eta$ vanish to degree 2, in dimension 3,4 that it vanish to degree 1, and in higher dimensions, that it is bounded.  This gives the condition claimed.

To prove the characterisation of $\sH^2(\sT)$, let $\xi \in \sH^2(\sT)$ and let $\nu = \Delta \xi$.  Since $\Delta: \ell^2(\sT) \to \ell^2(\sT)$ is bounded, $\|\nu\|_2 < \infty$ and hence $\nu$ has finite support.  It follows that $g_\nu$ is well-defined as a function on $\sT$, and $\Delta(\xi - g_\nu) = 0$.  If $(\xi - g_\nu)(x) \to 0$ as $d(0,x) \to \infty$, then by the maximum modulus principle $\xi - g_\nu = 0$. This applies unless $d = 2$ and $\nu \not \in C^1(\sT)$.  To rule out the remaining case, let $y \in \Lambda$ and let $\tau_y$ denote translation by $y$.  Since $\nu - \tau_y \nu$ is at least $C^1(\sT)$, $g*(\nu - \tau_y\nu)$ tends to 0 at infinity, and hence, for any $y$, $\xi - \tau_y \xi = g*(\nu - \tau_y \nu)$.  Since $\nu \not \in C^1(\sT)$, $g*\nu$ is unbounded, and hence $g*(\nu - \tau_y \nu)$ can take arbitrarily large values.  But $\xi - \tau_y \xi$ is bounded, a contradiction.  Hence, $\xi \in \sH^2(\sT)$ implies $\xi = g* (\Delta \xi)$ and $\Delta \xi \in C^j(\sT)$.
\end{proof}
\subsection{The Green's function of periodic and reflected tilings}
Let $\sT \subset \bR^d$ be a tiling, which is periodic with period $\Lambda$. A mean zero Green's function started from 0 may be defined on $\sT/m\Lambda$ as follows.  On $\Lambda/m\Lambda$ define $\varrho_{\bT_m}(x) = \varrho(x + m\Lambda)$ and 
\begin{equation}
 g_{0, \bT_m}(x) = \frac{1}{\deg(0)} \sum_{n=0}^\infty \left(\varrho_{\bT_m}^{*n}(x) - \frac{1}{m^d}\right).
\end{equation}
This may be extended to all of $\sT/m\Lambda$ by the formula, 
\begin{equation}
 g_{0, \bT_m}(v) = \E[g_{0, \bT_m}(Y_{v,T_v})].
\end{equation}
This is still mean 0, since for any $v$,
\begin{equation}
  \sum_{\lambda \in \Lambda/m\Lambda} g_{0, \bT_m}(v + \lambda)  = \sum_{\lambda \in \Lambda/m\Lambda} \E[g_{0, \bT_m}(Y_{v, T_v} + \lambda)] = 0
\end{equation}
by the $\Lambda$ translation invariance.

As above, the Green's function started from an arbitrary point $v$ is obtained by
\begin{equation}
 g_{v, \bT_m}(x) = -c_v + \frac{1}{\deg x} \E\left[ \sum_{j=0}^{T_v-1} \one\left(Y_{v,j} = x\right)\right] + \E\left[g_{Y_{v, T_v}, \bT_m}(x)\right],
\end{equation}
where the constant $c_v$ is chosen to make the Green's function mean 0. Since $T_v$ has exponentially decaying tail and $g_{Y_{v, T_v}}$ is mean 0, $c_v = O\left(\frac{1}{m^d} \right).$

\begin{lemma}
 The Green's function satisfies \begin{equation}\Delta g_v(x) = \delta_v(x) - \frac{1}{m^d} \delta(x \in \Lambda).\end{equation}
\end{lemma}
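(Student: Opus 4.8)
The plan is to treat $v \in \Lambda$ and $v \notin \Lambda$ separately, reducing each case to a telescoping identity and leaning on the computations already carried out for the Green's function of $\sT$.

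First I would record the single identity, valid for every $x \in \sT/m\Lambda$,
\[
 \Delta g_{0,\bT_m}(x) = \deg(x)\bigl(g_{0,\bT_m}(x) - \E\bigl[g_{0,\bT_m}(Y_{x,T_x})\bigr]\bigr),
\]
which follows from writing $\sum_{(x,w)\in E} g_{0,\bT_m}(w) = \deg(x)\,\E[g_{0,\bT_m}(Y_{x,1})]$ and then observing, by conditioning on the first step and combining the strong Markov property with the defining relation $g_{0,\bT_m}(w) = \E[g_{0,\bT_m}(Y_{w,T_w})]$ for $w \notin \Lambda$, that $\E[g_{0,\bT_m}(Y_{x,1})] = \E[g_{0,\bT_m}(Y_{x,T_x})]$; all expectations converge absolutely since $g_{0,\bT_m}$ is bounded on the finite graph and $T_x$ has an exponentially decaying tail. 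For $x \notin \Lambda$ the bracket vanishes by definition, so $\Delta g_{0,\bT_m}(x) = 0$, which is the asserted value there. For $x \in \Lambda$ I would set $\tilde G = \deg(0)\,g_{0,\bT_m}$ restricted to $\Lambda$; since $\deg(x) = \deg(0)$ by $\Lambda$-periodicity and $\Prob(Y_{x,T_x} = y) = \varrho_{\bT_m}(y-x) = \varrho_{\bT_m}(x-y)$ by symmetry of $\varrho$, the bracket equals $\deg(0)^{-1}\bigl(\tilde G - \varrho_{\bT_m} * \tilde G\bigr)(x)$; telescoping the series $\tilde G = \sum_{n \ge 0}\bigl(\varrho_{\bT_m}^{*n} - m^{-d}\bigr)$ using $\varrho_{\bT_m} * \bigl(\varrho_{\bT_m}^{*n} - m^{-d}\bigr) = \varrho_{\bT_m}^{*(n+1)} - m^{-d}$ collapses this to $\varrho_{\bT_m}^{*0} - m^{-d} = \delta_0 - m^{-d}$. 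Hence $\Delta g_{0,\bT_m} = \delta_0 - \tfrac{1}{m^d}\delta(\,\cdot\,\in\Lambda)$, and applying a lattice translation (which commutes with $\Delta$ because $\sT$ is $\Lambda$-periodic) gives the same identity for $g_{w,\bT_m}$ with $w \in \Lambda$.

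For $v \notin \Lambda$ I would feed the defining formula $g_{v,\bT_m} = -c_v + h_v + \E[g_{Y_{v,T_v},\bT_m}]$, with $h_v(x) = \tfrac{1}{\deg x}\E\bigl[\sum_{j=0}^{T_v-1}\one(Y_{v,j}=x)\bigr]$, into $\Delta$ term by term. The constant $-c_v$ drops out. For $h_v$ the computation is identical to the one already done on $\sT$ — telescope $\sum_{j=0}^{T_v-1}\one(Y_{v,j}=x)$ against $\sum_{j=1}^{T_v}\one(Y_{v,j}=x)$, the measurability of $\{T_v-1 \ge j\}$ licensing the interchange of $\E$ with the inner sum — and gives $\Delta h_v(x) = \delta_v(x) - \Prob(Y_{v,T_v}=x)$. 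For the final term, $\Delta$ is a finite linear combination of point evaluations and the average over $Y_{v,T_v} \in \Lambda$ converges absolutely, so $\Delta\E[g_{Y_{v,T_v},\bT_m}(x)] = \E[\Delta g_{Y_{v,T_v},\bT_m}(x)]$, which by the first part equals $\Prob(Y_{v,T_v}=x) - \tfrac{1}{m^d}\delta(x\in\Lambda)$. Summing the three pieces, the terms $\pm\Prob(Y_{v,T_v}=x)$ cancel and leave $\Delta g_{v,\bT_m}(x) = \delta_v(x) - \tfrac{1}{m^d}\delta(x\in\Lambda)$.

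The step needing the most care is the telescoping of $\sum_{n\ge 0}\bigl(\varrho_{\bT_m}^{*n} - m^{-d}\bigr)$: the step measure $\varrho$ on $\Lambda$ may be $2$-periodic (for instance when $\sT = \zed^d$), so $\varrho_{\bT_m}^{*n}$ need not converge and the series must be read as a Cesàro or Abel limit, exactly as in the definition of $g_{0,\bT_m}$ itself. With that reading the identity $\tilde G - \varrho_{\bT_m}*\tilde G = \delta_0 - m^{-d}$ still holds, since $\varrho_{\bT_m}$ is doubly stochastic on $\Lambda/m\Lambda$ and fixes the uniform distribution; beyond this bookkeeping the argument is routine.
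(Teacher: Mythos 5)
Your proof is correct and follows essentially the same route as the paper: both reduce the computation of $\Delta g_{0,\bT_m}$ on $\Lambda/m\Lambda$ to the telescoping $\tilde G - \varrho_{\bT_m}*\tilde G = \delta_0 - m^{-d}$, use $\Lambda$-translation invariance to handle $v\in\Lambda$, and then apply $\Delta$ term by term to the decomposition $g_{v,\bT_m} = -c_v + h_v + \E[g_{Y_{v,T_v},\bT_m}]$ for $v\notin\Lambda$.

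Two small points in your favor. Your unified identity $\Delta g_{0,\bT_m}(x) = \deg(x)\bigl(g_{0,\bT_m}(x) - \E[g_{0,\bT_m}(Y_{x,T_x})]\bigr)$ handles $x\in\Lambda$ and $x\notin\Lambda$ in one stroke, which is slightly cleaner than the paper's explicit splitting of the $n=0$ term and separate verification off $\Lambda$; the two formulations are a rearrangement of the same computation, since $\deg(0)\E[g_{0,\bT_m}(Y_{x,T_x})]=(\varrho_{\bT_m}*\tilde G)(x)=\sum_{n\ge1}\bigl(\varrho_{\bT_m}^{*n}(x)-m^{-d}\bigr)$. Your flag about conditional convergence is also a genuine improvement in rigor: when $\sT$ is bipartite (e.g.\ $\zed^2$ with $m$ even), $\varrho_{\bT_m}$ is $2$-periodic on $\Lambda/m\Lambda$ and the series $\sum_{n\ge0}(\varrho_{\bT_m}^{*n}-m^{-d})$ oscillates, so it — and the interchange of the sum with convolution by $\varrho_{\bT_m}$ — must indeed be read in the Abel or Cesàro sense, or one must use the lazy version $\varrho_{\frac12,\bT_m}$ throughout, which the paper does introduce but only later. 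The paper glosses over this; you make it explicit, and the telescoping survives because the Abel means of $\varrho_{\bT_m}^{*N}$ still converge to $m^{-d}$.
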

\begin{proof}
If $v \not \in \Lambda$,
\begin{equation}
 \Delta g_{0, \bT_m}(v) = (\deg v) g_{0, \bT_m}(v) - \sum_{(v,w) \in E} g_{0, \bT_m}(w) = 0
\end{equation}
since the sum over $w$ corresponds to taking one step in the random walk $Y_v$.

When $x \in \Lambda$, by splitting off the $n = 0$ term in the sum defining $g_{0, \bT_m}$,
\begin{align*}
 &\Delta g_{0, \bT_m}(x) = \delta_0(x) - \frac{1}{m^d} + \sum_{n=1}^\infty \left(\rho_{\bT_m}^{*n}(x) - \frac{1}{m^d} \right) - \sum_{(x,y) \in E} g_{0, \bT_m}(y)\\
 &=\delta_0(x) - \frac{1}{m^d} + \sum_{n=1}^\infty \left(\rho_{\bT_m}^{*n}(x) - \frac{1}{m^d} \right) \\&  - \deg(0) \E\left[ \frac{1}{\deg x} \left(\sum_{(x,y) \in E, y \not \in \Lambda} g_{0, \bT_m}(Y_{y, T_y})+ \sum_{(x,y) \in E, y \not \in \Lambda} g_{0, \bT_m}(y) \right)\right]\\
 &=\delta_0(x) - \frac{1}{m^d} + \sum_{n=1}^\infty \left(\rho_{\bT_m}^{*n}(x) - \frac{1}{m^d} \right)\\ &  - \deg(0) \E\left[g_{0, \bT_m}(Y_{x, T_x})\right].
\end{align*}
Since $Y_{x, T_x}$ has the distribution of $\delta_x * \varrho_{\bT_m}$, and since $\varrho_{\bT_m}$ is symmetric the sum and the expectation cancel, leaving $\delta_0(x) - \frac{1}{m^d}$.

The values of $g_{x, \bT_m}$ for $x \in \Lambda$ are obtained by translation invariance.  

To check the property at $v \not \in \Lambda$,
\begin{align*}
 \Delta g_{v, \bT_m}(x) &= -\Delta c_v + \E\left[ \sum_{j=0}^{T_v-1} \one(Y_{v,j} = x)\right] \\&-  \sum_{(x,y) \in E} \frac{1}{\deg y}\E\left[ \sum_{j=0}^{T_v-1} \one(Y_{v,j} = y)\right] + \E[\Delta g_{Y_{v, T_v}, \bT_m}(x)]\\
 &= \E\left[ \sum_{j=0}^{T_v-1} \one(Y_{v,j} = x)\right] - \E\left[ \sum_{j=1}^{T_v} \one(Y_{v,j} = x)\right]\\& + \Prob(Y_{v, T_v} = x) - \frac{1}{m^d} \delta(x \in \Lambda)\\
 &= \delta_v(x) - \frac{1}{m^d} \delta(x\in \Lambda).
\end{align*}

\end{proof}

It follows that $g_{\bT_m}$ has the property that \begin{equation}\Delta(g_{v_1, \bT_m}(x) - g_{v_2, \bT_m}(x)) = \delta_{v_1}(x) - \delta_{v_2}(x).\end{equation}
Given an integer valued function $\eta$ on $\sT/m\Lambda$, define
\begin{equation}
 g_{\eta, \bT_m} = g_{\bT_m}* \eta(x) = \sum_{v \in \sT/m\Lambda} \eta(v) g_{v, \bT_m}(x).
\end{equation}
Abusing notation, given $\eta \in C^0(\sT)$, define $g_{\eta, \bT_m} = g_{\bT_m} * \eta_{\bT_m}$.

Given a tiling $\sT$ with reflection symmetry in family of hyperplanes
\begin{equation}
 \sF = \{ n v_i + H_i: n \in \zed, H_i = \{x: \langle x, v_i\rangle = 0\}\}
\end{equation}
which its edges do not cross,
the tiling is periodic with period lattice $\Lambda$ generated by $\{2v_i: i = 1, 2, ..., d\}$.  A Green's function for $\sT$ with reflection symmetry in $m\sF$ is obtained by letting $\tilde{g}$ be a Green's function for $\sT/m\Lambda$, and then imposing reflection anti-symmetry by forming an alternating sum over reflections in a bounded number of hyperplanes.

\subsection{Derivative estimates}
The following results are needed regarding discrete derivatives of the Green's function on $\sT/m\Lambda$, and are proved in Appendix \ref{Green_fn_appendix}.

\begin{lemma}\label{deriv_bound_lemma}
Let $\sT$ be a tiling of $\bR^d$ which is $\Lambda \cong \zed^d$ periodic.
 Let $\eta$ be of class $C^j(\sT)$ for some $0 \leq j \leq 2$.  Let $D^{\ua}$ be a discrete differential operator on the lattice $\Lambda$ and assume that $|\ua| + j + d - 2 > 0$.  For $x \in \Lambda$, for $m \geq 1$,
 \begin{equation}
  D^{\ua} g_{\eta, \bT_m} (x) \ll \frac{1}{1 + \|x\|_{(\zed/m\zed)^d}^{|\ua| + j + d -2}}.
 \end{equation}

\end{lemma}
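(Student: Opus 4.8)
The plan is to reduce the bound to the single-source torus Green's function $g_{0,\bT_m}$ with a possibly larger differential order, and then bound that by the time-domain series together with the local limit theorem. First I would record that, restricted to $\Lambda$,
\[
 g_{\eta,\bT_m} = C_\eta + g_{0,\bT_m} * (\varrho_\eta)_{\bT_m},
\]
where $\varrho_\eta = \sum_v \eta(v)\varrho_v$ is the harmonic measure of $\eta$, its periodization $(\varrho_\eta)_{\bT_m}$ is taken on $\Lambda/m\Lambda$, and $C_\eta = -\sum_v \eta(v) c_v = O(m^{-d})$ is a constant arising from the mean-zero normalizations $c_v = O(m^{-d})$; this follows by inserting $g_{v,\bT_m}(x) = -c_v + g_{0,\bT_m}*\varrho_{v,\bT_m}(x)$ (valid for $x\in\Lambda$) and using $\Lambda$-translation invariance on the torus. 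When $\ua\neq 0$ the operator $D^{\ua}$ annihilates $C_\eta$; when $\ua=0$ the hypothesis forces $j+d-2\le d$ (as $j\le 2$), so together with $\|x\|_{(\zed/m\zed)^d}\ll m$ the constant $C_\eta=O(m^{-d})$ is itself $O((1+\|x\|_{(\zed/m\zed)^d})^{-(j+d-2)})$ and is absorbed into the claimed bound.

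Next I would peel off the moment conditions. A signed measure on $\Lambda$ with exponentially decaying tails whose moments of orders $0,\dots,j-1$ vanish can be written as $\delta_{i_1}*\cdots*\delta_{i_j}*\psi$ with $\psi\in\ell^1(\Lambda)$ also exponentially decaying (discrete telescoping; for $j=0$ one first splits off the total mass, leaving a single extra lower-order term). Since $\eta\in C^j(\sT)$ is precisely the assertion that $\varrho_\eta$ has this property — the total mass $\sum_v\eta(v)$ vanishes for $j\ge 1$ and, for $j=2$, the first moment $\sum_v\eta(v)\E[Y_{v,T_v}]$ vanishes, by the definition of $C^j(\sT)$ — periodization gives $(\varrho_\eta)_{\bT_m} = \delta_{i_1}*\cdots*\delta_{i_j}*\psi_{\bT_m}$, so that $D^{\ua}g_{\eta,\bT_m} = (D^{\ua'}g_{0,\bT_m})*\psi_{\bT_m}$ with $|\ua'|=|\ua|+j$ (modulo the one lower-order term when $j=0$, which is strictly better). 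Convolution with $\psi_{\bT_m}$ preserves a bound of the form $(1+\|x\|_{(\zed/m\zed)^d})^{-\beta}$: the part of $\psi_{\bT_m}$ supported in $\|z\|_{(\zed/m\zed)^d}\le\|x\|/2$ contributes $\ll (1+\|x\|)^{-\beta}$ since there $\|x-z\|\asymp\|x\|$ and $\|\psi_{\bT_m}\|_1\ll 1$, while the remaining part contributes $\ll\sum_{\|z\|>\|x\|/2}e^{-c\|z\|}\ll e^{-c\|x\|/4}\ll(1+\|x\|)^{-\beta}$. It therefore suffices to prove $D^{\ua}g_{0,\bT_m}(x)\ll(1+\|x\|_{(\zed/m\zed)^d})^{-(|\ua|+d-2)}$ whenever $|\ua|+d-2>0$.

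For this base case I would use $g_{0,\bT_m}(x)=\frac{1}{\deg 0}\sum_{n\ge 0}(\varrho_{\bT_m}^{*n}(x)-m^{-d})$ on $\Lambda$ together with $\varrho_{\bT_m}^{*n}(x)=\sum_{\lambda\in\Lambda}\varrho^{*n}(x+m\lambda)$, so that for $\ua\neq 0$
\[
 D^{\ua}g_{0,\bT_m}(x)=\frac{1}{\deg 0}\sum_{n\ge 0}\sum_{\lambda\in\Lambda}D^{\ua}\varrho^{*n}(x+m\lambda)
\]
(for $\ua=0$ one carries the regularizing constants $-m^{-d}$ along, controlled by the Poisson estimate below in the range $n\gtrsim m^2$), and then insert the local limit theorem (Theorem \ref{local_limit_theorem}, applied to $\varrho^{*2}$ and pairing consecutive powers to accommodate a possible $2$-periodicity of $\varrho$): for large $n$, $D^{\ua}\varrho^{*n}(y)=n^{-(d+|\ua|)/2}e^{-\|\sigma^{-1}(y+\ua/2)\|^2/2n}(Q_{\ua}(\tfrac{y+\ua/2}{\sqrt n})+O(\tfrac1n(1+\tfrac{\|y\|}{\sqrt n})^{|\ua|+4}))+O_\epsilon(e^{-n^{3/8-\epsilon}})$, while $\varrho^{*n}$ is supported in $\|y\|\le Ln$, so every term with $n<R/L$ vanishes, where $R:=\|x\|_{(\zed/m\zed)^d}\ll m$. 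Splitting the $n$-sum at $n\asymp m^2$: for $R/L\lesssim n\lesssim m^2$ only the minimizing translate ($\|x+m\lambda\|=R$) is non-negligible and $\sum_n n^{-(d+|\ua|)/2}e^{-cR^2/n}(1+R/\sqrt n)^{O(1)}\ll(1+R)^{-(|\ua|+d-2)}$ (the hypothesis $|\ua|+d-2>0$ is exactly what makes the small-$n$ tail, equivalently the case $R=0$, converge); for $n\gtrsim m^2$, Poisson summation — equivalently, exponential mixing of $\varrho_{\bT_m}$ — gives $\sum_\lambda D^{\ua}\varrho^{*n}(x+m\lambda)\ll m^{-(|\ua|+d)}e^{-cn/m^2}$, whence $\sum_{n\gtrsim m^2}\ll m^{-(|\ua|+d-2)}\le R^{-(|\ua|+d-2)}$ using $R\ll m$; and the local-limit error term, summed over the effectively finite ranges $\|\lambda\|\ll n/m$ and $n\gtrsim R$, contributes $\ll e^{-cR^{3/8-\epsilon}}$. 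Adding the pieces proves the base case, and the reductions above upgrade it to the statement.

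The step I expect to be the main obstacle is the bookkeeping in the last paragraph: guaranteeing uniformity of the local-limit approximation over the effective ranges of $\lambda$ and $n$; handling the small-$n$ regime, where the local-limit error is useless but every summand simply vanishes once $n<R/L$ (and only $O(1)$ values of $n$ need a crude bound when $R$ is bounded); and verifying that each $m$-dependent bound produced for $n\gtrsim m^2$ is dominated by the target $R^{-(\cdot)}$ precisely because $R\ll m$ on the torus. A lesser but still technical point is the telescoping decomposition of $\varrho_\eta$ with exponentially decaying kernels and the check that convolution with the periodization $\psi_{\bT_m}$ does not degrade polynomial decay. (An alternative route avoids the time domain, expanding $D^{\ua}g_{\eta,\bT_m}$ as a Fourier series on $(\zed/m\zed)^d$ with coefficients $\tfrac{1}{\deg 0}\widehat{D^{\ua}}(\xi)\,\hat\varrho_\eta(\xi)/(1-\hat\varrho(\xi))$ and estimating it via a smooth frequency cutoff at scale $1/R$ plus iterated summation by parts in a coordinate where $x$ is large; the analytic core, including the role of $|\ua|+j+d-2>0$, is the same.)
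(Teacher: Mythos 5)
Your proposal is correct and follows essentially the same path as the paper's proof: reduce to the harmonic-measure convolution $g_{\eta,\bT_m}=g_{0,\bT_m}*\varrho_\eta$ on $\Lambda$, decompose $\varrho_\eta$ as $j$ discrete-difference operators convolved with an exponentially decaying kernel (this is precisely the content of Lemma \ref{measure_decomp_lemma}), and then bound $D^{\ua'}g_{0,\bT_m}$ with $|\ua'|=|\ua|+j$ via the local limit theorem for the stopped walk. The only cosmetic difference is in the base case: the paper's Lemma \ref{deriv_decay_lemma_T_m} truncates the time sum at $N=R^2$ with $R=2+\|\sigma^{-1}x\|$ and estimates the tail in Fourier characters on $(\zed/m\zed)^d$, whereas you truncate at $n\asymp m^2$ and invoke Poisson summation for the remainder; these amount to the same estimate, both using $|\ua'|+d-2>0$ in the same way, so the argument is not a genuinely different route.
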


Note that, although Lemma \ref{deriv_bound_lemma} applies to $x \in \Lambda$, by Lemma \ref{C_2_translation_invariance_lemma} the property of being $C^j$ is invariant under translating $\sT$, and hence the same estimate holds for arbitrary $x \in \sT$ up to changing the norm by $O(1)$.

\begin{lemma}\label{G_eta_eval}
 Let $\sT$ be a tiling of $\bR^d$ with period lattice $\Lambda$ identified with $\zed^d$ via a choice of basis. Set $\sigma^2 = \Cov(\varrho_{\frac{1}{2}})$.  Let $\eta$ be of class $C^1(\sT)$, and let $\varrho_\eta$ be the signed measure on $\Lambda$ obtained by stopping simple random walk on $\sT$ started from $\eta$ when it reaches $\Lambda.$ Let $\varrho_\eta$ have  mean $v$. For $m \geq 1$, for $n \in \Lambda$, $1 \leq \|n\| \ll \left(\frac{m^2}{\log m}\right)^{\frac{d-1}{2d}}$,
 \begin{equation}
  g_{\eta, \bT_m}(n) = \frac{\Gamma\left(\frac{d}{2} \right) v^t \sigma^{-2}n}{ \deg(0)\pi^{\frac{d}{2}}\|\sigma^{-1}n\|^d \det \sigma} + O\left(\frac{1}{\|\sigma^{-1}n\|^d} \right).
 \end{equation}
If $d \geq 3$ and $\eta \not \in C^1(\sT)$ has total mass $C$,
\begin{equation}
 g_{\eta, \bT_m}(n) = \frac{C \Gamma\left(\frac{d}{2} -1\right) }{2 \deg(0)\pi^{\frac{d}{2}}\|\sigma^{-1}n\|^{d-2} \det \sigma} + O\left(\frac{1}{\|\sigma^{-1}n\|^{d-1}} \right).
\end{equation}

\end{lemma}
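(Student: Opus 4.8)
The plan is to work entirely on $\bT_m$, restrict to the period lattice $\Lambda$, reduce to the stopped return measures $\varrho_{\bT_m}$, peel off the trivial periodic image to recover the free lattice heat kernel, and finally evaluate via the local limit theorem, Theorem~\ref{local_limit_theorem}. Combining the formula for $g_{v,\bT_m}$ established above with $\Lambda$-translation invariance of $g_{0,\bT_m}$, one has for $n\in\Lambda$ that $g_{\eta,\bT_m}(n)=g_{0,\bT_m}\!*(\varrho_\eta)_{\bT_m}(n)+O\!\bigl(m^{-d}\|\eta\|_1\bigr)$, where $g_{0,\bT_m}(x)=\frac{1}{\deg 0}\sum_{k\geq 0}\bigl(\varrho_{\bT_m}^{*k}(x)-m^{-d}\bigr)$ and $(\varrho_\eta)_{\bT_m}$ is the $m\Lambda$-periodization of $\varrho_\eta$, a signed measure of total mass $C$ (that of $\eta$), mean $v$, and exponentially decaying tails. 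Since $(\varrho_\eta)_{\bT_m}$ is, up to $O(m^{-A})$, supported within $O(\log m)$ of the origin, a discrete Taylor expansion of $g_{0,\bT_m}(n-z)$ in $z$ — whose second-order remainder is bounded by the Hessian estimate $D^{\ua}g_{0,\bT_m}(x)\ll\|x\|_{(\zed/m\zed)^d}^{-(|\ua|+d-2)}$ of Lemma~\ref{deriv_bound_lemma} — yields
\[
g_{\eta,\bT_m}(n)=C\,g_{0,\bT_m}(n)-v^{t}\nabla g_{0,\bT_m}(n)+O\!\bigl(\|\sigma^{-1}n\|^{-d}\bigr).
\]
If $\eta\in C^1(\sT)$ then $C=0$ and only the gradient term survives; if $\eta\notin C^1(\sT)$ (hence $d\geq 3$) then $C\,g_{0,\bT_m}(n)\asymp C\|n\|^{2-d}$ is the main term and $v^{t}\nabla g_{0,\bT_m}(n)=O(\|n\|^{1-d})$ is absorbed.

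\emph{Removing the periodic images.} Writing $\varrho_{\bT_m}^{*k}(x)=\sum_{\lambda\in\Lambda}\varrho^{*k}(x+m\lambda)$, the term $\lambda=0$ contributes $\sum_k\varrho^{*k}(n)=\deg(0)g_0(n)$ and $\sum_kD_i\varrho^{*k}(n)=\deg(0)D_ig_0(n)$, the infinite-tiling Green's function restricted to $\Lambda$. For $\lambda\neq 0$ the argument has norm $\gtrsim m\|\lambda\|$: truncating the $k$-sum at $k\asymp m^2/\log m$ makes the low-$k$ part $O(m^{-A})$ via the Gaussian factor $e^{-cm^2\|\lambda\|^2/k}$, while for larger $k$ the geometric decay of $\varrho_{\bT_m}^{*k}-m^{-d}$ afforded by the spectral gap of $\bT_m$ controls the tail; the surviving contribution is governed by $\|\lambda\|=1$ at $k\asymp m^2$ and is of size $m^{1-d}$ for the gradient and $m^{2-d}$ for the value, up to powers of $\log m$. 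Thus $\nabla g_{0,\bT_m}(n)=\nabla g_0(n)+O(m^{1-d}\log m)$ and $g_{0,\bT_m}(n)=g_0(n)+O(m^{2-d}\log m)$. The hypothesis $\|n\|\ll(m^2/\log m)^{(d-1)/(2d)}$ is exactly what makes $m^{1-d}\log m\ll\|n\|^{-d}$, handling the first formula; the $m^{2-d}\log m$ discrepancy affects only the second formula, where it joins the error term $O(\|\sigma^{-1}n\|^{1-d})$ once $\|n\|$ is confined to the somewhat smaller range on which $m^{2-d}\log m\ll\|n\|^{1-d}$.

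\emph{Evaluation of $g_0$ and $\nabla g_0$.} Apply Theorem~\ref{local_limit_theorem} to the lazy, aperiodic return measure whose covariance is $\sigma^2=\Cov(\varrho_{1/2})$ — passing to the lazy walk rescales $g_0$ and $\sigma$ by reciprocal factors and leaves the formulas unchanged. In $\deg(0)g_0(n)=\sum_{k\geq 0}\varrho^{*k}(n)$ and $\deg(0)D_ig_0(n)=\sum_{k\geq 0}D_i\varrho^{*k}(n)$, split at $k_0=\|n\|^2/(\log\|n\|)^2$: for $k<k_0$ the walk reaches distance $\|n\|$ with probability $\ll e^{-c(\log\|n\|)^2}$ by a Chernoff bound on the increments (cf.\ Lemma~\ref{Chernoff_variant_lemma}), so this part is super-polynomially small in $\|n\|$; for $k\geq k_0$, Theorem~\ref{local_limit_theorem} replaces $\varrho^{*k}(n)$ and $D_i\varrho^{*k}(n)$ by
\[
\frac{\exp(-\|\sigma^{-1}n\|^2/2k)}{(2\pi k)^{d/2}\det\sigma}\qquad\text{and}\qquad -\frac{(\sigma^{-2}n)_i}{k}\,\frac{\exp(-\|\sigma^{-1}n\|^2/2k)}{(2\pi k)^{d/2}\det\sigma},
\]
with Edgeworth-type errors $O\!\bigl(k^{-1}(1+\|n\|/\sqrt k)^{|\ua|+4}\bigr)$ times the Gaussian, which sum over $k\geq k_0$ to $O(\|\sigma^{-1}n\|^{-d})$, plus $O_\epsilon(e^{-k^{3/8-\epsilon}})$ terms summing to a negligible quantity. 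Comparing the sums to $\int_0^\infty$ by Euler--Maclaurin (Riemann-sum error $O(\|\sigma^{-1}n\|^{-d})$, the integrand decaying Gaussianly) and using, for $r=\|\sigma^{-1}n\|$,
\[
\int_0^\infty\frac{e^{-r^2/2t}}{(2\pi t)^{d/2}}\,dt=\frac{\Gamma(d/2-1)}{2\pi^{d/2}r^{d-2}},\qquad
\int_0^\infty\frac{e^{-r^2/2t}}{t(2\pi t)^{d/2}}\,dt=\frac{\Gamma(d/2)}{\pi^{d/2}r^{d}},
\]
gives $g_0(n)=\frac{\Gamma(d/2-1)}{2\deg(0)\pi^{d/2}\|\sigma^{-1}n\|^{d-2}\det\sigma}+O(\|\sigma^{-1}n\|^{1-d})$ and $\nabla g_0(n)=-\frac{\Gamma(d/2)\sigma^{-2}n}{\deg(0)\pi^{d/2}\|\sigma^{-1}n\|^{d}\det\sigma}+O(\|\sigma^{-1}n\|^{-d})$. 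Substituting into the Taylor expansion and using $(d-2)\Gamma(d/2-1)=2\Gamma(d/2)$ produces the two displayed identities.

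\emph{Main obstacle.} The skeleton is routine; the real work is the uniform bookkeeping of the error terms when summing Theorem~\ref{local_limit_theorem} over $k$ — reconciling the super-polynomial decay for $k\lesssim\|n\|^2$, the Edgeworth error, and the $O_\epsilon(e^{-k^{3/8-\epsilon}})$ tail after summation — and the verification that each torus correction (the periodic images $\lambda\neq 0$, the $-m^{-d}$ normalization, the truncation tail, and the discrete-versus-continuous Taylor remainder) remains below the claimed error throughout the admissible range; it is the image contribution of size $\asymp m^{1-d}$ at times $k\asymp m^2$ that pins down the constraint on $\|n\|$.
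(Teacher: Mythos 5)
Your argument is correct and reaches the same conclusion, but rearranges the work into a single derivation rather than invoking the paper's auxiliary lemmas. The paper proves the statement as a near-immediate corollary of Lemmas~\ref{gradient_lemma} and~\ref{G_0_asymp_lemma}: it subtracts a model signed measure on $\Lambda$ (the first-derivative operator $h=-v^t\nabla$ in the $C^1$ case, or a point mass of total mass $C$ at $0$ in the $C^0$ case) so that $\varrho_\eta-h$ gains one degree of regularity; Lemma~\ref{deriv_bound_lemma} then absorbs $g_{\bT_m}*(\varrho_\eta-h)$ into the error, and Lemmas~\ref{gradient_lemma} and~\ref{G_0_asymp_lemma}, which evaluate $\nabla g_{0,\bT_m}$ and $g_{0,\bT_m}$ directly on the torus by splitting the convolution-power sum at $T\asymp m^2/\log m$ (local limit theorem for $N<T$, Fourier inversion on $(\zed/m\zed)^d$ for $N\geq T$), deliver the main term. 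Your step of Taylor-expanding $g_{0,\bT_m}(n-z)$ to first order with the $|\ua|=2$ Hessian bound from Lemma~\ref{deriv_bound_lemma} is the same substitution in disguise (the model measure encodes the linear coefficient, and the $C^2$ case of Lemma~\ref{deriv_bound_lemma} is precisely the quadratic remainder bound), and your ``remove periodic images'' step does the same accounting as the paper's split at $T$: Chernoff kills the $\lambda\neq 0$ terms for $N<T$, and for $N\geq T$ the bound you attribute to the spectral gap is, once made precise, the paper's Fourier estimate producing the $T^{-(d-1)/2}$ (resp.\ $T^{-(d-2)/2}$) discrepancy that pins down the admissible $\|n\|$. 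What your rearrangement buys is a self-contained proof that passes through the free lattice Green's function and its explicit evaluation via $\int_0^\infty e^{-r^2/2t}(2\pi t)^{-d/2}\,t^{-j}\,dt$; what it costs is that you re-derive the content of Lemmas~\ref{gradient_lemma} and~\ref{G_0_asymp_lemma} inside the proof rather than citing them. You are also right to flag that the $T^{-(d-2)/2}$ tail and the $-m^{-d}$ normalization contributions force the smaller range $\|n\|\ll(m^2/\log m)^{(d-2)/(2(d-1))}$ for the second (total mass $C\neq 0$) display, which is the hypothesis of Lemma~\ref{G_0_asymp_lemma} but not the range stated at the top of Lemma~\ref{G_eta_eval}; that restriction is implicit in the paper's proof and should really appear in the statement.
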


As in the previous lemma, $g_\eta$ may be recovered on all of $\sT/m\Lambda$ by translating $\sT$ to translate the period lattice.

\begin{lemma}\label{Green_fn_asymptotic}
 Let $d \geq 2$ and let $\ua \in \bN^d$.  If $|\ua| + \frac{d}{2} > 2$ then  for each fixed $n,v \in \sT$, 
 \begin{equation}
  D^{\ua} g_{v, \bT_m}(n) \to D^{\ua} g_{v}(n)
 \end{equation}
as $m \to \infty$.
\end{lemma}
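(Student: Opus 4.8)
The plan is to reduce to the case $v=0$, $n\in\Lambda$ by a stopped-walk argument, to prove that case directly in the time and frequency domains, and then to bootstrap back to arbitrary $v,n\in\sT$.

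\emph{Core case $v=0$, $n\in\Lambda$.} Starting from the series $\deg(0)g_{0,\bT_m}(x)=\sum_{k\geq 0}(\varrho_{\bT_m}^{*k}(x)-m^{-d})$ and $\deg(0)g_0(x)=\sum_{k\geq 0}\varrho^{*k}(x)$ (with the $-\varrho^{*k}(0)$ regularisation in dimension $2$), apply $D^{\ua}$ termwise; when $|\ua|\geq 1$ this kills the constants $m^{-d}$ and $\varrho^{*k}(0)$, while the remaining case $|\ua|=0$ forces $d\geq 5$ and is handled the same way. I would split the $k$-sum at $K_m=\lfloor\sqrt m\rfloor$. For $k\leq K_m$, writing $D^{\ua}\varrho_{\bT_m}^{*k}(n)=D^{\ua}\varrho^{*k}(n)+\sum_{\lambda\neq 0}D^{\ua}\varrho^{*k}(n+m\lambda)$ and invoking the superpolynomial bound $\varrho^{*k}(y)\ll_A\|y\|^{-A}$ for $k\ll\|y\|^2/\log^2\|y\|$ (from the local limit theorem, Theorem~\ref{local_limit_theorem}) shows the wrap-around terms total $O_A(K_m m^{-A})$, so $\sum_{k\leq K_m}D^{\ua}\varrho_{\bT_m}^{*k}(n)\to\deg(0)D^{\ua}g_0(n)$, the tail $\sum_{k>K_m}|D^{\ua}\varrho^{*k}(n)|\ll\sum_{k>K_m}k^{-(d+|\ua|)/2}$ vanishing since $d+|\ua|>2$ (a weakening of the hypothesis). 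For $k>K_m$, pass to the discrete Fourier transform on $\Lambda/m\Lambda\cong(\zed/m\zed)^d$: since $\widehat{\varrho_{\bT_m}}(\xi)=\hat\varrho(\xi/m)$, summing the geometric series gives
\[
 \sum_{k>K_m}D^{\ua}\varrho_{\bT_m}^{*k}(n)=\frac{1}{m^d}\sum_{\xi\neq 0}\psi_{\ua}(\xi/m)\frac{\hat\varrho(\xi/m)^{K_m+1}}{1-\hat\varrho(\xi/m)}e(n\cdot\xi/m),
\]
where $|\psi_{\ua}(\theta)|\asymp\prod_i\|\theta_i\|_{\bR/\zed}^{a_i}\ll\|\theta\|^{|\ua|}$ near $0$. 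On the range $\|\xi/m\|\geq\epsilon_0$ one has $|\hat\varrho|\leq 1-c_1$, contributing $O((1-c_1)^{K_m})$; near $0$ the Taylor expansion $1-\hat\varrho(\theta)\asymp\theta^t\sigma^2\theta$ together with $|\hat\varrho(\theta)|\leq e^{-c\|\theta\|^2}$ bounds the contribution by $\ll\int_{\|u\|>c/m}\|u\|^{|\ua|-2}e^{-cK_m\|u\|^2}\,du+O(m^{2-|\ua|-d})\ll K_m^{-(|\ua|+d-2)/2}+m^{2-|\ua|-d}$, which tends to $0$. Combining the two ranges of $k$ gives $D^{\ua}g_{0,\bT_m}(n)\to D^{\ua}g_0(n)$ for $n\in\Lambda$.

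\emph{Bootstrapping.} For $n\notin\Lambda$, $\Lambda$-periodicity of $\sT$ gives $Y_{n+w,T_{n+w}}\stackrel{d}{=}Y_{n,T_n}+w$ for $w\in\Lambda$, so the mean-value identity $g_{0,\bT_m}(v)=\E[g_{0,\bT_m}(Y_{v,T_v})]$ and its infinite-volume analogue (Lemma~\ref{off_lattice_green_lemma}), after pulling the finite-difference combination inside the expectation, yield
\[
 D^{\ua}g_{0,\bT_m}(n)-D^{\ua}g_0(n)=\E[(D^{\ua}g_{0,\bT_m}-D^{\ua}g_0)(Y_{n,T_n})].
\]
Since $Y_{n,T_n}\in\Lambda$ the integrand converges pointwise to $0$ by the core case, and it is dominated uniformly in $m$ by the bound $|D^{\ua}g_{0,\bT_m}(z)|\ll 1$ of Lemma~\ref{deriv_bound_lemma} (applicable since $|\ua|+d-2>0$), so dominated convergence applies; for $m$ large the stopped walk from the fixed point $n$ does not wrap around $m\Lambda$ outside an event of probability $\ll e^{-cm}$, so the laws of $Y_{n,T_n}$ used on the two sides coincide. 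Finally, for $v\notin\Lambda$ the decomposition $g_{v,\bT_m}(x)=-c_v+\frac{1}{\deg x}\E[\sum_{j<T_v}\one(Y_{v,j}=x)]+\sum_{z\in\Lambda}\varrho_v(z)g_{0,\bT_m}(x-z)$ has a constant $c_v=O(m^{-d})$ (killed by $D^{\ua}$, or $\to 0$ when $|\ua|=0$), a middle term coinciding for large $m$ with its infinite-volume counterpart, and a $\varrho_v$-average of $D^{\ua}g_{0,\bT_m}(x-z)$ to which the previous step and dominated convergence apply; matching against the same decomposition of $D^{\ua}g_v$ completes the proof.

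\emph{Main obstacle.} The delicate point is the $k>K_m$ estimate in the core case, where one must show that the ``tail of the torus heat kernel'' is negligible uniformly in $m$; this is exactly where the frequency-domain representation and the quantitative non-degeneracy $1-\hat\varrho(\theta)\asymp\|\theta\|^2$ are used, and it is the step that fixes the admissible range of $\ua$. A secondary technicality is the possible $2$-periodicity of $\varrho$ for bipartite planar tilings, in which case $\hat\varrho$ attains the value $-1$; this is handled by pairing consecutive terms (equivalently, by working with $\varrho^{*2}$), exactly as in the construction of $g_0$, after which all the estimates go through unchanged.
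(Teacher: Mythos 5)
Your proof is correct and reaches the lemma by a genuinely different route in the core case. The paper argues directly in frequency domain: it writes $D^{\ua}g_{0,\bT_m}(n)$ as the Riemann sum $\frac{1}{m^d}\sum_{0\neq\xi\in(\zed/m\zed)^d}\widehat{D^{\ua}g_0}(\xi/m)e(n\cdot\xi/m)$ and observes that, because the summand has an integrable singularity at $0$ of order $\|\xi\|^{|\ua|-2}$ and the sum avoids the origin, the Riemann sum converges to the integral $\int_{(\bR/\zed)^d}\widehat{D^{\ua}g_0}(\xi)e(n\cdot\xi)\,d\xi = D^{\ua}g_0(n)$. You instead split the time series at $K_m=\lfloor\sqrt m\rfloor$: for $k\leq K_m$ you match $D^{\ua}\varrho_{\bT_m}^{*k}$ with $D^{\ua}\varrho^{*k}$ up to exponentially small wrap-around, and for $k>K_m$ you pass to Fourier and sum the geometric series to show the torus heat-kernel tail is negligible. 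Both arguments hinge on the same two ingredients (the explicit Fourier representation $\widehat{D^{\ua}g_0}(\theta)=\prod_j(e(\theta_j)-1)^{a_j}/(\deg(0)(1-\hat\varrho(\theta)))$ and the quadratic lower bound $1-|\hat\varrho(\theta)|\gg\|\theta\|_{(\bR/\zed)^d}^2$), but yours makes the tail estimate explicit where the paper appeals to general Riemann-sum convergence; the cost is more computation, the benefit is that it exposes that $|\ua|+d>2$ suffices, a weakening of the stated $|\ua|+\tfrac d2>2$ (the latter is the $L^2$ criterion for $\widehat{D^{\ua}g_0}$, the former the $L^1$/integrability criterion actually used). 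Your bootstrapping steps — pulling $D^{\ua}$ through $\E[\,\cdot\,(Y_{n,T_n})]$ using $\Lambda$-translation invariance, and the three-term decomposition of $g_{v,\bT_m}$ with $c_v\to 0$ for $v\notin\Lambda$ — are essentially identical to the paper's. One small imprecision: the superpolynomial bound $\varrho^{*k}(y)\ll_A\|y\|^{-A}$ for $k\ll\|y\|^2/\log^2\|y\|$ at fixed or slowly growing $k$ really comes from Chernoff's inequality rather than the local limit theorem (whose error term $O_\epsilon(\exp(-N^{3/8-\epsilon}))$ does not decay in $\|y\|$ at fixed $N$); the paper uses Chernoff in the analogous places. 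This is immaterial to the correctness of your argument.
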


\section{The sandpile group and dual group}
The reader is referred to Section 2 of \cite{JLP15}, which gives a clear discussion of the sandpile group of a simple connected finite graph.  The arguments given there go through with only slight  changes to handle  graphs with multiple edges, which are used to handle the case of a sink at the boundary. 

Let $G = (V,E)$ be a graph which is connected, with possibly multiple edges but no loops.  Let $s \in V$ be the sink.  A \emph{sandpile} on $G$ is a map $\sigma: V \setminus \{s\} \to \zed_{\geq 0}$. The sandpile is \emph{stable} if $\sigma(v) < \deg(v)$ for all $v \in V \setminus\{s\}$.  If $\sigma$ is unstable, so that for some $v \in V \setminus \{s\}$, $\sigma(v) \geq \deg(v)$, the sandpile at  $v$ can topple to $\sigma'$ which has
\begin{equation*}
 \sigma'(v) = \sigma(v) - \deg(v),
\end{equation*}
for $w \in V\setminus\{s\}$ such that $(v,w) \in E$, 
\begin{equation*}
 \sigma'(w) = \sigma(w) + \deg(v,w)
\end{equation*}
where $\deg(v,w)$ is the number of edges between $v$ and $w$ in $E$,
and
\begin{equation*}
 \sigma'(w) = \sigma(w)
\end{equation*}
otherwise.

Topplings commute, and a vertex's height does not decrease unless it topples, hence given a sandpile $\sigma$ there is a unique stable sandpile $\sigma^o$ which can be obtained from $\sigma$ by repeated toppling. Let
\begin{equation}
 \sS(G) = \{\sigma: V \setminus\{s\} \to \zed_{\geq 0}, \sigma \leq \deg-1\}.
\end{equation}
The set $\sS(G)$ becomes an additive monoid under the law $\sigma \oplus \eta(v) = (\sigma + \eta)^o(v),$ in which the heights are added and then the sandpile is stabilized.

Sandpile dynamics on $\sS(G)$ are given by letting $\mu$ be the probability measure
\begin{equation}
 \mu = \frac{1}{|V|} \left(\delta_{\id} + \sum_{v \in V \setminus\{s\}} \delta_{v} \right)
\end{equation}
in which $\id$ is the identity element of the sandpile group, and $\delta_v$ is the Kronecker delta function at $v$.
Given an initial probability distribution $\nu$ on $\sS(G)$, the distribution at step $n$ of the dynamics is $\mu^{*n}*\nu$ where $\mu^{*n}$ is the $n$-fold repeated convolution.

Since the full state $\sigma_{\full}(v) = \deg(v)-1$ has a positive probability of being reached from any given state in a bounded number of steps, $\sigma_{\full}$ is recurrent for the dynamics, and hence the recurrent states are those reachable from $\sigma_{\full}$.  Let $\Delta'$ denote the reduced graph Laplacian, which is obtained from the graph Laplacian 
\begin{equation}
 \Delta f(v) = \sum_{(v,w) \in E} f(v)-f(w)
\end{equation}
by omitting the row and column corresponding to the sink.
The recurrent states form an abelian group $\sG(G) \cong \zed^{V\setminus \{s\}}/\Delta' \zed^{V\setminus\{s\}}$, see \cite{JLP15} for a proof. Since $\Delta'$ is a symmetric matrix, the dual lattice to $\Delta' \zed^{V \setminus \{s\}}$ is $(\Delta')^{-1} \zed^{V \setminus \{s\}}$ and hence the dual group is isomorphic to
\begin{equation}
 \hat{\sG}(G) \cong (\Delta')^{-1} \zed^{V \setminus \{s\}}/\zed^{V \setminus \{s\}}.
\end{equation}
Given $\xi \in \hat{\sG}$ and $g \in \sG$, viewed as functions on $V \setminus \{s\}$, the pairing is $\xi(g) = \xi \cdot g \in \bR/\zed$.  

In this article, attention is limited to the random walk $\mu^{*n}$ restricted to the group $\sG$ of recurrent states.  This is the long term behavior, and in any case, in \cite{HJL17} it is shown that on the torus $(\zed/m\zed)^2$, the random walk started from any stable state is absorbed into $\sG(G)$ with probability $1-o(1)$ in a lower order number of steps than the mixing time; the proof given there could be adapted to this situation as well.

Since the random walk considered is a random walk on an abelian group, in terms of the mixing behavior there is no loss in assuming that the walk is started at the identity.  Also, the transition kernel is diagonalized by the Fourier transform, that is, the characters, for $\xi \in \hat{\sG}$, $\chi_\xi(g) = e^{2\pi i \xi(g)}$ are eigenfunctions for the transition kernel, and the eigenvalues are the Fourier coefficients
\begin{equation}
 \hat{\mu}(\xi) = \frac{1}{|V|} \left(1 + \sum_{v \in V \setminus \{s\}} e(\xi_v) \right).
\end{equation}
Since the Fourier transform has the usual property of carrying convolution to pointwise multiplication, Cauchy-Schwarz and Plancherel give the following lemma, see \cite{D88}.
\begin{lemma}[Upper bound lemma]
 Let $\bU_{\sG}$ denote the uniform measure on the sandpile group $\sG(G)$.  For $n \geq 1$,
 \begin{equation}\left\|\mu^{*n} - \bU_{\sG}\right\|_{\TV(\sG)} \leq \frac{1}{2} \left\|\mu^{*n} - \bU_{\sG}\right\|_2 = \frac{1}{2} \left(\sum_{\xi \in \hat{\sG}\setminus \{0\}} \left|\hat{\mu}(\xi)\right|^{2n}\right)^{\frac{1}{2}}.
 \end{equation}

\end{lemma}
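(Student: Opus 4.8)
The plan is to prove the two relations in turn: the inequality $\|\mu^{*n}-\bU_{\sG}\|_{\TV(\sG)} \le \tfrac12\|\mu^{*n}-\bU_{\sG}\|_2$ and the identity $\tfrac12\|\mu^{*n}-\bU_{\sG}\|_2 = \tfrac12\bigl(\sum_{\xi\neq 0}|\hat\mu(\xi)|^{2n}\bigr)^{1/2}$. Here, following the standard abuse, for a signed measure $\nu$ on the finite group $\sG$ I read $\|\nu\|_2$ as the $L^2(\bU_\sG)$-norm of its density $f = |\sG|\,\nu$ relative to the uniform probability measure $\bU_\sG$. First I would recall the elementary description of total variation on a finite set: taking $A = \{g \in \sG : \mu^{*n}(g) > \bU_\sG(g)\}$ in the supremum defining $\|\cdot\|_{\TV(\sG)}$ shows $\|\mu^{*n}-\bU_{\sG}\|_{\TV(\sG)} = \tfrac12\sum_{g\in\sG}\bigl|\mu^{*n}(g)-|\sG|^{-1}\bigr| = \tfrac12\,\E_{\bU_\sG}|f-1|$, where $f(g) = |\sG|\,\mu^{*n}(g)$. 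Cauchy--Schwarz against the probability measure $\bU_\sG$ then bounds this by $\tfrac12\bigl(\E_{\bU_\sG}|f-1|^2\bigr)^{1/2} = \tfrac12\|f-1\|_{L^2(\bU_\sG)} = \tfrac12\|\mu^{*n}-\bU_{\sG}\|_2$, which is the first relation.

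For the identity I would invoke that the characters $\{\chi_\xi : \xi\in\hat\sG\}$, $\chi_\xi(g) = e^{2\pi i\,\xi(g)}$, form an orthonormal basis of $L^2(\bU_\sG)$; this is Pontryagin duality for the finite abelian group $\sG$ together with the perfectness of the pairing $\xi(g) = \xi\cdot g$ recorded above. Expanding $f-1$ in this basis and applying Parseval gives $\|f-1\|_{L^2(\bU_\sG)}^2 = \sum_{\xi\neq 0}\bigl|\langle f,\chi_\xi\rangle_{L^2(\bU_\sG)}\bigr|^2$, the term $\xi = 0$ being deleted since $\langle f,\chi_0\rangle_{L^2(\bU_\sG)} = \E_{\bU_\sG} f = 1$ exactly cancels the constant $1$. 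Finally $\langle f,\chi_\xi\rangle_{L^2(\bU_\sG)} = |\sG|^{-1}\sum_{g}f(g)\overline{\chi_\xi(g)} = \sum_{g}\mu^{*n}(g)\overline{\chi_\xi(g)} = \widehat{\mu^{*n}}(\xi)$, and because the Fourier transform carries convolution to pointwise product — equivalently, as noted just before the lemma, the characters diagonalize the transition kernel with eigenvalue $\hat\mu(\xi)$ — one has $\widehat{\mu^{*n}}(\xi) = \hat\mu(\xi)^n$, so $|\langle f,\chi_\xi\rangle_{L^2(\bU_\sG)}|^2 = |\hat\mu(\xi)|^{2n}$. Combining the two displays yields the asserted chain of (in)equalities.

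This is entirely routine bookkeeping and there is no genuine obstacle; the only point requiring care is fixing conventions, namely that $\|\cdot\|_2$ denotes the $L^2$-norm of the density with respect to the uniform probability measure $\bU_\sG$, which is what makes the Parseval sum come out as $\sum_{\xi\neq 0}|\hat\mu(\xi)|^{2n}$ rather than with a spurious factor of $|\sG|$. One should also note explicitly that the first inequality step loses nothing qualitatively here, since all later applications of the lemma only need the $L^2$-bound on the right.
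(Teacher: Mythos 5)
Your argument is correct, and it is the standard Diaconis--Shahshahani proof that the paper itself defers to by citing \cite{D88} rather than giving details: the split into Cauchy--Schwarz against $\bU_\sG$ followed by Parseval over the characters, with the $\xi=0$ term cancelling because $\mu^{*n}$ is a probability measure, is exactly the argument there. Your care in fixing the normalization of $\|\cdot\|_2$ (as the $L^2(\bU_\sG)$-norm of the density $|\sG|\,\mu^{*n}$) is the right reading and is what makes the sum come out without a stray $|\sG|$ factor.
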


Several further representations of the dual group $\hat{\sG}(G)$ are useful.
\begin{lemma}
 The group $\hat{\sG}$ may be identified with the restriction to $V \setminus\{s\}$ of functions $\xi: V \to \bR/\zed$ such that $\xi(s) = 0$ and $\Delta\xi \equiv 0 \bmod 1$.
\end{lemma}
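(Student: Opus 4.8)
The plan is to make explicit the bijection already packaged in the isomorphism $\hat{\sG}(G)\cong(\Delta')^{-1}\zed^{V\setminus\{s\}}/\zed^{V\setminus\{s\}}$ recorded just above, and to verify that it lands precisely in the stated function class. First I would take a coset representative $\xi\in(\Delta')^{-1}\zed^{V\setminus\{s\}}$, regard it as a real-valued function on $V\setminus\{s\}$, and extend it to all of $V$ by declaring $\xi(s)=0$. The key observation is that, because of this choice of value at the sink, for every non-sink vertex $v$ one has $\Delta\xi(v)=\Delta'\xi(v)$ (deleting the sink column removes the term $-\deg(v,s)\xi(s)=0$, while the diagonal entry $\deg(v)$ survives), so $\Delta\xi(v)\in\zed$ there. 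Then $\Delta\xi(s)$ is forced to be an integer as well, since $\sum_{v\in V}\Delta\xi(v)=0$ for any function on a finite graph (each edge $\{v,w\}$ contributes $(\xi(v)-\xi(w))+(\xi(w)-\xi(v))=0$), whence $\Delta\xi(s)=-\sum_{v\neq s}\Delta\xi(v)\in\zed$. Thus the extension satisfies $\Delta\xi\equiv 0\bmod 1$ on $V$ together with $\xi(s)=0$; reducing values mod $1$ produces a function $V\to\bR/\zed$ of the claimed type, and replacing $\xi$ by $\xi+\zed^{V\setminus\{s\}}$ leaves this $\bR/\zed$-valued function unchanged, so the assignment $\hat{\sG}\to\{\xi:V\to\bR/\zed:\ \xi(s)=0,\ \Delta\xi\equiv 0\bmod 1\}$ is well defined.

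Next I would construct the inverse. Given $\xi:V\to\bR/\zed$ with $\xi(s)=0$ and $\Delta\xi\equiv 0\bmod 1$, choose any real lift $\tilde\xi:V\to\bR$ with $\tilde\xi(s)=0$. Since $\tilde\xi(s)=0$, for each $v\in V\setminus\{s\}$ the value $\Delta'\tilde\xi(v)$ equals $\Delta\tilde\xi(v)$, which is an integer by hypothesis; hence $\tilde\xi|_{V\setminus\{s\}}\in(\Delta')^{-1}\zed^{V\setminus\{s\}}$ determines a class in $\hat{\sG}$, independent of the chosen lift because two real lifts with the same value at $s$ differ by an element of $\zed^{V\setminus\{s\}}$. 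One then checks directly that the two constructions are mutually inverse: starting from a coset, extending by $0$ at $s$, reducing mod $1$, and lifting back with value $0$ at $s$ returns the original coset, and conversely for a function. Both maps are visibly additive, so this is an isomorphism of abelian groups, which is the assertion of the lemma.

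All the computations here are routine; the only step that deserves care is the claim that the extended function satisfies $\Delta\xi(s)\in\zed$, which does not follow from $\Delta'\xi\in\zed^{V\setminus\{s\}}$ alone and instead uses the global conservation identity $\sum_{v\in V}\Delta\xi(v)=0$ for the (full, unreduced) Laplacian. Everything else is bookkeeping about which entries of the Laplacian survive deletion of the sink row and column, together with the observation that fixing $\xi(s)=0$ is exactly what makes the reduced and full Laplacians agree on non-sink vertices.
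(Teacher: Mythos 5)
Your proof is correct and follows essentially the same route as the paper's: extend by $\xi(s)=0$, observe that the full and reduced Laplacians then agree on non-sink vertices, use the conservation identity $\sum_v \Delta\xi(v)=0$ to get integrality at the sink, and invert via $(\Delta')^{-1}\zed^{V\setminus\{s\}}/\zed^{V\setminus\{s\}}$. Your writeup is simply more explicit about well-definedness and the two maps being mutually inverse.
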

\begin{proof}
Given $\xi \in \hat{\sG}$, extend $\xi$ to a function $\xi_0$ on $V$ by defining $\xi_0(s) = 0$.  For $v \neq s$, $\Delta \xi_0(v) = \Delta' \xi(v)$.  Since $\Delta \xi_0$ is mean 0 on $V$, it follows that $\Delta \xi_0(s) \equiv 0 \bmod 1$, also, so each element of $\hat{\sG}$ can be recovered this way.

Conversely, given such a $\xi$, for $v \in V\setminus \{s\}$, $\Delta \xi(v) = \Delta' \xi|_{V \setminus \{s\}}$ so the claim follows from the structure of the dual group.
\end{proof}
Abusing notation, given any function $\xi: V \to \bR$, define $\hat{\mu}(\xi) = \frac{1}{|V|} \sum_{v \in V} e(\xi_v)$.
\begin{lemma}
 Let $\xi : V \to \bR/\zed$ be such that $\xi(s) = 0$ and $\Delta \xi \equiv 0 \bmod 1$.  Let $\nu = \Delta \xi$ and $\overline{\xi} = g* \nu$ where $g$ is a Green's function of the graph.  Then $\xi - \xi^*$ is a constant, and in particular,
 \begin{equation}
  \left|\hat{\mu}(\xi)\right|= \left|\hat{\mu}(\overline{\xi})\right|.
 \end{equation}

\end{lemma}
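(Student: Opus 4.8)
The plan is to show that $\xi$ and $\overline{\xi}$ differ by a constant in $\bR/\zed$; since $\hat{\mu}$ only acquires a unimodular factor under a global phase shift, the claimed equality of absolute values follows at once.

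First I would pass to a real-valued lift of $\xi$, still written $\xi$, normalized so that $\xi(s) = 0$. Because $\Delta\xi \equiv 0 \bmod 1$, the function $\nu := \Delta\xi$ is integer-valued, and on the finite connected graph $G$ the Laplacian has vanishing column sums, so $\sum_{v \in V}\nu(v) = \sum_{v\in V}\Delta\xi(v) = 0$ automatically. Hence $\nu$ is an admissible input for the Green's function convolution, and $\overline{\xi} = g*\nu = \sum_{v \in V}\nu(v)\,g_v$ is a well-defined real-valued function on $V$ with $\Delta\overline{\xi} = \nu$; this is exactly the property of the graph Green's function recorded in Section~2, valid precisely because $\nu$ has total sum zero, and it is independent of the particular Green's function chosen. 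Consequently $\Delta(\xi - \overline{\xi}) = \nu - \nu = 0$.

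Next I would invoke the maximum principle on a finite connected graph: a harmonic function is constant, so $\xi - \overline{\xi} \equiv c$ for some $c \in \bR$. Reducing modulo $1$, $\xi$ and $\overline{\xi}$ agree up to the additive constant $c$ as maps into $\bR/\zed$, so
\begin{equation*}
 \hat{\mu}(\xi) = \frac{1}{|V|}\sum_{v \in V} e(\xi_v) = \frac{1}{|V|}\sum_{v \in V} e(\overline{\xi}_v + c) = e(c)\,\hat{\mu}(\overline{\xi}),
\end{equation*}
and taking moduli yields $|\hat{\mu}(\xi)| = |\hat{\mu}(\overline{\xi})|$. The argument is essentially formal; the only point requiring care is the bookkeeping between the $\bR/\zed$-valued character $\xi$ and its real lift, together with the observation that $\nu = \Delta\xi$ has vanishing total sum on a finite graph so that $g*\nu$ is defined and inverts $\Delta$. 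I do not anticipate any substantive obstacle.
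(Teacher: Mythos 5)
Your proof is correct and follows essentially the same route as the paper's: note that $\nu = \Delta\xi$ has zero sum because the Laplacian has vanishing column sums, so $\Delta\overline{\xi} = \nu$, hence $\Delta(\xi - \overline{\xi}) = 0$, and a harmonic function on a finite connected graph is constant. The only cosmetic difference is that you phrase the last step via the maximum principle where the paper simply invokes that the kernel of $\Delta$ consists of constants, and you are more explicit about choosing a real-valued lift of $\xi$; neither changes the substance.
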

\begin{proof}
 Note that the image of $\Delta$ has sum 0 on $V$, so $\nu = \Delta \xi$ has mean 0.  Hence $\Delta(\overline{\xi}) = \nu$ and $\Delta(\xi - \overline{\xi}) = 0$.  The conclusion thus holds, since the kernel of $\Delta$ is the space of constant functions.
\end{proof}
Note that, since the image of $\Delta$ are functions of mean 0, treated as a function on $V$, $\nu = \Delta \xi$ has mean 0, and hence for $\xi \neq 0$,  $\|\nu\|_1 \geq 2$.

The above representation is useful in considering sandpiles on periodic tilings, where $\xi$ may be understood to be a harmonic modulo 1 function on $\sT$ which is $m\Lambda$ periodic and vanishes at the periodic images of the sink.

In the case of an open boundary, another representation is more useful.
\begin{lemma}
Let $m \geq 1$.  Let $G = \sT_m$ be the graph associated to a tiling $\sT$ with reflection symmetry in a family of hyperplanes $\sF$ and fundamental region $\sR$. Identify $\sT_m \setminus \{s\}$ with $\sT \cap m \cdot \sR$.  Given $\xi \in \hat{\sG}(\sT_m)$, there is a function $\xi_0 : \sT \to \bR$ which is harmonic modulo 1, has reflection anti-symmetry in each hyperplane in $m \cdot \sF$, and such that $\xi_0|_{\sT \cap m\cdot \sR} = \xi$.
\end{lemma}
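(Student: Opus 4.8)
The plan is to take $\xi_0$ to be the reflection anti-symmetric extension of $\xi$ across the walls of the reflected copies of $\sR$. By the representation of $\hat{\sG}(\sT_m)$ as functions vanishing at the sink and harmonic modulo $1$, we may regard $\xi$ as a real-valued function on $\sT\cap m\cdot\sR$ together with the value $0$ at every vertex of $\sT$ lying outside $m\cdot\sR$ (those identified with $s$), with the property that $\Delta_{\sT_m}\xi\equiv 0\bmod 1$ at every interior vertex, where $\Delta_{\sT_m}$ is the graph Laplacian of $\sT_m$; note that $\Delta_{\sT_m}$ agrees with the Laplacian $\Delta$ of the infinite tiling at interior vertices, since passing to $\sT_m$ does not change the degree of such a vertex.

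First I would record the combinatorics of the reflection group. Let $W$ be the group generated by the reflections in the hyperplanes of $m\cdot\sF$, and let $\sgn\colon W\to\{\pm 1\}$ be the homomorphism sending each reflection to $-1$ (the determinant of the linear part). By hypothesis the family $\{w(m\cdot\sR):w\in W\}$ tiles $\bR^d$ with $W$ acting simply transitively on the tiles — in two dimensions this is precisely the assumption that no nontrivial composition of reflections fixes $\sR$, and in dimension $\geq 3$ it is the standard fact for the coordinate hyperplane arrangement. Consequently every $x\in\bR^d$ that is off the hyperplanes of $m\cdot\sF$ lies in the interior of a unique tile, and there is a unique $w_x\in W$ with $w_x^{-1}x\in m\cdot\sR$. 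Define
\begin{equation*}
 \xi_0(x)=\begin{cases}\sgn(w_x)\,\xi(w_x^{-1}x), & x\notin\textstyle\bigcup(m\cdot\sF),\\[2pt] 0, & x\in\textstyle\bigcup(m\cdot\sF).\end{cases}
\end{equation*}
Because $m\cdot\sF$ is a subfamily of $\sF$, the tiling $\sT$ has reflection symmetry in $m\cdot\sF$, so $w_x^{-1}x\in\sT$; and $w_x^{-1}x\in m\cdot\sR$, so $\xi(w_x^{-1}x)$ is defined. Taking $w_x=\id$ shows $\xi_0$ restricts to $\xi$ on $\sT\cap m\cdot\sR$. If $r$ is the reflection in $H\in m\cdot\sF$, then for $x$ off the walls one has $w_{rx}=r w_x$, $(rw_x)^{-1}(rx)=w_x^{-1}x$ and $\sgn(rw_x)=-\sgn(w_x)$, so $\xi_0(rx)=-\xi_0(x)$; and for $x$ on a wall, $rx$ again lies on a wall (the family $m\cdot\sF$ is invariant under each of its reflections), so both sides vanish. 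Hence $\xi_0$ is anti-symmetric in every hyperplane of $m\cdot\sF$.

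The main point is that $\Delta\xi_0\equiv 0\bmod 1$ at every vertex of $\sT$, and here condition A enters in the form: no edge of $\sT$ crosses a hyperplane of $\sF$, hence none of $m\cdot\sF$, which is how condition A was used for $\Dfour$ above. Fix $v\in\sT$. If $v$ is interior to a tile $w(m\cdot\sR)$, then every neighbour of $v$ lies in the closed tile (an edge leaving it would cross a hyperplane of $m\cdot\sF$); interior neighbours $y$ have $\xi_0(y)=\sgn(w)\xi(w^{-1}y)$ and boundary neighbours have $\xi_0(y)=0=\sgn(w)\xi(s)$, so collecting terms and using $\deg_{\sT}(v)=\deg_{\sT_m}(w^{-1}v)$ gives $\Delta\xi_0(v)=\sgn(w)\,\Delta_{\sT_m}\xi(w^{-1}v)\equiv 0\bmod 1$. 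If instead $v$ lies on some $H\in m\cdot\sF$, then $\xi_0(v)=0$ and the reflection $r$ in $H$ fixes $v$ and permutes, with multiplicity, the neighbours of $v$, pairing $y$ with $ry$ for $y\notin H$ and fixing those on $H$; anti-symmetry gives $\xi_0(y)+\xi_0(ry)=0$ for the pairs and $\xi_0(y)=0$ for the fixed ones, so $\Delta\xi_0(v)=-\sum_{(v,y)\in E}\xi_0(y)=0$. Thus $\xi_0\in\sH(\sT)$, which completes the proof.

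The step I expect to be the real work is the harmonicity check at vertices lying on the reflecting hyperplanes: one must pair up neighbours under the reflection (respecting edge multiplicities) and use both the forced vanishing of $\xi_0$ on those hyperplanes and condition A to keep all neighbours inside the relevant closed tile. The remaining items — well-definedness via simple transitivity, the anti-symmetry identity, and the degree bookkeeping between $\sT$ and $\sT_m$ — are routine.
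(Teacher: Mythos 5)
Your proof is correct and takes essentially the same route as the paper's: define $\xi_0$ by reflection anti-symmetric extension using the simply transitive action of the reflection group on the tiles, observe that anti-symmetry forces vanishing on the hyperplanes, match $\Delta\xi_0$ to $\Delta'\xi$ on interior vertices (using condition A to keep neighbours in the closed tile), and kill $\Delta\xi_0$ at hyperplane vertices by pairing neighbours under the fixing reflection. The paper compresses this to a few sentences; you have expanded the same argument with the explicit group-theoretic bookkeeping, which is sound.
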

\begin{proof}
 Since any sequence of reflections in $m \cdot \sF$ which maps $m \cdot \sR$ onto itself is the identity, it follows that there is a unique extension $\xi_0$ of $\xi$, thought of as a function on $\sT \cap m \cdot \sR$ to a function which is reflection anti-symmetric in $m\cdot \sF$.  Such a function necessarily vanishes on the vertices of $\sT$ which lie on a hyperplane from $m\cdot \sF$.  Since $\xi_0$ vanishes on the boundary of $m\cdot \sR$, $\Delta \xi_0$ and $\Delta' \xi$ agree on the interior $m \cdot \sR$.  By reflection anti-symmetry, $\Delta \xi_0$ vanishes on $m \cdot \sF \cap \sT$.  Thus $\xi_0$ is harmonic modulo 1.
\end{proof}

Given $\xi \in \hat{\sG}$, the choice of $\xi$ is only determined modulo 1.  As in \cite{HJL17} it is useful for ordering purposes to make a preferred choice of the representation.  Let 
\begin{equation}
 C(\xi) = \frac{1}{2\pi} \arg(\hat{\mu}(\xi)) \in \left[-\frac{1}{2}, \frac{1}{2}\right).
\end{equation}
Let $\xi'$ be defined by choosing, for $x \in V \setminus \{s\}$,
\begin{equation}
 \xi'_{x} \equiv \xi_{x} \bmod 1, \qquad \xi'_{x} \in \left(C(\xi)-\frac{1}{2}, C(\xi) + \frac{1}{2}\right].
\end{equation} Define the \emph{distinguished prevector} of $\xi$,  $\nu(\xi) = \Delta' \xi'$.  
\begin{lemma}\label{size_bound_lemma}
 Let $\xi \in \hat{\sG}$ with distinguished prevector $\nu$.  The Fourier coefficient $\hat{\mu}(\xi)$ satisfies
 \begin{equation}
  1 - \left|\hat{\mu}(\xi)\right| \gg \frac{\|\nu\|_2^2}{|V|} \geq \frac{\|\nu\|_1}{|V|}.
 \end{equation}

\end{lemma}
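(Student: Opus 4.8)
The plan is to rotate away the argument of $\hat\mu(\xi)$, reduce the estimate to a comparison of $\ell^2$ norms, and then exploit the integer‑valuedness of $\nu$ together with the bounded degree of $\sT$. Write $c = C(\xi)$ and define $\tilde\xi$ on all of $V$ by $\tilde\xi_v = \xi'_v - c$ for $v \neq s$ and $\tilde\xi_s = -c$; by the construction of the distinguished representative $\xi'$ and the choice of $C(\xi)$, each $\tilde\xi_v$ lies in $(-\tfrac12,\tfrac12]$. Since $\hat\mu(\xi) = |\hat\mu(\xi)|e(c)$ and $\xi_s = 0$, one has $|\hat\mu(\xi)| = e(-c)\hat\mu(\xi) = \frac1{|V|}\sum_{v\in V}e(\tilde\xi_v)$, and taking real parts,
\begin{equation*}
 1 - |\hat\mu(\xi)| \;=\; \frac1{|V|}\sum_{v\in V}\bigl(1-\cos 2\pi\tilde\xi_v\bigr)\;\gg\;\frac1{|V|}\sum_{v\in V}\tilde\xi_v^2,
\end{equation*}
using $1 - \cos 2\pi\theta \gg \theta^2$ on $[-\tfrac12,\tfrac12]$. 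Thus it suffices to prove $\|\nu\|_2^2 \ll \sum_{v\in V}\tilde\xi_v^2$; the inequality $\|\nu\|_2^2 \ge \|\nu\|_1$ is then immediate since $\nu = \Delta'\xi'$ is integer valued.

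To control $\nu$, set $\beta(v) = \deg(v,s)$; expanding $\xi' = \tilde\xi + c$ on $V\setminus\{s\}$ gives $\nu = \Delta'\tilde\xi + c\,\beta$, where I write $\Delta'\tilde\xi(v) = \deg(v)\tilde\xi_v - \sum_{(v,w)\in E,\,w\neq s}\tilde\xi_w$. Because $\sT$ has bounded degree, every non‑sink vertex has degree $O(1)$, so the reduced Laplacian has $\ell^2(V\setminus\{s\})$ operator norm $O(1)$, and moreover $\|\beta\|_2^2 = \sum_{v\neq s}\deg(v,s)^2 \le (\max_v \deg(v,s))\deg(s) \ll \deg(s)$. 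Hence $\|\nu\|_2^2 \ll \sum_{v\in V}\tilde\xi_v^2 + c^2\deg(s)$. For the torus graph $\bT_m$ the sink is a single vertex of the tiling, so $\deg(s) = O(1)$, the extra term is dominated by $\tilde\xi_s^2 = c^2$, and the proof is complete.

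The one remaining point, and the crux of the argument, is to absorb $c^2\deg(s)$ for the open‑boundary graph $\sT_m$, where $\deg(s)\asymp m^{d-1}$ is large. Here I would again use integrality of $\nu$: if $v$ is a non‑sink vertex joined to the sink by a single edge then $\Delta'\tilde\xi(v) = \nu(v) - c$ with $\nu(v)\in\zed$, so $|\Delta'\tilde\xi(v)| \ge \|c\|_{\bR/\zed} = |c|$; since $|\Delta'\tilde\xi(v)| \ll |\tilde\xi_v| + \max_{(v,w)\in E,\,w\neq s}|\tilde\xi_w|$, this forces $\tilde\xi_v^2 + \sum_{(v,w)\in E,\,w\neq s}\tilde\xi_w^2 \gg c^2$. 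Summing over all such $v$ and using bounded degree once more gives $\sum_{v\in V}\tilde\xi_v^2 \gg c^2\cdot\#\{v\neq s: \deg(v,s)=1\}$. Finally, since edges of $\sT$ have bounded length and $\sR$ is a bounded convex polytope, the non‑sink vertices incident to the sink by more than one edge lie within $O(1)$ of the codimension‑two skeleton of $m\sR$ and so number $O(m^{d-2}) = o(\deg(s))$; hence $\#\{v : \deg(v,s)=1\}\asymp\deg(s)$, so $\sum_{v\in V}\tilde\xi_v^2 \gg c^2\deg(s)$, and combining with the previous paragraph, $\|\nu\|_2^2 \ll \sum_{v\in V}\tilde\xi_v^2 \ll |V|\,(1 - |\hat\mu(\xi)|)$, as required.

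The hardest step is this last one: a naive operator‑norm bound for the reduced Laplacian introduces a spurious term proportional to the degree of the boundary sink, which is not small for the open‑boundary graphs, and eliminating it forces one to combine the integrality of the distinguished prevector with the metric geometry of the boundary of $\sR$. For the periodic boundary condition this difficulty is absent.
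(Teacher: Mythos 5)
Your opening paragraph reproduces the paper's own argument: rotate by $C(\xi)$, take real parts, use $1-\cos 2\pi\theta\gg\theta^2$ on $[-\tfrac12,\tfrac12]$ to get $1-|\hat\mu(\xi)|\gg\|\tilde\xi\|_2^2/|V|$, and then reduce to bounding $\|\nu\|_2^2$ by $\|\tilde\xi\|_2^2$. After that the paper is much terser than you are: it simply writes $\|\nu\|_2^2=\|\Delta\xi^*|_{V\setminus\{s\}}\|_2^2\ll\|\xi^*\|_2^2$ ``since $\|\Delta\|_{2\to 2}$ is bounded,'' with no further discussion of the open-boundary sink.

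You are right to be uneasy with that one-liner. On $\sT_m$ the sink has degree $\asymp m^{d-1}$, the operator $\xi^*\mapsto\Delta\xi^*|_{V\setminus\{s\}}$ has $\ell^2$ norm $\asymp\sqrt{\deg(s)}$, and a direct estimate yields only $\|\nu\|_2^2\ll\|\xi^*\|_2^2+\deg(s)\,C(\xi)^2$, exactly the spurious term you flag. Your decomposition $\nu=\Delta'\tilde\xi+c\beta$ and the use of integrality of $\nu$ at a vertex with $\deg(v,s)=1$ to force $|c|\le|\Delta'\tilde\xi(v)|$ is a genuine additional idea that goes beyond what the paper records, and it does close the gap for the cubic lattice with coordinate hyperplanes.

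However, the step where you conclude is where your own argument breaks down. You assert that the non-sink vertices with $\deg(v,s)>1$ lie within $O(1)$ of the codimension-two skeleton of $m\sR$, so that $\#\{v:\deg(v,s)=1\}\asymp\deg(s)$. This is false for several of the paper's own examples. Consider the triangular tiling of Figure \ref{fig:open_boundary}: if the boundary line is horizontal, a vertex one row above it has two neighbours on that line, so $\deg(v,s)=2$ for \emph{every} generic boundary-adjacent vertex, not just near corners; the set $\{v:\deg(v,s)=1\}$ is at most $O(1)$ and is far from a positive proportion of $\deg(s)$. With $\deg(v,s)=2$ the integrality constraint gives only $\|2c\|_{\bR/\zed}\le|\Delta'\tilde\xi(v)|$, which yields nothing when $c$ is near $\pm\tfrac12$. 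Thus the chain $c^2\deg(s)\ll\sum_v\tilde\xi_v^2$ does not follow from the argument you give, and the proof is incomplete for such tilings. To patch it you would need to argue simultaneously over vertices with all values of $\deg(v,s)$ (using, e.g., that the corner structure of $m\sR$ prevents a configuration with $\|kc\|_{\bR/\zed}$ small for every boundary vertex), or to find a different way to control $C(\xi)$ that does not hinge on the existence of many $\deg(v,s)=1$ vertices.
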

\begin{proof}The last inequality is true, since $\nu$ is integer valued.

 Treat $\xi$ as defined on $V$ by setting $\xi(s) = 0$ and define $\xi^* = \xi - C(\xi)$.  Since 
 \begin{equation}
  \left|\hat{\mu}(\xi)\right| = \frac{1}{|V|}\sum_{v \in V} e(\xi_v^*) =\frac{1}{|V|} \sum_{v \in V} c(\xi_v^*)
 \end{equation}
is real, and since $\|\xi^*\|_\infty \leq \frac{1}{2}$, it follows from $1 - c(x) \geq 8 x^2$ for $|x|\leq \frac{1}{2}$ that
\begin{equation}
 1 - \left|\hat{\mu}(\xi)\right| \geq \frac{8\|\xi^*\|_2^2}{|V|}.
\end{equation}
Since $\|\Delta\|_{2\to 2}$ is bounded, 
\begin{equation}
 \frac{\|\nu\|_2^2}{|V|} = \frac{\|\Delta \xi^*|_{V \setminus \{s\}}\|_2^2}{|V|} \ll \frac{\|\xi^*\|_2^2}{|V|} \ll 1- \left|\hat{\mu}(\xi)\right|.
\end{equation}

\end{proof}

\section{Spectral estimates}
This section collects together the spectral gap and spectral disjointness estimates needed to prove Theorem \ref{mixing_theorem}.  In the periodic case, the estimates are closely related to the spectral estimates in Section 6 of \cite{HJL17}, although some adjustments are needed to generalize the estimates to higher dimensions, and to treat the Green's function of a tiling.

\subsection{Periodic case}  Given $\xi \in \hat{\sG}_m$, recall that $\hat{\mu}(\xi) = \E_{x \in \bT_m}[e(\xi_x)]$. Given $S \subset \bT_m$, define the `savings from $S$' for the frequency $\xi$, denoted by $\sav(\xi;S)$,
\begin{equation}
 \sav(\xi;S) := |S| - \left|\sum_{x \in S} e(\xi_x)\right|.
\end{equation}
If $S_1, S_2 \subset \bT_m$ are disjoint, then by the triangle inequality,
\begin{equation}
 \sav(\xi; S_1 \cup S_2) \geq \sav(\xi; S_1) + \sav(\xi; S_2).
\end{equation}
The `total savings' for $\xi$ is defined by
\begin{equation}
 \sav(\xi):= \sav(\xi; \bT_m) = |\bT_m| - \left|\sum_{x \in \bT_m} e(\xi_x)\right|
\end{equation}
and satisfies
\begin{equation}
 1 - \left|\hat{\mu}(\xi)\right| = \frac{\sav(\xi)}{|\bT_m|}.
\end{equation}
The spectral gap is given by
\begin{equation}
 \gap_m = \min_{0 \neq \xi \in \hat{\sG}_m} \frac{\sav(\xi)}{|\bT_m|}.
\end{equation}

Set 
\begin{equation}\rho = \left\{\begin{array}{lll} 2 && d = 2\\ 1 && d = 3,4\\ 0 && d \geq 5  \end{array}\right.,
\end{equation}
and $\beta = d-2 + \rho$,
 \begin{equation}
  \beta = \left\{\begin{array}{lll}2 && d = 2,3\\ 3&& d = 4\\ d-2 && d \geq 5  \end{array} \right..
 \end{equation}
By Lemma \ref{greens_fn_decay_lemma_T}, if $\nu \in C^\rho(\sT)$  and $\xi = g*\nu$ then  for $y \neq 0$,
 \begin{equation}
  |\xi_y| \ll \frac{1}{d(0, y)^\beta},
 \end{equation}
and hence $\xi \in \ell^2(\sT)$.
 
Let $B_R(0) = \{x \in \sT: d(x,0) \leq R\}$ and define 
\begin{equation}
 \sC(B,R) := \{\nu \in C^\rho(B_R(0)): \|\nu\|_1 \leq B\}.
\end{equation}
Recall that for $\xi : \sT \to \bR$,
\begin{equation}
 f(\xi) := \sum_{x \in \sT} (1 - c(\xi_x))
\end{equation}
and
\begin{equation}
 \sI := \{\Delta w: w \in C^0(\sT)\} \subset C^2(\sT).
\end{equation}

\begin{lemma}
 The spectral parameter $\gamma$ has characterization, in dimension 2,
 \begin{equation*}
  \gamma = \inf\{f(g*\nu): \nu \in C^2(\sT) \setminus \sI\},
 \end{equation*}
and in dimension at least 3,
\begin{equation*}
 \gamma = \inf\{f(g*\nu): \nu \in C^1(\sT) \setminus \sI\}.
\end{equation*}
The parameter $\gamma_0$ has characterization,
\begin{equation*}
 \gamma_0 = \inf\{f(g*\nu): \nu \in C^{\rho}(\sT) \setminus \sI\}.
\end{equation*}

\end{lemma}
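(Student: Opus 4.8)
The plan is to reduce everything to Theorem~\ref{ell_2_theorem}, which gives a bijection $\sH^2(\sT) \leftrightarrow C^\rho(\sT)$ via $\xi \mapsto \Delta\xi$ with inverse $\nu \mapsto g*\nu$ (here $\rho$ is exactly the parameter $j$ of that theorem), under which $f(\xi) = f(g*\nu)$. The one genuinely new ingredient is the characterization, for $\nu \in C^\rho(\sT)$,
\begin{equation*}
 g*\nu \equiv 0 \bmod 1 \text{ on all of } \sT \quad \Longleftrightarrow \quad \nu \in \sI.
\end{equation*}

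First I would prove this equivalence. For the forward implication, set $\xi = g*\nu$; by Theorem~\ref{ell_2_theorem} this lies in $\sH^2(\sT) \subset \ell^2(\sT)$ and satisfies $\Delta\xi = \nu$. If $\xi$ takes only integer values then, being in $\ell^2(\sT)$, it has finite support, so $\xi \in C^0(\sT)$, whence $\nu = \Delta\xi \in \{\Delta w : w \in C^0(\sT)\} = \sI$. For the reverse implication, write $\nu = \Delta w$ with $w \in C^0(\sT)$; since $\sI \subset C^2(\sT) \subseteq C^\rho(\sT)$, the function $g*\nu$ is defined and lies in $\ell^2(\sT)$, and $\Delta(g*\nu - w) = \nu - \nu = 0$. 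Both $g*\nu$ (being $\ell^2$) and $w$ (being finitely supported) tend to $0$ at infinity, so the maximum modulus principle — exactly as invoked in the proof of Theorem~\ref{ell_2_theorem} — forces $g*\nu = w$, which is integer valued.

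With this in hand the three characterizations are immediate. Because $\xi \in \sH^2(\sT)$ is equivalent to $\xi = g*\nu$ for a unique $\nu = \Delta\xi \in C^\rho(\sT)$, and $\xi \not\equiv 0 \bmod 1$ is equivalent to $\nu \notin \sI$, the definition of $\gamma_0$ becomes $\gamma_0 = \inf\{f(g*\nu) : \nu \in C^\rho(\sT)\setminus\sI\}$. For $\gamma$ the additional requirement $\Delta\xi \in C^1(\sT)$ translates to $\nu \in C^1(\sT)$: in dimension $2$, $\rho = 2$ and $C^2(\sT) \subseteq C^1(\sT)$, so this is automatic and $\gamma = \inf\{f(g*\nu) : \nu \in C^2(\sT)\setminus\sI\}$; in dimension $d = 3,4$, $\rho = 1$, so again it is vacuous; in dimension $d \geq 5$, $\rho = 0$ and it genuinely cuts $\nu$ down from $C^0(\sT)$ to $C^1(\sT)$. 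In every case $d \geq 3$ this yields $\gamma = \inf\{f(g*\nu) : \nu \in C^1(\sT)\setminus\sI\}$.

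The only step demanding care is the $\bmod 1$ equivalence, and within it the reverse direction, where one must invoke uniqueness of harmonic functions on $\sT$ vanishing at infinity; but that principle is already established and used in Theorem~\ref{ell_2_theorem}, so no further analytic work is needed, and the rest is bookkeeping with the inclusions $\sI \subset C^2(\sT) \subseteq C^1(\sT) \subseteq C^0(\sT)$ and the value of $\rho$ in each dimension.
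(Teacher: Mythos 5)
Your proposal is correct and takes essentially the same route as the paper: both reduce to the bijection $\sH^2(\sT) \leftrightarrow C^\rho(\sT)$ provided by Theorem~\ref{ell_2_theorem} and the equivalence $g*\nu \equiv 0 \bmod 1 \iff \nu \in \sI$. The only difference is cosmetic: you isolate and prove both directions of the $\bmod 1$ equivalence as a standalone claim, whereas the paper states one direction directly and leaves the other implicit; otherwise the bookkeeping over dimensions matches the paper's proof exactly.
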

\begin{proof}
 Recall the characterizations,
 \begin{align*}
  \gamma &= \inf\left\{\sum_{x \in \sT} 1 - \cos(2\pi \xi_x): \xi \in \sH^2(\sT),  \Delta \xi \in C^1(\sT), \xi \not \equiv 0 \bmod 1\right\},\\
  \gamma_0 &= \inf\left\{\sum_{x \in \sT} 1- \cos(2\pi \xi_x): \xi \in \sH^2(\sT), \xi \not \equiv 0 \bmod 1\right\}.
 \end{align*}
 First consider the case of $\gamma_0$.  If $\nu \in C^\rho(\sT)\setminus \sI$ then $\Delta (g*\nu) = \nu \in C^\rho(\sT)$ so $\xi = g*\nu$ is harmonic modulo 1 and in $\ell^2(\sT)$.  This demonstrates $\xi \not \equiv 0 \bmod 1$, since otherwise $\xi$ has finite support so that $\nu = \Delta \xi \in \sI$.  Thus 
 \[
  \gamma_0 \leq \inf\{f(g*\nu): \nu \in C^\rho(\sT) \setminus \sI\}.
 \]
To prove the reverse inequality, suppose $\xi \in \sH^2(\sT)$, $\xi \not \equiv 0 \bmod 1$.  Let $\nu = \Delta \xi$.  By Theorem \ref{ell_2_theorem}, $\xi = g*\nu$, $\nu \in C^\rho(\sT)$ and since $\xi$ is not integer valued, $\nu \not \in \sI$. This proves the reverse equality.

The case of $\gamma$ is essentially the same, except that in dimensions at least 5, there is the further restriction that $\nu = \Delta \xi \in C^1(\sT)$.  
\end{proof}

\begin{proposition}\label{savings_convergence_prop}
 Fix $B, R_1 > 0$.  For any $\nu \in \sC(B, R_1)$ and $m > 2R_1$, let $\xi^{(m)} = \xi^{(m)}(\nu)$ be the frequency in $\hat{\sG}_m$ corresponding to $\nu$, namely
 \begin{equation}
  \xi_x^{(m)} = (g_{\bT_m}*\nu)(x) - (g_{\bT_m}*\nu)(0)
 \end{equation}
and let $\xi = \xi(\nu) = g*\nu$.  Then
\begin{equation}
 \sav(\xi^{(m)}) \to f(\xi)\qquad \text{as } m \to \infty.
\end{equation}

\end{proposition}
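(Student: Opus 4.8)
The plan is to express $\sav(\xi^{(m)}) = |\bT_m| - |\sum_{x \in \bT_m} e(\xi^{(m)}_x)|$, split the character sum into real and imaginary parts, recognise the real-part deficit $\sum_{x \in \bT_m}(1-c(\cdot))$ as the quantity converging to $f(\xi)$, and bound the imaginary part using the decay of the Green's function. To begin, note that $\sav$ depends on the frequency only through $|\sum_x e(\cdot)|$, which is unchanged by adding a constant, so I may work with $\zeta^{(m)} := g_{\bT_m}*\nu$ in place of $\xi^{(m)}$ and write $\sav(\xi^{(m)}) = |\bT_m| - |\sum_{x \in \bT_m} e(\zeta^{(m)}(x))|$, identifying $\bT_m$ with the fundamental box of lattice representatives centred at $0$, on which the quotient norm $\|x\|_{\bT_m}$ is comparable to $d(0,x)$. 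Lemma~\ref{deriv_bound_lemma} with $\ua = 0$ and $j = \rho$, extended off $\Lambda$ by translation invariance of $C^\rho$, gives $|\zeta^{(m)}(x)| \ll (1 + \|x\|_{\bT_m}^\beta)^{-1}$ uniformly in $m$, while Lemma~\ref{greens_fn_decay_lemma_T} gives $|\xi_x| = |(g*\nu)(x)| \ll (1 + d(0,x)^\beta)^{-1}$. Since $\beta \ge 2$, hence $2\beta > d$, one gets $1 - c(\zeta^{(m)}(x)) \ll \min(1,|\zeta^{(m)}(x)|^2) \ll (1 + \|x\|_{\bT_m}^{2\beta})^{-1}$ and the analogous bound for $1 - c(\xi_x)$; in particular $\sum_{x \in \bT_m}(1 - c(\zeta^{(m)}(x)))$ is bounded uniformly in $m$ and $f(\xi) < \infty$.

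The next step is the pointwise limit $\zeta^{(m)}(x) \to \xi_x$ for each fixed $x \in \sT$. Since $\nu \in C^\rho(\sT)$, its stopped image $\varrho_\nu$ lies in $C^\rho(\Lambda)$, which by definition is the $\zed$-span of translates of $\rho$-fold discrete derivatives of the point mass at $0$; and on $\Lambda$ one has $\zeta^{(m)} = g_{0,\bT_m}*\varrho_\nu + O(m^{-d})$, so up to translation and this negligible error $\zeta^{(m)}$ is a finite $\zed$-combination of terms $D^{\ua} g_{0,\bT_m}$ with $|\ua| = \rho$. The defining choice of $\rho$ makes $\rho + d/2 > 2$ in every dimension (in dimension $2$ one genuinely needs $\rho = 2$, as a single first difference of $g_{0,\bT_m}$ does not converge), so Lemma~\ref{Green_fn_asymptotic} applies term by term and yields $\zeta^{(m)}(x) \to (g_0 * \varrho_\nu)(x) = (g*\nu)(x) = \xi_x$ for $x \in \Lambda$; the off-lattice values follow from the mean value representation of $g_{\cdot,\bT_m}$ together with bounded convergence, the relevant stopping times having exponential tails. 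Combining this with the summable domination of the previous paragraph (split each sum at $d(0,x) = R$, let $m \to \infty$ on the finite inner sum, then let $R \to \infty$) gives
\[
 \sum_{x \in \bT_m} (1 - c(\zeta^{(m)}(x))) \longrightarrow \sum_{x \in \sT} (1 - c(\xi_x)) = f(\xi), \qquad m \to \infty.
\]

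Finally, write $\sum_{x \in \bT_m} e(\zeta^{(m)}(x)) = A_m + i B_m$ with $A_m = |\bT_m| - \sum_x (1 - c(\zeta^{(m)}(x)))$ and $B_m = \sum_x s(\zeta^{(m)}(x))$. From $|s(t)| \ll \min(1,|t|)$ and the decay bound, $|B_m| \ll \sum_{x \in \bT_m} \min(1, (1 + \|x\|_{\bT_m}^\beta)^{-1}) \ll \sum_{1 \le r \ll m} r^{d-1} \min(1, r^{-\beta})$, which is $O(\log m)$ if $d = 2$, $O(m)$ if $d \in \{3,4\}$, and $O(m^2)$ if $d \ge 5$; in every case $B_m^2 = o(m^d) = o(|\bT_m|)$, and since $A_m = |\bT_m| - O(1) \asymp |\bT_m|$ we get $B_m^2/A_m \to 0$. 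The elementary inequalities $A_m \le \sqrt{A_m^2 + B_m^2} \le A_m + B_m^2/(2A_m)$ then sandwich
\[
 \sum_{x \in \bT_m} (1 - c(\zeta^{(m)}(x))) - \frac{B_m^2}{2 A_m} \ \le\ \sav(\xi^{(m)}) \ \le\ \sum_{x \in \bT_m} (1 - c(\zeta^{(m)}(x))),
\]
and letting $m \to \infty$ gives $\sav(\xi^{(m)}) \to f(\xi)$.

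The main obstacle is the pointwise convergence $\zeta^{(m)} \to \xi$: extracting it from the torus Green's function asymptotics forces the reduction to discrete derivatives of order exactly $\rho$ and the use of the inequality $\rho + d/2 > 2$, and one must also be a little careful matching the torus sum to the tiling sum near the boundary of the fundamental domain. The remaining steps are routine estimates with the Green's function decay bounds.
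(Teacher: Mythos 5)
Your proposal follows essentially the same route as the paper: reduce to $\zeta^{(m)} = g_{\bT_m}*\nu$ (exploiting that $\sav$ is invariant under adding a constant), control the cosine sum via the decay bound of Lemma~\ref{deriv_bound_lemma}, show the sine sum is negligible, convert $|{\cdot}|$ to the real part by $\sqrt{a^2+b^2}-a \le b^2/(2a)$, and truncate at radius $R$ before letting $m\to\infty$ then $R\to\infty$. The one genuine deviation is your treatment of the imaginary part: the paper Taylor-expands $s(\cdot)$ to degree~3 and kills the linear term using the mean-zero property of $g_{\bT_m}$ to get $B_m=O(1)$, whereas you settle for the cruder $|s(t)|\ll\min(1,|t|)$, obtaining $B_m = O(\log m),O(m),O(m^2)$ in dimensions $2$, $3,4$, $\ge 5$ respectively; since in every case $B_m^2 = o(|\bT_m|)$ this still gives $B_m^2/A_m\to 0$, so the conclusion survives and the mean-zero observation is not needed. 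One small imprecision in your pointwise-convergence step: $\varrho_\nu$ is a real-valued measure with exponentially decaying tails, not integer-valued, so it is not literally an element of $C^\rho(\Lambda)$, and the decomposition of Lemma~\ref{measure_decomp_lemma} is an \emph{infinite} sum of translated $\delta$-derivatives with exponentially decaying real coefficients rather than a finite $\zed$-combination; one should apply Lemma~\ref{Green_fn_asymptotic} term by term and invoke the uniform-in-$m$ decay of Lemma~\ref{deriv_bound_lemma} to dominate and justify passing to the limit inside the infinite sum. This is readily repaired and the argument is otherwise sound.
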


\begin{proof}
 Let $\xi^* = g_{\bT_m} * \nu$.  Since $\sav(\xi^*) = \sav(\xi^{(m)})$ it suffices to show that  $\sav(\xi^*) \to f(\xi)$ as $m \to \infty$.  By Lemma \ref{deriv_bound_lemma}, $\xi^*(y) \ll \frac{1}{d(0,y)^{\beta}}.$ Observe, using that $\#\{x: d(0,x) \leq R\} \ll R^d$ and partial summation, that
 \begin{align*}
  \sum_{d(0,y) > R} (1 - c(\xi_y^*)) &= O_{B, R_1}(R^{-2\beta + d}),\\
  \sum_{y \in \bT_m} (1-c(\xi_y^*)) &= O_{B, R_1}(1),\\
  \left|\sum_{y \in \bT_m} s(\xi_y^*)\right|& = O_{B, R_1}(1)
 \end{align*}
where in the last line  $s(\xi_y^*)$ is expanded in Taylor series to degree 3 and the linear term sums to zero, since $g_{\bT_m}$ is mean 0.  Note that  the linear term sums to 0.  Use, for $a>0$, $\sqrt{a^2 + b^2} - a \leq \frac{b^2}{2a}$, and hence
\begin{equation}
 \left| \sum_{y \in \bT_m} c(\xi_y^*) + i s(\xi_y^*)\right| = \left|\sum_{y \in \bT_m} c(\xi_y^*)\right| + O\left(\frac{1}{m^d} \right),
\end{equation}
it follows that
\begin{align*}
 \sav(\xi^*) &= O_{B, R_1}(m^{-d}) + \sum_{y \in \bT_m}(1-c(\xi_y^*))\\
 &= O_{B, R_1}(R^{-2\beta + d}) + \sum_{d(0,y) \leq R} (1-c(\xi_y^*)).
\end{align*}
Letting $m \to \infty$ for fixed $R$ obtains $\xi_y^* \to \xi_y$.  Then letting $R \to \infty$ obtains $\lim_{m \to \infty} \sav(\xi^*) = f(\xi)$.
\end{proof}

On $\sT$, $\xi = g* \nu \in \ell^2(\sT)$ if and only if $\nu \in C^\rho(\sT)$.  The following lemma gives a local version of this statement by showing that if a local part of $\nu$ is not in $C^\rho$, subject to some technical conditions, there is arbitrarily large savings near the local piece.

\begin{lemma}\label{irregular_bound_lemma}
 For all $A, B, R_1 > 0$ there exists an $R_2(A, B, R_1) > 2R_1$ such that if $m$ is sufficiently large, then for any $x \in \bT_m$ and any $\nu \in \zed^{\bT_m}$ satisfying the following conditions:
 \begin{enumerate}
  \item $\|\nu\|_1 \leq B$
  \item $\nu|_{B_{R_1}(x)} \not \in C^\rho(\bT_m)$
  \item $d(x, \supp \nu|_{B_{R_1}(x)^c}) > 2R_2$
 \end{enumerate}
it holds
\begin{equation}
 \sav(g_{\bT_m}*\nu; B_{R_2}(x)) \geq A.
\end{equation}
Thus, if $\nu$ has mean zero, then the corresponding frequency $\xi \in \hat{\sG}_m$ satisfies $\sav(\xi; B_{R_2}(x)) \geq A$.
\end{lemma}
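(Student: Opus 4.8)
The plan is to pass from the ball $B_{R_2}(x)$ to a smaller ball $B_{R_3}(x)$ whose radius is fixed in terms of $A$ alone, on which the part of $\nu$ near $x$ already forces savings $\geq 2A$, while the far part of $\nu$, after subtracting a constant, is uniformly negligible once $R_2$ is large compared with $R_3$. Decompose $\nu = \nu_1 + \nu_2$ with $\nu_1 = \nu|_{B_{R_1}(x)}$ and $\nu_2 = \nu|_{B_{R_1}(x)^c}$, so that $g_{\bT_m}*\nu = g_{\bT_m}*\nu_1 + g_{\bT_m}*\nu_2$ everywhere. When $d \geq 5$ one has $\rho = 0$ and $C^\rho(\bT_m) = \zed^{\bT_m}$, so hypothesis (2) is vacuous; assume henceforth $d \in \{2,3,4\}$. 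For fixed $R_1$ and $B$ there are finitely many integer functions supported in a ball of radius $R_1$ with $\ell^1$-norm $\leq B$, up to the finitely many $\Lambda$-translation classes of $x$; fix one such $\nu_1 \notin C^\rho$. By Theorem \ref{ell_2_theorem}, $g*\nu_1 \notin \ell^2(\sT)$, and Lemma \ref{G_eta_eval} (together with the classical two-dimensional logarithmic asymptotic of $g_0$ when $d=2$ and the mass $C := \sum_v \nu_1(v)$ is nonzero) gives the model behaviour of $g_{\bT_m}*\nu_1$ near $x$ on the range $\|n-x\| \ll \left(\frac{m^2}{\log m}\right)^{\frac{d-1}{2d}}$: a monopole $\asymp C\,\|n-x\|^{-(d-2)}$ (logarithmic for $d=2$) when $C \neq 0$, necessarily $|C| \geq 1$, and a dipole $\asymp v\cdot(n-x)\,\|n-x\|^{-d}$ with $v \neq 0$ when $\nu_1 \in C^1(\sT)\setminus C^2(\sT)$. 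Using $8\|t\|_{\bR/\zed}^2 \leq 1 - c(t)$ together with $\|a+b\|_{\bR/\zed}^2 \geq \tfrac12\|a\|_{\bR/\zed}^2 - \|b\|_{\bR/\zed}^2$ to absorb the error term of Lemma \ref{G_eta_eval} in the $\ell^2$ sense, one checks in each case that $\sav(g_{\bT_m}*\nu_1; B_R(x)) \to \infty$ as $R \to \infty$, uniformly over the admissible $\nu_1$ and over translation classes of $x$, provided $m$ is large. Fix $R_3 = R_3(A,B) > 2R_1$ with $\sav(g_{\bT_m}*\nu_1; B_{R_3}(x)) \geq 2A$ for all admissible $\nu_1$ and all large $m$.

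Next control $\psi := g_{\bT_m}*\nu_2 = \sum_v \nu_2(v)\, g_{v,\bT_m}$ on $B_{R_3}(x)$. Each discrete gradient $D_i g_{v,\bT_m}$ is $g_{\bT_m}$ convolved with the translate of $\delta_i \in C^1(\sT)$ by $v$, so Lemma \ref{deriv_bound_lemma}, together with the remark following it (which permits arbitrary base points), gives $|D_i g_{v,\bT_m}(z)| \ll d(v,z)^{-(d-1)}$. Hypothesis (3) forces $d(v,z) > 2R_2 - R_3 > R_2$ for $v \in \supp\nu_2$ and $z \in B_{R_3}(x)$ once $R_2 > R_3$, so $|\nabla\psi| \ll B R_2^{-(d-1)}$ on $B_{R_3}(x)$ and hence $\epsilon := \max_{y \in B_{R_3}(x)} \|\psi(y)-\psi(x)\|_{\bR/\zed} \ll B R_3 R_2^{-(d-1)}$; in $d=2$ with $\sum_v \nu_2(v) \neq 0$ one argues instead directly from the logarithmic Green's function, with the same bound. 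Writing $\psi = \psi(x) + e$ with $\|e\|_\infty \leq \epsilon$ on $B_{R_3}(x)$, and using that $c(\cdot)$ is $2\pi$-Lipschitz, for every constant $\theta$ one has $\sum_{y \in B_{R_3}(x)}\bigl(1 - c((g_{\bT_m}*\nu_1)(y) + e(y) + \theta)\bigr) \geq \sav(g_{\bT_m}*\nu_1; B_{R_3}(x)) - 2\pi\epsilon\,|B_{R_3}(x)|$; hence by superadditivity of $\sav$ over disjoint sets and $B_{R_3}(x) \subseteq B_{R_2}(x)$, \[ \sav(g_{\bT_m}*\nu; B_{R_2}(x)) \geq \sav(g_{\bT_m}*\nu; B_{R_3}(x)) \geq \sav(g_{\bT_m}*\nu_1; B_{R_3}(x)) - 2\pi\epsilon\,|B_{R_3}(x)|. \] Since $|B_{R_3}(x)| \ll R_3^d$, it remains to choose $R_2 = R_2(A,B,R_1) > R_3$ with $B R_3^{d+1} R_2^{-(d-1)} \leq A$ and then $m$ large, which yields $\sav(g_{\bT_m}*\nu; B_{R_2}(x)) \geq 2A - A = A$. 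Finally, when $\sum_v \nu(v) = 0$ the frequency $\xi \in \hat{\sG}_m$ associated to $\nu$ differs from $g_{\bT_m}*\nu$ by a constant modulo $1$, and $\sav$ is invariant under such shifts, which gives the last assertion.

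The step I expect to be the main obstacle is the quantitative balance in the second paragraph. A uniform phase perturbation of size $\delta$ alters $\sav$ on a set $S$ by as much as $\delta|S|$, and for $d=3$ the far part $\nu_2$ genuinely contributes, on the full ball $B_{R_2}(x)$, an $\ell^2$ defect of the same order in $R_2$ as the savings produced by $\nu_1$ --- so one cannot simply bound $\psi$ by a small constant on $B_{R_2}(x)$ itself. Confining the argument to the intermediate ball $B_{R_3}(x)$, whose radius is pinned down by $A$ (and the tiling, and only through the least admissible value of $|v|$ in the $d=2$ dipole case by $B$) \emph{before} $R_2$ is selected, is precisely what allows $\epsilon \ll B R_3 R_2^{-(d-1)}$ to be driven below $A/|B_{R_3}(x)|$. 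Keeping the chain of dependences (first $R_1$, then $R_3$, then $R_2$, then $m$) consistent, and verifying the uniformity over the finitely many admissible $\nu_1$ and over $\Lambda$-translation classes of $x$ --- together with the point that the error terms of Lemma \ref{G_eta_eval} must be controlled in the $\ell^2$, not merely the sup, norm on $B_{R_3}(x)$ --- is the bulk of the remaining bookkeeping.
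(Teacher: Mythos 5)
Your proof is correct and follows the same strategy as the paper's: decompose $\nu = \nu|_{B_{R_1}(x)} + \nu|_{B_{R_1}(x)^c}$, use the gradient bound $|\nabla g_{v,\bT_m}| \ll d(v,\cdot)^{-(d-1)}$ from Lemma~\ref{deriv_bound_lemma} to show the external contribution is nearly constant on an intermediate ball, and invoke Lemma~\ref{G_eta_eval} to show the internal piece alone produces unboundedly large savings as the radius grows. Your intermediate radius $R_3$ plays exactly the role of the parameter $R$ in the paper's proof (the paper takes $R \to \infty$ subject to $R^{d+1}/R_2^{d-1}\to 0$, which is the same constraint you derive as $B R_3^{d+1} R_2^{-(d-1)} \leq A$), and you correctly identify that fixing $R_3$ \emph{before} $R_2$ is what makes the external perturbation harmless.

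Two small points of bookkeeping worth tightening. First, $R_3$ should be allowed to depend on $R_1$ as well as on $A$ and $B$, since the implicit constants in Lemma~\ref{G_eta_eval} depend on the support of $\eta$, and the finite set of admissible $\nu_1$ depends on $R_1$; since the lemma allows $R_2 = R_2(A,B,R_1)$ this causes no harm. Second, your phrase ``one checks in each case that $\sav(g_{\bT_m}*\nu_1;B_R(x)) \to \infty$'' defers the part of the argument where the paper is most explicit: one must verify the sine contribution $\sum_y s(\xi^i_y)$ does not cancel the cosine defect $\sum_y (1 - c(\xi^i_y))$, since $\sav$ is the modulus of an exponential sum and not simply $\sum(1-c)$. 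The paper does this by Taylor expansion of the square root, together with a restriction to a two-dimensional slice in the $d=3$ monopole case to make the sine sum manifestly lower order. Your $\ell^2$-absorption sketch goes through (one can verify the Cauchy--Schwarz defect is not saturated for the monopole and dipole profiles), but to make the proof complete you would need to carry out that case-by-case verification explicitly. Otherwise the argument is sound.
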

\begin{proof}
It suffices to show that $\sav(g_{\bT_m}* \nu; B_{R_2}(x) \cap \Lambda) \geq A$, which simplifies the estimates.

 Assume that the dimension is at most 4, since otherwise $\nu \in C^\rho(\bT_m)$.  The proof is similar to the proof of Lemma 22 from \cite{HJL17}, so only the necessary modifications are indicated.
 
 As there, let $\overline{\xi} = \xi^i + \xi^e$ with
\begin{equation}
 \nu^i := \nu|_{B_{R_1}(x)}, \qquad \nu^e :=  \nu|_{B_{R_1}(x)^c}
\end{equation}
and
 \begin{align}
  \xi^i := g_{\bT_m} * \nu^i, \qquad \xi^e := g_{\bT_m} * \nu^e
 \end{align}
and treat $R_2$ as a parameter which  can be taken arbitrarily large, but fixed.  Let $R$ be a second parameter depending on $R_2$ such that $\frac{R^{d+1}}{R_2^{d-1}} \to 0$ as $R_2 \to \infty$.  By the estimate  $|\nabla g_\eta(y)| \ll \frac{1}{\|y\|^{d-1}}$ from Lemma \ref{deriv_bound_lemma}, it follows that for $\|y\| \leq R$, $\xi_{x+y}^e = \xi_x^e + O\left(\frac{BR}{R_2^{d-1}} \right).$ Thus,
\begin{equation}
 \left|\sum_{\|y\| \leq R} e(\overline{\xi}_{x+y}) \right| = O\left(\frac{BR^{d+1}}{R_2^{d-1}} \right) + \left|\sum_{\|y\| \leq R} e(\xi_{x+y}^i)\right|.
\end{equation}
Thus it suffices to prove that as $R \to \infty$,
\begin{equation}
 \# \{y : \|y\| \leq R\} - \left|\sum_{\|y\| \leq R} e(\xi_{x+y}^i)\right| \to \infty.
\end{equation}

First consider the case $d = 2$.  
If $\nu^i   \not \in C^1(\bT_m)$, the asymptotic for $\nabla g_\eta$ in Lemma \ref{G_eta_eval} implies $|\xi^i_{x + j e_1} - \xi^i_{x}| \to \infty$ while $|\xi^i_{x + (j+1)e_1} - \xi^i_{x +j e_1}| \to 0$ as $j \to \infty$, and hence
\begin{equation}
 R - \left| \sum_{j=1}^R e(\xi^i_{x + j e_1} - \xi^i_x) \right| \to \infty
\end{equation}
as $R \to \infty$, so that the claim holds by choosing $R$ sufficiently large. Suppose instead that $\nu^i \in C^1(\bT_m) \setminus C^2(\bT_m)$. By Lemma \ref{G_eta_eval}, if $\varrho_\eta$ has mean $v_0 \neq 0$,   
\begin{equation}
 g_\eta(n) = \frac{v_0^t \sigma^{-2}n}{ \deg(0)\pi \|\sigma^{-1}n\|^2 \det \sigma} + O\left(\frac{1}{\|\sigma^{-1}n\|^2} \right).
\end{equation}
It follows that there are $0\leq \theta_1 < \theta_2\leq 2\pi$ such that if $\theta_1 \leq \arg(y) \leq \theta_2$, then $|\xi_{x+y}^i| \asymp \frac{1}{\|y\|}$.  It follows that
\begin{align*}
 \sum_{\|y\| \leq R} (1-c(\xi_{x+y}^i)) &\asymp \log R\\
 \sum_{\|y\| \leq R} |s(\xi_{x+y}^i)| & \ll R.
\end{align*}
Using, for $a, b \in \bR$ and $a>0$, $\sqrt{a^2 + b^2} - \sqrt{a^2} \leq \frac{b^2}{2a}$, it follows that
\begin{equation}
 \#\{y: \|y\| \leq R\}- \left| \sum_{\|y\| \leq R} e(\xi_{x+y}^i)\right| \asymp \log R.
\end{equation}

In the case that $d \geq 3$, assume that $\nu^i \in C^0(\bT_m)\setminus C^1(\bT_m)$. Apply Lemma \ref{G_eta_eval} to find that for $\eta$ of mass $C$ with support in a bounded neighborhood of 0,
\begin{align}
 g_\eta(n) &= \frac{C\Gamma\left(\frac{d}{2}-1 \right)}{2 \deg(0) \pi^{\frac{d}{2}}\|\sigma^{-1}n\|^{d-2}\det \sigma} + O\left(\frac{1}{\|\sigma^{-1}n\|^{d-1}} \right).
\end{align}
It follows that for $\|n\| \gg 1$,
\begin{equation}
 |g_\eta(n)| \asymp \frac{1}{\|n\|^{d-2}}.
\end{equation}
In the case $d = 3$, sum in a dimension 2 plane to find
\begin{align*}
 \sum_{\|y\|\leq R, y_3 = 0} 1- c(\xi_{x + y}^i)  &\asymp \log R\\
 \sum_{\|y\| \leq R, y_3 = 0} |s(\xi_{x + y}^i)| & \ll R
\end{align*}
so that \begin{equation}\#\{y: \|y\| \leq R\} - \left|\sum_{\|y\| \leq R} e(\xi_{x + y}^i)\right| \gg \log R.\end{equation}

In the case $d = 4$,
\begin{align*}
 \sum_{\|y\| \leq R} 1 - c(\xi^i_{x+y}) &\asymp \log R\\
 \sum_{\|y\| \leq R} |s(\xi_{x+y}^i)| & \ll R^2.
\end{align*}
Applying the inequality for $\sqrt{a^2 + b^2} - \sqrt{a^2}$ to the real and imaginary parts of $\sum_{\|y\| \leq R} e(\xi_{x+y}^i)$ obtains
\begin{equation}
 \#\{y: \|y\| \leq R\} - \left|\sum_{\|y\| \leq R} e(\xi_{x+y}^i)\right| \asymp \log R.
\end{equation}

\end{proof}
\begin{lemma}\label{regular_bound_lemma}
 For all $B, R_1 > 0$ and $\alpha < 1$, there exists $R_2(\alpha, B, R_1) > 2 R_1$ such that if $m$ is sufficiently large, then for any $x \in \bT_m$ and any $\nu \in \zed^{\bT_m}$ satisfying the following conditions:
 \begin{enumerate}
  \item $\|\nu\|_1 \leq B$
  \item $\nu|_{B_{R_1}(x)} \in C^\rho(\bT_m)$
  \item $d\left(x, \supp \nu|_{B_{R_1}(x)^c} \right) > 2 R_2$
 \end{enumerate}
the bound holds
\begin{equation}
 \sav(g_{\bT_m} * \nu; B_{R_2}(x)) \geq \alpha \sav(\xi^*); \qquad \xi^* = g_{\bT_m} * \nu|_{B_{R_1}(x)}.
\end{equation}

\end{lemma}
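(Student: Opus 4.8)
The plan is to follow the architecture of the proof of Lemma~\ref{irregular_bound_lemma}: decompose $\overline{\xi}:=g_{\bT_m}*\nu=\xi^*+\xi^e$ with $\xi^*=g_{\bT_m}*\nu|_{B_{R_1}(x)}$ and $\xi^e=g_{\bT_m}*\nu|_{B_{R_1}(x)^c}$, show $\xi^e$ is essentially constant on a small ball about $x$, and reduce to analysing $\xi^*$ alone; the difference is that $\nu|_{B_{R_1}(x)}$ is now regular, so instead of extracting arbitrarily large savings we must recover almost all of $\sav(\xi^*)$. First a dichotomy. For $m$ large, $\nu^i:=\nu|_{B_{R_1}(x)}$ is identified with a finite-support function $\nu_0$ on $\sT$ with $\|\nu_0\|_1\le B$ and $\nu_0\in C^\rho(\sT)$ by hypothesis~(2), and there are only finitely many such $\nu_0$. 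If $\nu_0=\Delta w\in\sI$ with $w$ of finite support, then $\Delta(g_{\bT_m}*\Delta w-w)=0$ on $\bT_m$, so $g_{\bT_m}*\nu^i=w+c$ is integer-valued plus a constant; hence $\sav(\xi^*)=0$ and the claim is trivial. If $\nu_0\in C^\rho(\sT)\setminus\sI$, then $g*\nu_0\in\ell^2(\sT)$ by Theorem~\ref{ell_2_theorem} but is not integer-valued (else it would have finite support, forcing $\nu_0=\Delta(g*\nu_0)\in\sI$), so $f(g*\nu_0)>0$; minimizing over the finite family gives $c_0=c_0(B,R_1)>0$ with $f(g*\nu_0)\ge c_0$. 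By Proposition~\ref{savings_convergence_prop} (and invariance of $\sav$ under adding a constant), $\sav(\xi^*)\to f(g*\nu_0)$ uniformly over this finite family, so $\sav(\xi^*)\ge c_0/2$ once $m$ is large. Assume this case from now on.

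Fix a parameter $R$ with $R_1<R<R_2$. Unlike in Lemma~\ref{irregular_bound_lemma} one cannot pass to the sublattice $\Lambda$, since that loses a factor $|\sT/\Lambda|$, which is fatal when $\alpha$ is close to $1$; instead we use the decay bounds of Lemma~\ref{deriv_bound_lemma} in the form valid at arbitrary points of $\sT$ (the remark following that lemma). Condition~(3) gives $d(z,\supp\nu|_{B_{R_1}(x)^c})>2R_2-R$ for all $z\in B_R(x)$, so the gradient bound $|\nabla\xi^e|\ll BR_2^{-(d-1)}$ from Lemma~\ref{deriv_bound_lemma}, summed along a geodesic of length $\le R$, gives $\xi^e_z=\xi^e_x+O(BRR_2^{-(d-1)})$ on $B_R(x)$. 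Hence
\begin{equation*}
 \Bigl|\sum_{z\in B_R(x)}e(\overline{\xi}_z)\Bigr|=\Bigl|\sum_{z\in B_R(x)}e(\xi^*_z)\Bigr|+O\bigl(BR^{d+1}R_2^{-(d-1)}\bigr),
\end{equation*}
and by superadditivity of $\sav$ over $B_R(x)\subseteq B_{R_2}(x)$,
\begin{equation*}
 \sav(\overline{\xi};B_{R_2}(x))\ge\sav(\overline{\xi};B_R(x))\ge\sav(\xi^*;B_R(x))-O\bigl(BR^{d+1}R_2^{-(d-1)}\bigr).
\end{equation*}

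Next I would bound $\sav(\xi^*;B_R(x))$ from below by $\sav(\xi^*)$. By Lemma~\ref{deriv_bound_lemma}, $|\xi^*_z|\ll_{B,R_1}(1+d(x,z))^{-\beta}$ with $\beta=d-2+\rho$. Writing $\sum_{z\in B_R(x)}e(\xi^*_z)=C+iD$, $C=|B_R(x)|-\sum_{z\in B_R(x)}(1-c(\xi^*_z))$, $D=\sum_{z\in B_R(x)}s(\xi^*_z)$, one has $|D|\le 2\pi\sum_{z\in B_R(x)}|\xi^*_z|\ll_{B,R_1}\sum_{k=1}^{R}k^{d-1-\beta}$, which is $\ll\log R$ for $d=2$, $\ll R$ for $d=3,4$ and $\ll R^2$ for $d\ge5$; it is exactly because $\rho$ is the $\ell^2$-threshold exponent that $D^2/|B_R(x)|=o_R(1)$ in every case. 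Also $C\ge|B_R(x)|-\sav(\xi^*)-O(m^{-d})\asymp R^d>0$ for $R,m$ large, so $\sqrt{C^2+D^2}\le C+D^2/2C$ gives $\sav(\xi^*;B_R(x))\ge\sum_{z\in B_R(x)}(1-c(\xi^*_z))-o_R(1)$. Finally $\sum_{d(x,z)>R}(1-c(\xi^*_z))\ll_{B,R_1}\sum_{k>R}k^{d-1-2\beta}\ll_{B,R_1}R^{-(2\beta-d)}$ and $\sav(\xi^*)=\sum_{z\in\bT_m}(1-c(\xi^*_z))+O(m^{-d})$ (as in the proof of Proposition~\ref{savings_convergence_prop}), so $\sav(\xi^*;B_R(x))\ge\sav(\xi^*)-o_R(1)-O_{B,R_1}(R^{-(2\beta-d)})-O(m^{-d})$.

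Combining, $\sav(\overline{\xi};B_{R_2}(x))\ge\sav(\xi^*)-\bigl(o_R(1)+O_{B,R_1}(R^{-(2\beta-d)})+O(m^{-d})+O(BR^{d+1}R_2^{-(d-1)})\bigr)$. One then chooses $R=R(\alpha,B,R_1)$ large so the first two terms are $<(1-\alpha)c_0/6$; then $R_2=R_2(\alpha,B,R_1)>2\max(R_1,R)$ large so $O(BR^{d+1}R_2^{-(d-1)})<(1-\alpha)c_0/6$; then $m$ large so $O(m^{-d})<(1-\alpha)c_0/6$ and $\sav(\xi^*)\ge c_0/2$. The total error is then $<(1-\alpha)c_0/2\le(1-\alpha)\sav(\xi^*)$, giving $\sav(\overline{\xi};B_{R_2}(x))\ge\alpha\sav(\xi^*)$. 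The main obstacle is the third paragraph: over a ball there is no mean-zero cancellation in the imaginary part $D$, so one must use that $\beta=d-2+\rho$ sits exactly at the $\ell^2$ threshold to force $D^2/|B_R(x)|\to0$; a secondary point is the dichotomy on whether $\nu_0\in\sI$, which is what allows the additive errors to be absorbed into $(1-\alpha)\sav(\xi^*)$ when $\sav(\xi^*)$ is small.
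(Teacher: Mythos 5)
Your proof is correct and follows the same architecture as the paper's (which itself notes the argument is a close adaptation of Lemma 23 of \cite{HJL17}): a dichotomy on whether the localized prevector lies in $\sI$, a reduction to a fixed ball $B_R(x)$ via the gradient bound on the external phase $\xi^e$, and the inequality $\sqrt{a^2+b^2}-a\le b^2/(2a)$ applied to the real and imaginary parts of the local exponential sum, using the decay $|\xi^*_y|\ll d(x,y)^{-\beta}$ at exactly the $\ell^2$ threshold exponent. The only difference is one of detail: you make explicit the superadditivity step $\sav(\overline{\xi};B_{R_2}(x))\ge\sav(\overline{\xi};B_R(x))$ and the approximate-constancy of $\xi^e$ on $B_R(x)$, which the paper asserts without spelling out.
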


\begin{proof}
 The proof is essentially the same as for Lemma 23 of \cite{HJL17}, but is included here for completeness. First it is shown that there is $\delta = \delta(B,R_1)>0$ such that for sufficiently large $m$, if $\sav(\xi^*) < \delta$ then $\sav(\xi^*) = 0$. After making a translation in $\Lambda$, $\sav(\xi^*) = \sav(g_{\bT_m}*\nu')$ for some $\nu' \in \sC(B, R_1)$. Let 
 \begin{equation}
  \gamma' = \min\{f(g*\nu'): \nu' \in \sC(B, R_1) \setminus \sI\}>0.
 \end{equation}
 Since, by Proposition \ref{savings_convergence_prop}, for all sufficiently large $m$, 
 \begin{equation}
  \left|\sav(g_{\bT_m}*\nu') - f(g*\nu')\right| < \frac{\gamma'}{2}
 \end{equation}
for all $\nu' \in \sC(B, R_1)$. Thus, if $\nu' \in\sC(B, R_1) \setminus \sI$, then $\sav(\xi^*) > \frac{\gamma'}{2}$.
Since $\sav(g_{\bT_m} *\nu') = 0$ if $\nu' \in \sI$, it follows that the claim holds with $\delta = \frac{\gamma'}{2}$.

Now set $\epsilon = \epsilon(\alpha, B, R_1) = (1-\alpha)\delta > 0$.  It suffices to show that
\begin{equation}
 \sav(g_{\bT_m}*\nu; B_{R_2}(x)) > \sav(\xi^*) -\epsilon
\end{equation}
which implies the lemma, since the claim is trivial if $\sav(\xi^*) = 0$, while otherwise $\sav(\xi^*) \geq \delta$ so that $\sav(\xi^*)-\epsilon \geq \alpha \sav(\xi^*).$  It suffices to show that if $R$ is fixed, but sufficiently large, that
\begin{equation}
 \sav(\xi^*; B_R(x)) > \sav(\xi^*) - \frac{\epsilon}{2},
\end{equation}
since the difference between $\sav(g_{\bT_m}*\nu; B_R(x))$ and $\sav(\xi^*; B_R(x))$ may be made arbitrarily small by taking $R_2$ sufficiently large. 

By the decay estimates for the Green's function in Lemma \ref{deriv_bound_lemma}, for $y \neq x$,
\begin{equation}
\left|\xi^*_{y}\right| \ll  \frac{1}{d(x,y)^\beta} .
\end{equation}
Thus, for all $d \geq 2$,
\begin{equation}
 \sum_{d(x,y) \leq R} 1-c(\xi_y^*) = O(1).
\end{equation}
Meanwhile,
\begin{equation}\label{large_distance_bound}
 \sum_{d(x,y) > R} 1- c(\xi_y^*) \ll \left\{\begin{array}{lll} \frac{1}{R^2} && d= 2\\ \frac{1}{R} && d=3\\ \frac{1}{R^2} && d = 4\\ \frac{1}{R^{d-4}} && d \geq 5
\end{array}\right..
\end{equation}
Also,
\begin{equation}
 \left|\sum_{d(x,y) \leq R} s(\xi_y^*)\right| \leq \sum_{d(x,y) \leq R} |s(\xi_y^*)| \ll \left\{ \begin{array}{lll} \log R && d=2\\ R && d = 3,4\\ R^2 && d \geq 5 \end{array}\right..
\end{equation}
Using the formula, for $a>0$, $\sqrt{a^2 + b^2} - a \leq \frac{b^2}{2a}$ for the real and imaginary parts obtains
\begin{equation}
 \left|\sum_{d(x,y) \leq R} (1-c(\xi_y^*)) - \sav(\xi^*; B_R(x))\right| \ll \left\{\begin{array}{lll} \frac{(\log R)^2}{R^2} && d = 2\\ \frac{1}{R} && d=3 \\ \frac{1}{R^2} && d= 4\\ \frac{1}{R^{d-4}} && d \geq 5 \end{array}\right..
\end{equation}
Since
\begin{equation}
 \sav(\xi^*) = |\bT_m| - \left| \sum_{z \in \bT_m} e(\xi_z^*)\right| \leq \sum_{z \in \bT_m} (1 - c(\xi_z^*))
\end{equation}
and
\begin{equation}
  \sum_{z \in \bT_m} (1 - c(\xi_z^*)) - \sum_{d(x,y) \leq R} (1 - c(\xi_y^*)) 
\end{equation}
has the bound given in (\ref{large_distance_bound}), the claim follows by letting $R \to \infty$.
\end{proof}

\begin{proposition}\label{spectral_constant_prop}
The spectral constant $\gamma$ is positive, and there exist constants $B_0, R_0 > 0$ such that
 \begin{enumerate}
  \item For sufficiently large $m$, if $\gamma = \gamma_0$ any $\xi \in \hat{\sG}_m$ that achieves the spectral gap, $\sav(\xi) = |\bT_m| \gap_m$, has a prevector $\nu$ which is a translate of some $\nu' \in \sC(B_0, R_0) \subset C^\rho(\bT_m)$. If $\gamma_0 < \gamma$ then the support of $\nu$ is contained in at most two such neighborhoods.
  \item For any $\nu \in C^\rho(\sT)$ satisfying $f(g*\nu) < \frac{3}{2}\gamma_0$, there exists $\nu' \in \sC(B_0, R_0) \subset C^\rho(\sT)$ such that a translate of $\nu'$ differs from $\nu$ by an element of $\sI$.  In particular, $f(g*\nu) = f(g*\nu')$.
 \end{enumerate}

\end{proposition}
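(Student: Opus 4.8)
The plan is to pass between the global quantity $\sav(\xi)$ (or $f(g*\nu)$) and the local structure Lemmas~\ref{irregular_bound_lemma} and~\ref{regular_bound_lemma} by a cluster decomposition of a suitable prevector. First, positivity of $\gamma$: suppose $\nu\in C^\rho(\sT)\setminus\sI$ (with the extra constraint $\nu\in C^1(\sT)$ when $d\ge 5$) has $f(g*\nu)$ small. By Theorem~\ref{ell_2_theorem}, $\xi:=g*\nu\in\ell^2(\sT)$; let $w_0\in C^0(\sT)$ be the pointwise nearest-integer function to $\xi$ (of finite support since $\xi\in\ell^2$) and $\xi^\dagger:=\xi-w_0$, so $\|\xi^\dagger\|_\infty\le\tfrac12$ and, using $1-\cos 2\pi t\ge 8t^2$ on $[-\tfrac12,\tfrac12]$, $\|\xi^\dagger\|_2^2\le\tfrac18 f(\xi^\dagger)=\tfrac18 f(\xi)$. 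Since $\Delta\xi^\dagger=\nu-\Delta w_0$ is integer-valued and $\|\Delta\|_{2\to 2}<\infty$, if $f(\xi)$ is below an absolute constant then $\Delta\xi^\dagger=0$, i.e.\ $\nu=\Delta w_0\in\sI$, a contradiction; hence $\gamma>0$ (and likewise $\gamma_0>0$). I also record the easy upper bound $\min_{0\ne\xi\in\hat{\sG}_m}\sav(\xi)\le\gamma+o(1)$, obtained by feeding a near-extremal $\nu_0$ for $\gamma$ into Proposition~\ref{savings_convergence_prop}.

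For part (1), let $\xi\in\hat{\sG}_m$ achieve the spectral gap and let $\nu$ be its distinguished prevector; by Lemma~\ref{size_bound_lemma} and the upper bound just recorded, $\|\nu\|_1\ll\gamma$, so $\|\nu\|_1\le B_0$ and $|\supp\nu|\le B_0$ for $m$ large. Form clusters by joining two support points whenever their distance is at most a separation $G$ and taking connected components; each cluster then has diameter $\le B_0 G$ and distinct clusters are $>G$-separated. Choosing $G$ and the window radius $R_1$ consistently with the thresholds $R_2$ of Lemmas~\ref{irregular_bound_lemma}--\ref{regular_bound_lemma} (the bookkeeping of constants is as in \cite{HJL17}), for a cluster $C$ with local prevector $\nu^C=\nu|_{B_{R_1}(x_C)}$: if $\nu^C\notin C^\rho(\bT_m)$, Lemma~\ref{irregular_bound_lemma} gives $\sav(\xi;B_{R_2}(x_C))\ge A$ with $A$ arbitrarily large, contradicting $\sav(\xi)\le\gamma+o(1)$; so every cluster is regular, and Lemma~\ref{regular_bound_lemma} over the disjoint balls $B_{R_2}(x_C)$ gives $\sav(\xi)\ge\alpha\sum_C\sav(g_{\bT_m}*\nu^C)$ with, by Proposition~\ref{savings_convergence_prop}, each summand converging to $f(g*\widetilde{\nu^C})$, which is $\ge\gamma_0$ when $\nu^C\notin\sI$ and $0$ when $\nu^C\in\sI$ (for then $g*\nu^C$ is integer-valued). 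Thus $\alpha\gamma_0\,\#\{C:\nu^C\notin\sI\}\le\gamma+o(1)$: when $\gamma=\gamma_0$ and $\alpha$ is close to $1$ this forces at most one non-$\sI$ cluster, and when $\gamma_0<\gamma$ (which forces $d\ge 5$) one first checks $\gamma\le 2\gamma_0$ --- a near-extremal $\nu_0$ for $\gamma_0$ must have nonzero total mass (else it lies in $C^1(\sT)\setminus\sI$, giving $\gamma\le\gamma_0$), and then for $y\in\Lambda$ with $\|y\|\to\infty$, $\nu_0-\tau_y\nu_0\in C^1(\sT)\setminus\sI$ has $f(g*(\nu_0-\tau_y\nu_0))\to 2f(g*\nu_0)$ --- so there are at most two non-$\sI$ clusters. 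Finally, subtract the $\sI$-clusters of $\nu$ one at a time; each subtraction replaces $\nu$ by another prevector of $\xi$ of no larger $\ell^1$-norm and leaves $\sav$ unchanged, so the remaining prevector is supported in at most one (resp.\ two) clusters, each of which --- after a translation of $\sT$ (the classes $C^j$ being translation invariant, Lemma~\ref{C_2_translation_invariance_lemma}) --- is an element of $\sC(B_0,R_0)$; this gives the required $\nu'$ when $\gamma=\gamma_0$, and exhibits $\supp\nu$ as covered by two radius-$R_0$ balls otherwise.

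For part (2), given $\nu\in C^\rho(\sT)$ with $f(g*\nu)<\tfrac32\gamma_0$, first reduce as above: with $\xi=g*\nu$, $w_0$ its nearest-integer rounding, and $\nu^\dagger=\nu-\Delta w_0$, one gets $\nu^\dagger\in C^\rho(\sT)$, $\nu^\dagger\equiv\nu\bmod\sI$, $g*\nu^\dagger=\xi-w_0$, $f(g*\nu^\dagger)=f(g*\nu)$, and $\|\nu^\dagger\|_1\le\|\nu^\dagger\|_2^2\ll f(g*\nu)\ll\gamma_0=:B_0$ (using $\|\Delta\|_{2\to 2}<\infty$ and that $\nu^\dagger$ is integer-valued, hence of support size $\le B_0$). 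Transport $\nu^\dagger$ to $\bT_m$ for large $m$ via its periodization, so that by Proposition~\ref{savings_convergence_prop} the associated $\xi^{(m)}\in\hat{\sG}_m$ has $\sav(\xi^{(m)})\to f(g*\nu^\dagger)<\tfrac32\gamma_0$. Running the same cluster decomposition of $\nu^\dagger$, with $A>\tfrac32\gamma_0$ in Lemma~\ref{irregular_bound_lemma} ruling out irregular clusters and Lemma~\ref{regular_bound_lemma} giving $\alpha\gamma_0\,\#\{C:\nu^{\dagger,C}\notin\sI\}\le\tfrac32\gamma_0+o(1)$, at most one cluster is non-$\sI$ (for $\alpha$ near $1$). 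Subtracting the $\sI$-clusters of $\nu^\dagger$ produces $\nu'$ supported in a single ball of radius $R_0$, with $\|\nu'\|_1\le B_0$, $\nu'\equiv\nu\bmod\sI$, and $f(g*\nu')=f(g*\nu)$; after a translation of $\sT$, $\nu'\in\sC(B_0,R_0)$.

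The main obstacle is the passage from the purely local Lemmas~\ref{irregular_bound_lemma}--\ref{regular_bound_lemma} to the single global constraint on $\sav(\xi)$ (resp.\ $f(g*\nu)$): one must organize the cluster decomposition so that its separation, the window radii, and the lemmas' thresholds are mutually consistent --- this works because the relevant lower bound $\delta(B,R_1)$ in Lemma~\ref{regular_bound_lemma} is bounded below by $\gamma_0/2$ uniformly in $R_1$, and the radius $R$ in Lemma~\ref{irregular_bound_lemma} depends only on $A$ and $B$ --- and, to keep the count of neighborhoods honest, one must use the distinguished prevector together with the nearest-integer reduction so that spurious $\sI$-supported clusters cannot inflate it. The auxiliary inequality $\gamma\le 2\gamma_0$ needed for the two-neighborhood bound is a further small point.
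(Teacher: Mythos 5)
Your proof is correct and follows essentially the same route as the paper's: bound $\|\nu\|_1$ via Lemma~\ref{size_bound_lemma} and an upper bound from Proposition~\ref{savings_convergence_prop}, perform the cluster decomposition, rule out irregular clusters with Lemma~\ref{irregular_bound_lemma}, count regular non-$\sI$ clusters with Lemma~\ref{regular_bound_lemma} and the convergence of local savings, use $\gamma\le 2\gamma_0$ (with $\gamma_0<\gamma$ forcing $d\ge 5$) to cap the count at two, and discard $\sI$-clusters to land in $\sC(B_0,R_0)$. The only organizational difference is that you establish $\gamma_0>0$ directly by the nearest-integer rounding $\xi\mapsto\xi-w_0$ (integer-valuedness of $\Delta\xi^\dagger$ plus $\|\Delta\|_{2\to2}<\infty$ forces $\Delta\xi^\dagger=0$ when $f(\xi)$ is small), whereas the paper derives positivity as a byproduct of reducing the inf to a finite check over $\sC(B_0,R_0)\setminus\sI$; your version is a touch more direct but uses exactly the inequality $1-\cos 2\pi t\ge 8t^2$ that the paper already invokes in Lemma~\ref{size_bound_lemma} and in step (II) of its own proof, so nothing new is really needed.
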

\begin{proof}
 This closely follows the proof of Proposition 20 from \cite{HJL17}. The first step in this proof finds a constant $B_0$ such that:
 \begin{enumerate}
  \item [(I)] For sufficiently large $m$, if $\xi^{(m)} \in \hat{\sG}_{m}$ achieves $\sav(\xi^{(m)}) = |\bT_m|\gap_{m}$ then its distinguished prevector $\nu^{(m)}$ must satisfy $\left\|\nu^{(m)}\right\|_1 \leq B_0$.
  \item [(II)] If $\nu \in C^{\rho}(\sT)$  satisfies $f(g *\nu) \leq \frac{3}{2}\gamma_0 + 1$, then $\nu$ differs by an element of $\sI$ from some $\tilde{\nu} \in C^\rho(\sT)$ with $\|\tilde{\nu}\|_1 \leq B_0$.
 \end{enumerate}
 To prove (I), fix $\nu' \in C^\rho(\sT)\setminus \sI$.  Choose $B', R'$ large enough so that $\nu' \in \sC(B', R')$.  For each $m$ sufficiently large let $\nu_m$ be a translation of $\nu'$ and $\xi^{(m)}$ the corresponding element of $\hat{\sG}_m$.  By Proposition \ref{convergence_prop},
 \begin{equation}
  \sav\left(\xi^{(m)} \right) \to f(g*\nu')=\gamma'\; \text{as } m \to \infty,
 \end{equation}
and therefore $\sav(\xi^{(m)}) < \gamma' + 1$ for sufficiently large $m$.

Let $\xi^{(m)} \in \hat{\sG}_{m}$ achieve the spectral gap, and let $\nu^{(m)}$ be the distinguished prevector of $\xi^{(m)}$. By Lemma \ref{size_bound_lemma},
\begin{equation}
 \left\|\nu^{(m)}\right\|_1 \ll \sav\left(\xi^{(m)}\right) < \gamma' + 1.
\end{equation}
This proves (I).

To prove (II), let $\nu \in C^\rho(\sT)$, let $\xi = g*\nu$. Since $\nu \in C^\rho(\sT)$,  $\xi \in \ell^2(\sT)$.  There is a version $\tilde{\xi}: \sT \to \left[-\frac{1}{2}, \frac{1}{2}\right)$, $\tilde{\xi} \equiv \xi \bmod 1$ such that $\Delta \tilde{\xi} = \nu-\Delta w$ which differs from $\nu$ by $\Delta w \in \sI$. Because $\tilde{\nu}$ is integer valued and $\Delta$ is bounded $\ell^2 \to \ell^2$,
\begin{equation}
 \|\tilde{\nu}\|_1 \leq \|\tilde{\nu}\|_2^2 = \|\Delta \tilde{\xi}\|_2^2 \ll \|\tilde{\xi}\|_2^2 = \sum_{x \in \sT} |\tilde{\xi}_x|^2 \ll \sum_{x \in \sT}(1-c(\tilde{\xi}_x)).
\end{equation}
An upper bound on $f(g*\nu)$ thus implies and upper bound on $\|\tilde{\nu}\|_1$.

The covering process described in Proposition 20 of \cite{HJL17} takes as input a vector $\nu \in \zed^\sT$ or $\nu \in \zed^{\bT_m}$ with $\|\nu\|_1 \leq B_0$ and returns a set $\sX'$ and radia $R_1(x), R_2(x)$ satisfying the conditions of Lemma \ref{irregular_bound_boundary_lemma} or Lemma \ref{regular_bound_boundary_lemma}, and such that
\begin{equation}
 \supp \nu \subset \bigcup_{x \in \sX'} B_{R_1(x)}(x), \qquad d\left(x, \supp \nu|_{B_{R_1(x)}(x)^c}\right) > 2 R_2(x)
\end{equation}
for each $x \in \sX'$ and the balls $\{B_{R_2(x)}(x)\}_{x \in \sX'}$ are pairwise disjoint. Let $\sX'' = \{x \in \sX': \nu|_{B_{R_1(x)}(x)} \not \in \sI\}$.   Given $x \in \sX''$, if $u^{(x)} \not \in C^\rho$ then $\sav(\xi)\geq \frac{3}{2}\gamma_0 + 1$.  Also, if $|\sX''| \geq 2$ the savings from $g* u^{(x_1)}$ together with $g* u^{(x_2)}$ is approximately twice the savings from an individual component. Thus the savings is minimized by a $\nu$ with $|\sX''| = 1$.  This reduces the search for the minimizing prevector in the inf defining $\gamma_0$ to a finite check, which proves that $\gamma_0 > 0$.  Since the inf defining $\gamma$ is further restricted by $\Delta \xi \in C^1(\sT)$, $\gamma>0$.  Note that $\gamma \leq 2 \gamma_0$ since if $\nu$ achieves $\gamma_0$ and $\nu^y = \nu - \tau_y \nu \in C^1(\sT)$ and $\xi^y = g*(\nu^y)$ satisfies $\lim_{d(0,y)\to \infty} \sav(\xi^y) = 2 \gamma_0$ as $y \to \infty$. 

To prove item (2) of the Proposition, let $\nu$ be a function on $\sT$.  
Let $u^{(x)} = \nu|_{B_{R_1(x)}(x)}$.  Given $x \in \sX''$, if $u^{(x)} \not \in C^\rho$ then $\sav(\xi)\geq \frac{3}{2}\gamma_0 + 1$.  Also, if $|\sX''| \geq 2$ the savings from $g* u^{(x_1)}$ together with $g* u^{(x_2)}$ is approximately twice the savings from an individual component.  Thus $|\sX''| = 1$ and $u^{(x)} \in C^\rho(\sT)$, so that the difference between $u^{(x)}$ and $\nu$ is in $\sI$.  
Since the savings is translation invariant, the claim  holds.  This suffices for (2).

To prove (1), let $\nu \in C^\rho(\sT)$ satisfy $\xi = g*\nu$ achieves $\gamma_0$.  If $\nu \in C^1(\sT)$, so that $\gamma = \gamma_0$, let $\xi_m$ be the corresponding element of $\hat{\sG}_m$.  As $m \to \infty$, $\sav(\xi_m) \to f(\xi) = \gamma_0$.  If $\xi_0 \in \sG_m$ achieves the spectral gap, perform the clustering algorithm on $\nu_0 = \Delta \xi_0$.  If $|\sX''| > 1$ then the total savings is at least roughly double the savings of the cluster with the least savings.  Since this is asymptotically as large as $\gamma_0$, we obtain a contradiction.  If $\nu \not \in C^1(\sT)$, for any fixed $y$, $\nu^y = \nu - \tau^y \nu \in C^1(\sT)$ and $\xi^y = g_{\bT_m} * \nu^y$ has $\sav(\xi^y) \to f(\xi^y)$ as $m \to \infty$.  Letting $y \to \infty$ obtains a minimal savings which is asymptotically at most twice $\gamma_0$ as $\xi$ ranges in $\hat{\sG}_m \setminus \{0\}$. Arguing as before, we conclude that the optimal prevector has at most two clusters.
\end{proof}

The following lemma is the analogue of Lemma 24 of \cite{HJL17}.

\begin{lemma}\label{frequency_sum_lemma}
 Let $k \geq 1$ be fixed, and let $\nu_1, ..., \nu_k \in C^\rho(\bT_m)$ be bounded functions of bounded support which are $R$-separated, in the sense that their supports have pairwise  distance at least $R$.  Set $\nu = \sum_{i=1}^k \nu_i$.
 As $R \to \infty$,
 \begin{equation}
  1 - \left|\hat{\mu}(\xi(\nu))\right| = O\left( \frac{\log R}{R^{2\beta - d}m^d} \right) + \sum_{i=1}^k \left(1- \left|\hat{\mu}(\xi(\nu_i))\right| \right).
 \end{equation}
The implicit constant depends upon $k$ and the bounds for the functions and their supports.
\end{lemma}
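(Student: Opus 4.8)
The plan is to multiply out the exponential sum $\sum_{x\in\bT_m}e(\xi(\nu)_x)$ as a product over the $k$ pieces, isolate the diagonal contribution of each $\nu_i$, and show that distinct pieces interact negligibly by combining the polynomial decay of $g_{\bT_m}*\nu_i$ away from $\supp\nu_i$ with the $R$-separation. Write $\xi_i:=g_{\bT_m}*\nu_i$ and $\xi:=g_{\bT_m}*\nu=\sum_{i=1}^k\xi_i$; since $|\hat\mu(\cdot)|$ and $\sav(\cdot)$ do not change when a constant is added to the argument, the normalisation of $\xi(\nu)$ is immaterial, and we use $\sav(\xi)=|\bT_m|-\bigl|\sum_x e(\xi_x)\bigr|$ together with $1-|\hat\mu(\xi)|=\sav(\xi)/|\bT_m|$. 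From Lemma \ref{deriv_bound_lemma} (with $|\ua|=0$, $j=\rho$) and the translation invariance of the classes $C^j$ one gets $|\xi_i(x)|\ll_B(1+d(x,\supp\nu_i))^{-\beta}$, where $\beta=d-2+\rho$ and $d(x,\supp\nu_i):=\min_{v\in\supp\nu_i}d(x,v)$. Since $2\beta>d$ and $3\beta>d$ for every $d\ge 2$, and $\#B_r(0)\asymp r^d$, this yields $\sum_x|\xi_i(x)|^2\ll_B 1$, $\sum_x|\xi_i(x)|^3\ll_B 1$ and $\sum_x\bigl(1-c(\xi_i(x))\bigr)\ll_B 1$, uniformly in $m$. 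As $g_{\bT_m}$ started from any vertex is mean zero on $\bT_m$, $\sum_x\xi_i(x)=0$; splitting off the $O_B(1)$ vertices with $|\xi_i(x)|>\tfrac12$ and Taylor expanding $s$ on the rest gives $\sum_x s(\xi_i(x))=O_B(1)$. Hence $S_i:=\sum_x e(\xi_i(x))=M+c_i$ with $M:=|\bT_m|$, $c_i=-\sum_x(1-c(\xi_i(x)))+i\sum_x s(\xi_i(x))$, $|c_i|=O_B(1)$, and $\sqrt{(M+\Re c_i)^2+(\Im c_i)^2}=M+\Re c_i+O(1/M)$ gives $\sav(\xi_i)=\sum_x(1-c(\xi_i(x)))+O(1/M)=O_B(1)$.

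Write $e(\xi_i(x))=1+f_i(x)$, so $|f_i(x)|\le 2\pi|\xi_i(x)|\ll_B(1+d(x,\supp\nu_i))^{-\beta}$, and expand $e(\xi_x)=\prod_{i=1}^k(1+f_i(x))=\sum_{S\subseteq\{1,\dots,k\}}\prod_{i\in S}f_i(x)$. Summing over $x$,
\[
 \sum_x e(\xi_x)=M+\sum_{i=1}^k c_i+E,\qquad E:=\sum_{\substack{S\subseteq\{1,\dots,k\}\\|S|\ge 2}}\ \sum_x\ \prod_{i\in S}f_i(x).
\]

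The heart of the matter is the cross-term bound: for distinct $i,j$,
\[
 \sum_x|f_i(x)|\,|f_j(x)|\ll_B\sum_x\frac{1}{(1+d(x,\supp\nu_i))^\beta\,(1+d(x,\supp\nu_j))^\beta}\ll_B\frac{\log R}{R^{2\beta-d}}.
\]
To prove this I would split $\bT_m$ by which of the two supports $x$ lies closer to, say $d(x,\supp\nu_i)\le d(x,\supp\nu_j)$; the triangle inequality and $R$-separation then force $d(x,\supp\nu_j)\ge R/2$. Splitting that part further at $d(x,\supp\nu_i)=R/4$: on $\{d(x,\supp\nu_i)\le R/4\}$ one has $d(x,\supp\nu_j)\gg R$ and the contribution is $\ll R^{-\beta}\sum_{r\le R/4}r^{d-1-\beta}$, while on $\{d(x,\supp\nu_i)>R/4\}$ one has $d(x,\supp\nu_j)\ge d(x,\supp\nu_i)$, the summand is $\ll(1+d(x,\supp\nu_i))^{-2\beta}$, and the contribution is $\ll\sum_{r>R/4}r^{d-1-2\beta}\ll R^{d-2\beta}$ because $2\beta>d$. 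Now $\beta=d$ holds only for $d=2$, where the first contribution is $\ll R^{-2}\sum_{r\le R/4}r^{-1}\ll R^{-2}\log R$ and $2\beta-d=2$; for $d\ge 3$ one has $\beta<d$, so the first contribution is $\ll R^{-\beta}R^{d-\beta}=R^{-(2\beta-d)}$ with $2\beta-d>0$. The other half of the split ($d(x,\supp\nu_j)\le d(x,\supp\nu_i)$) is symmetric, so the displayed bound follows, and since for $|S|\ge2$ the remaining factors are $O(1)$ and $k$ is fixed, $|E|\ll_{B,k}\log R/R^{2\beta-d}$. This convolution-of-power-laws estimate — pinning down the exponent $2\beta-d$ and verifying that the region far from both supports does not dominate, where the inequality $2\beta>d$ (valid in every dimension) is exactly what is needed, with the logarithm entering only in the borderline case $d=2$ — is the step I expect to be the main obstacle; the rest is bookkeeping with Lemma \ref{deriv_bound_lemma}.

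Finally I would convert the additive identity for the exponential sum into one for the moduli. The display above shows $\sum_x e(\xi_x)$ has real part $M+\sum_i\Re c_i+O(\log R/R^{2\beta-d})=M(1+o(1))$ and imaginary part $\sum_i\Im c_i+O(\log R/R^{2\beta-d})=O_B(1)$, so $\sqrt{(M+a)^2+b^2}=M+a+O(b^2/M)$ gives $\bigl|\sum_x e(\xi_x)\bigr|=M+\sum_i\Re c_i+O(\log R/R^{2\beta-d})+O(1/M)$, whence
\[
 \sav(\xi)=-\sum_{i=1}^k\Re c_i+O\!\left(\frac{\log R}{R^{2\beta-d}}\right)+O\!\left(\frac1M\right)=\sum_{i=1}^k\sav(\xi_i)+O\!\left(\frac{\log R}{R^{2\beta-d}}\right),
\]
the last equality using $\sav(\xi_i)=-\Re c_i+O(1/M)$ from above, and the $O(1/M)$ being absorbed because $\nu_1,\dots,\nu_k$ being $R$-separated inside $\bT_m$ forces $R\ll m$, so $M^{-1}=O(m^{-d})\ll\log R/R^{2\beta-d}$. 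Dividing by $|\bT_m|\asymp m^d$ and recalling $1-|\hat\mu(\xi)|=\sav(\xi)/|\bT_m|$ yields the lemma.
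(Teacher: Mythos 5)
Your proof is correct, but it takes a somewhat different route from the paper's. The paper localizes: it chooses base points $x_i\in\supp\nu_i$, decomposes $\bT_m$ into disjoint balls $B_{R'}(x_i)$ and their complement, shows $1-|\hat\mu(\overline\xi_i)|$ is captured up to $O(\log R/(R^{2\beta-d}m^d))$ by the sum over $B_{R'}(x_i)$, then uses the cosine-addition identity to replace $c(\overline\xi_i(y))$ by $c(\overline\xi(y))$ inside each ball (costing $O(|s(\overline\xi_i(y))|/R^\beta)+O(R^{-2\beta})$ per point, with $\sum_{d(x_i,y)\leq R'}|s(\overline\xi_i(y))|$ bounded by $\log R$, $R$, or $R^2$ depending on $d$), and finally handles the exterior region directly. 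You instead exploit the group structure multiplicatively, writing $e(\xi_x)=\prod_i(1+f_i(x))$ and summing, so that the diagonal terms $\sum_x f_i(x)$ recover the $c_i$ and the cross terms collect into a single error $E$, which you control by a convolution-of-power-laws estimate $\sum_x|f_i||f_j|\ll\log R/R^{2\beta-d}$. Both arguments rest on the same inputs — Lemma \ref{deriv_bound_lemma}'s decay $|\xi_i(x)|\ll(1+d(x,\supp\nu_i))^{-\beta}$ and the fact that $\sum_x s(\xi_i(x))=O(1)$ from mean-zero plus $3\beta>d$ — and produce the same error exponent $2\beta-d$ with the $\log R$ appearing only at $d=2$. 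Your multiplicative expansion is cleaner in that it avoids the explicit spatial decomposition and cosine-addition bookkeeping, and it treats all cross interactions uniformly; the paper's ball-localization has the advantage of isolating, ball by ball, exactly where the savings is accumulated, a structure that is reused in the subsequent clustering lemmas. Your final accounting — absorbing the $O(1/M)$ into $O(\log R/R^{2\beta-d})$ via $R\ll m$ and $2\beta-d\leq d$ — is fine, though the paper sidesteps it by phrasing the conclusion in terms of $1-|\hat\mu(\overline\xi)|$ directly rather than going through $\sav$.
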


\begin{proof}
 Set $\overline{\xi} = g_{\bT_m} *\nu$ and $\overline{\xi}_i = g_{\bT_m} * \nu_i$, so that $|\hat{\mu}(\xi(\nu))| = |\hat{\mu}(\overline{\xi})|$ and $|\hat{\mu}(\xi(\nu_i))| = |\hat{\mu}(\overline{\xi}_i)|$.  Choose $x_i \in \supp \nu_i$ for each $i$ and let $R' = \lfloor (R-1)/2\rfloor$, so that the balls $B_{R'}(x_i)$ are disjoint. By Lemma \ref{deriv_bound_lemma},
 \begin{equation}
  |\overline{\xi}_i(y)| \ll \frac{1}{d(x_i,y)^\beta} .
 \end{equation}
Also,
\begin{equation}
 \sum_{d(x_i,y) > R'} 1- c\left(\xi_i(y) \right) \ll \frac{1}{R^{2\beta -d}}.
\end{equation}
Taylor expand $s(\xi_i(y))$ to degree 3 to find
\begin{equation}
 \sum_{y \in \bT_m} s(\xi_i(y)) = \sum_{y \in \bT_m} 2\pi \xi_i(y) + O(|\xi_i(y)|^3) \ll \sum_{y \in \bT_m} O(|\xi_i(y)|^3) = O(1),
\end{equation}
by using the fact that $\xi$ is mean 0 on $\bT_m$.
It follows that 
\begin{equation}1 - \left|\hat{\mu}\left(\overline{\xi}_i\right)\right| = \frac{1}{|\bT_m|} \sum_{d(x_i,y) \leq R'} \left(1 - c\left(\overline{\xi}_i(y) \right) \right) + O\left(\frac{1}{R^{2\beta-d}m^d} \right).
\end{equation}
If $d(x_i, y) \leq R'$, then $\overline{\xi}(y) = \overline{\xi}_i(y) + O(R^{-\beta})$, so that
\begin{equation}
 c\left(\overline{\xi}(y) \right) = c\left(\overline{\xi}_i( y) \right) + O\left(\frac{\left|s\left(\overline{\xi}_i(y) \right) \right|}{R^\beta} \right) + O(R^{-2\beta}).
\end{equation}
The estimates hold
\begin{equation}
 \sum_{d(x_i, y) \leq R'} \left|s\left(\overline{\xi}_i(y) \right)\right| \ll \left\{\begin{array}{lll}\log R && d = 2\\
 R && d=3,4\\ R^2 && d \geq 5\end{array} \right..
\end{equation}
Thus
\begin{equation}
 1 - \left|\hat{\mu}\left(\overline{\xi}_i \right) \right| =\frac{1}{|\bT_m|}\sum_{d(x_i, y) \leq R'} \left(1-c\left(\overline{\xi}_y\right)\right) + O\left(\frac{\log R}{R^{2\beta-d} m^d} \right) .
\end{equation}

For $z \not \in \bigcup_{i=1}^k B_{R'}(x_i)$, let $r_i = d(x_i, z)$, so that \begin{equation}\left|\overline{\xi}(z)\right| = O\left(\frac{1}{r_1^{\beta}}+ \cdots + \frac{1}{r_k^{\beta}} \right).\end{equation}
It follows that 
\begin{equation}
 \sum_{z \not \in \bigcup_{i=1}^k B_{R'}(x_i)} \left(1-c\left(\overline{\xi}(z) \right) \right) = O\left(\frac{1}{R^{2\beta-d}} \right),
\end{equation}
and thus
\begin{equation}
 \sum_{i=1}^k \left(1 - \left|\hat{\mu}\left(\overline{\xi}_i \right) \right| \right) = O \left(\frac{\log R}{R^{2\beta - d}m^d} \right) + \frac{1}{|\bT_m|}\sum_{z \in \bT_m} \left(1-c\left(\overline{\xi}(z) \right) \right).
\end{equation}
We have $\RE(\hat{\mu}(\overline{\xi})) \gg 1$.  Meanwhile, by Taylor expanding $\sin$ to degree 3,
\begin{equation}
 \IM\left(\hat{\mu}\left(\overline{\xi}\right) \right) = \frac{1}{|\bT_m|} \sum_{z \in \bT_m} s\left(\overline{\xi}(z) \right) = O\left(\frac{1}{m^d} \right).
\end{equation}
It follows that
\begin{equation}
  \sum_{i=1}^k \left(1 - \left|\hat{\mu}\left(\overline{\xi}_i \right) \right| \right) = O \left(\frac{\log R}{R^{2\beta - d}m^d} \right) + 1-\left|\hat{\mu}(\xi)\right|.
\end{equation}

\end{proof}

For larger frequencies $\xi$, a clustering is used on the prevector $\nu = \Delta \xi$.  Given a radius $R$, say two points $x_o, x_t \in \supp \nu$ are $R$-path connected if there exist points $x_o = x_0, x_1, ..., x_n = x_t$ in $\supp \nu$ such that for all $0 \leq i < n$, $d(x_i, x_{i+1}) < R$. Given $\nu$, let $\sC = \sC(\nu)$ be the $R$-path connected components in $\supp \nu$.  Say that $\nu$ is $R$-\emph{reduced} if for all $C \in \sC$, $\nu|_{C} \not \in \sI$.  The $R$-reduction of $\nu$ is the prevector $\nu'$ which is equivalent to $\nu$ and omits any clusters $C$ such that $\nu|_{C} \in \sI$.  Evidently, $\nu$ and $\nu'$ generate the same frequency $\xi \in \hat{\sG}_m$ and each norm of $\nu'$ is no larger than the norm of $\nu$.

\begin{lemma}\label{savings_lemma}
 Let $B \geq 1$ be a fixed parameter.  There is a function $\eta(B, R)$ tending to 0 as $R \to \infty$ such that for all $m$ sufficiently large, if $\nu \in \zed^{\bT_m}$ satisfies the following conditions:
 \begin{enumerate}
  \item $\nu$ is $R$-reduced
  \item $\|\nu\|_{L^\infty} = O(1)$
  \item $\nu$ has an $R$-cluster $C$ for which  $\left\| \nu|_C \right\|_1 \leq B$
 \end{enumerate}
then
\begin{equation}
 \sav(g_{\bT_m}*\nu; \nbd(C)) \geq \gamma_0-o(1) - \eta(B,R),
\end{equation}
with $o(1)$ tending to 0 as $m \to \infty$.
\end{lemma}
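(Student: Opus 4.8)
The plan is to isolate the cluster $C$, show that the savings it contributes is essentially the total savings on $\bT_m$ of the frequency generated by $\nu|_C$ alone, and bound the latter below by $\gamma_0$ using the variational characterisation $\gamma_0 = \inf\{f(g*\nu'): \nu' \in C^\rho(\sT)\setminus\sI\}$. Write $\nu_C := \nu|_C$ and $\nu_{\mathrm{ext}} := \nu - \nu_C$. Since $\|\nu_C\|_1 \le B$ the cluster has at most $B$ support points, so $\diam(C) \le (B-1)R$; fixing $x_0 \in C$ and translating $\sT$ so that $x_0$ lies over the origin of $\Lambda$ (which changes the $C^j$-classification only by $O(1)$, by Lemma \ref{C_2_translation_invariance_lemma}), we get $\nu_C \in \zed^{B_{(B-1)R}(x_0)}$, and $\nu_C \notin \sI$ because $\nu$ is $R$-reduced. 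Every other $R$-cluster $C'$ satisfies $d(x_0,C') \ge d(C,C') \ge R$, so $\supp\nu_{\mathrm{ext}} \cap B_R(x_0) = \emptyset$. Put $\xi = g_{\bT_m}*\nu = \xi_C + \xi_{\mathrm{ext}}$ with $\xi_C = g_{\bT_m}*\nu_C$, $\xi_{\mathrm{ext}} = g_{\bT_m}*\nu_{\mathrm{ext}}$, and take $\nbd(C) = B_r(x_0)$ with $r = r(R) \to \infty$ but $r = o(R)$.

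Suppose first that $\xi_{\mathrm{ext}}$ is nearly constant on $\nbd(C)$, say $\xi_{\mathrm{ext},y} = \theta + O(\epsilon(R))$ for $y \in B_r(x_0)$ with $\epsilon(R)\,|B_r(x_0)| \to 0$; then $\bigl|\sum_{B_r(x_0)} e(\xi_y)\bigr| = \bigl|\sum_{B_r(x_0)} e(\xi_{C,y})\bigr| + o(1)$ and it suffices to bound $\sav(\xi_C; B_r(x_0))$ below. If $\nu_C \in C^\rho(\bT_m)$ (automatic for $d \ge 5$), then exactly as in the proof of Lemma \ref{regular_bound_lemma} — using the decay $|\xi_{C,y}| \ll d(x_0,y)^{-\beta}$ from Lemma \ref{deriv_bound_lemma}, partial summation, and $\sqrt{a^2+b^2}-a \le b^2/(2a)$ applied to the real and imaginary parts of $\sum_{B_r(x_0)} e(\xi_{C,y})$ — one obtains $\sav(\xi_C; B_r(x_0)) = f(g_{\bT_m}*\nu_C) + O(r^{-c})$ for some $c = c(d) > 0$, while Proposition \ref{savings_convergence_prop} gives $\sav(g_{\bT_m}*\nu_C) \to f(g*\nu_C)$ as $m \to \infty$ (since $\nu_C \in \sC(B,(B-1)R)$), and $f(g*\nu_C) \ge \gamma_0$ by the characterisation of $\gamma_0$ together with $\nu_C \notin \sI$; this yields the bound, with $\eta(B,R)$ absorbing $O(r^{-c})$. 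If instead $\nu_C \notin C^\rho(\bT_m)$ (so $d \le 4$), one applies Lemma \ref{irregular_bound_lemma} with $A = \gamma_0$, $R_1 = (B-1)R$, and the resulting $R_2$: its hypotheses hold because $\nu|_{B_{R_1}(x_0)} \supseteq \nu_C \notin C^\rho(\bT_m)$ and $d(x_0, \supp\nu|_{B_{R_1}(x_0)^c}) \ge R > 2R_2$ once $R$ is large relative to $B$ (take $\eta(B,R)$ to exceed $\gamma_0$ for the finitely many smaller $R$, making the conclusion vacuous there), whence $\sav(\xi; B_{R_2}(x_0)) \ge \gamma_0$ and one chooses $r \ge R_2$; the compatibility of $r = o(R)$ with $r \ge R_2(\gamma_0,B,(B-1)R)$ rests on $R_2$ being at most linear with small enough constant in its last argument in the construction of Lemma \ref{irregular_bound_lemma}, or else one re-runs that construction directly around $C$ with neighbourhood radius proportional to $R$.

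The main obstacle is the suppressed hypothesis that $\xi_{\mathrm{ext}}$ is nearly constant on $\nbd(C)$. Since $\Delta\xi_{\mathrm{ext}}$ is supported outside $B_R(x_0)$ up to an $O(m^{-d})$ torus defect, $\xi_{\mathrm{ext}}$ is discrete-harmonic on $B_R(x_0) \supset B_r(x_0)$, but $\|\nu\|_1$ is \emph{not} assumed bounded, so the crude estimate on $|\nabla\xi_{\mathrm{ext}}|$ obtained by summing $|\nabla g_\eta(y)| \ll d(0,y)^{-(d-1)}$ (Lemma \ref{deriv_bound_lemma}) over $\supp\nu_{\mathrm{ext}}$ is too weak. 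One handles this either by exploiting the $R$-separation of all the clusters and $\|\nu\|_{L^\infty} = O(1)$ through a dyadic decomposition of $\bT_m$ by torus-distance from $x_0$ (to show the oscillation of $\xi_{\mathrm{ext}}$ on $B_r(x_0)$ is $o(r^{-d})$ after subtracting a constant), or by observing that whenever this oscillation is not small the near-affine discrete-harmonic phase $\xi_{\mathrm{ext}}$ already forces $\bigl|\sum_{B_r(x_0)} e(\xi_y)\bigr| = o(|B_r(x_0)|)$ — by van der Corput's inequality or a direct Fourier estimate on the ball — so that $\sav(\xi; \nbd(C)) \gtrsim |\nbd(C)| \ge \gamma_0$ trivially. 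In either case the quantitative point is to calibrate the growth of $r = r(R)$ against $R$ so that both the internal tail $O(r^{-c})$ and this external error tend to $0$ as $R \to \infty$, uniformly in $m$.
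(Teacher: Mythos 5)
Your skeleton matches the paper's: decompose $\overline\xi = \xi^i + \xi^e$, show that when $\xi^e$ is essentially constant on $\nbd(C)$ the internal savings are bounded below by $\gamma_0$ (via Lemma \ref{regular_bound_lemma}/\ref{irregular_bound_lemma}, Proposition \ref{savings_convergence_prop}, and the variational characterisation of $\gamma_0$), and then worry about what happens when $\xi^e$ is not constant. You also correctly identify the last step as the main obstacle, since only $\|\nu\|_{L^\infty}$, not $\|\nu\|_1$, is controlled. But your resolution of that obstacle is where the argument breaks.

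Your option (a) is false. With $\|\nu\|_{L^\infty} = O(1)$, a dyadic annulus at distance $\sim 2^k$ from $x_0$ can contain $\sim 2^{kd}$ support points of $\nu_{\mathrm{ext}}$, each contributing $\sim 2^{-k(d-1)}$ to $|\nabla \xi^e|$ by Lemma \ref{deriv_bound_lemma}, so the total first-derivative contribution from that annulus is $\sim 2^k$; summing over $k$ up to $\log m$ gives a divergent bound, not $o(r^{-d})$. There is no hope of showing $\xi^e$ is nearly constant in general — indeed the paper does not try to. What Lemma \ref{deriv_bound_lemma} gives, after summing as above with $|\ua|=3$, is only $|D^{\ua}\xi^e| \ll 1/R$: the \emph{third} derivatives are small, so $\xi^e$ is well-approximated on the cluster by a \emph{quadratic} polynomial, but the linear and quadratic parts can be arbitrary. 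Your option (b) is the right direction — an oscillating polynomial phase forces large savings by van der Corput — but as written it treats only a ``near-affine'' phase; the genuinely quadratic case must also be handled, and that is where the bulk of the work in Lemma 25 of \cite{HJL17} (which the paper invokes) lies. The nested radii $R_1 < R_2 < R_3 < R$ with $R_2/R_1^4 \to\infty$, $R_3/(R_1R_2^2)\to\infty$, $R/(R_1^2R_3^2)\gg 1$ are calibrated exactly for this two-stage van der Corput argument, and none of that structure appears in your proposal.

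A second omission is the device of summing over the classes $q \in \sQ = \sT/\Lambda$ before taking discrete derivatives. On a tiling (as opposed to a lattice) $D^{\ua}$ only makes sense along $\Lambda$, so the exponential sum over $\nbd(C)$ must be split into residue classes, the external phase factored out as $e(\xi^e_q)$ per class, and the per-class sums estimated; the cross-class recombination is then controlled by the triangle inequality. Your proof works directly on $\nbd(C)$ and silently assumes the lattice structure. Finally, as a smaller point: you write $\nu|_{B_{R_1}(x_0)} \supseteq \nu_C$ with $R_1 = (B-1)R$, but when $B>2$ this ball may capture support points from neighbouring clusters, so $\nu|_{B_{R_1}(x_0)} \notin C^\rho$ does not follow from $\nu_C \notin C^\rho$; the paper sidesteps this by running the covering process of Proposition 20 of \cite{HJL17}, which produces uniformly bounded inner radii $\tilde R_1(x)$ and well-separated balls.
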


\begin{proof}
 Decompose the phase function $\overline{\xi} = g_{\bT_m} * \nu$ into an internal and external component, $\overline{\xi} = \xi^i + \xi^e$,  where
 \begin{equation}
  \xi^i := g_{\bT_m} * \nu|_{C}, \qquad \xi^e := g_{\bT_m} * \nu|_{C^c}.
 \end{equation}
 
Let $\sQ = \sT/\Lambda$.  The argument sums over each of the individual classes in $\sQ$ so that discrete derivatives may be applied in $\Lambda$.  The first observation is that, in a fixed class $q \in \sQ$, the external phase $\xi^e$ may be well approximated in the cluster $\nbd(C)$ by a polynomial of degree at most 2. Note that for $x \in \nbd(C)$, any $y \in \supp(\nu) \setminus C$ satisfies $d(x,y) > R$. By the bound in Lemma \ref{deriv_bound_lemma}, for $|\ua| = 3$,  
\begin{align*}
 |D^{\ua} \xi_x^e| &= \left|\sum_{y \in C^c} \nu(y) D^{\ua} g_{y,\bT_m}(x)\right|\\
 &\leq \|\nu\|_{\infty} \sum_{y \in B_R(x)^c} \left| D^{\ua} g_{y, \bT_m}(x)\right|\\
 &\ll \sum_{d(x,y) \geq R} \frac{1}{d(x,y)^{d+1}}\\
 & \ll \int_R^\infty \frac{dr}{r^2} \ll \frac{1}{R}.
\end{align*}

The proof now proceeds essentially as in Lemma 25 of \cite{HJL17}.  Let $R_1, R_2, R_3$ which tend to $\infty$ with $R$ and satisfy $R_1 < R_2 < R_3 < R$, and
\begin{equation}
 R_1 \to \infty, \; \frac{R_2}{R_1^4} \to \infty, \; \frac{R_3}{R_1 R_2^2}\to \infty,\; \frac{R}{R_1^2 R_3^2} \gg 1, \; \text{as } R \to \infty.
\end{equation}
First, for each $x \in C$,  choose a representative $q$ for each class in $\sQ$ with $d(q,x) = O(1)$ and assume that for all $\lambda \in \Lambda$ such that $\|\lambda\|_1 \leq R_1$, 
\begin{equation}
\left\|\xi_{q}^e - \xi_{q+\lambda}^e\right\|_{\bR/\zed} < \frac{1}{R_1^{d+1}}.
\end{equation}
The clustering process of Proposition 20 of \cite{HJL17} obtains a cover 
\begin{equation}
 \supp \nu \subset \bigsqcup_{x \in \sX'} B_{\tilde{R}_1(x)}(x)
\end{equation}
and such that, for each $x \in \sX'$ there are radii $2\tilde{R}_1(x) < \tilde{R}_2(x)$ such that
\begin{equation}
 d\left(x, \supp \nu|_{B_{\tilde{R}_1(x)}(x)^c}\right) > 2 \tilde{R}_2(x), \qquad x \in \sX',
\end{equation}
and the balls $\{B_{\tilde{R}_2(x)}(x)\}_{x \in \sX'}$ are disjoint, and meet the conditions of either Lemma \ref{irregular_bound_lemma} or \ref{regular_bound_lemma}.
The radii $\tilde{R}_2$ are uniformly bounded by some $R_0$, with a bound depending only on $B$.  By taking $R$ sufficiently large assume that $R_0$ is arbitrarily small compared to $R_1$.

Let $x \in \sX'$ and $q \in \sQ$ with $d(x,q) = O(1)$ to find
\begin{equation}
 \left|\sum_{d(y,x) \leq R', y \equiv q \bmod \Lambda} e(\xi_{y}^i + \xi_{y}^e) \right| = \left|\sum_{d(y,x) \leq R', y \equiv q \bmod \Lambda} e(\xi_{y}^i ) \right| + O\left(\frac{1}{R_1}\right).
\end{equation}
Thus,
\begin{align*}
\sum_{d(y,x) \leq R'} e(\xi_y^i + \xi_y^e) = \sum_{q \in \sQ} e(\xi_q^e) \sum_{d(y,x) \leq R', y \equiv q \bmod \Lambda} e(\xi_y^i) + O\left(\frac{1}{R_1}\right).
\end{align*}
Let $\sX'' = \{x \in \sX': \nu_{B_{\tilde{R}_1(x)}(x)} \not \in \sI\}$.  Let $u^x = \nu|_{B_{\tilde{R}_1(x)}(x)}$.  If $u^x \not \in C^\rho(\bT_m)$ then each sum 
\begin{equation}
 \sum_{d(y,x)\leq R', y \equiv q \bmod \Lambda} e(\xi_y^i)
\end{equation}
can be made to save an arbitrary constant by choosing $R'$ sufficiently large, which suffices to complete the proof of the lemma, so  assume $u^x \in C^\rho(\bT_m)$.  Under this condition, 
\begin{align*}
\sum_{d(y,x) \leq R', y \equiv q \bmod \Lambda} |s(\xi_y^i)| \ll \frac{1}{R^{\beta - d}}
\end{align*}
so that
\begin{align*}
 \sav(\xi^i; B_{R'}(x) \cap q \bmod \Lambda) = \sum_{d(y,x) \leq R', y \equiv q \bmod \Lambda} 1- c(\xi_y^i) + O\left(\frac{1}{{R'}^{2\beta - d}} \right).
\end{align*}
In particular, 
\begin{align*}
 &\left|\sum_{q \in \sQ} e(\xi_q^e) \sum_{d(y,x) \leq R', y \equiv q \bmod \Lambda} e(\xi_y^i)\right|\leq \sum_{q \in \sQ} \left|\sum_{d(y,x) \leq R', y \equiv q \mod \Lambda} e(\xi_y^i) \right|\\& \leq \left|\sum_{d(y,x) \leq R'} e(\xi_y^i)\right| + O\left(\frac{1}{{R'}^{2\beta-d}} \right).
\end{align*}
If there are two or more elements of $\sX''$, appealing to Lemma \ref{regular_bound_lemma} saves more than $\gamma_0$ if $m$ is sufficiently large.  If $|\sX'|=1$ let $u^x \in C^\rho(\bT_m) \setminus \sI$. This obtains
\begin{align*}
 \sav(\xi^i; B_{R_1}(x)) &= \sav(g_{\bT_m}*u^x; B_{R_1}(x)) \\&= \sav(g_{\bT_m}*u^x) + O\left( \frac{(\log R_1)^2}{R_1^{2\beta-d}}\right).
\end{align*}
Since the support of $u^x$ is treated as bounded and for fixed $\nu \in C^\rho(\sT)$, $\sav(g_{\bT_m}*\nu) \to f(g*\nu)$ as $m \to \infty$, $\sav(g_{\bT_m}*u^x) \geq \gamma_0 - o(1)$ as $m \to \infty$. This again suffices for the lemma.

The remainder of the proof is the same as the proof of Lemma 25 of \cite{HJL17}, which handles the case of a linear or quadratic external phase.

\end{proof}

\subsection{Open boundary case}
In the case of a reflected boundary let $\sF$ be the family of reflecting hyperplanes, and $\sR$ the fundamental open region.  The number of vertices is $|\sT_m| = 1+ |m \sR \cap \sT|$. Consider $\xi \in \hat{\sG}_m$ to be an $m\sF$-anti-symmetric function on $\sT$.

Recall that in two dimensions, the spectral parameters are defined by
\begin{align*}
 \gamma_0 &= \inf_{\substack{\xi \in \sH^2(\sT)\\ \xi \not \equiv 0 \bmod 1}} \sum_{x \in \sT}1- c( \xi_x)\\
 \gamma_1 &= \frac{1}{2} \inf_{a \in \sL} \inf_{\substack{\xi \in \sH_a^2(\sT)\\ \xi \not \equiv 0 \bmod 1}} \sum_{x \in \sT} 1 - c( \xi_x)\\
 \gamma_2 &= \inf_{(a_1, a_2) \in \sC} \inf_{\substack{\xi \in \sH_{(a_1, a_2)}^2(\sT)\\ \xi \not \equiv 0 \bmod 1}} \sum_{x \in Q_{(a_1,a_2)}} 1- c( \xi_x).
\end{align*}
Define corresponding functionals
\begin{align*}
 f(\xi) &= \sum_{x \in \sT} 1-c( \xi_x)\\
 f_a(\xi) &= \frac{1}{2} \sum_{x \in \sT} 1-c( \xi_x)\\
 f_{(a_1, a_2)}(\xi) &= \sum_{x \in Q_{(a_1,a_2)}} 1 - c( \xi_x).
\end{align*}

In dimension $d \geq 3$, for $0 \leq i < d$,
\begin{equation}
 \gamma_i =   \inf_{\substack{S \subset \{1, 2, ..., d\}\\ |S| = i}}\inf_{\substack{ \xi \in \sH^2_S(\sT)\\ \xi \not \equiv 0 \bmod 1}} \sum_{x \in \sT/\fS_S} 1- c( \xi_x).
\end{equation}
Define the corresponding functional 
\begin{equation}
 f_S(\xi) = \sum_{x \in \sT/\fS_S} 1 - c(\xi_x).
\end{equation}

Recall
\begin{equation}
 \rho = \left\{\begin{array}{lll} 2 && d=2\\ 1 && d=3,4\\ 0 && d\geq 5 \end{array}\right..
\end{equation}
Also, let $B_R(0) = \{x \in \sT: d(x,0) \leq R\}$ and 
\begin{equation}
 \sC(B,R) := \{\nu \in C^\rho(B_R(0)): \|\nu\|_1 \leq B\}.
\end{equation}
The graph $\sT/\fS_S$ is given the quotient distance.

Let $r_m \to \infty$ with $m$ be a parameter, say $r_m = \log m$. Let $\nu : \sT_m \to \zed$.  Say $\nu$ is a \emph{co-dimension $j$ cluster} if its support has distance at most $r_m$ to a boundary of co-dimension $j$, but not any boundary of codimension $i > j$.  Let 
\begin{equation}
\hat{\sG}_{m,j} =\{\xi \in \hat{\sG}_m: \Delta \xi \text{ co-dim } j \text{ cluster}\}.
\end{equation}
Define the $j$th spectral gap to be
\begin{equation}
 \gap_{m,j} = \inf_{\substack{\xi \in \hat{\sG}_{m,j}\\ \xi \not \equiv 0 \bmod 1}} 1- \left|\hat{\mu}(\xi)\right|.
\end{equation}

Versions of the claims in the periodic case adapted to the reflected boundary case are as follows.
\begin{proposition}\label{convergence_prop}
 Fix $B, R_1 > 0$.  Let $\nu \in \sC(B, R_1)$ have reflection anti-symmetry in a family $S$ of $j$ hyperplanes.  For any $m > 2R_1$, let $\nu_m$ be any $j$-cluster in $\sT_m$ obtained by translating $\nu$ parallel to the reflecting hyperplanes, then by imposing reflection anti-symmetry in $m\cdot \sF$.  Let $\xi^{(m)} = \xi^{(m)}(\nu)$ be the frequency in $\hat{\sG}_m$ corresponding to $\nu_m$, 
and let $\xi = \xi(\nu) = g*\nu$.  Then
\begin{equation}
 \sav(\xi^{(m)}) \to f_S(\xi)\qquad \text{as } m \to \infty.
\end{equation}

\end{proposition}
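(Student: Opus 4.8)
The plan is to follow the proof of Proposition~\ref{savings_convergence_prop}, replacing the periodic Green's function by the reflected Green's function $g_{\sT_m}$ and replacing the global sum over $\bT_m$ by a local sum over the part of the chamber $m\sR$ adjacent to the boundary face carrying the cluster. First I would normalize: since $\sav$ is unchanged by adding a constant to the phase, take $\xi^{(m)}=g_{\sT_m}*\nu_m$, which vanishes at the sink and is $m\sF$-anti-symmetric (because $g_{\sT_m}$ is), hence a valid representative in $\hat{\sG}_m$. Identify $\sT_m\setminus\{s\}$ with $\sT\cap m\sR$. Since $\nu_m$ is a $j$-cluster obtained by translating $\nu$ parallel to the $j$ reflecting hyperplanes through $0$, its support lies within a bounded distance of some codimension-$j$ face $F_m$ of $\partial(m\sR)$ and at distance $\gg\log m$ from every face of higher codimension. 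Writing $A=\sum_{v\in\sT_m}c(\xi^{(m)}_v)$ and $B=\sum_{v\in\sT_m}s(\xi^{(m)}_v)$, the inequality $\sqrt{A^2+B^2}\le A+\tfrac{B^2}{2A}$ (legitimate once $A\ge\tfrac12|\sT_m|$, which holds for large $m$ by the decay estimate below) gives
\begin{equation}
 \sav(\xi^{(m)}) = \sum_{x\in\sT\cap m\sR}\bigl(1-c(\xi^{(m)}_x)\bigr) + O\Bigl(\tfrac{B^2}{|\sT_m|}\Bigr).
\end{equation}

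Next I would record the decay of $\xi^{(m)}$. Because $g_{\sT_m}$ is a bounded alternating sum of reflections of $g_{\bT_{cm}}$ (for a suitable constant $c$), Lemma~\ref{deriv_bound_lemma} gives, with $\beta=d-2+\rho$,
\begin{equation}
 |\xi^{(m)}_x| \ll \frac{1}{1+d(F_m,x)^{\beta}} + \frac{1}{m^{\beta}}, \qquad x\in\sT\cap m\sR,
\end{equation}
the second term accounting for the images of the cluster under reflections not fixing $F_m$, which lie at distance $\gg m$. Summing and using $2\beta>d$ yields $\sum_x|\xi^{(m)}_x|=o(m^{d/2})$ (concretely $O(\log m)$, $O(m)$, $O(m^2)$ according as $d=2$, $d\in\{3,4\}$, $d\ge5$), hence $|B|\le 2\pi\sum_x|\xi^{(m)}_x|=o(|\sT_m|^{1/2})$ and the error term above is $o(1)$; unlike the periodic case, one does \emph{not} need the first moment to vanish. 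The same computation gives, uniformly in $m$,
\begin{equation}
 \sum_{\substack{x\in\sT\cap m\sR\\ d(F_m,x)>R}}\bigl(1-c(\xi^{(m)}_x)\bigr) \ll \frac{1}{R^{2\beta-d}} + o(1),
\end{equation}
which tends to $0$ as $R\to\infty$.

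The heart of the argument is the local matching. Fix $R$. Within distance $R$ of the cluster the only hyperplanes of $m\sF$ are the $j$ hyperplanes through $F_m$; the group they generate is a translate $\fS_{S,m}$ of $\fS_S$, and the trace of $\sT\cap m\sR$ on this neighbourhood is a single chamber of $\fS_{S,m}$, hence a fundamental domain for its action on $\sT$ there. Because the reflections outside $\fS_{S,m}$ move $\supp\nu_m$ to distance $\gg m$, the construction of $g_{\sT_m}$ shows that on this neighbourhood $\xi^{(m)}$ agrees, up to $O(m^{-\beta})$, with the $\fS_{S,m}$-alternating sum of translates of $g_{\bT_{cm}}*\nu$. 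Letting $m\to\infty$ and using the pointwise convergence $g_{\bT_{cm}}*\nu\to g*\nu$ (exactly as in Proposition~\ref{savings_convergence_prop}; cf. Lemma~\ref{Green_fn_asymptotic}), this converges, after translating the cluster back to $0$, to the $\fS_S$-anti-symmetric potential $\xi=g*\nu$ restricted to that chamber. Therefore
\begin{equation}
 \sum_{\substack{x\in\sT\cap m\sR\\ d(F_m,x)\le R}}\bigl(1-c(\xi^{(m)}_x)\bigr) \longrightarrow \sum_{\substack{x\in\sT/\fS_S\\ d(0,x)\le R}}\bigl(1-c(\xi_x)\bigr) \qquad (m\to\infty).
\end{equation}
Combining the last three displays and then letting $R\to\infty$ (the tails vanish since $\xi\in\ell^2(\sT)$ and $2\beta>d$) gives $\sav(\xi^{(m)})\to\sum_{x\in\sT/\fS_S}(1-c(\xi_x))=f_S(\xi)$, as claimed.

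The main obstacle will be this local matching step: one must check carefully that the translated cluster occupies a neighbourhood of $F_m$ containing no stray hyperplane of $m\sF$, that the portion of $m\sR$ there is genuinely a fundamental domain for $\fS_S$ (so that the local sum of $1-c(\xi_x)$ over $\sT\cap m\sR$ reproduces exactly the sum over $\sT/\fS_S$ that defines $f_S$), and that the bounded-reflection construction of $g_{\sT_m}$ does reproduce the $\fS_S$-anti-symmetric potential of $\nu$ in the limit. The remaining ingredients -- the phase reduction, the decay bound, and the pointwise convergence of the periodic Green's function -- are routine adaptations of the periodic case already treated above.
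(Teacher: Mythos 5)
Your overall plan matches the paper's proof: identify $\sT_m$ with $\sT\cap m\sR$, decompose the phase into the main cluster plus far-away reflected images, convert $\sav$ into $\sum(1-c)$ via a $\sqrt{A^2+B^2}$ reduction, localize to a neighbourhood of the cluster, and pass $m\to\infty$ then $R\to\infty$. Your remark that the open-boundary case does \emph{not} require the first moment of the phase to vanish — one just bounds $\sum|s(\xi^{(m)})|$ directly — is correct and is exactly what the paper does. However, the decay estimate you write is wrong, and the quantitative steps that rest on it do not go through as stated. You claim
\begin{equation*}
 |\xi^{(m)}_x|\ll\frac{1}{1+d(F_m,x)^{\beta}}+\frac{1}{m^{\beta}},
\end{equation*}
where $F_m$ is the codimension-$j$ face of $\partial(m\sR)$ near which $\nu_m$ sits. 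But $\nu_m$ has \emph{bounded} support located at a single point near $F_m$; by Lemma~\ref{deriv_bound_lemma}, $g_{\bT_m}*\nu_m$ (and hence the alternating sum over reflections) decays away from a base point $x_0\in\supp\nu_m$, not away from the whole $(d-j)$-dimensional face $F_m$. For $x$ close to $F_m$ but far from $x_0$, your bound gives $|\xi^{(m)}_x|\asymp 1$, whereas in fact $|\xi^{(m)}_x|\ll d(x_0,x)^{-\beta}$; the paper's corresponding estimate is $\xi^{(m)}_i(y)\ll d(x_i,y)^{-\beta}$ with $x_i$ a base point of each component.

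This is not a notational slip, because your subsequent bounds depend on it. Counting $\asymp m^{d-j}r^{j-1}$ lattice points at distance $r$ from $F_m$, your estimate only gives $\sum_x|\xi^{(m)}_x|\ll\sum_{r\le cm} m^{d-j}r^{j-1}(1+r^\beta)^{-1}$; for $d=2$, $j=1$, $\beta=2$ that is $O(m)$, not the $O(\log m)$ you assert, so it only yields $B^2/|\sT_m|=O(1)$ rather than $o(1)$ and the phase reduction is not justified. Likewise $\sum_{d(F_m,x)>R}(1-c(\xi^{(m)}_x))$ would be bounded only by $\asymp m^{d-j}R^{j-2\beta}$, which is not $\ll R^{-(2\beta-d)}$ uniformly in $m$; and your localization region $\{d(F_m,x)\le R\}$ is a slab of $\asymp m^{d-j}R^j$ vertices rather than a genuine $O(R^d)$-point neighbourhood of the cluster that converges to a ball in $\sT/\fS_S$. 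All three estimates become correct, and your argument coincides with the paper's, once you replace $d(F_m,x)$ throughout by $d(x_0,x)$ for a fixed base point $x_0$ of $\supp\nu_m$.
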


\begin{proof}
In this proof, identify $\sT_m$ with $m\cdot \sR \cap \sT$.  Recall that functions which are reflection anti-symmetric in $\sF$ are periodic in a lattice $\Lambda$, and that $\sR$ has finite index in $\sT/\Lambda$.    Treated as a function on $\bT_m = \sT/m\Lambda$, $\nu_m$ may be considered as the sum of $I$ functions of bounded support 
\begin{equation}
\nu_m=\sum_{i=1}^I \nu_{m,i}
\end{equation}
one of whose support, say $\nu_{m,1}$ intersects $\sT_m$ and is a translate of $\nu$.  By the condition of being a $j$ cluster, the distance from the support of the next nearest component to $\sT_m$ is at least $r_m$, since $\nu_m$ has distance at least $r_m$ from the corresponding reflecting boundary.  Thus, $\xi^{(m)} = \sum_{i=1}^I g_{\bT_m} * \nu_{m,i} = \sum_{i=1}^I \xi^{(m)}_i.$ Let $x_i \in \supp \nu_{m,i}$.  By the decay estimate in Lemma \ref{deriv_bound_lemma},
\begin{equation}
 \xi_i^{(m)}(y) \ll \frac{1}{d(x_i,y)^\beta}.
\end{equation}
By Taylor expansion to degree 1,
\begin{equation}
 \sum_{y \in \sT_m} |s(\xi^{(m)}(y))|  \ll \left\{\begin{array}{lll} \log m && d=2\\ m && d=3,4\\ m^2 && d \geq 5\end{array}\right..
\end{equation}
Thus,
\begin{equation}
 \left|\sav(\xi^{(m)}) - \sum_{y \in \sT_m} (1 - c(\xi^{(m)}(y)))\right| \ll \left\{ \begin{array}{lll} \frac{(\log m)^2}{m^2} && d = 2\\ \frac{1}{m} && d=3\\ \frac{1}{m^2} && d= 4\\ \frac{1}{m^{d-4}} && d \geq 5 \end{array}\right..
\end{equation}
For $R$ a fixed parameter, which may be taken arbitrarily large, using $1 - c(x) \ll x^2$ 
\begin{align*}
 \sum_{y \in \sT_m, d(y, x_1)> R} 1 - c(\xi^{(m)}(y))&\ll \sum_{y \in \sT_m, d(y, x_1)> R} \left(\sum_{i=1}^I \frac{1}{d(y, x_i)^{2\beta}} \right)\ll \frac{1}{R^{2 \beta -d}}.
\end{align*}
Meanwhile, for $d(y, x_1) \leq R$,
\begin{equation}
1-c(\xi^{(m)}(y)) = 1 - c(\xi^{(m)}_1(y)) + o(1)
\end{equation}
with the error holding as $m \to \infty$. Thus, 
\begin{align*}
 \sav(\xi^{(m)}) = \sum_{y \in \sT_m, d(y, x_1)\leq R} 1 - c(\xi^{(m)}(y)) + o(1) + O\left( \frac{1}{R^{2 \beta -d}}\right).
\end{align*}
Letting $m \to \infty$, $\xi^{(m)}$ converges pointwise to a translated version of $\xi$, then letting $R \to \infty$ obtains the claim.
\end{proof}

\begin{lemma}\label{irregular_bound_boundary_lemma}
 For all $A, B, R_1 > 0$ there exists an $R_2(A, B, R_1) > 2R_1$ such that if $m$ is sufficiently large, then for any $x \in \sT_m$ and any $\nu \in \zed^{\sT_m}$ satisfying the following conditions:
 \begin{enumerate}
  \item $\|\nu\|_1 \leq B$
  \item $\nu|_{B_{R_1}(x)} \not \in C^{\rho}(\sT_m)$ and $\nu$ is a $j$-cluster
  \item $d(x, \supp \nu|_{B_{R_1}(x)^c}) > 2R_2$
 \end{enumerate}
the bound holds
\begin{equation}
 \sav(g*\nu; B_{R_2}(x)) \geq A.
\end{equation}
Thus, if $\nu$ has mean zero, then the corresponding frequency $\xi \in \hat{\sG}_m$ satisfies $\sav(\xi; B_{R_2}(x)) \geq A$.
\end{lemma}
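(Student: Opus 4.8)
The plan is to follow the proof of the periodic analogue, Lemma~\ref{irregular_bound_lemma}, carrying along the reflection anti-symmetrisation of the prevector. First, the assertion is vacuous unless $d\le 4$: when $d\ge 5$, $\rho=0$ and condition~(2) cannot hold, exactly as in the proof of Lemma~\ref{irregular_bound_lemma}. Next, if $\nu$ is a $j$-cluster with $j\ge 1$, then the reflection anti-symmetric extension $\tilde\nu^i$ of $\nu^i:=\nu|_{B_{R_1}(x)}$ (across the at most $d$ hyperplanes of $m\sF$ lying within $O(R_2)$ of $x$) is again anti-symmetric, so the one-period representative defining its membership in $C^\rho(\sT_m)$ has vanishing total mass, hence $\tilde\nu^i\in C^1(\sT)$. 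Since $C^\rho=C^1$ when $d=3,4$, condition~(2) forces $d=2$ in the case $j\ge 1$, and moreover $x$ must be close to \emph{exactly one} reflecting line $\ell$: being close to two distinct lines would force the mean of $\varrho_{\tilde\nu^i}$ --- which anti-symmetry constrains to be perpendicular to each such line --- to vanish, i.e.\ $\tilde\nu^i\in C^2(\sT)=C^\rho(\sT)$. Thus there are two substantive cases, $j=0$ (equivalently $x$ at distance $>2R_2$ from every hyperplane of $m\sF$), and $d=2$ with $x$ within $O(R_2)$ of a single reflecting line.

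In either case I would decompose $\overline\xi:=g_{\sT_m}*\nu=\xi^i+\xi^e$ with $\xi^i:=g_{\sT_m}*\nu^i$ and $\xi^e:=g_{\sT_m}*\nu|_{B_{R_1}(x)^c}$, using that near $x$ the reflected Green's function $g_{\sT_m}$ is a finite alternating sum of translates of $g_{\bT_m}$, so that $\xi^i=g_{\bT_m}*\tilde\nu^i$ with $\tilde\nu^i$ of bounded support (the reflected copies of $\nu^i$ remain within $O(R_1+R_2)$ of $x$). Because the $\sT_m$-distance is the quotient distance, every reflected copy of a point of $\supp\nu|_{B_{R_1}(x)^c}$ still lies at graph distance $>2R_2$ from $x$, so the derivative bound of Lemma~\ref{deriv_bound_lemma}, applied term by term, gives $|\nabla\xi^e_{x+y}|\ll B/R_2^{d-1}$ for $\|y\|\le R_2$, uniformly in $m$. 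As in Lemma~\ref{irregular_bound_lemma}, with a secondary scale $R=R(R_2)$ satisfying $R^{d+1}/R_2^{d-1}\to 0$, the external phase is constant on $B_R(x)$ up to an error tending to $0$ with $R_2$, and it suffices to show
\begin{equation*}
 \#\{y\in\Lambda:\|y\|\le R\}-\Bigl|\sum_{\substack{\|y\|\le R\\ x+y\in\sT_m}}e(\xi^i_{x+y})\Bigr|\longrightarrow\infty\qquad(R\to\infty),
\end{equation*}
the sum being further restricted to a fixed residue class of $\Lambda$ so that discrete differentiation and the local limit theorem apply; that $B_R(x)\cap\sT_m$ is only a half-ball (or quadrant) when $x$ is close to $m\sF$ costs at most a positive density factor.

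For $j=0$ this is exactly the content of Lemma~\ref{irregular_bound_lemma}: near $x$ the graph $\sT_m$ coincides with $\bT_m$, condition~(2) becomes $\nu^i\notin C^\rho(\bT_m)$, and the dimensional analysis there --- using the logarithmic growth of the planar Green's function when $d=2$ and $\nu^i\notin C^1$, and Lemma~\ref{G_eta_eval} otherwise (with the first moment of $\nu^i$ nonzero when $d=2$, the total mass nonzero when $d=3,4$) --- exhibits a one- or two-dimensional slab near $x$ on which $|\xi^i_{x+y}|\asymp 1/\|y\|$, or on which successive increments of $\xi^i$ diverge, so that $\sum_{\|y\|\le R}(1-c(\xi^i_{x+y}))\asymp\log R$ while $\sum_{\|y\|\le R}|s(\xi^i_{x+y})|\ll R$ (or $\ll R^2$ when $d=4$); the inequality $\sqrt{a^2+b^2}-a\le b^2/(2a)$ applied to the real and imaginary parts of $\sum e(\xi^i_{x+y})$ yields the divergence. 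In the remaining case ($d=2$, $x$ near one line $\ell$, $\tilde\nu^i\in C^1\setminus C^2$) the mean $v$ of $\varrho_{\tilde\nu^i}$ is nonzero and perpendicular to $\ell$, so Lemma~\ref{G_eta_eval} gives $\xi^i_{x+n}=\frac{v^{t}\sigma^{-2}n}{\deg(0)\,\pi\,\|\sigma^{-1}n\|^{2}\,\det\sigma}+O(\|\sigma^{-1}n\|^{-2})$; outside a thin slab of directions $|\xi^i_{x+y}|\asymp 1/\|y\|$, the half-plane $B_R(x)\cap\sT_m$ still contains $\gg R^2$ such lattice points, so $\sum(1-c(\xi^i_{x+y}))\asymp\log R$ and $\sum|s(\xi^i_{x+y})|\ll R$, and the same $\sqrt{a^2+b^2}-a$ manoeuvre finishes the proof after taking $R$, and then $R_2$, large.

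The step I expect to be the main obstacle is the uniformity of these Green's function estimates under the alternating-sum construction of $g_{\sT_m}$: one must verify that Lemmas~\ref{deriv_bound_lemma} and~\ref{G_eta_eval} hold for $g_{\sT_m}$ with implied constants independent of $m$ \emph{and} of how close the cluster and the evaluation point lie to the reflecting hyperplanes, and that the half-ball/quadrant truncation when $x$ is $O(R_2)$-close to $m\sF$ does not swallow the logarithmic gain --- i.e.\ that the directions on which $|\xi^i_{x+y}|\asymp 1/\|y\|$ still meet $B_R(x)\cap\sT_m$ in positive density. The dimensional bookkeeping and the final inequality are routine, being identical to the periodic case.
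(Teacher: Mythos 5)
Your proposal is correct and follows the route the paper intends: the paper's own proof of this lemma is only the remark that it is ``the same as'' that of Lemma~\ref{irregular_bound_lemma}, and you are spelling out what that means in the reflected geometry. Your observations that anti-symmetrisation kills the total mass of $\tilde\nu^i$ when $j\ge 1$, and also the first moment of $\varrho_{\tilde\nu^i}$ when $d=2$ and two independent lines are nearby, are accurate and worth making explicit, since they show only $j=0$ and $(d,j)=(2,1)$ are substantive; likewise the reduction $\xi^i=g_{\bT_m}*\tilde\nu^i$, with $\tilde\nu^i$ the anti-symmetrisation of $\nu^i$ across the $\le j$ nearby hyperplanes, resolves the uniformity concern you flag, because Lemmas~\ref{deriv_bound_lemma} and~\ref{G_eta_eval} are then applied on $\bT_m$ to a $C^\rho$ object whose $\ell^1$ mass and support diameter are controlled independently of $m$ and of the cluster's position relative to $m\sF$. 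The half-disc truncation also loses only a constant, as you say, because the zero set of the leading term of $\xi^i$ is a single line and so meets $B_R(x)\cap\sT_m$ in a thin slab, leaving $\gg R^2$ lattice points with $|\xi^i_{x+y}|\asymp 1/\|y\|$ and preserving the $\log R$ divergence of $\sum\bigl(1-c(\xi^i_{x+y})\bigr)$.
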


\begin{proof}
The proof is the same as of Lemma \ref{irregular_bound_lemma}.
\end{proof}

\begin{lemma}\label{regular_bound_boundary_lemma}
 For all $B, R_1 > 0$ and $\alpha < 1$, there exists $R_2(\alpha, B, R_1) > 2 R_1$ such that if $m$ is sufficiently large, then for any $x \in \sT_m$ and any $\nu \in \zed^{\sT_m}$ satisfying the following conditions:
 \begin{enumerate}
  \item $\|\nu\|_1 \leq B$
  \item $\nu|_{B_{R_1}(x)} \in C^{\rho}(\sT_m)$ and $\nu|_{B_{R_1}(x)}$ is a $j$-cluster
  \item $d\left(x, \supp \nu|_{B_{R_1}(x)^c} \right) > 2 R_2$
 \end{enumerate}
the bound holds
\begin{equation}
 \sav(g * \nu; B_{R_2}(x)) \geq \alpha \sav(\xi^*); \qquad \xi^* = g * \nu|_{B_{R_1}(x)}.
\end{equation}

\end{lemma}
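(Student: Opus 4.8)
The plan is to follow the proof of Lemma~\ref{regular_bound_lemma} (which itself follows Lemma~23 of \cite{HJL17}), substituting Proposition~\ref{convergence_prop} for Proposition~\ref{savings_convergence_prop}, the functionals $f_S$ for $f$, and the Green's function $g = g_{\sT_m}$ of the reflected graph for $g_{\bT_m}$. Since $g_{\sT_m}$ is a bounded alternating sum of reflected copies of $g_{\bT_m}$, it inherits the decay and derivative estimates of Lemma~\ref{deriv_bound_lemma} up to a bounded multiplicative constant and an $O(1)$ change of the relevant norm.

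First I would record the dichotomy that there is $\delta = \delta(B, R_1) > 0$ such that, for $m$ large, $\sav(\xi^*) < \delta$ forces $\sav(\xi^*) = 0$. The set $\sC(B,R_1)$ is finite, and for each $\nu' \in \sC(B,R_1)$ which is reflection anti-symmetric in a family $S$ of $j$ hyperplanes with $\nu' \notin \sI$, the function $g*\nu'$ lies in $\sH^2(\sT)$ by Theorem~\ref{ell_2_theorem}, is harmonic modulo $1$, and is not integer valued (otherwise $\nu' \in \sI$), so $f_S(g*\nu') > 0$; let $\gamma'$ be the minimum of these finitely many positive quantities. By Proposition~\ref{convergence_prop}, after translating $\nu|_{B_{R_1}(x)}$ parallel to the reflecting hyperplanes and imposing anti-symmetry, $\sav(\xi^*) \to f_S(g*\nu')$ for the corresponding $\nu' \in \sC(B, R_1)$, so for $m$ large $\sav(\xi^*) > \gamma'/2$ unless $\nu' \in \sI$, in which case $\sav(\xi^*) = 0$; take $\delta = \gamma'/2$.

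Now set $\epsilon = (1-\alpha)\delta > 0$. It suffices to prove $\sav(g*\nu; B_{R_2}(x)) > \sav(\xi^*) - \epsilon$, since the lemma is trivial when $\sav(\xi^*) = 0$ and otherwise $\sav(\xi^*) \geq \delta$, so $\sav(\xi^*) - \epsilon \geq \alpha\,\sav(\xi^*)$. Fix a large $R < R_2$ and split $\overline{\xi} := g*\nu = \xi^i + \xi^e$ with $\xi^i = g*\nu|_{B_{R_1}(x)} = \xi^*$ and $\xi^e = g*\nu|_{B_{R_1}(x)^c}$. Using condition (3) and the gradient bound $|\nabla g_{v,\sT_m}(y)| \ll d(v,y)^{-(d-1)}$ from Lemma~\ref{deriv_bound_lemma} — applied to each reflected copy, which is licit because $d$ is the quotient distance, so every reflected image of $\supp\nu|_{B_{R_1}(x)^c}$ stays at distance $> 2R_2$ from $B_{R_2}(x)$ — the phase $\xi^e$ is constant up to $O(BR/R_2^{d-1})$ on $B_R(x)$, whence
\begin{equation*}
 \left|\sum_{y \in B_R(x)} e(\overline{\xi}_y)\right| = \left|\sum_{y \in B_R(x)} e(\xi^*_y)\right| + O\!\left(\frac{B R^{d+1}}{R_2^{d-1}}\right).
\end{equation*}
Since $\sav(g*\nu; B_{R_2}(x)) \geq \sav(g*\nu; B_R(x))$ by superadditivity of savings over disjoint sets, it remains to check that for $R$ fixed large, $\sav(\xi^*; B_R(x)) > \sav(\xi^*) - \tfrac{\epsilon}{2}$, after which taking $R_2$ large absorbs the remaining error.

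For this last step, Lemma~\ref{deriv_bound_lemma} with $\nu|_{B_{R_1}(x)} \in C^\rho$ gives $|\xi^*_y| \ll d(x,y)^{-\beta}$ with $\beta = \rho + d - 2$, so $\sum_{d(x,y) > R}(1 - c(\xi^*_y))$ is bounded as in \eqref{large_distance_bound}, $\sum_{d(x,y)\le R}|s(\xi^*_y)|$ is $\ll \log R$, $R$, or $R^2$ according to whether $d = 2$, $d \in \{3,4\}$, or $d \geq 5$, and applying $\sqrt{a^2+b^2} - a \le \tfrac{b^2}{2a}$ to the real and imaginary parts of $\sum_{d(x,y) \le R} e(\xi^*_y)$ yields $\sav(\xi^*; B_R(x)) \geq \sav(\xi^*) - o_R(1)$, which is $> \sav(\xi^*) - \tfrac{\epsilon}{2}$ once $R$ is large. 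I expect the main obstacle to be keeping the reflection bookkeeping straight: one must check that when the codimension-$j$ cluster sits near the intersection of the $j$ reflecting hyperplanes, the local savings computed inside $\sT_m$ over the quotient ball $B_R(x)$ — which wraps around the corner — matches the functional $f_S$ in the $m \to \infty$ limit, and that reflected images of the external support never encroach on $B_{R_2}(x)$. Both follow from condition (3) being phrased in the quotient metric and from Proposition~\ref{convergence_prop}, but the alternating-sum structure of $g_{\sT_m}$ must be handled carefully near the boundary.
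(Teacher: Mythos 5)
The paper's own proof is a one-line pointer: \emph{the proof is the same as that of Lemma~\ref{regular_bound_lemma}}. Your write-up spells out exactly that adaptation and hits the same beats: the dichotomy threshold $\delta$, $\epsilon = (1-\alpha)\delta$, the internal/external split with the external phase controlled by the gradient bound from Lemma~\ref{deriv_bound_lemma}, and the tail estimate reducing $\sav(\xi^*)$ to $\sav(\xi^*; B_R(x))$. Your substitution list (Proposition~\ref{convergence_prop} for Proposition~\ref{savings_convergence_prop}, $f_S$ for $f$, $g_{\sT_m}$ for $g_{\bT_m}$) is correct, and the observation that $g_{\sT_m}$ is a bounded alternating sum of reflected copies of $g_{\bT_m}$, hence inherits the decay and derivative estimates, is the right justification for carrying the estimates over.

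The one step that needs more care is the definition of $\gamma'$. You minimize $f_S(g*\nu')$ over $\nu' \in \sC(B,R_1)$ that are anti-symmetric in a family of $j$ hyperplanes, but hypothesis (2) only forces the cluster $\nu|_{B_{R_1}(x)}$ to lie at distance at most $r_m$ from a codimension-$j$ boundary, and $r_m \to \infty$. If the cluster sits at distance $\rho \gg R_1$ from one of its hyperplanes, anti-symmetrizing produces a prevector on $\sT$ whose support has diameter of order $\rho$: it does not belong to $\sC(B,R_1)$, and in the periodic proof the translation-invariance of $\sav$ over $\bT_m$ was exactly what removed this dependence, a move that is unavailable here. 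The set of possible $\xi^*$ is therefore not finite, and $\sav(\xi^*)$ genuinely varies with $\rho$, drifting toward the lower-codimension savings as $\rho \to \infty$ along a subset of the $j$ hyperplanes. To recover a uniform $\delta$ one must take an infimum over all admissible positions and argue it is positive --- the $\liminf_{y\to\infty}\sav(g*(\tau_y u)) \geq \gamma_{j-1}$ compactness observation from the proof of Proposition~\ref{spectral_gap_prop} does this --- whereas your construction only covers clusters at bounded distance from all $j$ hyperplanes. With $\gamma'$ defined with this uniformity, the remaining steps of your argument go through unchanged.
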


\begin{proof}
 The proof is the same as of Lemma \ref{regular_bound_lemma}.
\end{proof}

\begin{proposition}\label{spectral_gap_prop}
The spectral parameters $\gamma_j $ are positive.  If $j = 0$ or $\gamma_j < \gamma_{j-1}$ then there exist constants $B_0, R_0 > 0$ such that
 \begin{enumerate}
  \item  For sufficiently large $m$, any $\xi \in \hat{\sG}_{m,j}$ that achieves the $j$th spectral gap, $\sav(\xi) = (1 + |\sT_m|) \gap_{m,j}$, has a prevector $\nu\in C^\rho(\sT_m)$ which is a translate of some $\nu' \in \sC(B_0, R_0) \subset C^{\rho}(\sT)$ with reflection anti-symmetry in a family $S$ of $j$ hyperplanes.
  \item For any $\nu \in C^\rho(\sT)$ which has reflection anti-symmetry in a family $S$ of hyperplanes, $|S| = j$, and satisfying $f_S(g*\nu) < \gamma_{j-1}$ or $j = 0$, there exists $\nu' \in \sC(B_0, R_0) \subset C^\rho(\sT)$ with reflection anti-symmetry in $S$ such that a translate of $\nu'$ differs from $\nu$ by an element of $\sI$.  In particular, $f_S(g*\nu) = f_S(g*\nu') $.
 \end{enumerate}
If $j > 0$ and $\gamma_j = \gamma_{j-1}$ the above statements hold with the caveat that the prevector has bounded $\ell^1$ norm and bounded support, but that the support of the prevector may be arbitrarily far from 0.
\end{proposition}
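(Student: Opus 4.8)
The plan is to follow the periodic argument, Proposition \ref{spectral_constant_prop} (itself an adaptation of Proposition 20 of \cite{HJL17}), while tracking throughout the codimension $j$ of the boundary that each cluster sits near and the associated reflection group $\fS_S$. First I would establish a uniform bound: there is $B_0 = B_0(j)$ so that, for all sufficiently large $m$, any $\xi \in \hat{\sG}_{m,j}$ achieving $\gap_{m,j}$ has distinguished prevector $\nu$ with $\|\nu\|_1 \le B_0$, and likewise any $\nu \in C^\rho(\sT)$ with reflection anti-symmetry in a family $S$ of $j$ hyperplanes and $f_S(g*\nu) \le \gamma_{j-1} + 1$ (or $j = 0$) differs from some $\tilde\nu$ with $\|\tilde\nu\|_1 \le B_0$ by an element of $\sI$. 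For the first part, fix a nonzero $\nu' \in C^\rho(\sT)\setminus\sI$ anti-symmetric in some family of $j$ hyperplanes, form the corresponding $j$-cluster $\nu_m$ on $\sT_m$, and use Proposition \ref{convergence_prop} to see $\sav(\xi^{(m)}(\nu')) \to f_S(g*\nu')$, bounding $\gap_{m,j}$ and, via Lemma \ref{size_bound_lemma}, the norm $\|\nu\|_1$. For the second, pass to the reduced representative $\tilde\xi \in [-\tfrac12,\tfrac12)$ of $\xi = g*\nu$, for which $\|\tilde\nu\|_1 \le \|\tilde\nu\|_2^2 = \|\Delta\tilde\xi\|_2^2 \ll \|\tilde\xi\|_2^2 \ll \sum_x (1 - c(\tilde\xi_x)) \ll f_S(g*\nu)$, using that anti-symmetry identifies $f$ on $\sT$ with $f_S$ on $\sT/\fS_S$.

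Next I would run the covering/clustering process of Proposition 20 of \cite{HJL17} on a prevector of $\ell^1$-norm at most $B_0$, lifting a $j$-cluster $\nu$ on $\sT_m$ to $\bT_m = \sT/m\Lambda$ as in the proof of Proposition \ref{convergence_prop}, where it becomes a sum of translates including the $\fS_S$-reflected images of the boundary-adjacent piece. This yields a finite set $\sX'$ and radii $\tilde R_1(x) < \tilde R_2(x)$, uniformly bounded by some $R_0 = R_0(B_0)$, with $\supp\nu \subset \bigsqcup_{x \in \sX'} B_{\tilde R_1(x)}(x)$, the balls $B_{\tilde R_2(x)}(x)$ disjoint, and each local piece $u^x = \nu|_{B_{\tilde R_1(x)}(x)}$ meeting the hypotheses of Lemma \ref{irregular_bound_boundary_lemma} or Lemma \ref{regular_bound_boundary_lemma}. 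Discard the clusters with $u^x \in \sI$. For a surviving cluster, if $u^x$ fails to lie in $C^\rho$ after imposing the anti-symmetry forced by the hyperplanes within distance $\ll R_0$ of $\supp u^x$, Lemma \ref{irregular_bound_boundary_lemma} gives savings at least any prescribed $A$ inside $B_{\tilde R_2(x)}(x)$; otherwise Lemma \ref{regular_bound_boundary_lemma} gives savings at least $\alpha\,\sav(g_{\bT_m}*u^x)$ there, and Proposition \ref{convergence_prop} applied to the anti-symmetrization of $u^x$ in the corresponding family $S(x)$ gives $\sav(g_{\bT_m}*u^x) \to f_{S(x)}(g*u^x) \ge \gamma_{|S(x)|}$ as $m \to \infty$. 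Disjointness of the $B_{\tilde R_2(x)}(x)$ makes the total savings at least the sum of these contributions; since each such contribution is $\ge \gamma_{|S(x)|}$ and, by exactly the ordering argument of Proposition \ref{spectral_constant_prop}, one can do no better than retaining a single surviving cluster, the infimum defining $\gamma_j$ reduces to a finite search over $\sC(B_0,R_0)$ intersected with the functions anti-symmetric in a family of $j$ hyperplanes. This proves $\gamma_j > 0$ and statement (2). Statement (1) follows by running the same clustering on the distinguished prevector of an actual minimizer $\xi \in \hat{\sG}_{m,j}$ and letting $m \to \infty$, using $\sav(\xi_m) \to f_S(g*\nu')$.

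For the degenerate case $j > 0$ with $\gamma_j = \gamma_{j-1}$, the minimizing configuration need not place its surviving cluster adjacent to a codimension-$j$ boundary: a cluster sitting on a reflecting hyperplane of codimension $j-1$ already realizes the value $\gamma_{j-1} = \gamma_j$, and translating it parallel to that hyperplane moves its support arbitrarily far from $0$ while preserving $\|\cdot\|_1$ and $\diam(\supp\cdot)$. Hence in this case I would only claim that the prevector is, up to translation, some $\nu' \in \sC(B_0,R_0)$ in the weaker sense that $\|\nu'\|_1$ and $\diam(\supp\nu')$ are bounded but the translate may be far from $0$; the clustering argument still forces a single surviving cluster and the finite check over diameter-bounded, $\ell^1$-bounded anti-symmetric prevectors modulo translation still yields $\gamma_j > 0$.

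The main obstacle I anticipate is bookkeeping the reflection anti-symmetry through the clustering: one must verify that lifting a $j$-cluster to $\bT_m$ and running the covering process respects the $\fS_S$-structure, so that the savings produced by a regular surviving cluster is governed by $f_S$ on $\sT/\fS_S$ rather than by $f$ on $\sT$, and that the Green's function decay estimates of Lemma \ref{deriv_bound_lemma} and Lemma \ref{G_eta_eval} applied to the anti-symmetrized local piece feed exactly into the definitions of the $\gamma_i$. A secondary delicacy, as in Proposition \ref{spectral_constant_prop}, is that positivity of each $\gamma_i$ is only obtained a posteriori from the finite check, so the ordering arguments forcing a single cluster must be phrased to avoid circularity, precisely as there.
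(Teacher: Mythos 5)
Your proposal follows the paper's own proof nearly step for step: the preliminary $\ell^1$-bound $B_0$ via Proposition \ref{convergence_prop} and Lemma \ref{size_bound_lemma}, the truncated phase $\tilde{\xi}$ and the chain $\|\tilde{\nu}\|_1 \le \|\tilde{\nu}\|_2^2 \ll \sum_x(1 - c(\tilde{\xi}_x))$, the covering process of Proposition 20 of \cite{HJL17} feeding into Lemmas \ref{irregular_bound_boundary_lemma}/\ref{regular_bound_boundary_lemma}, the ordering argument forcing a single surviving cluster, the use of $\liminf_{y\to\infty}\sav(g*(\tau_y u^{(x)})) \ge \gamma_{j-1}$ to pin down the cluster when $\gamma_j < \gamma_{j-1}$, and the translation-invariance escape for the degenerate $\gamma_j = \gamma_{j-1}$ case. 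The only cosmetic discrepancy is your threshold $\gamma_{j-1}+1$ in place of the paper's $\tfrac{3}{2}\gamma_j+1$ in step (II), which does not affect the argument.
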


\begin{proof}
 This is similar to the proof of Proposition \ref{spectral_constant_prop}. The first step in this proof finds a constant $B_0$ such that:
 \begin{enumerate}
  \item [(I)] For sufficiently large $m$, if $\xi^{(m)} \in \hat{\sG}_{m,j}$ achieves $\sav(\xi^{(m)}) = (1 + |\sT_m|)\gap_{m,j}$ then its distinguished prevector $\nu^{(m)}$ must satisfy $\left\|\nu^{(m)}\right\|_1 \leq B_0$.
  \item [(II)] If $\nu \in C^{\rho}(\sT)$ has reflection symmetry in a set $S$ of hyperplanes, $|S|=j$ and satisfies $f_S(g *\nu) \leq \frac{3}{2}\gamma_j + 1$, then $\nu$ differs by an element of $\sI$ from some $\tilde{\nu} \in C^\rho(\sT)$ with $\|\tilde{\nu}\|_1 \leq B_0$.
 \end{enumerate}
 To prove (I), fix $\nu' \in C^\rho(\sT)\setminus \sI$ with reflection anti-symmetry in a family $S$ of hyperplanes.  Choose $B', R'$ large enough so that $\nu' \in \sC(B', R')$.  For each $m$ sufficiently large let $\nu_m$ be a translation of $\nu'$ along hyperplanes on the boundary of $m \cdot \sR$ and $\xi^{(m)}$ the corresponding element of $\hat{\sG}_m$.  By Proposition \ref{convergence_prop},
 \begin{equation}
  \sav\left(\xi^{(m)} \right) \to f_S(g*\nu')=\gamma'\; \text{as } m \to \infty,
 \end{equation}
and therefore $\sav(\xi^{(m)}) < \gamma' + 1$ for sufficiently large $m$.

Let $\xi^{(m)} \in \hat{\sG}_{m,j}$ achieve the $j$th spectral gap, and let $\nu^{(m)}$ be the distinguished prevector of $\xi^{(m)}$. By Lemma \ref{size_bound_lemma},
\begin{equation}
 \left\|\nu^{(m)}\right\|_1 \ll \sav\left(\xi^{(m)}\right) < \gamma' + 1.
\end{equation}
This proves (I).

To prove (II), let $\nu \in C^\rho(\sT)$, let $\xi = g*\nu$. Since $\nu \in C^\rho(\sT)$,  $\xi \in \ell^2(\sT)$.  There is a version $\tilde{\xi}: \sT \to \left[-\frac{1}{2}, \frac{1}{2}\right)$, $\tilde{\xi} \equiv \xi \bmod 1$ such that $\Delta \tilde{\xi} = \nu-\Delta w$ which differs from $\nu$ by $\Delta w \in \sI$. Because $\tilde{\nu}$ is integer valued and $\Delta$ is bounded $\ell^2 \to \ell^2$,
\begin{equation}
 \|\tilde{\nu}\|_1 \leq \|\tilde{\nu}\|_2^2 = \|\Delta \tilde{\xi}\|_2^2 \ll \|\tilde{\xi}\|_2^2 = \sum_{x \in \sT} |\tilde{\xi}_x|^2 \ll \sum_{x \in \sT}(1-c(\tilde{\xi}_x)).
\end{equation}
An upper bound on $f(g*\nu)$ thus implies and upper bound on $\|\tilde{\nu}\|_1$.

The covering process described in Proposition 20 of \cite{HJL17} takes as input a vector $\nu \in \zed^\sT$ or $\nu \in \zed^{\sT_m}$ with $\|\nu\|_1 \leq B_0$ and returns a set $\sX'$ and radia $R_1(x), R_2(x)$ satisfying the conditions of Lemma \ref{irregular_bound_boundary_lemma} or Lemma \ref{regular_bound_boundary_lemma}, and such that
\begin{equation}
 \supp \nu \subset \bigcup_{x \in \sX'} B_{R_1(x)}(x), \qquad d\left(x, \supp \nu|_{B_{R_1(x)}(x)^c}\right) > 2 R_2(x)
\end{equation}
for each $x \in \sX'$ and the balls $\{B_{R_2(x)}(x)\}_{x \in \sX'}$ are pairwise disjoint. Let $\sX'' = \{x \in \sX': \nu|_{B_{R_1(x)}(x)} \not \in \sI\}$. 

First consider item (2) of the Proposition, so that $\nu$ is a function on $\sT$ which is anti-symmetric in a set $S$ of hyperplanes.  
Let $u^{(x)} = \nu|_{B_{R_1(x)}(x)}$ treated as a function which is anti-symmetric in $S$.  Given $x \in \sX''$, if $u^{(x)} \not \in C^\rho$ then $\sav(\xi)\geq \frac{3}{2}\gamma_j + 1$.  Also, if $|\sX''| \geq 2$ the savings from $g* u^{(x_1)}$ together with $g* u^{(x_2)}$ is approximately twice the savings from an individual component.  Thus it suffices to assume that $|\sX''| = 1$ and $u^{(x)} \in C^\rho(\sT)$.  

If $j \geq 1$, $g*u^{(x)}$ is in $\ell^2(\sT)$.  It follows that if $\tau_y$ indicates translation in a direction away from a hyperplane of $S$, then
\begin{equation}
 \liminf_{y \to \infty} \sav(g* (\tau_y u^{(x)})) \geq \gamma_{j-1}.
\end{equation}
Hence if $\gamma_j < \gamma_{j-1}$ then the inf is achieved by a function with support in a bounded neighborhood of 0. If $j = 0$ then there are no reflecting hyperplanes, and the savings is translation invariant, so that the claim still holds.  This suffices for (2).

For (1), argue similarly, using that as $m \to \infty$ the savings converges to $f_S$.

\end{proof}

\begin{lemma}\label{separated_clusters_lemma}
 Let $k \geq 1$ be fixed, and let $\nu_1, ..., \nu_k $ be bounded functions of bounded support and such that $\nu_i \in C^{\rho}(\sT_m)$.  Suppose the functions are  $R$-separated, in the sense that their supports have pairwise distance at least $R$.  Set $\nu = \sum_{i=1}^k \nu_i$.  Then as $R \to \infty$,
 \begin{equation}
  1 - \left|\hat{\mu}(\xi(\nu))\right| = O\left(\frac{\log R}{R^{2\beta -d}m^d} \right) + \sum_{i=1}^k \left(1- \left|\hat{\mu}(\xi(\nu_i))\right| \right).
 \end{equation}
The implicit constant depends upon $k$ and the bounds for the functions and their supports.
\end{lemma}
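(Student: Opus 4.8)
The plan is to repeat the proof of Lemma~\ref{frequency_sum_lemma} almost verbatim, with the open graph $\sT_m$ in place of the torus $\bT_m$ and the reflected Green's function $g_{\sT_m}$ in place of $g_{\bT_m}$ (equivalently, one may work on $\bT_m = \sT/m\Lambda$ with $g_{\bT_m}$ acting on the $m\sF$-anti-symmetric extensions of the $\nu_i$, exactly as in the proof of Proposition~\ref{convergence_prop}). Write $\overline{\xi} = g_{\sT_m} * \nu$ and $\overline{\xi}_i = g_{\sT_m} * \nu_i$, so that $|\hat{\mu}(\xi(\nu))| = |\hat{\mu}(\overline{\xi})|$ and $|\hat{\mu}(\xi(\nu_i))| = |\hat{\mu}(\overline{\xi}_i)|$; the task is to compare $1 - |\hat{\mu}(\overline{\xi})|$ with $\sum_i \bigl(1 - |\hat{\mu}(\overline{\xi}_i)|\bigr)$. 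Two inputs carry over unchanged: first, $|\sT_m| = 1 + |m\sR \cap \sT| \asymp m^d$; second, since $g_{\sT_m}$ is an alternating sum over a bounded number of reflections of $g_{\bT_m}$ and $\sR$ is bounded, Lemma~\ref{deriv_bound_lemma} together with the remark following it still gives $|\overline{\xi}_i(y)| \ll d(x_i, y)^{-\beta}$ for any $x_i \in \supp \nu_i$, where $d$ is the quotient distance on $\sT_m$ and $\beta = d - 2 + \rho$.

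With these in hand the computation is the one in Lemma~\ref{frequency_sum_lemma}. Pick $x_i \in \supp \nu_i$ and set $R' = \lfloor (R-1)/2 \rfloor$, so the balls $B_{R'}(x_i) \subset \sT_m$ are pairwise disjoint. The tail bound $\sum_{d(x_i,y) > R'} (1 - c(\overline{\xi}_i(y))) \ll R^{-(2\beta - d)}$ and the bound on $\sum_{y \in \sT_m} |s(\overline{\xi}_i(y))|$, which is $O(\log R)$ for $d = 2$, $O(R)$ for $d = 3,4$ and $O(R^2)$ for $d \geq 5$ (both obtained by partial summation from the decay estimate and $\#\{y : d(x,y) \leq R\} \ll R^d$), give, via $\sqrt{a^2 + b^2} - a \leq b^2/(2a)$ applied to the real and imaginary parts,
\begin{equation*}
 1 - |\hat{\mu}(\overline{\xi}_i)| = \frac{1}{1 + |\sT_m|} \sum_{d(x_i,y) \leq R'} (1 - c(\overline{\xi}_i(y))) + O\!\left(\frac{1}{R^{2\beta - d} m^d}\right).
\end{equation*}
On $B_{R'}(x_i)$ one has $\overline{\xi}(y) = \overline{\xi}_i(y) + O(R^{-\beta})$ because every $\nu_j$, $j \neq i$, lies at distance $> R'$, so replacing $\overline{\xi}_i$ by $\overline{\xi}$ in the localized sum costs $O\bigl(\log R / (R^{2\beta - d} m^d)\bigr)$, while the region outside $\bigcup_i B_{R'}(x_i)$ contributes $O(R^{-(2\beta - d)})$ to $\sum_{z \in \sT_m}(1 - c(\overline{\xi}(z)))$ by applying the decay estimate to all the $\overline{\xi}_j$ simultaneously and using $k$ fixed. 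Summing over $i$ yields
\begin{equation*}
 \sum_{i=1}^k \bigl(1 - |\hat{\mu}(\overline{\xi}_i)|\bigr) = O\!\left(\frac{\log R}{R^{2\beta - d} m^d}\right) + \frac{1}{1 + |\sT_m|} \sum_{z \in \sT_m} (1 - c(\overline{\xi}(z))).
\end{equation*}
Finally $\RE(\hat{\mu}(\overline{\xi})) \gg 1$, while Taylor expanding $\sin$ and using $|s(\overline{\xi}(z))| \ll \sum_i d(x_i,z)^{-\beta}$ gives $\IM(\hat{\mu}(\overline{\xi})) = o(1)$ as $m \to \infty$; hence $1 - |\hat{\mu}(\overline{\xi})| = \frac{1}{1+|\sT_m|}\sum_z (1 - c(\overline{\xi}(z))) + O\bigl(\IM(\hat{\mu}(\overline{\xi}))^2\bigr)$, and for $R$ fixed and $m \to \infty$ the last error is negligible against $\log R / (R^{2\beta - d} m^d)$. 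Combining the last two displays gives the claimed identity.

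The argument is essentially bookkeeping; the one point that needs a word of care is the transport of the Green's-function decay in Lemma~\ref{deriv_bound_lemma} to the reflected kernel and to the quotient distance on $\sT_m$. Since a bounded number of reflections in the bounded region $\sR$ changes graph distances by only $O(1)$, the polynomial decay is preserved, and if some $\nu_i$ sits near a reflecting boundary its anti-symmetric extension merely splits into further well-separated bounded pieces — these only contribute additional savings of exactly the same shape (and, as $\hat\mu$ of an $m\sF$-anti-symmetric function is real, no new imaginary-part issues arise), so the stated bound is unaffected. Everything else is the repetition of the periodic estimate with $|\sT_m| \asymp m^d$ in place of $|\bT_m| = m^d$.
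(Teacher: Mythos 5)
Your proposal is correct and follows essentially the same route as the paper's proof: localize the savings of each $\overline{\xi}_i$ to $B_{R'}(x_i)$ using the $d(x_i,y)^{-\beta}$ decay, swap $\overline{\xi}_i$ for $\overline{\xi}$ there at cost $O(\log R/(R^{2\beta-d}m^d))$, control the exterior region and the imaginary part by the same decay estimates, and assemble. The only cosmetic difference is that the paper lifts $\nu_i$ to its $m\sF$-anti-symmetric extension on $\bT_m$ and decomposes it explicitly as $\nu_i=\sum_j\nu_{i,j}$, whereas you keep $g_{\sT_m}*\nu_i$ as a single object on $\sT_m$ and absorb the reflected pieces into the quotient-distance decay; these are equivalent. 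One small caution: your parenthetical that $\hat\mu$ of an $m\sF$-anti-symmetric $\xi$ "is real" is not literally true on $\sT_m$ (the sum runs only over the interior $m\sR\cap\sT$, not the full reflection-closed set), but this remark is not load-bearing since you bound $\IM(\hat\mu)$ directly by Taylor expansion and the decay of $\overline{\xi}$, exactly as the paper does.
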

\begin{proof}
 Let $\nu_j = \sum_{i=1}^I \nu_{j,i}$ as a function on $\sT/m\Lambda$ with $\nu_{j,1}$ having support that intersects $\sT_m$.   Let $x_{j,i} \in \supp \nu_{j,i}$ and $\overline{\xi} = g_{\bT_m} * \nu$, $\overline{\xi}_j = g_{\bT_m} * \nu_j$, $\overline{\xi}_{j,i} = g_{\bT_m}* \nu_{j,i}$.  Let $R' = \left\lfloor \frac{R-1}{2}\right\rfloor$ so that the balls $B_{R'}(x_{i,j})$ are pairwise disjoint. 
 
 By the decay estimate in Lemma \ref{deriv_bound_lemma},
 \begin{equation}
  \left|\xi_{i,j}(y)\right| = O\left(\frac{1}{1 + d(x_{i,j}, y)^\beta} \right).
 \end{equation}
Thus,
\begin{align*}
 \sum_{y \in \sT_m} \left|s\left(\overline{\xi}_i(y)\right)\right| &\ll m^{d-\beta},\\
 \sum_{y \in \sT_m} \left|s\left(\overline{\xi}(y)\right)\right| &\ll m^{d-\beta}.
\end{align*}
It follows that
\begin{align*}
 \sav(\xi) &= \sum_{y \in \sT_m} \left(1 - c\left( \overline{\xi}(y)\right)\right) + O\left(\frac{1}{m^{2\beta-d}} \right),\\
 \sav(\xi_i) &= \sum_{y \in \sT_m} \left(1 - c\left( \overline{\xi}_i(y)\right)\right) + O\left(\frac{1}{m^{2\beta-d}} \right).
\end{align*}

If $d(x_{i,j}, y) \leq R'$, then $\overline{\xi}(y) = \overline{\xi}_{i,j}(y) + O(R^{-\beta})$, so that
\begin{equation}
 c\left(\overline{\xi}(y) \right) = c\left(\overline{\xi}_{i,j}(y) \right) + O\left(\frac{\left|s\left(\overline{\xi}_{i,j}(y) \right) \right|}{R^\beta} \right) + O(R^{-2\beta}).
\end{equation}
The estimates hold
\begin{equation}
 \sum_{d(x_{i,j}, y) \leq R'} \left|s\left(\overline{\xi}_{i,j}(y) \right)\right| \ll \left\{\begin{array}{lll}\log R && d = 2\\
 R && d=3,4\\ R^2 && d \geq 5\end{array} \right..
\end{equation}
Meanwhile, $\left( \sum_{j=1}^I \overline{\xi}_{i,j}(y)\right)^2 \leq I \sum_{j=1}^I \overline{\xi}_{i,j}(y)^2$. Thus
\begin{equation}
 \sum_{y \in \sT_m, d(y, x_{i,1})> R'} 1 - c\left(\overline{\xi}_i(y)\right) \ll \frac{1}{R^{2\beta -d}}.
\end{equation}
It follows that
\begin{equation}
 1-\left|\hat{\mu}\left(\overline{\xi}_i\right) \right| = O\left(\frac{\log R}{R^{2\beta-d}m^d} \right) + \frac{1}{1 + |\sT_m|} \sum_{y \in \sT_m, d(y, x_{1,i}) \leq R'} \left(1-c\left(\overline{\xi}(x_i + y) \right) \right).
\end{equation}
Similarly,
\begin{equation}
 \sum_{\substack{z \in \sT_m\\ z \not \in \bigcup_{j=1}^k B_{r'}(x_{1,j})}} 1 - c\left( \overline{\xi}(z)\right) \ll \frac{1}{R^{2\beta -d}}.
\end{equation}
It follows
\begin{equation}
 1 - \left|\hat{\mu}\left(\overline{\xi} \right) \right| = O\left(\frac{\log R}{R^{2\beta-d}m^d} \right) + \sum_{i=1}^k \left(1 -\left|\hat{\mu}\left(\overline{\xi}_i \right) \right| \right). 
\end{equation}

\end{proof}
For larger frequencies, the analogue of Lemma \ref{savings_lemma} in the case of an open boundary is as follows.
\begin{lemma}\label{savings_lemma_boundary}
 Let $B \geq 1$ be a fixed parameter.  There is a function $\eta(B, R)$ tending to 0 as $R \to \infty$ such that for all $m$ sufficiently large, if $\nu \in \zed^{\sT_m}$ satisfies the following conditions:
 \begin{enumerate}
  \item $\nu$ is $R$-reduced
  \item $\|\nu\|_{L^\infty} = O(1)$
  \item $\nu$ has an $R$-cluster $C$ for which $\left\| \nu|_C \right\|_1 \leq B$ and which is a $j$ boundary cluster
 \end{enumerate}
then
\begin{equation}
 \sav(g*\nu; \nbd(C)) \geq (1+|\sT_m|) \gap_{m,j} - \eta(B,R).
\end{equation}

\end{lemma}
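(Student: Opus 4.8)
The plan is to follow the proof of Lemma \ref{savings_lemma}, which in turn follows the proof of Lemma~25 of \cite{HJL17}, substituting the reflected-boundary versions of the auxiliary estimates. Write $\overline{\xi} = g_{\sT_m}*\nu$, identified with an $m\sF$-anti-symmetric function on $\bT_m$, and decompose $\overline{\xi} = \xi^i + \xi^e$ with $\xi^i = g_{\sT_m}*\nu|_C$ and $\xi^e = g_{\sT_m}*\nu|_{C^c}$. The structural point particular to the boundary case is that, since $C$ is a codimension $j$ cluster, the reflections of $C$ across the hyperplanes of $m\sF$ through the relevant codimension $j$ face lie within distance $O(r_m)$ of $C$ and so belong to the internal phase: viewing $\nu|_C$ as a function on the quotient $\sT/\fS_S$ for the family $S$ of $j$ hyperplanes, $\xi^i$ agrees on $\nbd(C)$ with the corresponding reflected convolution, and the frequency $\xi_C \in \hat{\sG}_{m,j}$ generated by $\nu|_C$ alone is nonzero because $\nu$ is $R$-reduced, so $\nu|_C \notin \sI$; consequently $\sav(\xi_C) \geq (1+|\sT_m|)\gap_{m,j}$ by definition of the $j$th spectral gap.

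First I would establish that $\xi^e$ is well approximated on $\nbd(C)$ by a polynomial of degree at most $2$ on each residue class $q \in \sQ = \sT/\Lambda$. Summing over classes so that discrete derivatives act on $\Lambda$, the third-derivative decay $|D^{\ua}g_{y,\sT_m}(x)| \ll d(x,y)^{-(d+1)}$ (the reflected Green's function obeys the bounds of Lemma \ref{deriv_bound_lemma}, being a bounded alternating sum of reflected copies of $g_{\bT_m}$, and the reflected sources stay at distance $\gg R$ from $\nbd(C)$), together with $d(x,\supp\nu|_{C^c}) > R$ for $x \in \nbd(C)$, gives $|D^{\ua}\xi^e_x| \ll 1/R$ for $|\ua| = 3$, whence $\xi^e$ agrees on each class with a degree $\leq 2$ polynomial up to an error vanishing as $R \to \infty$. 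Throughout, the imaginary part $\sum_y s(\overline{\xi}_y)$ is controlled by the first-order Taylor expansion and contributes $O(m^{-(2\beta-d)})$ to $\sav$ via $\sqrt{a^2+b^2}-a \le b^2/(2a)$.

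Next I would run the multi-scale clustering of Proposition~20 of \cite{HJL17} on $\nu$, producing a cover $\supp\nu \subset \bigsqcup_{x \in \sX'}B_{\tilde R_1(x)}(x)$ with disjoint enlargements $B_{\tilde R_2(x)}(x)$ meeting the hypotheses of Lemma \ref{irregular_bound_boundary_lemma} or Lemma \ref{regular_bound_boundary_lemma}, and set $\sX'' = \{x : \nu|_{B_{\tilde R_1(x)}(x)}\notin \sI\}$, $u^x = \nu|_{B_{\tilde R_1(x)}(x)}$. If some $u^x \notin C^\rho(\sT_m)$ then Lemma \ref{irregular_bound_boundary_lemma} makes the relevant sums save an arbitrary constant, which dominates $(1+|\sT_m|)\gap_{m,j}$; if $|\sX''| \geq 2$, Lemma \ref{regular_bound_boundary_lemma} applied to two $R$-separated subclusters (using Lemma \ref{separated_clusters_lemma}) yields savings exceeding $\gamma_j$, again sufficient. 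The remaining case is $|\sX''| = 1$ with $u^x \in C^\rho(\sT_m)\setminus\sI$: the $O(R_1^{-(2\beta-d)})$ and $O((\log R_1)^2 R_1^{-(2\beta-d)})$ errors reduce $\sav(\xi^i;\nbd(C))$ to $\sav(g_{\sT_m}*u^x)+o(1)$, and since $u^x$ has bounded support Proposition \ref{convergence_prop} gives $\sav(g_{\sT_m}*u^x)\to f_S(g*u^x) \geq \gamma_j$; more precisely, comparing $\overline{\xi}$ with $\xi_C$ on $\nbd(C)$ and using the tail bound $\sum_{z\notin\nbd(C)} 1-c((\xi_C)_z) \ll R^{-(2\beta-d)}$ (from the decay of Lemma \ref{deriv_bound_lemma}, or Lemma \ref{G_eta_eval} in the mean-nonzero $d \geq 5$ case, where $2\beta-d = d-4 > 0$) yields $\sav(g*\nu;\nbd(C)) \geq \sav(\xi_C) - \eta(B,R) \geq (1+|\sT_m|)\gap_{m,j} - \eta(B,R)$.

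Finally, the contribution of the degree $\leq 2$ external phase is handled exactly as in Lemma~25 of \cite{HJL17}: the linear part is absorbed by a translation and the quadratic part does not lower the savings below the internal savings once one passes to subscales $R_1 < R_2 < R_3 < R$ with $R_1 \to \infty$, $R_2/R_1^4 \to \infty$, $R_3/(R_1 R_2^2)\to\infty$, $R/(R_1^2 R_3^2)\gg 1$. I expect the main obstacle to be the bookkeeping of the reflection geometry near a codimension $j$ face — verifying that the reflected cluster and the reflected Green's function fit the $\sT/\fS_S$ picture so that Lemma \ref{deriv_bound_lemma} and Proposition \ref{convergence_prop} apply without modification — compounded by the inherited difficulty of controlling a quadratic external phase; granting these, all error terms collect into a single $\eta(B,R) \to 0$.
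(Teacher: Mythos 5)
Your proposal is correct and takes essentially the same approach as the paper, which gives as its entire proof of this lemma the single sentence that it is the same as the proof of Lemma \ref{savings_lemma} (itself adapted from Lemma 25 of \cite{HJL17}); you have simply spelled out that adaptation, substituting Lemmas \ref{irregular_bound_boundary_lemma}, \ref{regular_bound_boundary_lemma}, and Proposition \ref{convergence_prop} for their periodic counterparts and correctly noting that the reflected copies of the cluster across the relevant codimension-$j$ face are absorbed into the internal phase.
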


\begin{proof}  The proof is the same as the proof of Lemma \ref{savings_lemma}. 

\end{proof}

\begin{proof}[Proof of Theorem \ref{spectral_gap_theorem}]
When the boundary is open, let $\gamma' = \min_j \gamma_j$ and let $j$ be minimal such that $\gamma_j = \gamma'$.  By the minimality, there are $B_0, R_0$ such that there is a vector $\nu \in \sC(B_0, R_0)$ with reflection symmetry in a set $S$ of hyperplanes, $|S|=j$ such that $\xi = g *\nu$, and  $\gamma' = f_S(\xi)$. By Proposition \ref{convergence_prop} it is possible to find a sequence $\xi^{(m)} \in \hat{\sG}_m$ such that
\begin{equation}
 \sav(\xi^{(m)}) \to f_S(\xi)
\end{equation}
as $m \to \infty$.

To complete the proof, it suffices to show that  
\begin{equation}
\gamma_0 = \liminf_{m \to \infty} (1+|\sT_m|)\gap_m 
\end{equation}
satisfies $\gamma_0 \geq \gamma$.
Let $\xi^{(m_j)} \in \hat{\sG}_{m_j}$ satisfy $\sav(\xi^{(m_j)}) \to \gamma_0$, and let $\nu^{(m_j)}$ be the sequence of prevectors, which may be assumed to satisfy $\|\nu^{(m_j)}\|_1 \leq B_0$ and $\diam \supp \nu^{(m_j)} \leq R_0$.  Given $R>0$ let $j_R$ be maximal such that infinitely often $\supp \nu^{(m)}$ has distance at most $R$ from $j_R$ boundary hyperplanes.  Let $j^* = \sup_R j_R$, which is achieved for some $R_1$.  Let $r_m \to \infty$ sufficiently slowly so that only finitely many $m_j$ have $\nu^{(m_j)}$ a boundary cluster with more than $j^*$ boundaries.  Take a subsequence $m_{j_i}$ for which $\nu^{(m_{j_i})}$ is a $j^*$ boundary cluster and has distance at most $R_1$ from each of the $j^*$ boundaries.  By Proposition \ref{spectral_gap_prop}, for each $m_{j_i}$ there is a translation of $\nu^{(m_{j_i})}$ to $\nu_{m_{j_i}} \in \sC(B, R) \subset C^\rho(\sT)$ with reflection anti-symmetry in a family $S$ of $j^*$ hyperplanes.  By Proposition \ref{convergence_prop},
\begin{equation}
 \sav(\xi^{(m_{j_i})}) - f_S(g * \nu^{(m_{j_i})}) \to 0
\end{equation}
as $i \to \infty$, which proves that $\gamma_0 = \gamma'$.

In the case of a periodic boundary, let $\{\nu_n\}_n$ be a sequence of functions in $C^1(\sT)$ with $f(g*\nu_n) \to \gamma$.  For each fixed $n$, as $m \to \infty$, $\sav(g_{\bT_m} * \nu_n) \to f(g*\nu_n)$ so $\limsup |\bT_m| \gap_m \leq \gamma$.  To prove the reverse direction, 
let $\xi_{m_k} \in \hat{\sG}_{m_k}$ be a sequence such that $\sav(\xi_{m_k}) \to \liminf |\bT_m|\gap_m$.  Let $\nu_{m_k} = \Delta \xi_{m_k}$, which is integer valued and has sum 0, hence is in $C^1(\bT_m)$.  Perform a clustering on $\nu_{m_k}$.  Arguing as in the proof of Proposition \ref{spectral_constant_prop}, conclude that after eliminating clusters in $\sI$, there are at most two non-empty clusters in $\nu_{m_k}$ for all $k$ sufficiently large.  Since the clusters are of bounded size, after passing to a subsequence we may assume that, up to translation, the two clusters are the same for all $k$ sufficiently large.  In the limit, the savings from the neighborhood of each cluster tends to at least $\gamma_0 - o_R(1)$ as $k \to \infty$.  It follows that if there are two clusters, the lim inf is at least $2 \gamma_0 \geq \gamma$.  If there is only one cluster, the function in the cluster is $C^1$, and hence the liminf is at least $\gamma$.
\end{proof}

\section{Mixing analysis}
An $L^2$ version of Theorem \ref{mixing_theorem} is as follows.

\begin{theorem}\label{L_2_theorem}
For a fixed tiling $\sT$ in $\bR^d$, let $c_0 = \gamma_0^{-1}$.  Let $m \geq 2$ and let $t_m^{\mix} = \frac{c_0}{2} |\bT_m|\log |\bT_m|$.  For each fixed $\epsilon>0$, sandpiles on $\bT_m$ satisfy
\begin{align*}
 \lim_{m \to \infty} \left\|P_m^{\lceil (1-\epsilon) t_m^{\mix}\rceil} \delta_{\sigma_{\full}} - \bU_{\sR_m} \right\|_{L^2(d\bU_{\sR_m})} &= \infty,\\
 \lim_{m \to \infty} \left\|P_m^{\lfloor (1+\epsilon) t_m^{\mix}\rfloor} \delta_{\sigma_{\full}} - \bU_{\sR_m} \right\|_{L^2(d\bU_{\sR_m})} &= 0.
\end{align*}

If the tiling $\sT$ satisfies the reflection condition and condition A  then set $t_m^{\mix} = \frac{\Gamma}{2} |\sT_m| \log m$.  For each fixed $\epsilon > 0$, sandpiles on $\sT_m$ satisfy
\begin{align*}
 \lim_{m \to \infty} \left\|P_m^{\lceil (1-\epsilon) t_m^{\mix}\rceil} \delta_{\sigma_{\full}} - \bU_{\sR_m} \right\|_{L^2(d\bU_{\sR_m})} &= \infty,\\
 \lim_{m \to \infty} \left\|P_m^{\lfloor (1+\epsilon) t_m^{\mix}\rfloor} \delta_{\sigma_{\full}} - \bU_{\sR_m} \right\|_{L^2(d\bU_{\sR_m})} &= 0.
\end{align*}
\end{theorem}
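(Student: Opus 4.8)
The plan is to reduce everything to the single quantity $S_m(n) := \sum_{\xi \in \hat\sG_m\setminus\{0\}} |\hat\mu(\xi)|^{2n}$. Since $\sigma_{\full}$ is recurrent and the chain restricted to the recurrent states $\sR_m$ is a random walk on the abelian group $\sG_m$ driven by $\mu$, the measure $P_m^n\delta_{\sigma_{\full}}$ is a translate of $\mu^{*n}$, and as $\bU_{\sR_m}$ is translation invariant the $L^2$ distance is unchanged by that translation; by Plancherel and the fact that the characters $\chi_\xi$ diagonalise $P_m$ with eigenvalues $\hat\mu(\xi)$ one has exactly $\|P_m^n\delta_{\sigma_{\full}} - \bU_{\sR_m}\|_{L^2(d\bU_{\sR_m})}^2 = S_m(n)$. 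Write $N$ for $|\bT_m|$ in the periodic case and $1+|\sT_m|$ in the open case, so $1 - |\hat\mu(\xi)| = \sav(\xi)/N$, whence $|\hat\mu(\xi)|^{2n} \le e^{-2n\,\sav(\xi)/N}$ and $|\hat\mu(\xi)|^{2n} = e^{-(1+o(1))\cdot 2n\,\sav(\xi)/N}$ whenever $\sav(\xi)=O(1)$. By monotonicity of $n\mapsto \|P_m^n\delta_{\sigma_{\full}} - \bU_{\sR_m}\|_{L^2}$ it suffices to prove (i) $S_m(n)\to\infty$ at $n=\lceil(1-\epsilon)t_m^{\mix}\rceil$ and (ii) $S_m(n)\to 0$ at $n=\lfloor(1+\epsilon)t_m^{\mix}\rfloor$.

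For (i) I would exhibit enough near-extremal frequencies. In the periodic case fix, via Proposition \ref{spectral_constant_prop}(2), a prevector $\nu_0 \in \sC(B_0,R_0)\subset C^\rho(\sT)$ with $f(g*\nu_0)=\gamma_0$; the translates $\tau_a\nu_0$, $a\in\Lambda/m\Lambda$, yield $\asymp N$ distinct frequencies $\xi_a^{(m)}\in\hat\sG_m$, each with $\sav(\xi_a^{(m)}) = \gamma_0 + o(1)$ by Proposition \ref{savings_convergence_prop} together with translation invariance of $\sav$ on $\bT_m$. Hence $S_m(n) \gtrsim N\, e^{-(1+o(1))2n\gamma_0/N}$, and at $n = (1-\epsilon)\tfrac{1}{2\gamma_0}N\log N$ this is $\gtrsim N^{\,\epsilon/2}\to\infty$. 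In the open case let $j^*$ maximise $\Gamma_j$ over $0\le j\le d-1$ and fix, via Proposition \ref{spectral_gap_prop}(2), an extremal $\nu_0\in\sC(B_0,R_0)$ with reflection anti-symmetry in a family $S$ of $j^*$ hyperplanes and $f_S(g*\nu_0)=\gamma_{j^*}$; translating $\nu_0$ parallel to those hyperplanes while keeping its support at distance $\ge r_m = \log m$ from the remaining reflecting hyperplanes produces $\asymp m^{d-j^*}$ distinct codimension-$j^*$ clusters, each with $\sav\to\gamma_{j^*}$ by Proposition \ref{convergence_prop}. Then $S_m(n)\gtrsim m^{d-j^*}\,e^{-(1+o(1))2n\gamma_{j^*}/N}$, and with $n=(1-\epsilon)\tfrac{\Gamma}{2}|\sT_m|\log m$ and $\Gamma = \Gamma_{j^*} = (d-j^*)/\gamma_{j^*}$ this is $\gtrsim m^{(d-j^*)\epsilon/2}\to\infty$, using $d-j^*\ge 1$.

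For (ii) I would split the sum over $\xi$ according to the structure of the distinguished prevector $\nu=\nu(\xi)$ after $R$-reduction (clustering at a large fixed radius $R$ and discarding clusters in $\sI$). When the reduced prevector is a single bounded cluster, Proposition \ref{spectral_constant_prop} (resp.\ Proposition \ref{spectral_gap_prop}) shows there are finitely many shapes up to translation, each with $\sav \ge \gamma_0 - o(1)$ (bulk) or $\ge\gamma_j - o(1)$ (codimension $j$); a bulk shape has $\asymp N$ translates and contributes $\lesssim N\,e^{-(1+\epsilon-o(1))\log N}\to 0$, while a codimension-$j$ shape has $\asymp m^{d-j}$ translates along each of $O_d(1)$ boundary faces and contributes $\lesssim m^{d-j}\,m^{-(1+\epsilon)\Gamma\gamma_j(1-o(1))}\to 0$ because $\Gamma\gamma_j \ge \Gamma_j\gamma_j = d-j$. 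When the reduced prevector has $k\ge 2$ bounded, $R$-separated clusters, Lemma \ref{frequency_sum_lemma} / Lemma \ref{separated_clusters_lemma} give $1 - |\hat\mu(\xi)| = \sum_i(1-|\hat\mu(\xi_i)|) + O(\log R\cdot R^{-(2\beta-d)}N^{-1})$, so the contribution is bounded by $N^{o_R(1)}$ times a product of single-cluster bounds, which sums over all cluster configurations to $o(1)$ once $R$ is large. The remaining case — prevectors of unbounded $\ell^1$ norm, or with clusters of unbounded diameter — is the main obstacle: here Lemma \ref{savings_lemma} / Lemma \ref{savings_lemma_boundary} extract savings $\ge \gamma_0 - o(1) - \eta(B,R)$ from each tightly packed cluster, and the van der Corput estimates (as in Section 6 of \cite{HJL17}, here carried out in general dimension for the tiling Green's function) show that $\sav(\xi)$ grows at least linearly in $\|\nu\|_1$ and that a cluster spread over diameter $D$ saves $\gtrsim \log D$; combined with the exponential factor $e^{-2n\,\sav(\xi)/N} = N^{-(1+\epsilon)\sav(\xi)/\gamma_0}$ at $n=(1+\epsilon)t_m^{\mix}$ this defeats the at-most-$(CN)^{\|\nu\|_1}$ count of such frequencies. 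The delicate point is to balance these entropy-versus-savings estimates so that the resulting multiple series (over number of clusters, $\ell^1$ norms, diameters and boundary codimensions) sums to $o(1)$; this is also where the precise threshold — $\gamma_0^{-1}$ in the periodic case and $\Gamma = \max_j\Gamma_j$ in the open case, reflecting that a single local cluster only needs $\Delta\xi\in C^\rho$ rather than $C^1$ — is pinned down.
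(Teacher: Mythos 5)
Your overall strategy matches the paper's: reduce to $S_m(n)=\sum_{\xi\neq 0}|\hat\mu(\xi)|^{2n}$, exhibit near-extremal frequencies for the divergence direction, and count $R$-reduced prevectors against savings estimates for the vanishing direction. The upper-bound sketch is the same as the paper's argument with $\Xi(V,\{K_S\})$, $\sN(V,K)$ and the $V\geq AK$/$V<AK$ split, just stated more loosely.

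There is a genuine gap in your lower-bound construction for the torus case. You take a single extremal prevector $\nu_0\in\sC(B_0,R_0)\subset C^\rho(\sT)$ with $f(g*\nu_0)=\gamma_0$ and claim that its $\asymp N$ lattice translates each produce a frequency $\xi_a^{(m)}\in\hat\sG_m$. But $g_{\bT_m}*(\tau_a\nu_0)$ is harmonic modulo $1$ only when $\sum\nu_0\equiv 0\bmod m^d$, i.e.\ when $\nu_0\in C^1(\sT)$ (for $m$ large). In dimensions $d\leq 4$ one has $C^\rho\subseteq C^1$, so this is automatic; but for $d\geq 5$ one has $\rho=0$ and the $\gamma_0$-optimizer may well lie in $C^0\setminus C^1$. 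In fact Theorem~\ref{asymptotic_theorem} asserts exactly this for $\zed^d$ with $d$ large: $\gamma_{\zed^d,0}<\gamma_{\zed^d}$, and the optimizer has $\Delta\xi\in C^0(\sT)\setminus C^1(\sT)$. So the very phenomenon the theorem is designed to capture breaks your construction. The paper circumvents this by taking \emph{differences} $\nu_{i,j}=\nu_i^1-\nu_j^2$ of two far-separated translates of $\nu_0$, which are always mean-zero; each such pair has savings $\approx 2\gamma_0$, and the time $N$ is then chosen so that $|\hat\mu(\xi_{i,j})|^N\asymp e^{2c}m^{-d}$, after which the $L^2$ lower bound follows from the first condition in Lemma~\ref{lower_bound_lemma} by Cauchy--Schwarz. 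You should adopt this pairing; your numerology still works out (twice the savings, $\asymp M^2$ pairs) but the construction has to be amended.

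A secondary remark: the same concern does \emph{not} arise in the open-boundary case, because the identification of $\nu_0$ with a function on $\bT_m$ via an alternating sum over reflections in $m\sF$ automatically has total mass zero, so even a codimension-$0$ cluster with $\sum\nu_0\neq 0$ lifts to a valid frequency. Your treatment there is fine as sketched. You do flag the $C^\rho$ vs $C^1$ distinction at the very end, but only as a comment on why the threshold is $\gamma_0$ rather than $\gamma$; you should also apply it to your own family of test frequencies.
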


The proof of the lower bound of both the total variation and $L^2$ theorems uses the following Lemma adapted from Diaconis and Shahshahani \cite{DS87}.
\begin{lemma}\label{lower_bound_lemma}
 Let $\sG$ be a finite abelian group, let $\mu$ be a probability measure on $\sG$ and let $N\geq 1$. Let $\sX \subset \hat{\sG} \setminus\{0\}$.  Suppose that the following inequalities hold for some parameters $0 < \epsilon_1, \epsilon_2 < 1$,
 \begin{align}
  \sum_{\xi \in \sX} \left|\hat{\mu}(\xi)\right|^N & \geq \frac{|\sX|^{\frac{1}{2}}}{\epsilon_1}\\
  \notag \sum_{\xi_1, \xi_2 \in \sX} \left|\hat{\mu}(\xi_1 - \xi_2)\right|^N &\leq (1 +\epsilon_2^2)\left( \sum_{\xi \in \sX} \left|\hat{\mu}(\xi)\right|^N\right)^2.
 \end{align}
Then
\begin{equation}
 \left\|\mu^{*N} - \bU_{\sG}\right\|_{\TV(\sG)} \geq 1 - 4\epsilon_1^2 -4\epsilon_2^2.
\end{equation}

\end{lemma}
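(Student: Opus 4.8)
The plan is to run the second moment method of Diaconis and Shahshahani \cite{DS87} with a complex test function assembled from the characters indexed by $\sX$. Write $\chi_\xi(g) = e(\xi(g))$ for $\xi \in \hat{\sG}$, and recall the two facts that drive the argument: $\E_{g \sim \mu^{*N}}[\chi_\xi(g)] = \hat{\mu}(\xi)^N$ (convolution going to products under the Fourier transform), and $\{\chi_\xi\}_{\xi \in \hat{\sG}}$ is an orthonormal basis of $L^2(\bU_{\sG})$, so that $\E_{\bU_{\sG}}[\chi_\xi \overline{\chi_{\xi'}}] = \one(\xi = \xi')$ and in particular $\E_{\bU_{\sG}}[\chi_\xi] = 0$ whenever $\xi \neq 0$. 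For each $\xi \in \sX$ fix a constant $c_\xi$ with $|c_\xi| \leq 1$ and $c_\xi \hat{\mu}(\xi)^N = |\hat{\mu}(\xi)|^N$ (take $c_\xi$ unimodular if $\hat{\mu}(\xi) \neq 0$, and $c_\xi = 0$ otherwise), and set $W := \sum_{\xi \in \sX} c_\xi \chi_\xi$ and $\Sigma := \sum_{\xi \in \sX} |\hat{\mu}(\xi)|^N$; the first hypothesis forces $\Sigma > 0$, so we may divide by it.

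Next I would compute the first two moments of $W$ under each measure. Under $\mu^{*N}$ the choice of $c_\xi$ gives $\E_{\mu^{*N}}[W] = \Sigma$, which is real and positive, while under the uniform measure $\E_{\bU_{\sG}}[W] = 0$ since $0 \notin \sX$. For the second moments, orthonormality gives $\E_{\bU_{\sG}}[|W|^2] = \sum_{\xi \in \sX} |c_\xi|^2 \leq |\sX|$, whereas expanding $|W|^2 = \sum_{\xi_1, \xi_2 \in \sX} c_{\xi_1}\overline{c_{\xi_2}} \chi_{\xi_1 - \xi_2}$, using $|c_{\xi_1}\overline{c_{\xi_2}}| \leq 1$, and invoking the second hypothesis yields $\E_{\mu^{*N}}[|W|^2] \leq \sum_{\xi_1, \xi_2 \in \sX} |\hat{\mu}(\xi_1 - \xi_2)|^N \leq (1 + \epsilon_2^2)\Sigma^2$. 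Subtracting the square of the mean, $\E_{\mu^{*N}}[|W - \Sigma|^2] = \E_{\mu^{*N}}[|W|^2] - \Sigma^2 \leq \epsilon_2^2 \Sigma^2$.

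Finally I would separate the two measures with Chebyshev's inequality applied to the event $A := \{g : |W(g)| > \Sigma/2\}$. Under $\mu^{*N}$, the complement of $A$ is contained in $\{|W - \Sigma| \geq \Sigma/2\}$, so $\mu^{*N}(A) \geq 1 - \E_{\mu^{*N}}[|W - \Sigma|^2]/(\Sigma/2)^2 \geq 1 - 4\epsilon_2^2$. Under $\bU_{\sG}$, $\bU_{\sG}(A) \leq \E_{\bU_{\sG}}[|W|^2]/(\Sigma/2)^2 \leq 4|\sX|/\Sigma^2 \leq 4\epsilon_1^2$, the last inequality being exactly the first hypothesis $\Sigma \geq |\sX|^{1/2}/\epsilon_1$. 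Since $\|\mu^{*N} - \bU_{\sG}\|_{\TV(\sG)} \geq \mu^{*N}(A) - \bU_{\sG}(A)$, we conclude $\|\mu^{*N} - \bU_{\sG}\|_{\TV(\sG)} \geq 1 - 4\epsilon_1^2 - 4\epsilon_2^2$.

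There is no serious obstacle in this argument; the only points needing care are that $W$ is complex-valued, so that the ``variance'' is $\E[|W - \E W|^2]$ and one exploits that $\E_{\mu^{*N}}[W]$ is in fact real, the possibility that $\hat{\mu}(\xi)$ vanishes for some $\xi \in \sX$ (accommodated by allowing $c_\xi = 0$, which only strengthens the bound $\E_{\bU_{\sG}}[|W|^2] \leq |\sX|$), and fixing a Fourier convention consistent with the paper so that $\widehat{\mu^{*N}} = \hat{\mu}^N$ and characters indexed by nonzero $\xi$ have zero average against $\bU_{\sG}$.
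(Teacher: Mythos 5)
Your argument is correct and is precisely the Diaconis--Shahshahani second moment method that the paper delegates to Lemma 27 of \cite{HJL17}: build $W = \sum_{\xi\in\sX} c_\xi \chi_\xi$ with unimodular phases $c_\xi$ aligning $c_\xi\hat\mu(\xi)^N = |\hat\mu(\xi)|^N$, compute first and second moments of $W$ under $\mu^{*N}$ and $\bU_{\sG}$ using orthogonality and $\widehat{\mu^{*N}} = \hat\mu^N$, and separate the two measures with Chebyshev on the event $\{|W| > \Sigma/2\}$. The hypotheses are used exactly where they should be (the covariance bound gives $4\epsilon_2^2$ under $\mu^{*N}$, the size bound gives $4\epsilon_1^2$ under $\bU_{\sG}$), and your handling of the complex-valued test function and the possibility $\hat\mu(\xi)=0$ is careful and correct.
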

\begin{proof} See Lemma 27 in \cite{HJL17}.
\end{proof}

\begin{proof}[Proof of Theorem \ref{mixing_theorem}, lower bound]
 First consider the $\bT_m$ case.  Let $\nu \in \sC(B_0, R_0)$ be such that $\xi = g*\nu$ satisfies $f(\xi) = \gamma_0$.  Let $R> R_0$ and let $\{\nu_i^1\}_{i=1}^M$, $\{\nu_i^2\}_{i=1}^M$ be two collections of $R$-separated translates of $\nu$ with those translates $\nu_{i}^1$ having distance $\gg m$ from those translates $\nu_{i}^2$.    Here $M \asymp \frac{m^d}{R^d}$. Let $\nu_{i,j} = \nu_{i}^1 - \nu_{j}^2$. Let $\xi_{i,j} = g* \nu_{i,j}$.
 
 Let $N = \left\lfloor \left( \frac{d}{2}\log m -c \right) |\bT_m| \frac{1}{\gamma_0}\right\rfloor$.  Let $\sX = \{\xi_{i,j}\}_{i,j=1}^{M}$.  By Lemma \ref{separated_clusters_lemma},
\begin{equation}
1- \left|\hat{\mu}(\xi_{i,j})\right| = \frac{2 \gamma_0}{|\bT_m|} + O\left(\frac{\log m}{m^{2\beta}} \right). 
\end{equation}
Thus
\begin{equation}
 \left|\hat{\mu}(\xi_{i,j})\right|^N = e^{2c}m^{-d }\left(1 + O\left(\frac{(\log m)^2}{m^{2\beta -d}} \right)\right).
\end{equation}
It follows that the first condition of Lemma \ref{lower_bound_lemma} holds with $\epsilon_1 = O\left(R^d e^{-2c} \right)$.

If the supports of $\nu_{i_1}^1$ and $\nu_{i_2}^1$ have distance at least $\rho$, and the supports of $\nu_{j_1}^2$ and $\nu_{j_2}^2$ have distance at least $\rho$,
\begin{equation}
 1 - |\hat{\mu}(\xi_{i_1,j_1} - \xi_{i_2, j_2})| = \frac{4 \gamma_0}{|\bT_m|} + O\left(\frac{\log \rho}{\rho^{2\beta -d}m^d} \right),
\end{equation}
and, hence,
\begin{equation}
 \left|\hat{\mu}(\xi_{i_1,j_1}-\xi_{i_2,j_2}) \right|^N = e^{4c} m^{-2d + O\left(\frac{\log \rho}{\rho^{2\beta -d}} \right)}.
\end{equation}

Meanwhile, if $i_1 = i_2$ or $j_1 = j_2$ but the other functions have support at distance at least $\rho$, then
\begin{equation}
 1 - |\hat{\mu}(\xi_{i_1,j_1} - \xi_{i_2, j_2})| = \frac{2 \gamma_0}{|\bT_m|} + O\left(\frac{\log \rho}{\rho^{2\beta -d}m^d} \right)
\end{equation}
and hence
\begin{equation}
\left|\hat{\mu}(\xi_{i_1,j_1}-\xi_{i_2,j_2}) \right|^N = e^{2c} m^{-d + O\left(\frac{\log \rho}{\rho^{2\beta -d}} \right)}. 
\end{equation}

If $R$ is a large enough fixed constant, then
\begin{equation}
 \sum_{\substack{1 \leq i_1, i_2, j_1, j_2 \leq M\\ \min(d(\nu_{i_1}^1, \nu_{i_2}^1), d(\nu_{j_1}^2, \nu_{j_2}^2)) < \log m}} \left|\hat{\mu}(\xi_{i_1, j_1}-\xi_{i_2, j_2})\right|^N \ll \frac{m^{2d}}{R^{2d}}+ e^{4c} O\left(m^{\frac{3d}{2}} \right) .
\end{equation}
Meanwhile
\begin{align*}
 &\sum_{\substack{1 \leq i_1, i_2,j_1, j_2 \leq M\\ \min(d(\nu_{i_1}^1, \nu_{i_2}^1),  d(\nu_{j_1}^2, \nu_{j_2}^2)) \geq \log m}}\left|\hat{\mu}(\xi_{i_1, j_1}-\xi_{i_2, j_2})\right|^N \\&= \sum_{\substack{1 \leq i_1, i_2,j_1,j_2 \leq M\\ d(\nu_{i_1}, \nu_{i_2}), d(\nu_{j_1}, \nu_{j_2}) \geq \log m}} e^{4c}m^{-2d}\left(1 + O\left(\frac{\log \log m}{\log m}\right)\right)\\
 &\leq M^4 \left|\hat{\mu}(\xi)\right|^{4N}\left(1 + O\left(\frac{\log \log m}{\log m}\right)\right).
\end{align*}
Therefore, since $M^4 \left|\hat{\mu}(\xi)\right|^{4N} \asymp e^{4c} \frac{m^{2d}}{R^{4d}}$,
\begin{align}
 &\sum_{1 \leq i_1, i_2, j_1,j_2 \leq M} \left|\hat{\mu}(\xi_{i_1,j_1} - \xi_{i_2,j_2})\right|^N \\&\notag\leq M^4\left|\hat{\mu}(\xi)\right|^{4N}\left(1 +O\left(\frac{\log \log m}{\log m} + \frac{R^{2d}}{e^{4c}} + \frac{1}{m^{\frac{d}{2}}} \right) \right),
\end{align}
and thus the second condition holds with $\epsilon_2 = O\left(R^{d} e^{-2c}\right).$

In the case of an open boundary, let $\Gamma = \Gamma_j$ be maximized at the co-dimension $j$ boundary. Recall $\Gamma_j = \frac{d-j}{\gamma_j}$ and hence, either $j=0$ or $\gamma_j < \gamma_{j-1}$.  In either case, there is a set $S$ of $j$ hyperplanes and a vector $\nu \in \sC(B_0, R_0)$ with reflection anti-symmetry in $S$ such that $\gamma_j = f_S(g*\nu)$.  

Let, as above, $R$ be a large constant, and let $\{\nu_i\}_{i=1}^M$, $M \asymp \frac{m^{d-j}}{R^{d-j}}$ be $R$-spaced translates of $\nu$ parallel to $S$, which are $j$-clusters in $\sT_m$.  Let $\xi_i = g_{\sT_m}*\nu_i$ and $\sX = \{\xi_i\}_{i=1}^M$.  By Proposition \ref{convergence_prop}, for each $i$, uniformly in $m$,
\begin{equation}
 \sav(\xi_i) = (1 + o(1)) \gamma_j
\end{equation}
although, note that the savings may differ across $\sX$.  Given $\epsilon > 0$, let 
\begin{equation}
 N = \left \lfloor(1-\epsilon)\frac{d-j}{2 \gamma_j} |\sT_m| \log m\right \rfloor.
\end{equation}
Hence,
\begin{equation}
 \left|\hat{\mu}(\xi_i)\right|^N = \frac{m^{o(1)}}{m^{\frac{d-j}{2}(1-\epsilon)}}.
\end{equation}
It follows that the condition on $\epsilon_1$ holds with 
\begin{equation}
 \epsilon_1 = \frac{1}{m^{\frac{d-j}{2}\epsilon + o(1)}}.
\end{equation}
Meanwhile, for $\rho > R$, if $d(\nu_i, \nu_j)> \rho$,
\begin{equation}
 1-\left|\hat{\mu}(\xi_i-\xi_j)\right| = 1 - \left|\hat{\mu}(\xi_i)\right| + 1 - \left|\hat{\mu}(\xi_j)\right| + O\left(\frac{\log \rho}{\rho^{2\beta-d}m^d} \right).
\end{equation}
Thus, arguing as before, 
\begin{equation}
 \sum_{\substack{1 \leq i,j \leq M\\ d(\nu_i, \nu_j) < \log m}}\left|\hat{\mu}(\xi_i-\xi_j)\right|^N = O\left(\frac{m^{d-j}}{R^{d-j}} \right) + O\left( m^{\frac{d-j}{2}}\right).
\end{equation}
Meanwhile, 
\begin{align*}
&\sum_{\substack{1 \leq i,j \leq M\\ d(\nu_i, \nu_j) \geq \log m}}\left|\hat{\mu}(\xi_i-\xi_j)\right|^N \\&\leq \left(\sum_{1 \leq i \leq M} \left|\hat{\mu}(\xi_i)\right|^N \right)^2 \left(1 + O\left(\frac{\log \log m}{\log m}\right)\right).
\end{align*}
It follows that
\begin{align*}
 &\sum_{\substack{1 \leq i,j \leq M}}\left|\hat{\mu}(\xi_i-\xi_j)\right|^N\\& \leq \left(\sum_{1 \leq i \leq M} \left|\hat{\mu}(\xi_i)\right|^N \right)^2 \left(1 + O\left(\frac{\log \log m}{\log m}+ \frac{m^{o(1)}}{m^{(d-j)\epsilon}} \right)\right).
\end{align*}
Thus the condition on $\epsilon_2$ is satisfied with $\epsilon_2 = O \left(\frac{\log \log m}{\log m} \right).$
\end{proof}

\begin{proof}[Proof of Theorem \ref{L_2_theorem}, lower bound] By Parseval,
\begin{equation}
 \left\| P_m^{N} \delta_{\sigma_{\full}} - \bU_{\sR_m}\right\|_2^2 = \sum_{0 \neq \xi \in \hat{\sG}_m} \left|\hat{\mu}(\xi)\right|^{2N}.
\end{equation}
 By Cauchy-Schwarz, the condition $\sum_{\xi \in \sX} \left|\hat{\mu}(\xi)\right|^N \geq \frac{|\sX|^{\frac{1}{2}}}{\epsilon_1}$ implies
 \begin{equation}
  \sum_{\xi \in \sX}\left|\hat{\mu}(\xi)\right|^{2N}\geq \frac{1}{\epsilon_1^2}.
 \end{equation}
The theorem thus follows from the previous lower bound.
\end{proof}
\subsection{Proof of upper bound} The upper bound in Theorem \ref{mixing_theorem} is obtained from the upper bound in Theorem \ref{L_2_theorem} by applying Cauchy-Schwarz followed by Parseval.

Let $R = R(\epsilon)$ be a parameter.  In the case of $\bT_m$, let $\xi \in \hat{\sG}_m$, and let $\nu$ be its $R$-reduced prevector.  Perform a clustering on $\nu$ in which points $x_i$, $x_t$ in its support are connected in a cluster if there is a sequence of points $x_i = x_0, x_1, ..., x_n=x_t$ from the support such that $B_R(x_i) \cap B_R(x_{i+1}) \neq \emptyset$.   Let $\sN(V,K)$ denote the number of $R$-reduced prevectors $\nu$ of $L^1$ mass $V$ in $K$ clusters.  In the case of $\sT_m$, let $r_m$ be a radius tending slowly to infinity, $r_m \leq \log m$. Given a set $S$ of bounding hyperplanes, say that a cluster $C$ is of type $S$ if $S$ is a maximal set of hyperplanes such that the cluster intersects the $r_m$ neighborhood of the intersection of the planes in $S$. If there is more than one such maximal $S$, choose one to which $C$ belongs arbitrarily. Let $\sN(V, \{K_S\})$ be the number of $R$-reduced prevectors $\nu$ of $L^1$ mass $V$ with $K_S$ boundary clusters of type $S$.

\begin{lemma}
 The following upper bounds hold:
 \begin{align*}
  \sN(V,K) &\leq \exp\left(K \log (m^d) + O(V\log R) \right)\\
  \sN(V, \{K_S\}) & \leq \exp\left(\sum_S K_S \log\left(m^{d-|S|}r_m^{|S|} \right) + O(V\log R) \right).
 \end{align*}

\end{lemma}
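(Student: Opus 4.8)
The plan is to estimate the number of $R$-reduced prevectors by encoding each one as a choice of cluster locations together with a bounded amount of additional combinatorial data per unit of $L^1$ mass. First I would fix the parameter $R$ and recall that, for an $R$-reduced prevector $\nu$ of total $L^1$ mass $V$ arranged in $K$ clusters, each cluster $C$ satisfies $\nu|_C \notin \sI$, so in particular $\nu|_C \not\equiv 0$ and hence $\|\nu|_C\|_1 \geq 1$; thus $K \leq V$. Each cluster is a set of points that is $R$-path connected in $\supp \nu$, so a cluster containing a total mass $V_C$ of chips has diameter $O(V_C R)$ and is contained in a ball of that radius around any of its points. The key counting step is: to specify $\nu$, it suffices to specify (i) for each cluster, a ``root'' vertex, which lies in $\bT_m$, a set of size $m^d$; and (ii) for each cluster, the shape of the cluster relative to its root together with the values of $\nu$ on it.

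The second ingredient is a bound on the number of possible cluster shapes with a given mass. A cluster $C$ with $\|\nu|_C\|_1 = V_C$ is built by an $R$-path-connected walk through $\supp\nu$; placing the $V_C$ units of mass one at a time, each new unit either lands on an already-occupied vertex or within graph-distance $R$ of one — and since $\sT$ has bounded degree, the ball of radius $R$ has size $O(R^d) \ll_R 1$, so there are $\exp(O(V_C \log R))$ such configurations (the implied constant also absorbs the bounded number of vertices per fundamental domain, the $\pm$ sign of each unit of mass, and the choice of which previously-placed unit the new one is attached to). Summing $\sum_C V_C = V$ over the $K$ clusters, the total shape-data contributes $\exp(O(V \log R))$, while the root choices contribute at most $(m^d)^K$. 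This gives
\begin{equation*}
 \sN(V,K) \leq \exp\left(K \log(m^d) + O(V\log R)\right),
\end{equation*}
as claimed. The only subtlety is that a root vertex need not be a lattice point; but since $|\sT/\Lambda| < \infty$, replacing ``root in $\bT_m$'' with ``root in $\Lambda/m\Lambda$ together with a bounded choice of coset representative'' changes the count only by a factor absorbed into $\exp(O(V\log R))$ (indeed into $\exp(O(K))$, which is $\exp(O(V))$).

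For the open-boundary bound, the argument is identical except that clusters are further classified by type. A cluster $C$ of type $S$ lies within the $r_m$-neighborhood of the intersection $\bigcap_{H \in S} H$ of $|S|$ bounding hyperplanes of $m\sF$; since the cluster also has bounded diameter $O(V_C R)$, once $m$ is large it is confined to an $O(r_m)$-neighborhood of that codimension-$|S|$ subspace, which meets $\sT_m$ (equivalently $\sT \cap m\sR$, modulo the reflection group) in a set of size $O(r_m^{|S|} m^{d-|S|})$ — here we use that there are $O(1)$ choices of which $|S|$-tuple of hyperplanes from $m\sF$ is involved (bounded by $m^{|S|}$ in principle but the cluster meets only an $O(1)$-bounded neighborhood of a single intersection), and that within that slab the transverse directions each contribute $O(m)$ lattice points; the factor $r_m^{|S|}$ from the thickness in the $|S|$ pinned directions is the honest loss. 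Choosing a root for each type-$S$ cluster from a set of size $O(m^{d-|S|} r_m^{|S|})$ and multiplying over all clusters and types, together with the same $\exp(O(V\log R))$ shape bound as before, yields
\begin{equation*}
 \sN(V,\{K_S\}) \leq \exp\left(\sum_S K_S \log\left(m^{d-|S|} r_m^{|S|}\right) + O(V\log R)\right).
\end{equation*}

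The main obstacle I expect is getting the type-$S$ cluster-location count right: one must argue carefully that a cluster pinned near a codimension-$|S|$ intersection is genuinely confined to an $O(m^{d-|S|} r_m^{|S|})$-size region — i.e., that it cannot ``slide'' along additional directions — and that the classification into types is consistent (the arbitrary choice when several maximal $S$ work is harmless because it only decreases the count). The bounded-diameter fact $\diam C \ll_R V_C$, valid for $m$ large relative to $R$ and $r_m$, is what makes this work, and it should be stated explicitly before the counting.
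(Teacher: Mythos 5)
Your overall strategy matches the paper's: choose a root for each cluster (and for the open-boundary case, count roots near a codimension-$|S|$ corner, giving the $O(m^{d-|S|}r_m^{|S|})$ factor, which you handle correctly), then multiply by a bound on the number of cluster shapes of given $\ell^1$ mass. The root counting, the observation $K\le V$, the passage to $\Lambda$-coset representatives, and the type-$S$ confinement are all sound and in line with the paper.

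The gap is in the cluster-shape count. Your sequential-placement scheme requires, for each newly placed unit, a choice of \emph{which} already-placed unit it attaches to, and you assert this choice is absorbed into the implied constant. It is not: for the $i$th placement there are up to $i-1$ candidate parents, so the scheme contributes a factor on the order of $(V_C-1)!\cdot O(R^d)^{V_C}$, i.e.\ $\exp\bigl(O(V_C\log V_C)+O(V_C\log R)\bigr)$. This is essentially Cayley's formula for labelled trees, and the extra $V\log V$ is not harmless in the application: in the $L^2$ upper bound one sums $\sN(V,K)\,e^{-cV\log m}$ with $V$ ranging up to order $|\bT_m|\asymp m^d$, so $\log V$ can itself be $\asymp\log m$ and the extra term competes with $cV\log m$ for an uncontrolled constant. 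The paper avoids this by using Otter's $\exp(O(k))$ bound on unlabelled (rooted) trees, then traversing the tree root-down with $O(R^d)$ position choices per edge, which gives $\exp(O(k\log R))$ with no $\log k$ loss. You could repair your argument either by citing the unlabelled-tree count, or by encoding the cluster via a depth-first traversal of a spanning tree of the $R$-connectivity graph (each of the $2(k-1)$ steps is ``descend to one of $O(R^d)$ nearby vertices'' or ``backtrack''), but as written the parent-choice factor is a genuine over-count that must be removed.
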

\begin{proof}
The case of $\sN(V, \{K_S\})$ is demonstrated, the other case being similar.  For each of the $K_{S}$ clusters of type $S$, choose base points of the clusters in $O\left(\exp\left(K_S \log \left(Cm^{d-|S|} r_m^{|S|}\right)\right)\right)$ ways. Given a string of length $V$, allocate the vertices to belong to the various clusters in $O(2^{|V|})$ ways by splitting the string at $\sum K_S -1$ places.  For each cluster of size $k$, choose an unlabeled tree on $k$ nodes in $\exp(O(k))$ ways, see \cite{O48} for the asymptotic count.  Traverse the tree from the root down, placing a vertex at distance $O(R^d)$ from its parent vertex.  Now assign the height of each vertex in $O(1)$ ways.  This obtains the claimed bound.
\end{proof}
The upper bound in Theorem \ref{mixing_theorem} follows from the upper bound in Theorem \ref{L_2_theorem} by applying Cauchy-Schwarz, so the upper bound in Theorem \ref{L_2_theorem} is demonstrated.
\begin{proof}[Proof of Theorem \ref{L_2_theorem}, upper bound]
The open boundary case is demonstrated, the periodic boundary case being easier.
 
 Let $N = \left\lceil(1 + \epsilon)\frac{\Gamma}{2}(1+|\sT_m|)\log m \right \rceil$.
Write $\Xi(V,\{K_S\})$ for the collection of nonzero frequencies $\xi \in \hat{\sG}_m$ such that the $R$-reduced prevector of $\xi$ has $L^1$ norm $V$ in $\{K_S\}$ $R$-clusters.   Thus, with $K=|K_S| = \sum_S K_S$,
\begin{equation}
 \left\|P_m^N \delta_{\sigma_{\full}} - \bU_{\sR_m}\right\|_{L^2(d\bU_{\sR_m})}^2 = \sum_{K_S, |K_S| \geq 1} \sum_{V \geq |K_S|} \sum_{\xi \in \Xi(V, \{K_S\})} \left|\hat{\mu}(\xi)\right|^{2N}.
\end{equation}
Let $\Xi(V, K) = \bigcup_{|K_S|=K} \Xi(V, K)$.
It follows from Lemma \ref{size_bound_lemma} that for some $c>0$, for $\xi \in \Xi(V,K)$ satisfies
\begin{equation}
 \left|\hat{\mu}(\xi)\right|^{2N} \leq \exp(-cV \log m).
\end{equation}

Let $A>0$ be a fixed integer constant. Then,
\begin{align}
 & \sum_{K \geq 1} \sum_{V \geq AK} \sum_{\xi \in \Xi(V,K)} \left|\hat{\mu}(\xi)\right|^{2N}\\
 & \notag \leq \sum_{K \geq 1} \sum_{V \geq AK} \sN(V,K) \exp(-cV\log m)\\
 &\notag \leq \sum_{K\geq 1} \sum_{V \geq AK} \exp\left(K \log(m^d) - V[c \log m -O(\log R)] \right).
\end{align}
If $m$ is sufficiently large, the inner sum is bounded by $\ll \exp(-\frac{cAK}{2}\log m)$.  Now choose $A$ large enough so that the sum over $K$ is bounded by $\ll m^{-1}$.

Let $0<\delta<1$ be a parameter and set $B = A \delta^{-1}$.  Apply Lemma \ref{savings_lemma_boundary} to choose $R = R(\epsilon)$ such that the savings from a $j$ boundary $R$ cluster of size at most $B$  is at least $ \gamma_j \left(1-\frac{\epsilon}{2}\right).$ If $\xi \in \Xi(V, K_S)$ with $V < AK$, then its $R$-reduced prevector has at least $(1-\delta)K$ clusters of size at most $B$.  Hence, with $\delta' = \frac{\gamma_0}{\gamma_d}\delta$ assumed to be sufficiently small, 
\begin{equation}
 1- \left|\hat{\mu}(\xi)\right| \geq (1-\delta') \sum_S K_S \gap_{m,j} \left(1 - \frac{\epsilon}{2}\right) \geq \frac{\left(1 - \frac{5\epsilon}{6}\right)}{1 + |\sT_m|}\sum_S K_S \gamma_{|S|}.
\end{equation}
Thus, using $\Gamma \geq \Gamma_j = \frac{d-j}{\gamma_j}$, and using $(1-x) \leq e^{-x}$,
\begin{align*}
 \left|\hat{\mu}(\xi)\right|^{2N} &\leq \exp\left(-(1+\epsilon)\left(1 - \frac{5\epsilon}{6}\right)\Gamma \sum_S K_S \gamma_j \log m \right)\\
 & \leq \exp\left(-(1 +\beta) \Gamma \sum_S K_S \gamma_j \log m\right)\\
 & \leq \exp\biggl(-(1 + \beta) \sum_{S, |S| < d} K_S (d-|S|) \log m \\ & \qquad\qquad- (1+\beta) K_{[d]} \Gamma \gamma_d \log m\biggr)
\end{align*}
where $\beta = \beta(\epsilon) > 0$.

Thus the sum over $V < AK$ is bounded by, for some $c>0$,
\begin{align*}
 &{\sum_{|K_S| \geq 1}} \sum_{|K_S| \leq V < A|K_S|}\sum_{\xi \in \Xi(V,K)}\left|\hat{\mu}(\xi)\right|^{2N}\\
 & \leq {\sum_{|K_S| \geq 1}} \sum_{|K_S| \leq V < A|K_S|}\\&\times \sN(V, \{K_S\}) \exp\left(-(1 + \beta) \sum_{|S|<d} K_S (d-|S|) \log m- c K_{[d]}\log m \right)\\
 &\ll {\sum_{|K_S| \geq 1}} \sum_{|K_S| \leq V < A|K_S|}\exp \biggl(\sum_S \left((-\beta K_S (d-|S|)\log m) + K_S |S| \log r_m\right)\\ &\qquad\qquad - cK_{[d]}\log m +O(|K|)\biggr)\\
 & \ll m^{-\frac{\beta}{2}}.
\end{align*}

\end{proof}

\appendix
\section{Green function estimates on general tilings}\label{Green_fn_appendix}

The Green's function estimates are based on the following local limit theorem for probability measures with exponentially decaying tail on $\zed^d$.

\begin{theorem*}[Theorem \ref{local_limit_theorem}]
 Let $\mu$ be a probability measure on $\zed^d$, satisfying the following conditions
 \begin{enumerate}
  \item (Lazy) $\mu(0)>0$
  \item (Symmetric) $\mu(x) = \mu(-x)$
  \item (Generic) $\supp(\mu)$ generates $\bR^d$.  There is a constant $k > 0$ such that $\mu^{*k}$ assigns positive measure to each standard basis vector.
  \item (Exponential tails) There is a constant $c > 0$ such that, for all $r \geq 1$,
  \begin{equation}
   \mu(|x| > r) \ll e^{-c r}.
  \end{equation}
 \end{enumerate}
 Let $\Cov(\mu) = \sigma^2$ where $\sigma$ is a positive definite symmetric matrix. For all $\ua \in \bN^d$ there is a polynomial  $Q_{\ua}(x_1, ..., x_d)$, depending on $\mu$, of degree at most $a_i$ in $x_i$ such that,   for all $N \geq 1$, and all $n \in \zed^d$,
 \begin{align*}
  \delta_1^{*a_1} * &\delta_2^{*a_2} * \cdots * \delta_d^{*a_d}* \mu^{*N}(n) = \frac{\exp\left(-\frac{\left|\sigma^{-1} \left(n + \frac{\ua}{2}\right) \right|^2}{2N} \right)}{N^{\frac{d + |\ua|}{2}}}\\
  &\times \Biggl(Q_{\ua}\left(\frac{n + \frac{\ua}{2}}{\sqrt{N}} \right)+ O\left(\frac{1}{N}\left(1 +\frac{\|n\|}{\sqrt{N}}\right)^{|\ua|+4} \right)  \Biggr)\\
  &+ O_\epsilon\left(\exp\left( -N^{\frac{3}{8} -\epsilon}\right) \right).
 \end{align*}
In the case of the gradient convolution operator $\nabla = \begin{pmatrix} \delta_1 \\ \vdots \\ \delta_d\end{pmatrix}$,
\begin{align*}
 \nabla \mu^{*N}(n) &= -\frac{\sigma^{-2} n}{N} \frac{\exp\left(-\frac{\|\sigma^{-1}n\|^2}{2N} \right)}{(2\pi)^{\frac{d}{2}} N^{\frac{d}{2}} \det \sigma}\\&+ O\left(\frac{\exp\left(-\frac{\|\sigma^{-1}n\|^2}{2N} \right)}{N^{\frac{d+2}{2}}}\left(1+\frac{\|n\|}{\sqrt{N}} \right)^5\right) + O_\epsilon\left(\exp\left( -N^{\frac{3}{8} -\epsilon}\right) \right).
\end{align*}

\end{theorem*}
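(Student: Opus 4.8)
The plan is to prove the estimate by Fourier inversion followed by a Cramér‑type exponential tilt (contour shift); the tilt is what keeps the conclusion valid far into the tail of $\mu^{*N}$, and is the point where the argument goes beyond the treatment in \cite{LL10}. Writing $\hat\mu(\xi) = \sum_x \mu(x) e(-x\cdot\xi)$ and absorbing the half‑integer centering via $\big(e(\xi_i)-1\big)^{a_i} = e^{\pi i a_i \xi_i}(2i\sin\pi\xi_i)^{a_i}$, inversion on $\bR^d/\zed^d$ gives
\[
 D^{\ua}\mu^{*N}(n) = \int_{\bR^d/\zed^d} e\!\left(\big(n+\tfrac{\ua}{2}\big)\cdot\xi\right)\prod_{i=1}^d (2i\sin\pi\xi_i)^{a_i}\,\hat\mu(\xi)^N\,d\xi ,
\]
so the Gaussian will automatically be centered at $n+\ua/2$. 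The exponential‑tail hypothesis lets me continue $\hat\mu$ and the symbol holomorphically to a strip $\{|\Im\xi_j| < c_0\}$. I would record three inputs: (i) by symmetry $\hat\mu$ is real and even on the real axis, so $\log\hat\mu(\xi) = -2\pi^2\xi^t\sigma^2\xi + O(\|\xi\|^4)$ has no linear or cubic term; (ii) laziness $\mu(0)>0$ together with genericity gives $|\hat\mu(\xi)|\le e^{-\kappa}$ for $\|\xi\|_{\bR/\zed}\ge\delta$; and (iii) a uniform tilted bound $|\hat\mu(\xi+i\eta)|\le \exp\!\big(C\|\eta\|^2 - c\|\xi\|_{\bR/\zed}^2\big)$ for $\|\eta\|$ small, deduced by patching the Taylor expansion near $0$ with (i), (ii) and the exponential tail.

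Next comes the tilt itself. For $\|n\|\le c_1 N$ I choose a real vector $\eta=\eta(n,N)$ depending smoothly on $w:=(n+\ua/2)/N$, with $\eta = -\tfrac{1}{2\pi}\sigma^{-2}w + O(\|w\|^2)$, so that on the shifted contour $\xi\mapsto \xi+i\eta$ the phase $e\big((n+\tfrac{\ua}{2})\cdot(\xi+i\eta)\big)\hat\mu(\xi+i\eta)^N$ has a critical point at $\xi=0$; the value at the saddle is $-N r(w)$ with rate function $r(w) = \tfrac12 w^t\sigma^{-2}w + O(\|w\|^4)$ (the correction quartic by symmetry). Shifting the contour by analyticity, I split the $\xi$‑integral into $\|\xi\|\le \delta_N$ and its complement in the fundamental domain: the complement is $\ll \exp(-cN\delta_N^2)$ by (iii); on $\|\xi\|\le\delta_N$ I Taylor‑expand $N\log\hat\mu(\xi+i\eta)$ about $\xi=0$ (the linear term cancels by the saddle condition, leaving $\exp(-2\pi^2N\xi^t\sigma^2\xi)$ times higher‑order factors) and Taylor‑expand the symbol $\prod_i(2i\sin\pi\xi_i)^{a_i}$ about $\xi=i\eta$. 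Performing the Gaussian integral yields the prefactor $N^{-d/2}$ and the factor $\exp(-Nr(w))$; the leading coefficient, once one separates $\exp(-Nr(w)) = \exp\!\big(-\tfrac{|\sigma^{-1}(n+\ua/2)|^2}{2N}\big)\big(1+O(N\|w\|^4)\big)$ and notes $N\|w\|^4 = \|n\|^4/N^3 \ll \tfrac1N(1+\|n\|/\sqrt N)^{|\ua|+4}$, together with the $\mu$‑dependent but $N$‑independent subleading pieces, assembles into $Q_{\ua}\big(\tfrac{n+\ua/2}{\sqrt N}\big)$ — a Hermite‑type polynomial, of degree at most $a_i$ in $x_i$ by the product structure $\prod_i D_i^{a_i}$ — while every remaining term carries a $1/N$ and is absorbed in the error $O\!\big(\tfrac1N(1+\|n\|/\sqrt N)^{|\ua|+4}\big)$. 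Choosing $\delta_N$ a suitable negative power of $N$ balances the Taylor‑remainder error against $\exp(-cN\delta_N^2)$ and produces the stated $O_\epsilon(\exp(-N^{3/8-\epsilon}))$ term. For $\|n\| > c_1N$ the tilt is replaced by crude bounds: both $D^{\ua}\mu^{*N}(n)$ (by the exponential tail of a sum of exponential‑tail increments, after expanding the bounded operator $D^{\ua}$) and the claimed Gaussian main term are $\le \exp(-c'N)$, hence negligible.

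Finally, the gradient statement is the stacked $\ua\in\{e_1,\dots,e_d\}$ case: $Q_{e_j}$ is linear, and its leading coefficient matches $-(\sigma^{-2}x)_j/((2\pi)^{d/2}\det\sigma)$, i.e. the $j$th partial of the Gaussian density with covariance $N\sigma^2$; re‑centering the evaluation from $n+e_j/2$ to $n$ costs only $O\!\big(N^{-(d+2)/2}\exp(-\|\sigma^{-1}n\|^2/2N)(1+\|n\|/\sqrt N)^5\big)$, which is the asserted error. The main obstacle is the uniform‑in‑$n$ saddle‑point analysis on the shifted contour: establishing the tilted bound (iii) with the correct constants, controlling the rate function and its Legendre dual over a full neighborhood of the origin rather than only infinitesimally, and carrying out the two simultaneous Taylor expansions carefully enough that the $N$‑independent polynomial $Q_{\ua}$ emerges cleanly with everything else provably inside the error; the optimization of $\delta_N$ giving the $N^{3/8-\epsilon}$ exponent is then routine.
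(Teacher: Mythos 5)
Your proposal is structurally the same as the paper's proof: Fourier inversion on $(\bR/\zed)^d$ with the symbol $\prod_i(2i\sin\pi\xi_i)^{a_i}$ and centering phase $e(\xi\cdot(n+\ua/2))$, followed by a complex contour shift and a Gaussian saddle-point evaluation. The genuine difference is that you propose a full Cram\'er tilt to the exact saddle $\eta(n,N)$, whereas the paper shifts by the linearized tilt $y = x - i\,\sigma^{-2}(n+\ua/2)/(2\pi N)$ and, before doing so, disposes of $\|n\|^2 \ge N^{3/2-\epsilon/2}$ by the unbounded-increment Chernoff estimate (Lemma \ref{Chernoff_variant_lemma}). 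That Chernoff cut is in fact the source of the exponent $N^{3/8-\epsilon}$: with $\lambda \asymp N^{1/4-\epsilon/4}$ the bound $Ne^{-c_1\lambda^{1/2}N^{1/4}}$ gives $e^{-cN^{3/8-\epsilon/8}}$. Your attribution of the $3/8$ to the optimization of $\delta_N$ is therefore not how the bottleneck arises (the contour-boundary contribution is already $\exp(-c N^{1/2})$), though this only means your bound would come out no worse than the stated one.

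There is one genuine slip that needs repairing. You work with the rate function $r(w)$ throughout $\|n\|\le c_1 N$ and then write $\exp(-Nr(w)) = \exp\bigl(-\tfrac{|\sigma^{-1}(n+\ua/2)|^2}{2N}\bigr)\bigl(1 + O(N\|w\|^4)\bigr)$, absorbing $N\|w\|^4 = \|n\|^4/N^3$ into $\tfrac1N(1+\|n\|/\sqrt N)^{|\ua|+4}$. The inequality $N\|w\|^4 \ll \tfrac1N(1+\|n\|/\sqrt N)^{|\ua|+4}$ is true, but the expansion $\exp(O(N\|w\|^4)) = 1 + O(N\|w\|^4)$ is only valid when $N\|w\|^4 = O(1)$, i.e.\ $\|n\| \lesssim N^{3/4}$. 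In the range $N^{3/4} \lesssim \|n\| \le c_1 N$ the ratio $\exp(-Nr(w))/\exp(-\tfrac{|\sigma^{-1}(n+\ua/2)|^2}{2N})$ need not be $1 + O(\cdot)$ at all. The fix is cheap: for $\|n\| \gtrsim N^{11/16}$ one has $\exp\bigl(-\tfrac{\|\sigma^{-1}(n+\ua/2)\|^2}{2N}\bigr) \le \exp(-cN^{3/8})$, and by convexity and strict positivity of $r$ on a compact set, $\exp(-Nr(w)) \le \exp(-c'N^{3/8})$ as well, so both the claimed Gaussian expression and the actual saddle-point output are already inside the additive $O_\epsilon(\exp(-N^{3/8-\epsilon}))$ error; the multiplicative expansion is only needed for $\|n\| \lesssim N^{11/16}$, where $N\|w\|^4$ is small. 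This observation must be stated; as written your cut at $\|n\| = c_1 N$ leaves the intermediate regime unjustified. The paper's choice of cut $\|n\|^2 = N^{3/2-\epsilon/2}$ sidesteps the issue because, below that threshold, the imaginary shift $\|\sigma^{-2}(n+\ua/2)\|/(2\pi N) \ll N^{-1/4-\epsilon/4}$ is already inside the truncated window $\|x\|\le c_3 N^{-1/4}$ and no separate rate-function expansion is needed.

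Your treatment of the gradient formula by re-centering from $n+e_j/2$ to $n$ is a valid alternative to the paper's (which just reads it off from the parity of the Gaussian moments); the cost you estimate for the shift is of the right order.
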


\begin{proof}
 If $\|n\|^2 \geq N^{\frac{3}{2}-\frac{\epsilon}{2}}$ apply Chernoff's inequality.  By Fourier inversion,
 \begin{align*}
  &\delta_1^{*a_1} * \delta_2^{*a_2} * \cdots * \delta_d^{*a_d}* \mu^{*N}(n)\\& = (2i)^{|\ua|}\int_{(\bR/\zed)^d}s\left(\frac{x_1}{2}\right)^{a_1}\cdots s\left(\frac{x_d}{2}\right)^{a_d}\hat{\mu}(x)^N e\left(x^t \left(n + \frac{\ua}{2} \right) \right)dx.
 \end{align*}
By symmetry,
\begin{align*}
 \hat{\mu}(x) &= \sum_{n \in \zed^d} \mu(n) c(n \cdot x)\\
 &= 1 -2\pi^2 \sum_{n \in \zed^d} \mu(n) \left(|n \cdot x|^2 +O\left(\|n\|^4 \|x\|^4 \right) \right)\\
 &= 1-2\pi^2 \|\sigma x\|^2 + O(\|x\|^4).
\end{align*}
Since $\mu(0)>0$, and since $\mu^{*k}$ assigns positive measure to each standard basis vector, for each $\delta > 0$ there is $c_1>0$ such that if $\|x\|_{(\bR/\zed)^d} > \delta$ then $\left|\hat{\mu}(x)\right| \leq 1-c_1$.  Combining this observation with Taylor expansion about 0, it follows that there is $c_2 > 0$ such that $\left|\hat{\mu}(x)\right| \leq 1 - c_2 \|x\|_{(\bR/\zed)^d}^2$. Using this, truncate to, for some $c_3 >0$, $\|x\|_{(\bR/\zed)^d} \leq c_3 N^{-\frac{1}{4}}$.  

Write the remaining part of the integral as
\begin{align*}
 &(2i)^{|\ua|} \int_{\|x\| \leq c_3N^{-\frac{1}{4}}}s\left(\frac{x_1}{2} \right)^{a_1} \cdots s\left(\frac{x_d}{2} \right)^{a_d}\\&\exp\left(-2\pi^2 N \|\sigma x\|^2  + 2\pi i x^t \left(n + \frac{\ua}{2}\right)+ O\left(N\|x\|^4\right)  \right)dx.
\end{align*}
Write the main term in the exponentials as
\begin{align*}
 &-\frac{1}{2}\left(2\pi \sqrt{N} \sigma x - i \frac{\sigma^{-1}\left(n + \frac{\ua}{2}\right)}{\sqrt{N}} \right)^t \left(2\pi \sqrt{N} \sigma x - i \frac{\sigma^{-1}\left(n + \frac{\ua}{2}\right)}{\sqrt{N}} \right)\\& - \frac{1}{2} \frac{\left\|\sigma^{-1}\left(n + \frac{\ua}{2} \right) \right\|^2}{N}.
\end{align*}

Substitute
\begin{equation}
 y = x - i \frac{\sigma^{-2}\left(n + \frac{\ua}{2} \right)}{2\pi N}.
\end{equation}
Shift the integrals in the complex plane so that
\begin{equation}
 2\pi \sqrt{N} \sigma \left(x - i \frac{\sigma^{-2}\left(n + \frac{\ua}{2} \right)}{2\pi N} \right) = 2\pi \sqrt{N}\sigma y
\end{equation}
becomes real.  This introduces an integral on $\|\RE(x)\| = c_3 N^{-\frac{1}{4}}$ on which $\|\IM(x)\| \leq N^{-\frac{1}{4} - \frac{\epsilon}{4}}$.  Throughout this integral the integrand is bounded by $\exp(-c_4 N^{\frac{1}{2}})$ so this contributes an error term.  On the shifted integral, write 
\[\exp(O(N\|x\|^4)) = 1 + O(N\|x\|^4) = 1+ O\left(N\|\RE(x)\|^4 + \frac{\|n\|^4}{N^3}\right).\]  Write, by Taylor expansion,
\[s\left(\frac{x_1}{2} \right)^{a_1} \cdots s\left(\frac{x_d}{2}\right)^{a_d} = \pi^{|\ua|}x_1^{a_1}\cdots x_d^{a_d}\left(1 + O\left(\|\RE(x)\|^2 + \frac{\|n\|^2}{N^2} \right)\right).\]  
Bound each term $\left|\sigma^{-2}\left(n  + \frac{\ua}{2}\right)_j\right|\ll \|n\|$.  In integrating away this error, each factor of $|x_j|$ contributes a term of order $\frac{1}{\sqrt{N}}$, which obtains a bound of
\begin{equation}
 \ll \exp\left(- \frac{1}{2} \frac{\left\|\sigma^{-1}\left(n + \frac{\ua}{2} \right) \right\|^2}{N}\right)\frac{1 + \left(\frac{\|n\|}{\sqrt{N}} \right)^{|\ua|+4}}{N^{\frac{d+|\ua|+2}{2}}}. 
\end{equation}
Note that the error of size $\frac{\|n\|^2}{N^2}$ is bounded by $\frac{1}{N} + \frac{\|n\|^4}{N^3}$.

Write the main term as
\begin{align*}
&(2\pi i)^{|\ua|} \exp\left(- \frac{1}{2} \frac{\left\|\sigma^{-1}\left(n + \frac{\ua}{2} \right) \right\|^2}{N}\right)
\\& \int_{\|y\| \leq c_3 N^{-\frac{1}{4}}} \prod_{j=1}^d\left(y_j + i \left(\frac{\sigma^{-2} \left(n + \frac{\ua}{2} \right)}{2\pi N} \right)_j \right)^{a_j}  \exp\left( -2\pi^2 N\|\sigma y\|^2\right)dy.
\end{align*}
Extend the integral to $\bR^d$ with negligible error, and substitute $z = 2\pi \sqrt{N}\sigma y$ to obtain
\begin{align*}
 &\frac{\exp\left(- \frac{1}{2} \frac{\left\|\sigma^{-1}\left(n + \frac{\ua}{2} \right) \right\|^2}{N}\right)}{(2\pi)^d N^{\frac{d+|\ua|}{2}} \det \sigma}\\
 &\int_{\bR^d} \exp\left(- \frac{\|z\|^2}{2} \right) \prod_{j=1}^d \left(i(\sigma^{-1}z)_j - \left(\frac{\sigma^{-2}\left(n + \frac{\ua}{2} \right)}{\sqrt{N}} \right)_j \right)^{a_j} dz.
\end{align*}

 This produces the claimed main term. Note that only terms with even powers of $z$ are preserved by the integral, which proves the formula for the gradient.
\end{proof}
%\begin{theorem}[Chernoff's inequality]
% Assume $X_1, X_2, ..., X_n$ are jointly independent random variables where $|X_i -\E[X_i]| \leq 1$ for all $i$. Set $X:= X_1 + X_2 + \cdots + X_n$ and let $\sigma := \sqrt{\Var(X)}$ be the standard deviation of $X$.  Then for any $\lambda >0$
% \begin{equation}
%  \Prob(|X-\E[X]| \geq \lambda \sigma) \leq 2 \max\left(e^{-\frac{\lambda^2}{4}}, e^{-\frac{\lambda \sigma}{2}}\right).
% \end{equation}

%\end{theorem}
%\begin{proof}
% See \cite{TV06}.
%\end{proof}

%Chernoff's inequality is applied in the following context.

Let $\eta$ be a function of bounded support on $\sT$.  Let $\varrho_\eta$ be the signed measure on $\Lambda$ obtained by starting simple random walk from $\eta$ and stopping it on the first non-negative step at which it visits $\Lambda$.  

\begin{lemma}\label{measure_decomp_lemma}
 Let $\sT$ be a tiling in $\bR^d$. There is a constant $c = c(\eta)>0$ such that the following holds.  If $\eta \in C^0(\sT)$ then there is $c>0$ such that $\|\varrho_\eta(x)\| \ll e^{-c\|x\|}$. 
 
 If $\eta \in C^1(\sT)$ then there are functions $f_1, f_2, ..., f_d$ on $\Lambda$ such that \[\varrho_\eta = \sum_{j=1}^d f_j * \delta_j\] and satisfying $|f_j(x)| \leq e^{-c\|x\|}$. 
 
 If $\eta \in C^2(\sT)$ then there are functions $f_{i,j}$, $1 \leq i \leq j \leq d$ on $\Lambda$ such that 
 \[
 \varrho_\eta = \sum_{1 \leq i \leq j \leq d} f_{i,j} * \delta_i * \delta_j
 \]
and satisfying $|f_{i,j}(x)| \ll e^{-c\|x\|}$.
\end{lemma}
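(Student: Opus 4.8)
The plan is to prove the three statements in order, the first being essentially a restatement of the exponential-tail estimates already established (the lemma after the definition of $\varrho_v$, together with the fact that $\varrho_\eta$ is a finite signed combination of the $\varrho_v$), so the real content is the decomposition into discrete differences with exponentially-decaying coefficients. The key observation is that all three claims concern the \emph{coefficient structure} of $\varrho_\eta$ as an element of $\ell^1(\Lambda)$, and that on $\Lambda$ the relevant submodules are exactly $C^1(\Lambda) = \langle \delta_i \rangle$ and $C^2(\Lambda) = \langle \delta_i * \delta_j\rangle$. So the strategy is: (i) verify that $\varrho_\eta$ lands in the appropriate class $C^j(\Lambda)$ when $\eta \in C^j(\sT)$, and (ii) invert the differencing operators, controlling decay of the resulting potentials via Fourier analysis.

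First I would treat $j=1$. Since $\eta \in C^1(\sT)$ has total mass zero, $\varrho_\eta$ has total mass zero, i.e. $\widehat{\varrho_\eta}(0)=0$; combined with the exponential tail bound on $\varrho_\eta$ (so $\widehat{\varrho_\eta}$ is real-analytic near $0$, in fact extends holomorphically to a neighborhood of $(\bR/\zed)^d$ in $(\bC/\zed)^d$), this gives $\widehat{\varrho_\eta}(\xi) = \sum_{i=1}^d \widehat{\delta_i}(\xi)\,\widehat{f_i}(\xi)$ where one may take, e.g., $\widehat{f_i}(\xi) = \overline{\widehat{\delta_i}(\xi)}\,\widehat{\varrho_\eta}(\xi) / \sum_k |\widehat{\delta_k}(\xi)|^2$ on the complement of $0$, noting $\sum_k|\widehat{\delta_k}(\xi)|^2 = \sum_k |1-e(-\xi_k)|^2 \asymp \|\xi\|_{\bR^d/\zed^d}^2$ is bounded below away from $0$ and vanishes to order $2$ at $0$, exactly matching the order of vanishing of the numerator. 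Thus $\widehat{f_i}$ is holomorphic in a neighborhood of the real torus, and the standard contour-shift argument (shift each $\xi_j$-integral in the Fourier inversion formula for $f_i$ by an imaginary amount $\pm i c_0$) yields $|f_i(x)| \ll e^{-c\|x\|}$. For $j=2$ the argument is identical in shape: $\eta \in C^2(\sT)$ forces both $\widehat{\varrho_\eta}(0)=0$ and $\nabla \widehat{\varrho_\eta}(0)=0$ (the latter being the vanishing-first-moment condition $\sum_x \eta(x)\E[Y_{x,T_x}]=0$, which by Lemma \ref{C_2_translation_invariance_lemma} is independent of the choice of lattice coset), so $\widehat{\varrho_\eta}$ vanishes to order $2$ at the origin and one divides by $\sum_{i\le j} |\widehat{\delta_i}\widehat{\delta_j}(\xi)|^2$, which is $\asymp \|\xi\|^4$ near $0$ and bounded below elsewhere — again matching orders, and again the resulting $\widehat{f_{i,j}}$ are holomorphic near the real torus, giving exponential decay by contour shift.

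The main obstacle is the bookkeeping at the origin: one must check that the order of vanishing of $\widehat{\varrho_\eta}$ at $\xi=0$ is \emph{at least} the order of vanishing of the denominator (order $2$ for $C^1$, order $4$ for $C^2$), so that $\widehat{f_i}$, resp. $\widehat{f_{i,j}}$, is genuinely holomorphic — not merely bounded — in a full complex neighborhood of $0$, which is what the contour-shift decay argument requires. For $j=1$ this is just mass zero; for $j=2$ one needs mass zero \emph{and} first moment zero, and the point is that ``first moment of $\varrho_\eta$'' equals $\sum_x \eta(x)\,\E[Y_{x,T_x}]$ by conditioning on the first visit to $\Lambda$ (the walk from $x$, stopped at $T_x$, has mean $\E[Y_{x,T_x}]$, and summing against $\eta$ commutes with the expectation), so the $C^2(\sT)$ hypothesis is exactly what is needed. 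One also needs the elementary facts that $\sum_k|\widehat{\delta_k}(\xi)|^2$ and $\sum_{i\le j}|\widehat{\delta_i\ast\delta_j}(\xi)|^2$ are bounded away from zero on $(\bR/\zed)^d\setminus\{0\}$ (immediate from $\widehat{\delta_k}(\xi)=1-e(-\xi_k)$) and the observation that division by a function vanishing to the matching even order at $0$ produces a holomorphic, not merely smooth, quotient — this is where I would be slightly careful, writing the denominators as $|P(\xi)|^2$ with $P$ holomorphic and factoring accordingly, or simply choosing the $f_i$ non-canonically (e.g. peel off $\delta_1$-differences greedily, coordinate by coordinate, as in the proof that $C^1(\Lambda)=\langle\delta_i\rangle$) to avoid the $|\cdot|^2$ altogether. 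Everything else — the exponential tail of $\varrho_\eta$ giving analyticity of $\widehat{\varrho_\eta}$ near the real torus, and the contour shift giving exponential decay of Fourier coefficients of an analytic function — is routine and already used elsewhere in the paper.
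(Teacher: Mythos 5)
Your proposal takes a genuinely different route from the paper — the paper works in space domain, truncating $\varrho_\eta$ to dyadic annuli, adding exponentially small corrections to restore the $C^j$ class, writing each truncation as an explicit finite sum of translated $\delta_i$'s (or $\delta_i*\delta_j$'s) by telescoping, and letting the radius go to infinity — whereas you propose a Fourier-analytic division. But your primary construction has a genuine gap at exactly the point you flag as needing ``slight care,'' and it is not a matter of slight care. The denominator $\sum_k|\widehat{\delta_k}(\xi)|^2$, extended holomorphically, is $\sum_k(1-e(-\xi_k))(1-e(\xi_k)) = 4\sum_k\sin^2(\pi\xi_k)$, whose Taylor expansion at $0$ is $4\pi^2\sum_k\xi_k^2 + O(\|\xi\|^4)$. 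For $d\geq 2$ this vanishes not at the isolated point $\xi=0$ but on a codimension-one complex analytic variety through the origin (approximately $\sum_k\xi_k^2=0$, e.g.\ $\xi_2=\pm i\xi_1$ when $d=2$). Your numerator $(1-e(\xi_i))\widehat{\varrho_\eta}(\xi)$ also vanishes to order $2$ at $\xi=0$, but two functions vanishing to the same order need not have a holomorphic quotient — compare $\xi_1\xi_2/(\xi_1^2+\xi_2^2)$, which is bounded near $0$ but has poles along $\xi_1=\pm i\xi_2$. So $\widehat{f_i}$ would generically have singularities on every shifted contour $[0,1]^d+iv$, and the contour-shift argument for exponential decay breaks down. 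The ``write the denominator as $|P(\xi)|^2$'' suggestion does not rescue this, since for $d\geq2$ the function $\sum_k|\widehat{\delta_k}(\xi)|^2$ is not of that form.

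Your parenthetical fallback — ``peel off $\delta_1$-differences greedily, coordinate by coordinate'' — is the right idea and is essentially the paper's proof, but in the proposal it is a single clause rather than an argument, and it carries the whole burden once the Fourier route is abandoned. To execute it you would need to (i) truncate $\varrho_\eta$ to $\|x\|\leq R$ and observe that the lost mass and lost first moment are $O(e^{-cR})$, so an exponentially small correction $\nu_{\eta,R}$ restores membership in $C^j(\Lambda)$; (ii) write the restored function as a finite $\zed$-combination of translates of $\delta_i$ (resp.\ $\delta_i*\delta_j$) by telescoping along lattice paths from each support point to the origin, noting that each point at distance $R$ contributes $O(R)$ (resp.\ $O(R^2)$) translates, which is polynomial and hence dominated by the $e^{-cR}$ decay; and (iii) take a dyadic limit $R\to\infty$ while verifying that the increments $\varrho'_{\eta,2R}-\varrho'_{\eta,R}$ only contribute $O(e^{-c'R})$ to the resulting $f_i$ (resp.\ $f_{i,j}$). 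That is precisely the content of the paper's proof of Lemma~\ref{measure_decomp_lemma}; without it, the proposal as written does not establish the lemma.
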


\begin{proof}
 In the case that $\eta \in C^0(\sT)$, the exponential decay condition follows from the fact that the stopped random walk has a distribution with exponentially decaying tails.
 
 To prove the two remaining claims, given a radius $R$, let $\varrho_{\eta, R}$ denote the measure $\varrho_\eta$ restricted to $\|x\| \leq R$.  Due to the exponentially decaying tails, the mass of this measure is exponentially small in $R$, and if $\eta$ is $C^2$, the moment is exponentially small in $R$.  Hence, it follows that there is a bounded measure $\nu_{\eta, R}$ of total mass exponentially small in $R$, such that $\varrho'_{\eta, R} = \varrho_{\eta, R} + \nu_{\eta, R}$ has the same regularity as $\varrho_\eta$.  Since $\varrho'_{\eta, R} \in C^j(\Lambda)$, it can be written as a sum of translates of $\{\delta_{i}\}_{1 \leq i \leq d}$ if $\eta \in C^1(\sT)$ or $\{\delta_{i,j}\}_{1 \leq i \leq j \leq d}$ of $\eta \in C^2(\sT)$.  Arrange this sum such that $\varrho'_{\eta, 2R} - \varrho'_{\eta, R}$ is the sum of $O(e^{-c'R})$ translates, which is easily achieved in the case of $C^1(\sT)$ by balancing each function value in the support with an opposing value at the origin, the total number of $\delta_i$ needed to achieve this being $O(R)$.  In the case of $C^2(\sT)$, first write the difference of a sum of translates of $\delta_i$, then balance each $\delta_i$ with a corresponding term at the origin, the number needed for a single on again being $O(R)$.  The polynomial growth is now dominated by the exponential decay of $\varrho'$.  Letting $R \to \infty$ obtains the required decomposition.
\end{proof}

Let $\sT$ be a tiling, periodic with period $\Lambda$, which is identified with $\zed^d$ via a choice of basis. Assume $0 \in \sT$.  Let $\varrho$ be the measure obtained by stopping the random walk started at 0 at its first return to $\Lambda$. Let $\varrho_{\frac{1}{2}} = \frac{1}{2}(\varrho + \delta_0)$ be the half-lazy version of $\varrho$.  Given $m \geq 1$, let $\varrho_{\bT_m}(x) = \varrho(x + m \Lambda)$ and $\varrho_{\frac{1}{2}, \bT_m}(x) = \varrho_{\frac{1}{2}}(x + m \Lambda).$

\begin{lemma}
 Let $\sT$ be a tiling of $\bR^d$ which is periodic in lattice $\Lambda \cong \zed^d$.  The Green's function of  $\sT$ started from 0 is given on $\Lambda$ by, in dimension $d=2$,
 \begin{equation}
  g_{0}(n) = \frac{1}{2 \deg(0)}\sum_{N=0}^\infty \varrho_{\frac{1}{2}}^{*N}(n) - \varrho_{\frac{1}{2}}^{*N}(0),
 \end{equation}
and in dimension $d \geq 3$ by
\begin{equation}
 g_{0}(n) = \frac{1}{2 \deg(0)} \sum_{N=0}^\infty \varrho_{\frac{1}{2}}^{*N}(n).
\end{equation}
For all $m$ sufficiently large, on $\sT/m\Lambda$, when restricted to $\Lambda/m\Lambda$, the Green's function is given by
\begin{equation}
 g_{0, \bT_m}(n) = \frac{1}{2\deg(0)} \sum_{N=0}^\infty \left(\varrho_{\frac{1}{2}, \bT_m}^{*N}(n) - \frac{1}{m^d}\right).
\end{equation}

\end{lemma}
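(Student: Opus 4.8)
The heart of the lemma is the elementary observation that the Green's kernel of the lazy return walk is exactly twice that of the return walk itself, and the plan is to reduce each of the three formulas to the corresponding statement already proved for $\varrho$, resp.\ for $\varrho_{\bT_m}$. The only algebraic input is that $\varrho_{\frac{1}{2}} = \frac{1}{2}(\varrho + \delta_0)$, whence on $\Lambda \cong \zed^d$
\[
 1 - \hat\varrho_{\frac{1}{2}}(x) = \tfrac12\bigl(1 - \hat\varrho(x)\bigr),
\]
and the same relation holds character-by-character between $\varrho_{\frac{1}{2},\bT_m}$ and $\varrho_{\bT_m}$ on the finite group $\Lambda/m\Lambda$. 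Two further remarks will be used: $\Cov(\varrho_{\frac{1}{2}}) = \frac{1}{2}\Cov(\varrho)$ is still positive definite, since $\supp\varrho$ generates $\Lambda$; and, because $\varrho_{\frac{1}{2}}$ is lazy, $\hat\varrho_{\frac{1}{2}}(x) = \frac12(1 + \hat\varrho(x)) \in [0,1)$ for every $x \neq 0$, so $\hat\varrho_{\frac{1}{2}}$ never takes the value $-1$ that $\hat\varrho$ may attain.

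On the tiling $\sT$ I would argue by Fourier inversion. By the lemma already proved, $\deg(0)\,g_0(n) = \sum_{N\geq 0}\varrho^{*N}(n)$ when $d\geq 3$. Since $\hat\varrho$ is real (symmetry of $\varrho$), equals $1$ only at the origin (as $\supp\varrho$ generates $\Lambda$), and satisfies $1 - \hat\varrho(x) = 2\pi^2 x^t\Cov(\varrho)\,x + O(\|x\|^4)$ near $0$ (exponential tails and vanishing first moment), the function $\frac{1}{1-\hat\varrho(x)} \ll \|x\|^{-2}$ lies in $L^1\bigl((\bR/\zed)^d\bigr)$ for $d \geq 3$, so $\deg(0)\,g_0(n) = \int \frac{e(-n\cdot x)}{1-\hat\varrho(x)}\,dx$. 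Running the same computation for $\varrho_{\frac{1}{2}}$ and interchanging $\sum_N$ with the integral by dominated convergence — the partial sums $\tfrac{1-\hat\varrho_{\frac{1}{2}}(x)^{T+1}}{1-\hat\varrho_{\frac{1}{2}}(x)}$ are dominated by $\tfrac{2}{|1-\hat\varrho_{\frac{1}{2}}(x)|} \ll \|x\|^{-2}$, uniformly in $T$, and converge pointwise away from $x = 0$ because $|\hat\varrho_{\frac{1}{2}}| < 1$ there — gives
\[
 \sum_{N\geq 0}\varrho_{\frac{1}{2}}^{*N}(n) = \int \frac{e(-n\cdot x)}{1-\hat\varrho_{\frac{1}{2}}(x)}\,dx = 2\int\frac{e(-n\cdot x)}{1-\hat\varrho(x)}\,dx = 2\deg(0)\,g_0(n),
\]
which is the claimed formula. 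The case $d = 2$ is the same after the regularizing subtraction: $\deg(0)\,g_0(n) = \sum_{N\geq 0}\bigl(\varrho^{*N}(n) - \varrho^{*N}(0)\bigr)$ becomes $\int\frac{e(-n\cdot x) - 1}{1-\hat\varrho(x)}\,dx$, absolutely convergent on $\bR^2$ since $|e(-n\cdot x)-1| \ll \|x\|$ removes one power at the origin, and the identical dominated-convergence step (majorant $\tfrac{2|e(-n\cdot x)-1|}{|1-\hat\varrho_{\frac{1}{2}}(x)|} \ll \|x\|^{-1} \in L^1(\bR^2)$, uniform in $T$) yields $\sum_{N\geq 0}\bigl(\varrho_{\frac{1}{2}}^{*N}(n)-\varrho_{\frac{1}{2}}^{*N}(0)\bigr) = 2\deg(0)\,g_0(n)$.

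On the quotient $\bT_m$, for $m$ large, I would instead invoke uniqueness. There $\varrho_{\frac{1}{2},\bT_m}$ is a symmetric, aperiodic, irreducible probability measure on $\Lambda/m\Lambda$ with uniform stationary measure (irreducibility inherited from $\supp\varrho$ generating $\Lambda$), so $\varrho_{\frac{1}{2},\bT_m}^{*N}(n) \to m^{-d}$ geometrically and $\tilde g(n) := \frac{1}{2\deg(0)}\sum_{N\geq 0}\bigl(\varrho_{\frac{1}{2},\bT_m}^{*N}(n) - m^{-d}\bigr)$ converges absolutely to a mean-zero function on $\Lambda/m\Lambda$. As $\varrho_{\frac{1}{2},\bT_m}$ fixes the constant functions, telescoping the partial sums gives $\deg(0)(I - \varrho_{\bT_m})\tilde g = 2\deg(0)(I - \varrho_{\frac{1}{2},\bT_m})\tilde g = \delta_0 - m^{-d}$ on $\Lambda/m\Lambda$; this is precisely the identity satisfied by $g_{0,\bT_m}$ restricted to $\Lambda/m\Lambda$, obtained in the course of the proof that $\Delta g_{v,\bT_m}(x) = \delta_v(x) - \frac{1}{m^d}\delta(x\in\Lambda)$, together with the mean-zero normalization. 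Since the only solutions of $(I - \varrho_{\bT_m})h = 0$ on $\Lambda/m\Lambda$ are the constants — even a $(-1)$-eigenvalue of $\varrho_{\bT_m}$ contributes eigenvalue $2$ to $I - \varrho_{\bT_m}$ — the mean-zero solution is unique, so $\tilde g = g_{0,\bT_m}$ there.

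The one genuinely delicate point is the sum--integral interchange in the $d = 2$ tiling case: the Green's sums are only conditionally convergent and $\hat\varrho$ can equal $-1$ on a null set (e.g.\ $\hat\varrho(\tfrac12,\tfrac12) = -1$ for the plain square lattice), where $\hat\varrho(x)^{T+1}$ oscillates rather than converging; this is absorbed by working from the \emph{already regularized} formula of the earlier lemma, where a $T$-independent integrable majorant is available and almost-everywhere pointwise convergence suffices, while laziness removes the analogous obstruction on the $\varrho_{\frac{1}{2}}$ side outright. The remaining inputs — positive-definiteness of $\Cov(\varrho_{\frac{1}{2}})$, the Taylor expansion of $1-\hat\varrho_{\frac{1}{2}}$ near $0$, and irreducibility of $\varrho_{\frac{1}{2},\bT_m}$ on $\Lambda/m\Lambda$ for large $m$ — are routine.
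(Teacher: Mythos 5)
Your proof is correct, but it takes a genuinely different route from the paper's. The paper proves the infinite-tiling identities by a purely combinatorial rearrangement in physical space: expand $\varrho_{\frac{1}{2}}^{*N} = 2^{-N}\sum_{k=0}^{N}\binom{N}{k}\varrho^{*k}$ by the binomial theorem, swap the order of summation, and evaluate $\sum_{N\geq k}\binom{N}{k}2^{-N}=2$ to reduce to the already-established $\varrho$-series; for the torus it simply expands both sides in characters of $\Lambda/m\Lambda$ and matches Fourier coefficients through $1-\hat\varrho_{\frac{1}{2}}=\tfrac12(1-\hat\varrho)$. You instead pass to frequency space from the start, writing the (regularized, when $d=2$) $\varrho$-series as an $L^1$ integral of $(1-\hat\varrho)^{-1}$, then performing the identical computation for $\varrho_{\frac12}$ with an explicit $T$-uniform dominated-convergence majorant, and on the torus you telescope the partial sums to the discrete Poisson equation $\deg(0)\,(I-\varrho_{\bT_m})\tilde g=\delta_0-m^{-d}$ and conclude by uniqueness of the mean-zero solution via irreducibility of $\varrho_{\bT_m}$. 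The trade-offs are real: the binomial route is elementary and never leaves physical space, but the interchange $\sum_{N}\sum_{k\le N}\to\sum_k\sum_{N\ge k}$ under only conditional convergence of the outer sum is a point the paper treats tersely; your Fourier route pays the price of an inversion set-up, but the integrable majorant makes the convergence and interchange airtight, and it naturally surfaces the potential $\hat\varrho=-1$ periodicity issue and explains precisely how laziness removes it. Your torus uniqueness argument and the paper's character expansion are equivalent — characters diagonalize $I-\varrho_{\bT_m}$, which is exactly the kernel statement your Perron--Frobenius step needs — but the character expansion is the shorter of the two.
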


\begin{proof}
This is very similar to the proof of Lemma 29 in \cite{HJL17}.  Recall that, in dimension 2,
\begin{equation}
 g_{0}(n) = \frac{1}{\deg(0)} \sum_{N=0}^\infty \varrho^{*N}(n) - \varrho^{*N}(0),
\end{equation}
and in dimension at least 3,
\begin{equation}
 g_{0}(n) = \frac{1}{\deg(0)} \sum_{N=0}^\infty \varrho^{*N}(n).
\end{equation}
Since, by definition, $\varrho^{*2}(0) > 0$, the measure $\varrho^{*2}$ satisfies the conditions of the local limit theorem above, see also \cite{LL10}.  Thus after taking consecutive odd and even terms together, the sums converge absolutely.

Expanding by the binomial theorem, in the $d = 2$ case,
\begin{align*}
&\frac{1}{2\deg(0)} \sum_{N=0}^\infty \left(\varrho_{\frac{1}{2}}^{*N}(n) - \varrho_{\frac{1}{2}}^{*N}(0) \right)\\
&= \frac{1}{2\deg(0)} \sum_{N=0}^\infty \frac{1}{2^N} \left(\sum_{k=0}^N \binom{N}{k} \left(\varrho_{\frac{1}{2}}^{*k}(n) - \varrho_{\frac{1}{2}}^{*k}(0) \right)\right)\\
&= \frac{1}{2\deg(0)}\sum_{k=0}^\infty \left(\varrho_{\frac{1}{2}}^{*k}(n) - \varrho_{\frac{1}{2}}^{*k}(0) \right) \sum_{N=k}^\infty \binom{N}{k} 2^{-N}.
\end{align*}
The inner sum evaluates to 2, from the identity $\left(\frac{1}{1-x} \right)^k = \sum_{N=0}^\infty \binom{N+k}{k} x^N$, which proves the first claim.  The claim in dimensions $d \geq 3$ is similar.  

To prove the identity on $\Lambda/m\Lambda$, expand both sides in characters of the group.
\end{proof}

The following lemma gives decay estimates for the Green's function on $\sT$.

\begin{lemma}\label{greens_fn_decay_lemma_T}
 Let $\sT$ be a tiling in $\bR^d$ with period lattice $\Lambda$, and let $\eta$ be a function on $\sT$ of bounded support.  Let $g_\eta = g*\eta$.  If $\eta \not \in C^1(\sT)$, for $x \in \Lambda$,
 \[
  g_\eta(x) \ll \left\{ \begin{array}{ccc} \log (2 + \|x\|)&& d= 2\\ \frac{1}{(1 + \|x\|)^{d-2}} && d \geq 3 \end{array}\right..
 \]
If $D^{\ua} = \delta_1^{*a_1} *... * \delta_d^{*a_d}$ is a discrete differential operator and $\|\ua\| + j \geq 1$, then
\[
 D^{\ua} g_\eta(x) \ll \frac{1}{(1 + \|x\|)^{d + |\ua|+j-2}} .
\]

\end{lemma}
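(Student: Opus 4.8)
The plan is to reduce the estimate on $\sT$ to one on the period lattice $\Lambda$, transfer all of the discrete differentiation onto the lattice Green's function $g_0$, and then bound high derivatives of $g_0$ by summing the local limit theorem over the number of steps. First I would use the identity $g_\eta = g_0*\varrho_\eta$ on $\Lambda$ established above, together with Lemma \ref{measure_decomp_lemma}: if $\eta \in C^j(\sT)$ with $j \in \{1,2\}$, then $\varrho_\eta$ is a finite sum of $j$-fold discrete derivatives of functions $f_I$ on $\Lambda$ with $|f_I(y)| \ll e^{-c\|y\|}$. Writing $D^{\underline{b}}$ for the composition of $D^{\ua}$ with the relevant $\delta_{i_1}*\cdots*\delta_{i_j}$, so that $|\underline{b}| = |\ua| + j$, this gives $D^{\ua} g_\eta = \sum_I (D^{\underline{b}} g_0) * f_I$. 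Since each $f_I$ decays exponentially, a routine splitting of the convolution at $\|y\| = \|x\|/2$ reduces the desired bound on $D^{\ua}g_\eta$ to the pointwise bound on $D^{\underline{b}}g_0$ with $|\underline{b}| = |\ua| + j$. The first assertion ($\eta \not\in C^1$, so $j = 0$) follows the same way after writing $\varrho_\eta = C\delta_0 + (\varrho_\eta - C\delta_0)$ with $C = \sum_{v}\eta(v) \neq 0$: here $\varrho_\eta - C\delta_0 \in C^1(\Lambda)$ contributes the (strictly smaller) $j = 1$ bound, while $C g_0$ carries the logarithmic ($d=2$) or $\|x\|^{-(d-2)}$ ($d \geq 3$) decay already recorded in Lemma \ref{green_fn_growth_lemma}.

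It remains to prove that for every fixed multi-index $\underline{b}$ with $|\underline{b}| \geq 1$ and $x \in \Lambda$ one has $D^{\underline{b}} g_0(x) \ll (1+\|x\|)^{-(d+|\underline{b}|-2)}$. Because $|\underline{b}| \geq 1$ annihilates the constant subtracted off in the two-dimensional definition of $g_0$, in every dimension $D^{\underline{b}}g_0(x) = \frac{1}{2\deg(0)}\sum_{N \geq 0} D^{\underline{b}}\varrho_{\frac{1}{2}}^{*N}(x)$, where $\varrho_{\frac{1}{2}} = \frac{1}{2}(\varrho + \delta_0)$ meets the hypotheses of Theorem \ref{local_limit_theorem}. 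I would split the sum at $N_0 \asymp \|x\|^2/(\log\|x\|)^2$. For $N < N_0$ the summands are super-polynomially small — either because the Gaussian weight in Theorem \ref{local_limit_theorem} is $\ll \exp(-c(\log\|x\|)^2)$, or, for the very small $N$, because $\varrho_{\frac{1}{2}}^{*N}$ has exponentially decaying tails and Lemma \ref{Chernoff_variant_lemma} applies, exactly as in the proof of Lemma \ref{green_fn_growth_lemma}. For $N \geq N_0$, Theorem \ref{local_limit_theorem} writes $D^{\underline{b}}\varrho_{\frac{1}{2}}^{*N}(x)$ as a Gaussian times $N^{-(d+|\underline{b}|)/2}$ times $Q_{\underline{b}}(x/\sqrt{N}) + O(N^{-1}(1+\|x\|/\sqrt{N})^{|\underline{b}|+4})$, plus an error $O_\epsilon(\exp(-N^{3/8-\epsilon}))$; truncating at $N \leq \|x\|^{C}$ for $C$ large (past which the main term $\ll N^{-(d+|\underline{b}|)/2}$ is itself summable since $(d+|\underline{b}|)/2 > 1$), this error is negligible, and the main term sums to
\[
 \ll \sum_{N \geq N_0} \frac{\exp(-c\|x\|^2/N)}{N^{(d+|\underline{b}|)/2}}\Big(1 + \frac{\|x\|}{\sqrt{N}}\Big)^{|\underline{b}|} \ll \frac{1}{\|x\|^{d+|\underline{b}|-2}}
\]
by the substitution $N = \|x\|^2/u$, the sum being concentrated at $N \asymp \|x\|^2$. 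Here $Q_{\underline{b}}$ has total degree at most $|\underline{b}|$, which lets me absorb $Q_{\underline{b}}(x/\sqrt N)$ into the factor $(1 + \|x\|/\sqrt N)^{|\underline{b}|}$, and the inner $O$-term is smaller by $\ll \|x\|^{-2}$ in the dominant range.

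The step I expect to be the main obstacle is precisely this last estimate: controlling, uniformly in $N$, the competition between the polynomial growth of $Q_{\underline{b}}(x/\sqrt N)$, the Gaussian weight, and the power $N^{-(d+|\underline{b}|)/2}$, and disposing cleanly of the tail error $\exp(-N^{3/8-\epsilon})$ in the range $N$ comparable to $\|x\|^2$ rather than much larger. This is the same bookkeeping already carried out for the plain Green's function in Lemma \ref{green_fn_growth_lemma}, so the only genuinely new input is the derivative form of the local limit theorem (Theorem \ref{local_limit_theorem}). Finally, since lying in $C^j(\sT)$ is translation invariant (Lemma \ref{C_2_translation_invariance_lemma}), translating $\sT$ so that an arbitrary $x \in \sT$ becomes a lattice point extends all of these bounds from $\Lambda$ to $\sT$, at the cost of replacing $\|x\|$ by $\|x\| + O(1)$.
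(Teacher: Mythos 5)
Your proposal is essentially the paper's own argument: reduce to $\Lambda$ via $g_\eta = g_0*\varrho_\eta$, use Lemma~\ref{measure_decomp_lemma} to shift $j$ discrete derivatives from $\varrho_\eta$ onto $g_0$ and absorb the exponentially-decaying weight, then bound $D^{\underline{b}}g_0$ by summing the derivative form of the local limit theorem over $N$, using Chernoff to kill the small-$N$ range. The only cosmetic difference is in the $j=0$, no-derivative case, where you split off $C\delta_0$ and appeal to Lemma~\ref{green_fn_growth_lemma}, while the paper instead truncates the convolution $\sum_y \varrho_\eta(y)g_0(x-y)$ directly at $\|y\|\ll\log\|x\|$ — both handle the same issue in essentially equivalent ways.
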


\begin{proof}
The claims are first proved for the Green's function $g_0$ started at 0.  
 In this case, the claims regarding the Green's function itself were proved in Lemma \ref{green_fn_growth_lemma}.  To prove the claims regarding the discrete derivative, write
 \begin{equation}
  D^{\ua} g_0 (x) = \frac{1}{2 \deg 0} \sum_{n=0}^\infty D^{\ua} \varrho_{\frac{1}{2}}^{*N}(x).
 \end{equation}
For $N < \frac{\|x\|^2}{(1 + \log (2 + \|x\|))^2}$, Chernoff's inequality implies that $\varrho_{\frac{1}{2}}^{*N}(x) = O_A((1 + \|x\|)^{-A})$, so that this part of the sum may be ignored.  In the remaining part of the sum, the local limit theorem obtains, for some $c > 0$,
\[
 D^{\ua} \varrho_{\frac{1}{2}}^{*N}(x) \ll \frac{\exp\left(- c\frac{\|x\|^2}{N} \right)}{N^{\frac{d+|\ua|}{2}}}.
\]
Summed in $N$, this obtains the bound claimed.

Now, given $\eta$, if $\eta \not \in C^0(\sT)$, write on $\Lambda$, $g*\varrho_\eta = g_\eta$,
\begin{align*}
 g_\eta (x) &= \sum_{y \in \Lambda} \varrho_\eta(y) g_0(x-y)
  \ll \sum_{y \in \Lambda} e^{-c \|y\|} |g_0(x-y)|.
\end{align*}
Due to the bound for $g_0$, $y$ may be truncated at $\|y\| \ll (1 + \log (2 + \|x\|))$, from which the claim follows.  The proof in case of $C^j$ for $j = 1, 2$ is similar, by writing $\varrho_\eta$ as a sum of translates of first or second derivative operators.

\end{proof}

The remaining lemmas  obtain analogues of the decay estimates for $Dg$ on $\sT$ in the setting of the periodic case $\bT_m$.  This is accomplished by a split space-frequency representation on $\bT_m$ in which small convolutions $\varrho^{*N}$, which are localized in space, are treated in space domain, and large values of $\varrho^{*N}$ are treated in frequency domain.

\begin{lemma}\label{gradient_lemma}
Let $\sT$ be a tiling of $\bR^d$ with periodic lattice $\Lambda$ identified with $\zed^d$ by a choice of basis. Let  $\sigma^2 = \Cov(\varrho)$.   For $m \geq 1$ and for $1\leq \|x\|_{(\zed/m\zed)^d} \ll \left(\frac{m^2}{\log m} \right)^{\frac{d-1}{2d}}$, 
\begin{equation}
 \nabla g_{0, \bT_m}(x) = - \frac{\Gamma\left(\frac{d}{2}\right)\sigma^{-2}x }{ \deg(0)\pi^{\frac{d}{2}} \|\sigma^{-1} x\|^d \det \sigma}  + O\left(\frac{1}{\|\sigma^{-1} x\|^d} \right).
\end{equation}

\end{lemma}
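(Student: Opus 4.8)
## Proof proposal for Lemma \ref{gradient_lemma}

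The plan is to use the split space–frequency representation advertised in the paragraph preceding the statement. Write
\begin{equation*}
 \nabla g_{0,\bT_m}(x) = \frac{1}{2\deg(0)}\sum_{N=0}^\infty \nabla\Bigl(\varrho_{\frac12,\bT_m}^{*N}(x) - \tfrac{1}{m^d}\Bigr) = \frac{1}{2\deg(0)}\sum_{N=0}^\infty \nabla \varrho_{\frac12,\bT_m}^{*N}(x),
\end{equation*}
the constant term dropping out under the (discrete) gradient. I would cut the sum at a threshold $N_0 \asymp m^2/\log m$. For the small range $N \le N_0$, the periodized convolution $\varrho_{\frac12,\bT_m}^{*N}$ agrees with the convolution $\varrho_{\frac12}^{*N}$ on $\Lambda$ up to an error that is superpolynomially small in $m$: by the exponential-tail bound (Lemma preceding, giving $\varrho(\{d(x,0)>N\}) \ll e^{-cN}$) combined with Chernoff's inequality (Lemma \ref{Chernoff_variant_lemma}), the wrap-around mass $\varrho_{\frac12}^{*N}(x + m\Lambda \setminus \{x\})$ is $O(e^{-c' m})$ for $N \le N_0$ since $\|x\|$ is far below $m$. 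Hence on this range I may replace $\varrho_{\frac12,\bT_m}^{*N}$ by $\varrho_{\frac12}^{*N}$, apply the gradient form of the local limit theorem (Theorem \ref{local_limit_theorem}), and get
\begin{equation*}
 \nabla \varrho_{\frac12}^{*N}(x) = -\frac{\sigma^{-2}x}{N}\,\frac{\exp(-\|\sigma^{-1}x\|^2/2N)}{(2\pi)^{d/2}N^{d/2}\det\sigma} + O\!\left(\frac{\exp(-\|\sigma^{-1}x\|^2/2N)}{N^{(d+2)/2}}\Bigl(1+\tfrac{\|x\|}{\sqrt N}\Bigr)^5\right),
\end{equation*}
where $\Cov(\varrho_{\frac12}) = \tfrac12\sigma^2$ so the covariance constants must be tracked carefully — this is the point where the precise numerical constant $\Gamma(d/2)/(\pi^{d/2}\|\sigma^{-1}x\|^d\det\sigma)$ will be produced.

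The main term, summed over $0 \le N \le N_0$, is evaluated by comparison with the integral $\int_0^\infty t^{-1-d/2}\exp(-\|\sigma^{-1}x\|^2/2t)\,dt$. Substituting $u = \|\sigma^{-1}x\|^2/2t$ turns this into a Gamma integral $\Gamma(d/2)\,(\|\sigma^{-1}x\|^2/2)^{-d/2}\,2^{?}$, and after bookkeeping with the $\tfrac12$ in $\Cov(\varrho_{\frac12})$ one recovers the stated coefficient $-\Gamma(d/2)\sigma^{-2}x / (\deg(0)\pi^{d/2}\|\sigma^{-1}x\|^d\det\sigma)$. The Euler–Maclaurin error in replacing the sum by the integral, plus the local-limit error term summed over $N \le N_0$, both contribute $O(\|\sigma^{-1}x\|^{-d})$ (the error term sums like $\int t^{-1-(d+2)/2}(1+\|x\|/\sqrt t)^5 \exp(\cdots)dt \ll \|x\|^{-d}$ in the regime $\|x\| \le N_0^{1/2}$, which is exactly the stated range $\|x\| \ll (m^2/\log m)^{(d-1)/(2d)}$ — note $(m^2/\log m)^{1/2} = N_0^{1/2}$, and the exponent $\frac{d-1}{2d} < \frac12$ is what keeps the tail of the integral negligible). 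The constant-term subtraction $\tfrac1{m^d}$ is annihilated by $\nabla$ and so plays no role here, but it is what makes the whole periodized Green's function well-defined, so I would remark on that.

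For the large range $N > N_0$, I switch to frequency domain on $\Lambda/m\Lambda$: $\widehat{\varrho_{\frac12,\bT_m}}$ is a function on $\tfrac1m\Lambda^\vee/\Lambda^\vee$ with $|\widehat{\varrho_{\frac12}}(\theta)| \le 1 - c\|\theta\|^2$ for $\theta \ne 0$ (from laziness and genericity, as in the proof of Theorem \ref{local_limit_theorem}), so $\sum_{N > N_0}\widehat{\varrho_{\frac12,\bT_m}}(\theta)^N \ll (1-c\|\theta\|^2)^{N_0}/(c\|\theta\|^2)$, which for the smallest nonzero frequency $\|\theta\| \asymp 1/m$ is $\ll m^2 e^{-cN_0/m^2} = m^2 e^{-c\log m/c''}$, superpolynomially small once $N_0/m^2 \gg \log m$; multiplying by $m^d$ frequencies and by the trivial bound $\|\nabla\| = O(1/m)$ on the Fourier side still leaves this negligible compared to $\|x\|^{-d} \gg m^{-(d-1)}$. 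I expect the frequency-domain tail estimate to be the cleanest part. The main obstacle is the careful constant tracking in the transition from the local limit theorem (stated for a general measure with covariance $\sigma^2$) applied to $\varrho_{\frac12}$ whose covariance is $\tfrac12\sigma^2$ where $\sigma^2 = \Cov(\varrho)$ — getting the powers of $2$ and the $\Gamma$-function normalization to land exactly on the claimed formula, and confirming that the error from truncating at $N_0$ rather than $\infty$ is genuinely $O(\|\sigma^{-1}x\|^{-d})$ and not merely $o$ of the main term, which is what pins down the admissible range of $\|x\|$.
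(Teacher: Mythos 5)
Your strategy coincides with the paper's: split the Green's function sum at a threshold $T\asymp m^2/\log m$, unfold the periodization and apply the gradient form of the local limit theorem for the small-$N$ range, sum the main term against a $\Gamma$-integral (tracking $\Cov(\varrho_{1/2})=\tfrac12\sigma^2$, which you correctly identify as the bookkeeping hurdle for the constant), and treat the tail $N>T$ in frequency space via Fourier inversion on $(\zed/m\zed)^d$.

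However, the frequency-domain tail estimate — which you describe as ``the cleanest part'' — contains an arithmetic error that is not cosmetic. With $N_0\asymp m^2/\log m$ you have $N_0/m^2 \asymp 1/\log m$, \emph{not} $\gg \log m$, so $(1-c\|\theta\|^2)^{N_0}$ at the smallest nonzero frequency $\|\theta\|\asymp 1/m$ is $\exp(-cN_0/m^2)=\exp(-O(1/\log m))$, which tends to $1$; the tail is therefore not superpolynomially small. Bounding every frequency by the worst-case one and multiplying by $m^d$ gives nothing usable. What the paper actually does is integrate over the whole frequency torus, where the factor $r^{d-1}$ from the volume of the frequency shell combined with $e^{-cTr^2}$ yields the polynomially small bound $T^{-(d-1)/2}$. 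It is precisely this polynomial decay — not a superpolynomial one — that forces the range $\|x\|\ll T^{(d-1)/(2d)}$, since one needs $T^{-(d-1)/2}\ll \|x\|^{-d}$. (The space-domain truncation error, which you point to as pinning down the range, only requires the weaker $\|x\|\ll T^{1/2}$, so it is not the binding constraint.) You would need to redo the frequency tail as an integral with the shell-volume factor to obtain the correct $T^{-(d-1)/2}$ bound and thereby recover the stated range.
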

\begin{proof}
 Let $T = \frac{C m^2}{ \log m}$ for a constant $C>0$. Let $\sigma_{\frac{1}{2}}^2 = \Cov(\rho_{\frac{1}{2}})$, so $\sigma_{\frac{1}{2}} = \frac{1}{\sqrt{2}} \sigma$.  Let $R =2+\left\|\sigma_{\frac{1}{2}}^{-1} x\right\|$.  Then
 \begin{align*}
  \nabla g_{0, \bT_m}(x) &= \frac{1}{2 \deg(0)} \sum_{n \in \zed^d} \sum_{0 \leq N <T} \begin{pmatrix} \delta_1 \\ \delta_2 \\ \vdots \\ \delta_d\end{pmatrix} *\varrho_{\frac{1}{2}}^{*N}( x + m n)\\
  &+ \frac{1}{2 \deg(0)} \sum_{T \leq N} \begin{pmatrix} \delta_1 \\ \delta_2 \\ \vdots \\ \delta_d\end{pmatrix} *\varrho_{\frac{1}{2}, \bT_m}^{*N}( x ).
 \end{align*}
In the first sum, by applying Chernoff's inequality, those terms with $n \neq 0$ contribute an acceptable error term if $C$ is sufficiently small.  Similarly, discard those terms with $N \ll \frac{R^2}{\log R}$ as an error term. Applying the local limit theorem, the first term has a main term, 
\begin{equation} \frac{1}{2\deg(0)}\sum_{\frac{R^2}{\log R} \ll N \leq T}\left(- \frac{\sigma_{\frac{1}{2}}^{-2}x}{(2\pi)^{\frac{d}{2}}\det \sigma_{\frac{1}{2}}}\frac{\exp\left(-\frac{\left\|\sigma_{\frac{1}{2}}^{-1} x\right\|^2}{2N} \right)}{N^{\frac{d}{2}+1}} \right).
\end{equation}
The error is bounded by 
\begin{equation}
 \ll O_A(R^{-A}) + \sum_{\frac{R^2}{\log R} \ll N \leq T} \frac{\exp\left(-\frac{\left\|\sigma_{\frac{1}{2}}^{-1}x\right\|^2}{2N}\right)}{N^{\frac{d+2}{2}}}\left(1 + \frac{\left\|\sigma_{\frac{1}{2}}^{-1}x\right\|}{\sqrt{N}}\right) \ll \frac{1}{\left\|\sigma_{\frac{1}{2}}^{-1}x\right\|^d}. 
\end{equation}

The sum may be replaced with an integral,
\begin{align*}
 &\left(1 + O\left(\frac{1}{R}\right)\right)\frac{1}{2 \deg(0)}\left(- \frac{\sigma^{-2}_{\frac{1}{2}}x}{\pi^{\frac{d}{2}} \left\|\sigma_{\frac{1}{2}}^{-1} x\right\|^d \det \sigma_{\frac{1}{2}}} \right) \int_{\frac{c}{\log R}}^{\frac{2T}{R^2}}  \frac{\exp(-1/x)}{x^{\frac{d}{2}}} \frac{dx}{x}\\
 &= \left(1 + O\left(\frac{1}{R}\right)\right)\left(- \frac{\Gamma\left(\frac{d}{2}\right)\sigma_{\frac{1}{2}}^{-2}x }{2 \deg(0)\pi^{\frac{d}{2}} \left\|\sigma^{-1}_{\frac{1}{2}} x\right\|^d \det \sigma_{\frac{1}{2}}} \right).
\end{align*}
By Fourier inversion on the group $(\zed/m\zed)^d$, the tail of the sum is given by
\begin{align*}
 \frac{1}{2\deg(0)} \frac{1}{m^d} \sum_{0 \neq \xi \in (\zed/m\zed)^d} \begin{pmatrix}  e\left(\frac{\xi_1}{m}\right)-1 \\ \vdots \\  e\left(\frac{\xi_d}{m}\right)-1\end{pmatrix}\frac{\left(\frac{1}{2} + \frac{1}{2}\hat{\varrho}\left(\frac{\xi}{m} \right) \right)^T}{1-\hat{\varrho}\left(\frac{\xi}{m}\right)}e\left(\frac{\xi \cdot x}{m} \right).
\end{align*}
This is bounded in norm by, for some $c>0$,
\begin{align*}
 &\ll \frac{1}{m^d} \sum_{0 \neq \xi \in (\zed/m\zed)^d}\frac{\left(1-c \frac{\|\xi\|^2}{m^2} \right)^T}{\frac{\|\xi\|}{m}}\\&\ll  \int_{\left(\bR/\zed\right)^d} \frac{\exp(-cT\|x\|^2)}{\|x\|}dx\\&\ll \int_0^\infty \exp(-cTr^2) r^{d-1} \frac{dr}{r} \ll T^{-\frac{d-1}{2}}.
\end{align*}
The claimed asymptotic holds, since $T \gg R^{\frac{2d}{d-1}}$.
\end{proof}

\begin{lemma}\label{G_0_asymp_lemma}
Let $\sT$ be a tiling of $\bR^d$, $d > 2$ with periodic lattice $\Lambda$ identified with $\zed^d$ by a choice of basis. Set $\sigma^2 = \Cov(\varrho)$. For $m \geq 1$ and for $1\leq \|x\|_{(\zed/m\zed)^d} \ll \left(\frac{m^2}{\log m} \right)^{\frac{d-2}{2(d-1)}}$, 
\begin{equation}
  g_0(x) =  \frac{\Gamma\left(\frac{d}{2}\right) }{2 \deg(0)(\pi)^{\frac{d}{2}} \|\sigma^{-1} x\|^{d-2} \det \sigma}  + O\left(\frac{1}{\|\sigma^{-1} x\|^{d-1}} \right).
\end{equation}

\end{lemma}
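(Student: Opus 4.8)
The plan is to follow the split space--frequency strategy of Lemma \ref{gradient_lemma}, with the gradient operator replaced by the identity; throughout I write $g_0$ for the torus Green's function $g_{0,\bT_m}$, which is what is meant here since the admissible range of $x$ depends on $m$. The starting point is the identity established in the preceding lemma, on $\Lambda/m\Lambda$,
\[
g_{0,\bT_m}(x) = \frac{1}{2\deg(0)}\sum_{N=0}^\infty\left(\varrho_{\frac12,\bT_m}^{*N}(x) - \frac{1}{m^d}\right),
\]
where $\varrho_{\frac12} = \frac12(\varrho+\delta_0)$ is the half--lazy return measure, $\Cov(\varrho_{\frac12}) = \sigma_{\frac12}^2 = \frac12\sigma^2$, and $\varrho_{\frac12}$ meets the hypotheses of the local limit theorem (Theorem \ref{local_limit_theorem}). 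I would set $T = Cm^2/\log m$ for a small constant $C>0$, put $R = 2 + \|\sigma_{\frac12}^{-1}x\|$, and split the sum at $N=T$.

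For the low range $0 \le N < T$ I would unfold the periodization, $\varrho_{\frac12,\bT_m}^{*N}(x) = \sum_{n\in\zed^d}\varrho_{\frac12}^{*N}(x+mn)$. The terms with $n \neq 0$ have $\|x+mn\| \gg m$ while $N<T$, so Chernoff's inequality makes their total contribution $\ll_A m^{-A}$ for every $A$ once $C$ is small; the same bound disposes of the terms with $N \ll R^2/\log R$, and the leftover $-Tm^{-d} \ll m^{2-d}/\log m$ lies well inside the claimed error $O(\|\sigma^{-1}x\|^{-(d-1)})$ throughout the stated range of $x$. For the surviving terms ($n=0$, $R^2/\log R \ll N < T$) I would apply Theorem \ref{local_limit_theorem} with $\ua=0$; its error terms sum in $N$ to $O(\|\sigma^{-1}x\|^{-(d-1)})$ together with a quantity super-polynomially small in $R$, leaving the main sum
\[
\frac{1}{2\deg(0)}\cdot\frac{1}{(2\pi)^{d/2}\det\sigma_{\frac12}}\sum_{N}\frac{\exp\!\left(-\|\sigma_{\frac12}^{-1}x\|^2/(2N)\right)}{N^{d/2}}.
\]
Comparing this to $\int_0^\infty u^{-d/2}\exp(-\|\sigma_{\frac12}^{-1}x\|^2/(2u))\,du$ and substituting $u = \|\sigma_{\frac12}^{-1}x\|^2/(2v)$ evaluates it to an explicit constant (a Gamma value) times $\|\sigma_{\frac12}^{-1}x\|^{-(d-2)}$; rescaling by $\sigma_{\frac12} = \sigma/\sqrt2$ collapses the powers of $2$ and reproduces the leading term in the statement, exactly as in the gradient computation.

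For the high range $N \ge T$ I would expand in characters of $(\zed/m\zed)^d$: since $\hat\varrho_{\frac12}(y) = \frac12(1+\hat\varrho(y))$ and $\hat\varrho_{\frac12}(0)=1$, the $\xi=0$ term cancels the $-m^{-d}$, and summing the geometric series gives
\[
\sum_{N\ge T}\left(\varrho_{\frac12,\bT_m}^{*N}(x) - \frac1{m^d}\right) = \frac{1}{m^d}\sum_{0\ne\xi\in(\zed/m\zed)^d}\frac{\hat\varrho_{\frac12}(\xi/m)^T}{1-\hat\varrho_{\frac12}(\xi/m)}\,e\!\left(\frac{\xi\cdot x}{m}\right).
\]
Because $\supp\varrho$ generates $\Lambda$ one has $|\hat\varrho_{\frac12}(y)| \le 1 - c\|y\|_{(\bR/\zed)^d}^2$ and $|1-\hat\varrho_{\frac12}(y)| \gg \|y\|_{(\bR/\zed)^d}^2$, so this is $\ll \frac{1}{m^d}\sum_{0\ne\xi}\frac{(1-c\|\xi\|^2/m^2)^T}{\|\xi\|^2/m^2} \ll \int_{(\bR/\zed)^d}\frac{e^{-cT\|u\|^2}}{\|u\|^2}\,du \ll T^{-(d-2)/2}$.

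The main obstacle, and the step that forces the hypothesis, is matching this tail bound to the claimed error: $T^{-(d-2)/2} \asymp (m^2/\log m)^{-(d-2)/2}$, and $\|x\| \ll (m^2/\log m)^{(d-2)/(2(d-1))}$ is exactly the condition under which $(m^2/\log m)^{-(d-2)/2} \ll \|x\|^{-(d-1)} \asymp \|\sigma^{-1}x\|^{-(d-1)}$ — the analogue of $T \gg R^{2d/(d-1)}$ in Lemma \ref{gradient_lemma}, the exponent being shifted because there is no factor $e(\xi_j/m)-1$ to supply an extra power of $\|\xi\|$ in the numerator. Beyond this matching, the only points needing genuine care are the uniform Chernoff estimates justifying the truncations (the $n\ne 0$ unfolded terms and the $N \ll R^2/\log R$ terms); the remainder is the same bookkeeping as in Lemma \ref{gradient_lemma}. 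Assembling the main term from the low range with the error bounds from both ranges then yields the lemma.
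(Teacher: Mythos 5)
Your proof is correct and takes essentially the same route as the paper's: split the Green's function sum at $N = T = Cm^2/\log m$, apply Chernoff to discard the $n\neq 0$ unfolded terms and small $N$, use the local limit theorem with $\ua = 0$ on the surviving range and compare to the Gamma integral, then bound the tail in frequency space by $T^{-(d-2)/2}$ and match it to the claimed error under the stated constraint on $\|x\|$. Your handling of the $\xi=0$ cancellation via the $-1/m^d$ subtraction and the leftover $-Tm^{-d}$ is a touch more explicit than what appears in the paper, but the structure and all the key estimates coincide.
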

\begin{proof}
 Let $T = \frac{C m^2}{ \log m}$ for a constant $C>0$. Let $\sigma_{\frac{1}{2}}^2 = \Cov(\rho_{\frac{1}{2}})$, so that $\sigma_{\frac{1}{2}} = \frac{1}{\sqrt{2}} \sigma$. Let $R =2+ \left\|\sigma_{\frac{1}{2}}^{-1} x\right\|$. Write
 \begin{align*}
   g_0(x) &= \frac{1}{2 \deg(0)} \sum_{n \in \zed^d} \sum_{0 \leq N <T} \varrho_{\frac{1}{2}}^{*N}( x + m n)\\
  &+ \frac{1}{2 \deg(0)} \sum_{T \leq N} \varrho_{\frac{1}{2}, \bT_m}^{*N}( x ).
 \end{align*}
In the first sum, by applying Chernoff's inequality, those terms with $n \neq 0$ contribute an acceptable error term if $C$ is sufficiently small.  Similarly, discard those terms with $N \ll \frac{R^2}{\log R}$ as an error term. Applying the local limit theorem, the first term becomes, with error $O_A(R^{-A})$,
\begin{equation} \frac{1}{2\deg(0)}\sum_{\frac{R^2}{\log R} \ll N \leq T}\left( \frac{\exp\left(-\frac{\left\|\sigma_{\frac{1}{2}}^{-1} x\right\|^2}{2N} \right)}{(2\pi)^{\frac{d}{2}}\det \sigma_{\frac{1}{2}} N^{\frac{d}{2}}} \right)\left(1 + O\left(\frac{1}{R}\right)\right).
\end{equation}
With the same relative error the sum may be replaced with an integral,
\begin{align*}
 &\left(1 + O\left(\frac{1}{R}\right)\right)\frac{1}{ 4\deg(0)}\left( \frac{1}{\pi^{\frac{d}{2}} \left\|\sigma_{\frac{1}{2}}^{-1} x\right\|^{d-2} \det \sigma_{\frac{1}{2}}} \right) \int_{\frac{c}{\log R}}^{\frac{2T}{R^2}}  \frac{\exp(-1/x)}{x^{\frac{d}{2}-1}} \frac{dx}{x}\\
 &= \left(1 + O\left(\frac{1}{R}\right)\right)\left( \frac{\Gamma\left(\frac{d}{2}-1\right) }{ 4\deg(0)\pi^{\frac{d}{2}} \left\|\sigma_{\frac{1}{2}}^{-1} x\right\|^{d-2} \det \sigma_{\frac{1}{2}}} \right).
\end{align*}
The main term can be obtained by using $\sigma_{\frac{1}{2}} = \frac{1}{\sqrt{2}} \sigma$.
By Fourier inversion on the group $(\zed/m\zed)^d$, the tail of the sum is given by
\begin{align*}
 \frac{1}{2\deg(0)} \frac{1}{m^d} \sum_{0 \neq \xi \in (\zed/m\zed)^d} \frac{\left(\frac{1}{2} + \frac{1}{2}\hat{\varrho}\left(\frac{\xi}{m} \right) \right)^T}{1-\hat{\varrho}\left(\frac{\xi}{m}\right)}e\left(\frac{\xi \cdot x}{m} \right).
\end{align*}
This is bounded in norm by, for some $c>0$,
\begin{align*}
 &\ll \frac{1}{m^d} \sum_{0 \neq \xi \in (\zed/m\zed)^d}\frac{\left(1-c \frac{\|\xi\|^2}{m^2} \right)^T}{\left(\frac{\|\xi\|}{m}\right)^2}\\&\ll  \int_{\left(\bR/\zed\right)^d} \frac{\exp(-cT\|x\|^2)}{\|x\|^2}dx\\&\ll \int_0^\infty \exp(-cTr^2) r^{d-2} \frac{dr}{r} \ll T^{-\frac{d-2}{2}}.
\end{align*}
The claimed asymptotic holds, since $T \gg R^{\frac{2(d-1)}{d-2}}$.
\end{proof}

\begin{lemma}\label{deriv_decay_lemma_T_m}
 Keep the notation of the previous lemma.  The discrete derivatives satisfy, for any $\ua \in \bN^d$, $|\ua| \geq 1$, and for all $x \in \Lambda$, 
 \begin{equation}
  D^{\ua} g_{0, \bT_m}(x) \ll_{\ua} \frac{1}{1 + \|x\|_{(\zed/m\zed)^d}^{|\ua| + d-2}}.
 \end{equation}

\end{lemma}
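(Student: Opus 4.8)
\emph{Plan.} The argument follows the split space--frequency scheme used for Lemmas \ref{gradient_lemma} and \ref{G_0_asymp_lemma}. Writing, as in the lemma preceding this one,
\[
 g_{0,\bT_m}(x) = \frac{1}{2\deg(0)}\sum_{N=0}^\infty\Big(\varrho_{\frac{1}{2},\bT_m}^{*N}(x) - \frac{1}{m^d}\Big)
\]
on $\Lambda/m\Lambda$ and applying $D^{\ua}$, the constant $m^{-d}$ is annihilated because $|\ua|\geq 1$, so $D^{\ua}g_{0,\bT_m}(x) = \frac{1}{2\deg(0)}\sum_{N=0}^\infty D^{\ua}\varrho_{\frac{1}{2},\bT_m}^{*N}(x)$. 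I would fix a small constant $C>0$ and cut the sum at $T = \lfloor Cm^2\rfloor$; the key point is that here, unlike in Lemmas \ref{gradient_lemma} and \ref{G_0_asymp_lemma}, only the \emph{order} of decay is required, so one may take $T$ of order $m^2$ rather than $m^2/\log m$, which is what lets the complementary frequency tail be controlled with no logarithmic loss.

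For the range $N<T$ I would unfold to the lattice, $D^{\ua}\varrho_{\frac{1}{2},\bT_m}^{*N}(x) = \sum_{n\in\zed^d}D^{\ua}\varrho_{\frac{1}{2}}^{*N}(x+mn)$, taking $x$ in a fundamental cube so that $\|x+mn\|\gg m\|n\|$ for $n\neq 0$. For $n=0$ one writes $\sum_{N<T}D^{\ua}\varrho_{\frac{1}{2}}^{*N}(x) = 2\deg(0)\,D^{\ua}g_0(x) - \sum_{N\geq T}D^{\ua}\varrho_{\frac{1}{2}}^{*N}(x)$; the first term is $\ll\|x\|^{-(|\ua|+d-2)}$ by Lemma \ref{greens_fn_decay_lemma_T}, and by Theorem \ref{local_limit_theorem} the tail is $\ll\sum_{N\geq T}N^{-(|\ua|+d)/2}\ll T^{-(|\ua|+d-2)/2}\asymp m^{-(|\ua|+d-2)}\ll\|x\|_{(\zed/m\zed)^d}^{-(|\ua|+d-2)}$, using $\|x\|_{(\zed/m\zed)^d}\ll m$. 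For $n\neq 0$ one discards the range $N\ll\|x+mn\|^2/(\log(2+\|x+mn\|))^2$ via Chernoff as in the proof of Lemma \ref{greens_fn_decay_lemma_T}, and in the remaining range $N<T\leq C\|x+mn\|^2$ applies the Gaussian bound $|D^{\ua}\varrho_{\frac{1}{2}}^{*N}(x+mn)|\ll N^{-(|\ua|+d)/2}e^{-c\|x+mn\|^2/N}$ from Theorem \ref{local_limit_theorem}; summing over $N$ and using $\|x+mn\|\gg m\|n\|$ produces a bound $\ll m^{-(|\ua|+d-2)}e^{-c'\|n\|^2}$ (with the finitely many exceptional small $\|n\|$, for which the deviation range reaches $N\asymp\|x+mn\|^2$, handled instead by $\|x+mn\|^{-(|\ua|+d-2)}\ll m^{-(|\ua|+d-2)}$). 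Hence $\sum_{n\neq 0}$ converges to $\ll_C m^{-(|\ua|+d-2)}\ll\|x\|_{(\zed/m\zed)^d}^{-(|\ua|+d-2)}$.

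For the range $N\geq T$ I would expand by the discrete Fourier transform on $(\zed/m\zed)^d$,
\[
 \sum_{N\geq T}D^{\ua}\varrho_{\frac{1}{2},\bT_m}^{*N}(x) = \frac{1}{m^d}\sum_{0\neq\xi\in(\zed/m\zed)^d}\Big(\prod_{j=1}^d\big(e(\xi_j/m)-1\big)^{a_j}\Big)\frac{\hat{\varrho}_{\frac{1}{2}}(\xi/m)^T}{1-\hat{\varrho}_{\frac{1}{2}}(\xi/m)}\,e\Big(\frac{\xi\cdot x}{m}\Big),
\]
the $\xi=0$ mode again being killed by $D^{\ua}$. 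Using $|e(\xi_j/m)-1|\ll\|\xi\|/m$, the lower bound $1-\hat{\varrho}_{\frac{1}{2}}(\xi/m)\gg\|\xi\|^2/m^2$ (valid for $\xi\neq 0$ since $\hat{\varrho}_{\frac{1}{2}}$ is real, bounded by $1$, and $1-\hat{\varrho}_{\frac{1}{2}}(y)\asymp\|y\|_{\bR^d/\zed^d}^2$), and the estimate $|\hat{\varrho}_{\frac{1}{2}}(\xi/m)|^T\leq(1-c\|\xi\|^2/m^2)^T\ll e^{-cC\|\xi\|^2}$ coming from the bound $|\hat{\varrho}_{\frac{1}{2}}(y)|\leq 1-c\|y\|_{\bR^d/\zed^d}^2$ established in the proof of Theorem \ref{local_limit_theorem}, the whole tail is $\ll m^{2-|\ua|-d}\sum_{0\neq\xi\in\zed^d}\|\xi\|^{|\ua|-2}e^{-cC\|\xi\|^2}\ll_C m^{-(|\ua|+d-2)}\ll\|x\|_{(\zed/m\zed)^d}^{-(|\ua|+d-2)}$. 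Combining the three contributions, and using that $D^{\ua}g_{0,\bT_m}$ is bounded for bounded $\|x\|_{(\zed/m\zed)^d}$, gives the claimed bound.

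\emph{Main obstacle.} The delicate point is the bookkeeping for the nonzero images $x+mn$ in the low range. Since $|\ua|+d-2$ may be as small as $d-1$ (when $|\ua|=1$), the naive estimate $\sum_{n\neq 0}\|x+mn\|^{-(|\ua|+d-2)}\ll m^{-(|\ua|+d-2)}\sum_{n\neq 0}\|n\|^{-(|\ua|+d-2)}$ \emph{diverges} for $|\ua|\in\{1,2\}$ in low dimension; one must instead extract genuine Gaussian decay in $\|n\|$ from the moderate-deviation form of Theorem \ref{local_limit_theorem}, which is exactly what fails if the cutoff is taken as small as $m^2/\log m$. Choosing $T\asymp m^2$ both supplies this decay in $\|n\|$ and localizes the complementary frequency sum to boundedly many nonzero frequencies, and this is what reconciles the two ranges without a spurious power of $\log m$.
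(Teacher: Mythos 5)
Your proof is correct and employs the same split space--frequency representation used in the paper, differing only in the choice of cutoff. The paper truncates the $N$-sum at $T=R^2$ where $R=2+\|\sigma^{-1}x\|_{(\zed/m\zed)^d}$, a cut adapted to $x$: the $n=0$ contribution in the space range and the character sum in the frequency range then each yield $\ll R^{-(|\ua|+d-2)}$ directly, while the $n\neq 0$ images are dominated by Gaussian decay in $\|n\|$ exactly as you argue. You instead take a uniform cut $T\asymp m^2$, rewrite the $n=0$ piece as $2\deg(0)\,D^{\ua}g_0(x)$ minus its own $N\geq T$ tail (invoking Lemma~\ref{greens_fn_decay_lemma_T} and the local limit theorem respectively), and close with the comparison $\|x\|_{(\zed/m\zed)^d}\ll m$. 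Both organizations work; the adaptive cut is marginally cleaner because it avoids the detour through the infinite-tiling Green's function and delivers the target bound without the final comparison step.

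One inaccuracy in your discussion of the obstacle: taking the cutoff as small as $m^2/\log m$ does \emph{not} weaken the Gaussian decay in $\|n\|$ extracted from Theorem~\ref{local_limit_theorem}. Quite the opposite: for $N<m^2/\log m$ and $n\neq 0$ one has $\|x+mn\|^2/N\gg\|n\|^2\log m$, which is strictly better decay in $n$ than with $T\asymp m^2$. The genuine problem with so small a cutoff is the one you correctly identify second, namely that the frequency tail $m^{2-|\ua|-d}\sum_{\xi}\|\xi\|^{|\ua|-2}(1-c\|\xi\|^2/m^2)^T$ then localizes only to $\|\xi\|\lesssim\sqrt{\log m}$ rather than to bounded $\|\xi\|$, costing a factor $(\log m)^{(|\ua|+d-2)/2}$. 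The paper's adaptive cut $T=R^2$ sidesteps this entirely: the Riemann-sum estimate gives the frequency tail $\ll R^{-(|\ua|+d-2)}$ exactly, with no logarithmic loss at any scale of $\|x\|$.
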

\begin{proof}
Assume that among the representatives of $x \bmod m\zed^d$, $\|x\|$ is minimal. Let $R = 2 + \left\|\sigma^{-1}x \right\|_{(\zed/m\zed)^d}$.  Split the sum as
\begin{align*}
 D^{\ua} g_{0, \bT_m}(x) &=\frac{1}{2\deg(0)} \sum_{n \in \zed^d}\sum_{0 \leq N < R^2} \delta_1^{*a_1}*\cdots*\delta_d^{*a_d}\varrho_{\frac{1}{2}}^{*N}(x + m n)\\
 &+ \frac{1}{2\deg(0)} \sum_{N > R^2} \delta_1^{*a_1}*\cdots*\delta_d^{*a_d}\varrho_{\frac{1}{2}, \bT_m}^{*N}(x).
\end{align*}
In the first sum, use Chernoff's inequality to discard those terms with $N \ll \frac{R^2}{\log R}$, and those terms with $m^2\|n\|^2 \gg R^2 \log R$.  

By the local limit theorem, the first sum is bounded by
\begin{align*}
 \ll \sum_{\frac{R^2}{\log R} \ll N \leq R^2} \sum_{n \in \zed^d} \frac{\exp\left(- \frac{\left\|\sigma^{-1}\left(x + m n \right)\right\|^2}{2N} \right)}{N^{\frac{d + |\ua|}{2}}}\left(1 + \frac{\|x + mn\|}{\sqrt{N}} \right)^{|\ua|}.
\end{align*}
By the exponential decay, the sum over $n$ is bounded by a constant times the $n = 0$ term.  Meanwhile, the sum over those terms with $n = 0$ is bounded by $\ll \frac{1}{1 + \|x\|^{|\ua| + d-2}}$.

Expanding the tail of the sum in characters, and bounding the sum in absolute value, it is bounded by
\begin{align*}
 &\ll \frac{1}{m^d} \sum_{0 \neq \xi \in (\zed/m\zed)^d} \prod_{j=1}^d \left|1 - e\left(\frac{\xi_j}{m} \right)\right|^{a_j} \frac{\left|\frac{1 + \hat{\varrho}\left(\frac{\xi}{m}\right)}{2} \right|^{R^2}}{1 - \left|\hat{\varrho}\left(\frac{\xi}{m}\right)\right|}\\
 &\ll \int_{(\bR/\zed)^d} \frac{\prod_{j=1}^d |\xi_j|^{a_j}}{\|\xi\|^2}\exp(-c R^2 \|\xi\|^2) d\xi\\
 &\ll \int_0^\infty e^{-cr^2 R^2} r^{|\ua| + d-2} \frac{dr}{r}\\
 &\ll \frac{1}{R^{|\ua| + d -2}}.
\end{align*}

\end{proof}

The remaining lemmas treat the convolution of the Green's function with a measure $\eta$ of bounded support on the tiling $\sT$.  Note that the estimates are stated for the argument in the lattice $\Lambda$, but the regularity of $\eta$  is invariant under translating $\sT$, which permits recovering estimates for all $t \in \sT$.

\begin{lemma*}[Lemma \ref{deriv_bound_lemma}]
Let $\sT$ be a tiling of $\bR^d$ which is $\Lambda \cong \zed^d$ periodic.
 Let $\eta$ be of class $C^j(\sT)$ for some $0 \leq j \leq 2$.  Let $D^{\ua}$ be a discrete differential operator on the lattice $\Lambda$ and assume that $|\ua| + j + d - 2 > 0$.  For $m \geq 1$, for $x \in \Lambda$, 
 \begin{equation}
  D^{\ua} g_{\eta, \bT_m} (x) \ll \frac{1}{1 + \|x\|_{(\zed/m\zed)^d}^{|\ua| + j + d -2}}.
 \end{equation}

\end{lemma*}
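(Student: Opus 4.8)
The plan is to reduce the estimate for $g_{\eta,\bT_m}$ to the corresponding estimate for the Green's function $g_{0,\bT_m}$ started at the origin, for which the decay of discrete derivatives is (essentially) Lemma \ref{deriv_decay_lemma_T_m}, and to pass from the point mass $\delta_0$ to a general datum $\eta$ of class $C^j$ by means of the decomposition of the stopped harmonic measure in Lemma \ref{measure_decomp_lemma}. First I would dispose of small $m$: for $m$ below any fixed bound $\bT_m$ is one of finitely many fixed finite graphs, so the left side is bounded while the right side exceeds a fixed positive constant, and the estimate holds after enlarging the implied constant; thus it suffices to take $m$ large. Restricting to $x\in\Lambda$, the identity $g_{v,\bT_m}(x) = -c_v + (g_{0,\bT_m}*\varrho_{v,\bT_m})(x)$ for $x\in\Lambda$ with $c_v = O(m^{-d})$, summed against $\eta$, gives on $\Lambda$
\begin{equation*}
 g_{\eta,\bT_m} = -\textstyle\sum_v \eta(v)c_v \;+\; g_{0,\bT_m}*\varrho_{\eta,\bT_m},
\end{equation*}
where $\varrho_{\eta,\bT_m}$ is the $m\Lambda$-periodization of the signed stopped measure $\varrho_\eta$. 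The constant is $O(m^{-d})$, hence absorbed into the claimed bound (since $\|x\|_{(\zed/m\zed)^d}\ll m$) and moreover killed outright by $D^{\ua}$ when $|\ua|\geq1$; so it remains to bound $D^{\ua}(g_{0,\bT_m}*\varrho_{\eta,\bT_m})$.

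By Lemma \ref{measure_decomp_lemma}, $\varrho_\eta$ is a finite sum of terms which, according to $j$, are $f$ (when $j=0$), $f*\delta_i$ (when $j=1$), or $f*\delta_i*\delta_{i'}$ (when $j=2$), where the indices lie in $\{1,\dots,d\}$ and each coefficient satisfies $|f(y)|\ll e^{-c\|y\|}$ for some $c=c(\eta)>0$. Since the $\delta_i$ have bounded support, periodization commutes with these convolutions, so $\varrho_{\eta,\bT_m}$ is a finite sum of terms $f_{\bT_m}$, $f_{\bT_m}*\delta_i$, $f_{\bT_m}*\delta_i*\delta_{i'}$, with $\|f_{\bT_m}\|_{\ell^1(\Lambda/m\Lambda)}\ll1$ uniformly in $m$ and $|f_{\bT_m}(y)|\ll e^{-c\|y\|_{(\zed/m\zed)^d}}$. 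Moving the $\delta_i$ factors onto $g_{0,\bT_m}$, we are reduced to bounding convolutions $\bigl(D^{\ua'} g_{0,\bT_m}\bigr)*f_{\bT_m}$ in which $D^{\ua'}$ is a discrete derivative operator of order $|\ua|+j$. When $|\ua|+j\geq1$ — which covers every case with $j\geq1$ and every case with $|\ua|\geq1$ — Lemma \ref{deriv_decay_lemma_T_m} yields $|D^{\ua'}g_{0,\bT_m}(z)|\ll(1+\|z\|_{(\zed/m\zed)^d}^{|\ua|+j+d-2})^{-1}$, and the desired bound follows from a routine convolution estimate on the torus: if $\|y\|_{(\zed/m\zed)^d}\leq\tfrac12\|x\|_{(\zed/m\zed)^d}$ then $\|x-y\|_{(\zed/m\zed)^d}\geq\tfrac12\|x\|_{(\zed/m\zed)^d}$ by the triangle inequality for the quotient norm, and the range $\|y\|_{(\zed/m\zed)^d}>\tfrac12\|x\|_{(\zed/m\zed)^d}$ contributes $\ll e^{-c\|x\|_{(\zed/m\zed)^d}/4}$ using the exponential decay of $f_{\bT_m}$ together with $\|D^{\ua'}g_{0,\bT_m}\|_\infty\ll1$ (and for $\|x\|_{(\zed/m\zed)^d}<2$ the claim is trivial).

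The only remaining case is $j=0$, $|\ua|=0$, where the hypothesis $|\ua|+j+d-2>0$ forces $d\geq3$ and one needs $|g_{0,\bT_m}(x)|\ll(1+\|x\|_{(\zed/m\zed)^d}^{d-2})^{-1}$ for \emph{all} $x\in\Lambda$. This is the $\ua=0$ endpoint of Lemma \ref{deriv_decay_lemma_T_m}, and I would prove it by rerunning that lemma's argument verbatim: with $R=2+\|x\|_{(\zed/m\zed)^d}$ one writes $2\deg(0)\,g_{0,\bT_m}(x)=\sum_{n\in\zed^d}\sum_{0\leq N<R^2}\varrho_{\frac{1}{2}}^{*N}(x+mn)+\sum_{N\geq R^2}\bigl(\varrho_{\frac{1}{2},\bT_m}^{*N}(x)-m^{-d}\bigr)$, applies Chernoff's inequality to discard $n\neq0$ and $N\ll R^2/\log R$, bounds the space part by Theorem \ref{local_limit_theorem} (the resulting $N$-sum of Gaussians is $\ll R^{-(d-2)}$ precisely because $d>2$), and bounds the frequency tail via Fourier inversion on $(\zed/m\zed)^d$ by $\ll m^{-d}\sum_{0\neq\xi}\|\xi/m\|^{-2}(1-c\|\xi/m\|^2)^{R^2}\ll\int\|\zeta\|^{-2}e^{-cR^2\|\zeta\|^2}\,d\zeta\ll R^{-(d-2)}$, the integral converging at the origin exactly because $d>2$. (Lemma \ref{G_0_asymp_lemma} gives the same bound but only in a restricted range of $\|x\|$, so the estimate must be redone without that restriction; note that naive ``method of images'' fails here, since $\sum_{\lambda}g_0(x+m\lambda)$ diverges in every dimension.) Convolving this bound with the exponentially localized $\varrho_{\eta,\bT_m}$ as in the previous paragraph completes the proof.

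\textbf{Main obstacle.} The substantive input is the uniform-in-$(x,m)$ split space–frequency control of $g_{0,\bT_m}$ and its discrete derivatives — it must hold for $\|x\|_{(\zed/m\zed)^d}$ all the way up to order $m$, far beyond where $g_{0,\bT_m}$ is well approximated by its infinite-volume counterpart, which is exactly why the frequency-side contribution has to be estimated (there is no cancellation-free bound of the right strength in that regime). Granting Lemma \ref{deriv_decay_lemma_T_m} and its $\ua=0$ analogue, what is left — passing from $\delta_0$ to a general $C^j$ datum $\eta$ — is only the bookkeeping of Lemma \ref{measure_decomp_lemma} together with the elementary fact that convolution against an exponentially localized kernel preserves polynomial decay in the quotient distance.
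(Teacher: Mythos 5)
Your proposal follows the paper's own route: reduce to $x\in\Lambda$ via the stopped-measure identity $g_{\eta,\bT_m}=g_{0,\bT_m}*\varrho_\eta$ (up to a constant), decompose $\varrho_\eta$ using Lemma \ref{measure_decomp_lemma} into an exponentially localized $f$ convolved with $j$ discrete-derivative factors, move those factors onto $g_{0,\bT_m}$, and apply the uniform derivative decay of Lemma \ref{deriv_decay_lemma_T_m} together with a standard dyadic convolution estimate on the quotient. The substantive ingredients and their order are the same as in the paper's proof.

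Where you go beyond the paper's write-up is in two spots of bookkeeping. First, you explicitly track the constant $-\sum_v\eta(v)c_v$ coming from the mean-zero normalization of the torus Green's function; the paper's proof writes $g_{\eta,\bT_m}=g_{\bT_m}*\varrho_\eta$ without comment. Your observations that $D^{\ua}$ kills the constant when $|\ua|\geq1$, and that when $|\ua|=0$ the $O(m^{-d})$ size is absorbed by the claimed bound precisely because $j\leq2$ and $\|x\|_{(\zed/m\zed)^d}\ll m$, are both correct and fill a small gap. Second, and more importantly, you notice that the case $j=0$, $|\ua|=0$ (forced to have $d\geq3$) is not actually covered by Lemma \ref{deriv_decay_lemma_T_m}, which is stated only for $|\ua|\geq1$, nor by Lemma \ref{G_0_asymp_lemma}, which carries a restriction on the range of $\|x\|$. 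The paper's proof simply cites Lemma \ref{deriv_decay_lemma_T_m} here. Your proposed patch — rerun the split space/frequency argument with $\ua=0$, inserting the $-m^{-d}$ subtraction into the tail sum so that the $\xi=0$ mode cancels, and observe that the space side and the frequency-side integral $\int\|\zeta\|^{-2}e^{-cR^2\|\zeta\|^2}\,d\zeta$ both give $\ll R^{-(d-2)}$ exactly when $d>2$ — is correct and is the natural way to close that gap. So the proposal is sound and, if anything, more careful than the paper's own argument at the endpoint.
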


\begin{proof}
Assume without loss of generality that $x \in \Lambda$ satisfies $\|x\| = \|x\|_{(\zed/m\zed)^d}$, which can be assumed to be larger than any fixed constant.

 Let $\varrho_\eta$ be the signed measure on $\sT$ obtained by stopping random walk started from $\eta$ at the first time that it reaches $\Lambda$.  Thus, for $x \in \Lambda/m\Lambda$, $g_{\eta, \bT_m}(x) = g_{\bT_m} * \varrho_\eta$.
 
 In the case $j = 0$, bound, using Lemma \ref{deriv_decay_lemma_T_m},
 \begin{align*}
 D^{\ua} g_{\eta, \bT_m}(x) &= \sum_{y \in \Lambda} \varrho_\eta(y) D^{\ua}g_{0,\bT_m}(x-y) \\&\ll \sum_{y \in \Lambda}e^{-c\|y\|} \frac{1}{1 + \|x-y\|_{(\zed/m\zed)^2}^{d+|\ua|-2}} \\&\ll \frac{1}{1 + \|x\|_{(\zed/m\zed)^2}^{d + |\ua|-2}}. 
 \end{align*}
The last estimate holds by splitting on $\|y\| \ll \log \|x\|$, and bounding the values of $\frac{1}{1 + \|x-y\|_{(\zed/m\zed)^2}^{d+|\ua|-2}}$ with larger $\|y\|$ by a constant.
 In the case $j= 1$, by Lemma \ref{measure_decomp_lemma} write $\varrho_\eta = \sum_{i = 1}^d f_i * \delta_i$.  Then
 \begin{align*}
  D^{\ua} g_{\eta, \bT_m}(x) &= \sum_{i=1}^d\sum_{y \in \Lambda} f_i(y) D^{\ua}\delta_i *g_{0,\bT_m}(x-y) \\&\ll \sum_{i=1}^d\sum_{y \in \Lambda}e^{-c\|y\|} \frac{1}{1 + \|x-y\|_{(\zed/m\zed)^2}^{d+|\ua|-1}} \\&\ll \frac{1}{1 + \|x\|_{(\zed/m\zed)^2}^{d + |\ua|-1}}.
 \end{align*}

 In the case $j = 2$, by Lemma \ref{measure_decomp_lemma} write $\varrho_\eta = \sum_{1 \leq i \leq j \leq d} f_{i,j}*\delta_i*\delta_j$.  Then
 \begin{align*}
    D^{\ua} g_{\eta, \bT_m}(x) &= \sum_{1 \leq i\leq j \leq d}\sum_{y \in \Lambda} f_{i,j}(y) D^{\ua}\delta_i *\delta_j*g_{0,\bT_m}(x-y) \\&\ll \sum_{1 \leq i \leq j \leq d}\sum_{y \in \Lambda}e^{-c\|y\|} \frac{1}{1 + \|x-y\|_{(\zed/m\zed)^2}^{d+|\ua|}} \\&\ll \frac{1}{1 + \|x\|_{(\zed/m\zed)^2}^{d + |\ua|}}.
 \end{align*}

\end{proof}

\begin{lemma*}[Lemma \ref{G_eta_eval}]
  Let $\sT$ be a tiling of $\bR^d$ with period lattice $\Lambda$ identified with $\zed^d$ via a choice of basis. Let $\sigma^2 = \Cov(\varrho)$.  Let $\eta$ be  of class $C^1(\sT)$, and let $\varrho_\eta$ be the signed measure on $\Lambda$ obtained by stopping simple random walk on $\sT$ started from $\eta$ when it reaches $\Lambda.$ Let $\varrho_\eta$ have  mean $v$. For $n \in \Lambda$, $1 \leq \|n\| \ll \left(\frac{m^2}{\log m}\right)^{\frac{d-1}{2d}}$,
 \begin{equation}
  g_{\eta, \bT_m}(n) = \frac{\Gamma\left(\frac{d}{2} \right) v^t \sigma^{-2}n}{ \deg(0)\pi^{\frac{d}{2}}\|\sigma^{-1}n\|^d \det \sigma} + O\left(\frac{1}{\|\sigma^{-1}n\|^d} \right).
 \end{equation}
If $d \geq 3$ and $\eta \not \in C^1(\sT)$ has total mass $C$,
\begin{equation}
 g_{\eta, \bT_m}(n) = \frac{C\Gamma\left(\frac{d}{2} -1\right) }{2 \deg(0)\pi^{\frac{d}{2}}\|\sigma^{-1}n\|^{d-2} \det \sigma} + O\left(\frac{1}{\|\sigma^{-1}n\|^{d-1}} \right).
\end{equation}

\end{lemma*}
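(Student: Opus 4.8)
The plan is to reduce the statement to the Green's function estimates already available on $\bT_m$: the gradient asymptotic of Lemma~\ref{gradient_lemma}, the value asymptotic of Lemma~\ref{G_0_asymp_lemma}, the decay bounds of Lemma~\ref{deriv_bound_lemma} and Lemma~\ref{deriv_decay_lemma_T_m}, and the decomposition of the stopped measure $\varrho_\eta$ supplied by Lemma~\ref{measure_decomp_lemma}. In every case the starting point is that on $\Lambda/m\Lambda$ one has $g_{\eta,\bT_m}=g_{0,\bT_m}*\varrho_\eta$, that is $g_{\eta,\bT_m}(n)=\sum_{y\in\Lambda}\varrho_\eta(y)\,g_{0,\bT_m}(n-y)$, and that $\varrho_\eta$ has exponentially decaying tails, so the contribution of $\|y\|\gg\log\|n\|$ never exceeds $O(\|n\|^{-A})$ for any fixed $A$.

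I would treat the case $\eta\notin C^1(\sT)$ first, since it is short. Let $C$ be the total mass of $\eta$. Because $0\in\Lambda\subset\sT$, the function $\eta-C\delta_0$ lies in $C^1(\sT)$ and its stopped measure is $\varrho_\eta-C\delta_0$, so $g_{\eta,\bT_m}(n)=C\,g_{0,\bT_m}(n)+g_{\eta-C\delta_0,\bT_m}(n)$. The second term is $O\bigl(\|n\|^{-(d-1)}\bigr)$ by Lemma~\ref{deriv_bound_lemma} with $j=1$ and $\ua=0$, while Lemma~\ref{G_0_asymp_lemma} (or, where the range requires it, a direct re-run of its split space-frequency argument) gives $C\,g_{0,\bT_m}(n)=\frac{C\,\Gamma(d/2-1)}{2\deg(0)\pi^{d/2}\|\sigma^{-1}n\|^{d-2}\det\sigma}+O\bigl(\|\sigma^{-1}n\|^{-(d-1)}\bigr)$; adding the two yields the claimed formula, using $\|n\|\asymp\|\sigma^{-1}n\|$.

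For $\eta\in C^1(\sT)$ I would use Lemma~\ref{measure_decomp_lemma} to write $\varrho_\eta=\sum_{i=1}^d f_i*\delta_i$ with $|f_i(x)|\ll e^{-c\|x\|}$, so that $g_{\eta,\bT_m}(n)=\sum_{i=1}^d\sum_{y\in\Lambda}f_i(y)\,D_i g_{0,\bT_m}(n-y)$ via $\delta_i*g_{0,\bT_m}=D_i g_{0,\bT_m}$. The contribution of $\|y\|\gg\log\|n\|$ is $O(\|n\|^{-A})$, using the exponential decay of $f_i$ against the bound $|D_i g_{0,\bT_m}(n-y)|\ll\|n-y\|^{-(d-1)}$ of Lemma~\ref{deriv_decay_lemma_T_m}. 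In the remaining range I substitute the asymptotic $D_i g_{0,\bT_m}(x)=-\frac{\Gamma(d/2)(\sigma^{-2}x)_i}{\deg(0)\pi^{d/2}\|\sigma^{-1}x\|^d\det\sigma}+O\bigl(\|\sigma^{-1}x\|^{-d}\bigr)$ of Lemma~\ref{gradient_lemma} at $x=n-y$, expand $(\sigma^{-2}(n-y))_i=(\sigma^{-2}n)_i+O(\|y\|)$ and $\|\sigma^{-1}(n-y)\|^{-d}=\|\sigma^{-1}n\|^{-d}\bigl(1+O(\|y\|/\|n\|)\bigr)$, and sum against $f_i$ using $\sum_y f_i(y)=\widehat{f_i}(0)$ and $\sum_y|f_i(y)|\,\|y\|=O(1)$. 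This produces $g_{\eta,\bT_m}(n)=-\frac{\Gamma(d/2)}{\deg(0)\pi^{d/2}\|\sigma^{-1}n\|^d\det\sigma}\sum_{i=1}^d\widehat{f_i}(0)(\sigma^{-2}n)_i+O\bigl(\|\sigma^{-1}n\|^{-d}\bigr)$. The final step identifies $\sum_i\widehat{f_i}(0)e_i$ with $-v$: from $\operatorname{mean}(a*b)=\operatorname{mass}(a)\operatorname{mean}(b)+\operatorname{mass}(b)\operatorname{mean}(a)$ together with $\operatorname{mass}(\delta_i)=0$ and $\operatorname{mean}(\delta_i)=-e_i$ one obtains $v=\operatorname{mean}(\varrho_\eta)=-\sum_i\widehat{f_i}(0)e_i$, hence $\sum_i\widehat{f_i}(0)(\sigma^{-2}n)_i=-v^t\sigma^{-2}n$, and the main term becomes $\frac{\Gamma(d/2)\,v^t\sigma^{-2}n}{\deg(0)\pi^{d/2}\|\sigma^{-1}n\|^d\det\sigma}$, as required.

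The main obstacle is the error accounting in the $C^1$ case: one must check that the tail in $y$, the Taylor remainders from moving the base point from $n-y$ to $n$, and the $O(\|\sigma^{-1}x\|^{-d})$ term in Lemma~\ref{gradient_lemma} all collapse to a single $O(\|\sigma^{-1}n\|^{-d})$ that is uniform in $m$ and uniform for $n$ in the stated range, and that the auxiliary lemmas are applied only at points $n-y$ with $\|y\|\ll\log m$, so that their range restrictions $\|n-y\|\ll(m^2/\log m)^{(d-1)/(2d)}$ are inherited automatically; a similar range check is needed in the $\eta\notin C^1$ case when invoking Lemma~\ref{G_0_asymp_lemma}. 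A smaller but essential point is keeping the normalization of $\delta_i$ straight (so that $\operatorname{mean}(\delta_i)=-e_i$), since the mean $v$ must be propagated correctly through the decomposition of $\varrho_\eta$.
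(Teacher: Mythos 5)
Your proof is correct, but it follows a genuinely different path from the paper's for the $\eta \in C^1(\sT)$ case. The paper's argument is a single cancellation: it picks the finitely supported model measure $h = -v^t\cdot\nabla = -\sum_i v_i\delta_i$ on $\Lambda$, which has the same first moment $v$ as $\varrho_\eta$, observes that $\varrho_\eta - h$ then has zero mass and zero moment so is of class $C^2$, and hence by Lemma~\ref{deriv_bound_lemma} (with $j=2$, $\ua=0$) contributes only $O(\|\sigma^{-1}n\|^{-d})$, while $g_{\bT_m}*h(n) = -v^t\nabla g_{0,\bT_m}(n)$ is read off verbatim from Lemma~\ref{gradient_lemma}. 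No convolution sum, truncation, or Taylor expansion of the gradient is needed. You instead fully expand $\varrho_\eta = \sum_i f_i*\delta_i$ via Lemma~\ref{measure_decomp_lemma}, write $g_{\eta,\bT_m}(n) = \sum_i\sum_y f_i(y)\,D_ig_{0,\bT_m}(n-y)$, truncate the $y$-sum using the exponential decay of $f_i$ against Lemma~\ref{deriv_decay_lemma_T_m}, Taylor-expand the gradient asymptotic about $n$, and identify the coefficient as $-v^t$ via $v = -\sum_i\hat f_i(0)e_i$. The paper's route is shorter and avoids the error bookkeeping in the expansion, but it hides the role of the decomposition inside Lemma~\ref{deriv_bound_lemma}; yours exposes that mechanism and shows explicitly how $v$ arises from the zeroth Fourier coefficients $\hat f_i(0)$, essentially in-lining the proof of Lemma~\ref{deriv_bound_lemma} while extracting the main term rather than just bounding. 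For the non-$C^1$ case your argument coincides with the paper's (subtract $C\delta_0$, apply Lemma~\ref{G_0_asymp_lemma} to the point mass, Lemma~\ref{deriv_bound_lemma} with $j=1$ to the $C^1$ remainder). Your parenthetical caution that Lemma~\ref{G_0_asymp_lemma}'s stated range $\|n\|\ll(m^2/\log m)^{(d-2)/(2(d-1))}$ is nominally narrower than the range in the present lemma is well taken; the paper glosses over the same point.
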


\begin{proof}
 Let $h = \sum_{i = 1}^d h_i  \delta_i = -v^t \cdot \nabla$ be a sum of first derivative operators which has the same mean as $\varrho_\eta$.  The difference $\eta-h$ is $C^2$, hence by the previous lemma\begin{equation}g_{\bT_m}*(\eta-h)(n) \ll \frac{1}{\|\sigma^{-1}n\|^d}.\end{equation}
 For the measure $h$, by Lemma \ref{gradient_lemma},
 \begin{equation}
  (g_{\bT_m}*h)(n) =  \frac{\Gamma\left(\frac{d}{2} \right) v^t \sigma^{-2}n}{ \deg(0)\pi^{\frac{d}{2}}\|\sigma^{-1}n\|^d \det \sigma} + O\left(\frac{1}{\|\sigma^{-1}n\|^d} \right).
 \end{equation}
The second claim follows similarly, by choosing $h$ to be a point mass at 0 with value  equal to the sum of the values of $\eta$.  Apply Lemma \ref{G_0_asymp_lemma} to the difference $g_{\bT_m}*(\eta-h)$.
\end{proof}

\begin{lemma*}[Lemma \ref{Green_fn_asymptotic}]
Let $d \geq 2$ and let $\ua \in \bN^d$.  If $|\ua| + \frac{d}{2} > 2$ then  for each fixed $n, v \in \sT$, 
 \begin{equation}
  D^{\ua} g_{v, \bT_m}(n) \to D^{\ua} g_{v}(n)
 \end{equation}
as $m \to \infty$.
\end{lemma*}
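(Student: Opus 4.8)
The plan is to reduce to the case $v=0$ with $n$ in the period lattice $\Lambda$, and then to exploit the split space--frequency representation of $g_{0,\bT_m}$ already used for Lemmas \ref{deriv_decay_lemma_T_m} and \ref{deriv_bound_lemma}. For the reduction, for $v\in\sT$ write $g_{v,\bT_m}=-c_v+L_{v,m}+g_{0,\bT_m}*\varrho_v$, where $L_{v,m}(x)=\frac{1}{\deg x}\E[\sum_{j<T_v}\one(Y_{v,j}=x)]$ and $c_v=O(m^{-d})$, the analogous decomposition on $\sT$ having $c_v$ replaced by $0$. Since $T_v$ has exponentially decaying tail, $L_{v,m}$ is a sum of the $m\Lambda$-translates of the exponentially decaying function $L_v$ on $\sT$, so $D^{\ua}L_{v,m}(n)\to D^{\ua}L_v(n)$, while $c_v\to0$ (and is annihilated by $D^{\ua}$ if $|\ua|\ge1$). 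It remains to treat $D^{\ua}(g_{0,\bT_m}*\varrho_v)(n)=\sum_{y\in\Lambda}\varrho_v(y)\,D^{\ua}g_{0,\bT_m}(n-y)$; since $\varrho_v$ has exponentially decaying tail and $D^{\ua}g_{0,\bT_m}$ is bounded uniformly in $m$ and its argument (Lemma \ref{deriv_bound_lemma} applied with $\eta=\delta_0$, using $|\ua|+d-2>0$), the contribution of $\|y\|>K$ is $O(\sum_{\|y\|>K}e^{-c\|y\|})$ uniformly in $m$, so letting $K\to\infty$ and then $m\to\infty$ reduces the whole statement to: $D^{\ua}g_{0,\bT_m}(x)\to D^{\ua}g_0(x)$ for each fixed $x\in\Lambda$.

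To prove this, fix $x\in\Lambda$ and set $T=T_m=\lfloor m^2/\log m\rfloor$. Using $g_{0,\bT_m}(x)=\frac{1}{2\deg(0)}\sum_{N\ge0}(\varrho_{\frac12,\bT_m}^{*N}(x)-m^{-d})$ and $\varrho_{\frac12,\bT_m}^{*N}(x)=\sum_{k\in\zed^d}\varrho_{\frac12}^{*N}(x+mk)$, split the $N$-sum at $T$. For $N<T$, Chernoff's inequality makes the $k\neq0$ terms exponentially small in $m$; when $|\ua|\ge1$ the constants $m^{-d}$ are killed by $D^{\ua}$, and the $k=0$ terms give $\frac{1}{2\deg(0)}\sum_{N<T}D^{\ua}\varrho_{\frac12}^{*N}(x)$, a partial sum of the series $2\deg(0)\,D^{\ua}g_0(x)$, which converges absolutely since Theorem \ref{local_limit_theorem} gives $D^{\ua}\varrho_{\frac12}^{*N}(x)\ll N^{-(d+|\ua|)/2}e^{-c\|x\|^2/N}$ and $d+|\ua|>2$ under the hypothesis $|\ua|+\frac d2>2$. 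For the tail $N\ge T$ one expands $D^{\ua}g_{0,\bT_m}$ in characters of $(\zed/m\zed)^d$ exactly as in the proof of Lemma \ref{deriv_decay_lemma_T_m}: using $|\widehat{\varrho_{\frac12}}(\xi/m)|\le1-c\|\xi/m\|_{(\bR/\zed)^d}^2$, it is bounded by $\ll\frac{1}{m^d}\sum_{0\neq\xi}\|\xi/m\|^{|\ua|-2}e^{-cT\|\xi/m\|^2}\ll T^{-(|\ua|+d-2)/2}\to0$. When $\ua=0$ (so $d\ge5$) the constant term does not drop; instead one writes, by Fourier inversion, $2\deg(0)g_{0,\bT_m}(x)=\frac{1}{m^d}\sum_{0\neq\xi}\frac{e(\xi\cdot x/m)}{1-\widehat{\varrho_{\frac12}}(\xi/m)}$ and compares this Riemann sum with $2\deg(0)g_0(x)=\int_{(\bR/\zed)^d}\frac{e(\xi\cdot x)}{1-\widehat{\varrho_{\frac12}}(\xi)}\,d\xi$; the integrand has an integrable singularity $\asymp\|\xi\|^{-2}$ at the origin, so the omitted cube of measure $m^{-d}$ about $0$ contributes $o(1)$ and the Riemann sums converge to the integral.

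The main obstacle is the frequency-domain control of the tail $N\ge T_m$ (and, in the $\ua=0$ case, the Riemann-sum comparison near the singularity of $(1-\widehat{\varrho_{\frac12}})^{-1}$); the rest is routine bookkeeping with the exponential tails of $\varrho$, $\varrho_v$ and $T_v$ and with the already-established uniform decay estimates for $D^{\ua}g_{0,\bT_m}$.
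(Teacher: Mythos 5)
Your proof is correct in substance, but it takes a genuinely different route for the core lattice convergence. The reduction to $v=0$ via the stopping-time decomposition $g_{v,\bT_m}=-c_v+L_{v,m}+g_{0,\bT_m}*\varrho_v$ is the same as the paper's, but your treatment of $D^{\ua}g_{0,\bT_m}(x)\to D^{\ua}g_0(x)$ on $\Lambda$ differs. You use a space--time hybrid: split at $T\asymp m^2/\log m$, handle $N<T$ in the time domain with the local limit theorem and Chernoff (recognizing a partial sum of the time-domain series for $D^{\ua}g_0(x)$), handle $N\geq T$ in frequency domain, and treat $\ua=0$ (hence $d\geq5$) separately by a Riemann-sum-to-integral argument. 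The paper's proof instead applies the Riemann-sum-to-integral argument uniformly for every $\ua$: it identifies $D^{\ua}g_{0,\bT_m}(n)$ as the Riemann sum over $(\zed/m\zed)^d\setminus\{0\}$ of the Fourier integral representing $D^{\ua}g_0(n)$, whose integrand has an integrable singularity $\asymp\|x\|^{|\ua|-2}$ at the origin, and sends $m\to\infty$. The paper's version is shorter and uniform in $\ua$; your version re-uses the machinery already set up for the uniform bound in Lemma~\ref{deriv_decay_lemma_T_m}, at the cost of a case split and some bookkeeping. Both are valid under the hypothesis $|\ua|+\tfrac d2>2$.

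One genuine gap: your reduction concludes with ``$D^{\ua}g_{0,\bT_m}(x)\to D^{\ua}g_0(x)$ for each fixed $x\in\Lambda$'', but the argument points you actually need are $x=n-y$ with $y\in\Lambda$, i.e.\ points of $n+\Lambda$, which is not $\Lambda$ when $n\notin\Lambda$. The paper closes this by writing, for $z\notin\Lambda$, $D^{\ua}g_0(z)=\E\bigl[D^{\ua}g_0(Y_{z,T_z})\bigr]$, a mixture of lattice values with exponentially decaying weights, so that the lattice-point convergence together with the uniform bound from Lemma~\ref{deriv_bound_lemma} propagates to off-lattice arguments. You should add this step (or, equivalently, run the lattice analysis on the translated lattice $n+\Lambda$, using the translation invariance of the $C^j$ classes recorded in Lemma~\ref{C_2_translation_invariance_lemma} and the remark after Lemma~\ref{deriv_bound_lemma}).
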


\begin{proof}

 As a function on $\Lambda$, the Fourier transform of  $D^{\ua} g_{0}$,
 \begin{equation}
  \widehat{D^{\ua} g_{0} }(x) = \frac{\prod_{j=1}^d (e(x_j)-1)^{\ua_j}}{(\deg(0))(1-\hat{\varrho}(x))}.
 \end{equation}
On $\Lambda/m\Lambda$, the discrete Fourier transform is obtained by taking points which are $\frac{1}{m}$ times a vector in $\zed^d$.  By inverse Fourier transform
\begin{equation}
 D^{\ua} g_{0, \bT_m}(n) = \frac{1}{m^d} \sum_{0 \neq x \in (\zed/m\zed)^d} \widehat{D^{\ua} g_{0} }\left(\frac{x}{m}\right) e\left(\frac{n \cdot x}{m} \right).
\end{equation}
Note that the summand can be unbounded near 0, but is integrable, and the sum avoids 0.  Letting $m \to \infty$ obtains the integral
\begin{equation}
 D^{\ua} g_{0}(n) = \int_{(\bR/\zed)^d} \widehat{D^{\ua} g_{0} }(x) e\left(n \cdot x \right)dx.
\end{equation}
When $n \not \in \Lambda$, use that $D^{\ua} g_0(n)$ is a mixture of nearby lattice values, and that the mixture decays exponentially.  Since $D^{\ua} g(n')$ also decays as $\|n'\| \to \infty$, the claim follows.

By translation invariance, the argument for $v = 0$ handles also the case of $v \in \Lambda$.  When $v \not \in \Lambda$, write
\begin{align*}
  g_{v, \bT_m}(x) = -c_v + \frac{1}{\deg x} \E\left[ \sum_{j=0}^{T_v-1} \one\left(Y_{v,j} = x\right)\right] + \E\left[g_{Y_{v, T_v}, \bT_m}(x)\right].
\end{align*}
Since $T_v$ has distribution which decays exponentially, as $m \to \infty$ there is a probability exponentially small in $m$ that $Y_{v, n}$, $1 \leq n \leq T_v$ exits $\left(-\frac{m}{2}, \frac{m}{2}\right]^d$, so up to exponentially small error, the value of \begin{equation}\frac{1}{\deg x} \E\left[ \sum_{j=0}^{T_v-1} \one\left(Y_{v,j} = x\right)\right]\end{equation} is the same whether interpretted on $\sT$ or on $\bT_m$.  Also, $c_v \to 0$ as $m \to \infty$ so this may be discarded as an error term.  The probability that $Y_{v, T_v}$ leaves a fixed ball about 0 tends to 0 as the radius tends to infinity, and by the derivative condition, the Green's function is bounded.  Hence the convergence of
\begin{equation}
 \E\left[g_{Y_{v, T_v}, \bT_m}(x)\right] \to \E\left[g_{Y_{v, T_v}}(x)\right]
\end{equation}
holds from the convergence of the Green's function on fixed balls about 0.
\end{proof}

\bibliographystyle{plain}

\end{document}